\newtheorem{theorem}{Theorem}[section]
\newtheorem{proposition}[theorem]{Proposition}
\newtheorem{lemma}[theorem]{Lemma}
\newtheorem{corollary}[theorem]{Corollary}
\newtheorem{example}[theorem]{Example}
\newtheorem{conj}[theorem]{Conjecture}
\newtheorem{notation}[theorem]{Notation}
\theoremstyle{definition}
\newtheorem{definition}[theorem]{Definition}
\newtheorem{remark}[theorem]{Remark}
\DeclareMathOperator{\Hom}{Hom}
\DeclareMathOperator{\Mod}{-mod}
\DeclareMathOperator{\End}{End}
\DeclareMathOperator{\Ent}{Ent}
\DeclareMathOperator{\Exit}{Exit}
\DeclareMathOperator{\Path}{\textbf{Path}}
\DeclareMathOperator{\Cell}{\textbf{Cell}}
\DeclareMathOperator{\im}{Im}
\newcommand\floor[1]{\lfloor #1 \rfloor}
\newcommand{\newterm}{\textsf}
\newcommand{\op}[1]{\operatorname{#1}}
\subjclass[2020]{18G99, 16E35, 16E05}
\keywords{Quiver algebra, entrance path, coherent-constructible correspondence, cellular resolution, toric variety, mirror symmetry}
\title{Homotopy path algebras}
\author[Favero]{David Favero}
\address{
  \begin{tabular}{l}
   David Favero \\
   \hspace{.1in} University of Minnesota, School of Mathematics \\
      \hspace{.1in} 206 Church Street, Minneapolis MN 55455, USA \\
         \hspace{.1in} Email: {\bf favero@umn.edu} \\
  \end{tabular}
}
\author[Huang]{Jesse Huang}
\address{
  \begin{tabular}{l}
   Jesse Huang \\
   \hspace{.1in} University of Waterloo, Department of Pure Mathematics \\
   \hspace{.1in} 200 University Avenue West, Waterloo ON, Canada  N2L3G1 \\
   \hspace{.1in} Email: {\bf j654huang@uwaterloo.ca} \\
  \end{tabular}
}
\begin{document}

\begin{abstract}
We define a basic class of algebras which we call homotopy path algebras.  We find that such algebras always admit a cellular resolution and detail the intimate relationship between these algebras, stratifications of topological spaces, and entrance/exit paths.  As examples, we prove versions of homological mirror symmetry due to Bondal-Ruan for toric varieties and due to Berglund-H\"ubsch-Krawitz for hypersurfaces with maximal symmetry.  We also demonstrate that a form of shellability implies Koszulity and the existence of a minimal cellular resolution.  In particular, when the algebra determined by the image of the toric Frobenius morphism is directable, then it is Koszul and  admits a minimal cellular resolution. 
\end{abstract}

\maketitle

\tableofcontents

\section{Introduction}
In 2006, Bondal and Ruan introduced a completely novel approach to homological mirror symmetry (HMS) \cite{Bon06}.  Their description connected sections of line bundles on toric varieties to entrance paths on a ``mirror'' torus.  The mantle for this approach was soon taken up by Fang-Liu-Treumann-Zaslow in a series of articles \cite{FLTZ11, FLTZ12, FLTZ14} which included proofs of HMS for smooth, proper toric varieties and maximally equivariant toric stacks. Many great works have followed which we will not attempt to catalogue, however, let us mention that a full proof of HMS for all toric DM stacks was provided by Kuwagaki \cite{Kuw20}. 

The work herein began as an attempt to codify the original ideas of Bondal-Ruan with the hope of extending their approach beyond the toric framework.  The resulting theory can be thought of as nothing more than the study of a simple class of $\mathsf k$-algebras which we would like to call ``homotopy path algebras'' (HPAs) (where $\mathsf k$ is \emph{any} unital commutative Noetherian base ring).  

The name HPA is descriptive.  If we embed a quiver in a topological space, then the corresponding HPA is just the path algebra of the quiver quotiented by the ideal generated by path homotopy.  On the other hand, these HPAs have a completely algebraic description (see Definition~\ref{defn: HPA}) and hence can be studied as a class of algebras in their own right.  Alternatively, bases of HPAs also arise exactly as entrance paths for a topological space $Y$ stratified by a poset $S$ of contractible strata: picture the vertices as chosen base points in $S$ and arrows as a basis of homotopy classes of indecomposable directed paths in $Y$.  

The study of entrance paths on a space stratified by a poset of subspaces really goes back to MacPherson, who famously conjectured an equivalence between constructible (co)sheaves and representations of entrance paths.  The strongest incarnation of this conjecture was handled by Curry-Patel \cite{CP16} for conically stratified spaces.  A higher categorical version is given in Lurie's Derived Algebraic Geometry \cite{Lur09}, Appendix A, (see also \cite{Tr09}). 

Our motivating examples were both the stratification of a torus considered by Bondal-Ruan and the tree stratification (see Definition~\ref{def: tree strata}) on the classifying space of entrance paths. These stratifications are not conical. Instead, they are what we call \emph{block} stratifications (see Definition~\ref{def: block}), which are in particular \emph{simple} stratifications (see Definition~\ref{def: simple}). Our first result is a version of MacPherson's conjecture for simple stratifications. 
\begin{theorem}
Let $S$ be a simple stratification of $Y$.  The following categories are equivalent:
\begin{itemize}
    \item $Sh_S(Y)$, the category of $S$-constructible sheaves on $Y$;
    \item $A\Mod$, the category of modules over the HPA described above;
    \item $Fun(\Ent_S(Y), \mathsf k\Mod)^{\op{op}}$, the opposite category of representations of $S$-entrance paths.
\end{itemize}
\end{theorem}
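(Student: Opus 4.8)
The plan is to prove the three categories equivalent by exhibiting two of the equivalences: $A\Mod \simeq Fun(\Ent_S(Y),\mathsf k\Mod)^{\op{op}}$, a formal consequence of the definition of a homotopy path algebra, and $Sh_S(Y)\simeq A\Mod$, which is the geometric core and the analogue for simple stratifications of the (conical) results of Curry--Patel and Lurie. For the first: by Definition~\ref{defn: HPA}, $A$ is a linearized version of the category algebra of $\Ent_S(Y)$ — a complete set of orthogonal idempotents is indexed by the chosen basepoints of the strata, and $e_\beta A e_\alpha$ is the free $\mathsf k$-module on homotopy classes of indecomposable directed paths, with multiplication given by concatenation up to homotopy. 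Since $S$ is finite and each $e_\beta A e_\alpha$ is finitely generated, a module over the ring $A$ is exactly a $\mathsf k$-linear functor out of (the linearization of) $\Ent_S(Y)$, that is, a representation of $S$-entrance paths; the decoration $^{\op{op}}$ encodes the usual passage between left and right modules, i.e.\ between a path algebra and its opposite. Unwinding the definitions, this part is routine bookkeeping.

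For $Sh_S(Y)\simeq A\Mod$ I would construct mutually quasi-inverse functors $\Phi$ and $\Psi$. Define $\Phi(\mathcal F)=\bigoplus_\alpha \mathcal F_{y_\alpha}$, the sum of the stalks of $\mathcal F$ at the chosen basepoints, where a basis path from $y_\alpha$ to $y_\beta$ acts through the composite of the associated generization maps of $\mathcal F$. The crucial point is that this composite depends only on the homotopy class of the path — this is precisely where the defining relations of the \emph{homotopy} path algebra are forced — and it holds because $\mathcal F$ is $S$-constructible: its restriction to each (contractible) stratum is constant, so the generization data propagate consistently across homotopies. Granting this, $\Phi(\mathcal F)$ is a genuine $A$-module; $\Phi$ is exact and conservative because stalks are, and a constructible sheaf vanishing at every basepoint vanishes by local constancy. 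The functor $\Psi$ reconstructs a sheaf from a module $M$: invoking the \emph{simple} hypothesis, fix for each stratum a neighborhood modeled on its local ``block'' — playing the role that cone charts play in the conical theory — chosen so that its intersections with the other strata realize the hom-sets of $\Ent_S(Y)$ up to homotopy; then glue the constant sheaves with stalks $M_\alpha$ on the strata along these overlaps, with the structure maps of $M$ as descent data, to obtain $\Psi(M)\in Sh_S(Y)$.

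The hard part will be showing that $\Psi$ is well-defined and is genuinely quasi-inverse to $\Phi$. That $\Phi\Psi(M)\cong M$ is essentially built into the construction — the basepoint stalks of $\Psi(M)$ are the $M_\alpha$, and its generization maps recover the structure maps of $M$. The substantive statement is $\Psi\Phi(\mathcal F)\cong\mathcal F$: that an $S$-constructible sheaf is recovered from its basepoint stalks together with its generization maps. I would prove this by showing that the canonical comparison morphism is a stalkwise isomorphism, checked stratum by stratum using the block local model; this model, together with the homotopy-invariance used to define $\Phi$, is precisely where the hypothesis of simplicity is essential, and is what must replace the cone-chart arguments available in the conically stratified case. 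Along the way one must check the cocycle condition for the gluing, verify that $\Psi(M)$ is $S$-constructible, and then deduce fullness of $\Phi$ (faithfulness being clear) — the latter reducing to a Mayer--Vietoris/descent computation of $\Hom_{Sh_S(Y)}(\mathcal F,\mathcal G)$ against the adapted cover by block neighborhoods. Since $\mathsf k$ is only assumed commutative Noetherian, all of this should be done using honest sheaf gluing and stalkwise arguments, with no appeal to duality or semisimplicity.
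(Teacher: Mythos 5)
Your first equivalence, $A\Mod\simeq Fun(\Ent_S(Y),\mathsf k\Mod)^{\op{op}}$, follows the same bookkeeping as the paper; the substantive equivalence is $Sh_S(Y)\simeq A\Mod$, and there your approach genuinely departs from the paper's, with a gap at its core.

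Your functor $\Phi(\mathcal F)=\bigoplus_\alpha\mathcal F_{y_\alpha}$ is the same stalk functor the paper uses. But the proposed inverse $\Psi$ --- gluing constant sheaves on strata using ``block neighborhoods'' as local models, ``playing the role that cone charts play in the conical theory'' --- is the step that does not go through. Simplicity (Definition~\ref{def: simple}) is a \emph{global} condition on the universal cover: the entrance spaces $\widetilde Y_{\Ent}(\widetilde y)$ and their pairwise differences must be contractible. It furnishes nothing like a conical chart or a distinguished open neighborhood of each stratum, and the strata themselves are typically not open, so one cannot naively glue constant sheaves on them and get a sheaf on $Y$. This is precisely why the paper remarks (after Theorem~\ref{thm: main equivalence}) that Curry--Patel's Theorem 6.1 is unavailable: the stratifications in play are not conical, so the descent-by-local-models argument you are imitating has no foothold. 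Even the innocuous-looking assertion that the composite generization map depends only on the homotopy class of an entrance path is, in this generality, part of what needs to be proved, not something that falls out of constructibility alone.

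The paper's actual strategy is Morita-theoretic and sidesteps local models entirely. It introduces the entrance sheaves $\mathcal I_w=(\pi\circ i_{\widetilde w})_*\mathsf k_{\widetilde Y_{\Ent}(\widetilde w)}$, proves by induction on strata (Lemma~\ref{lem: simple injectives}, using the relative-cohomology computation of Lemma~\ref{lem: rel coh} and the contractibility furnished by the simple hypothesis) that $\mathcal I_w$ corepresents your stalk functor at $y_w$ (Proposition~\ref{prop: rep stalk}), defines the inverse explicitly as $G(M)=\bigoplus_i\mathcal I_i\otimes_{\mathsf k_A}\mathsf k_M$, and checks $GF\cong\mathrm{Id}$ and $FG\cong\mathrm{Id}$ stalkwise via that corepresentability. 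Separately, Proposition~\ref{QCC} identifies $\End(\bigoplus_w\mathcal I_w)^{\op{op}}$ with $A$ by computing $\Hom(\mathcal I_v,\mathcal I_w)$ through proper base change on the fiber product of entrance spaces. If you want to repair your descent argument you would have to establish, without any local cone chart, both the homotopy invariance of generization along entrance paths and the gluing statement for constant-on-strata data, and that is exactly the work your sketch leaves to the reader.
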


Entrance paths themselves may be thought of as a type of directed fundamental group or as similar to flow lines in Morse theory. When $S$ is sufficiently nice, entrance paths should recover the homotopy type of $Y$, reminiscent of the seminal work \cite{CJS95} where gradient flow lines of a Morse function are used to reconstruct the homotopy type of a smooth manifold. This direction has recently been explored by Nanda \cite{Nan19} in the context of discrete Morse theory \cite{For98}, who proves the classifying space of the discrete Morse flow category recovers the homotopy type of a regular CW complex.  Following Nanda's approach, we prove:

\begin{theorem}
Let $S$ be a block stratification of $Y$.  Then $Y$ is homotopic to the classifying space of $S$-entrance paths.  That is, the homotopy type of $Y$ is encoded in the category of $S$-entrance paths.  \end{theorem}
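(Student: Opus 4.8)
The plan is to follow Nanda's strategy for discrete Morse theory, adapting it to the setting of block stratifications. The key observation is that a block stratification of $Y$ should give rise to a regular CW structure (or something close enough to one) on which the combinatorics of entrance paths can be analyzed cell-by-cell. First I would establish that the poset $S$ of strata, together with the block condition, allows us to reconstruct $Y$ up to homotopy from a regular CW complex whose cells are indexed by the strata — essentially, each contractible stratum contributes a cell, and the block condition controls how the closure of a stratum meets lower strata so that the attaching maps are well-behaved. This reduces the problem to a purely combinatorial statement about the nerve of the entrance path category versus the face poset of this CW complex.

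Next, I would identify the classifying space $B\Ent_S(Y)$ with the geometric realization of a simplicial set whose $n$-simplices are chains of composable indecomposable entrance paths. The main technical step is to produce a homotopy equivalence between $B\Ent_S(Y)$ and $Y$. Following Nanda, the natural route is to build an explicit zig-zag: introduce an intermediate space — the classifying space of the ``flow category'' or a barycentric-type subdivision associated to the stratification — and show it maps by weak equivalences to both $Y$ (via a Quillen-type Theorem A argument, using contractibility of strata and of the relevant comma categories) and to $B\Ent_S(Y)$ (via a functor that is cofinal or has contractible fibers). The contractibility of each stratum is what feeds into checking that the fibers/comma categories appearing in Theorem A are contractible, so the ``simple'' and ``block'' hypotheses must be used precisely at this checkpoint.

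Concretely, the steps in order: (1) unwind the definitions of block and simple stratification and extract the combinatorial data — the poset $S$, the contractibility of strata, and the local structure near each stratum closure; (2) construct the regular CW (or $\Delta$-complex) model $X$ of $Y$ from this data and prove $X \simeq Y$ using that strata are contractible and glued along contractible pieces; (3) show the face poset of $X$ is equivalent, as a category, to $\Ent_S(Y)$ — or at least that their classifying spaces agree — by matching cells with strata and incidence with indecomposable entrance paths; (4) invoke the standard fact that a regular CW complex is homotopy equivalent to the order complex (classifying space) of its face poset to conclude $Y \simeq X \simeq |N(\text{face poset})| \simeq B\Ent_S(Y)$.

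The hard part will be step (2)–(3): verifying that the entrance path category genuinely recovers the face incidence structure of a CW model, rather than something with extra higher morphisms or missing relations. In particular one must check that homotopy classes of indecomposable directed paths between chosen basepoints in adjacent strata are in bijection with cells in the boundary — this is where a badly-behaved (non-conical) stratification could in principle produce ``extra'' entrance paths or fail to be modeled by a regular CW complex, so the block condition has to be strong enough to rule this out. I expect the argument here to mirror Nanda's proof that the discrete Morse flow category has classifying space homotopy equivalent to the underlying regular CW complex, with the stratification poset playing the role of the cell poset and the acyclic matching replaced by the directedness inherent in entrance paths; making that analogy precise, and checking the Theorem A hypotheses hold with only ``block'' rather than ``conical'' in hand, is the crux.
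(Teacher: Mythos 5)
Your high-level framing (a zig-zag through an intermediate classifying space, Quillen's Theorem~A checked via contractibility of comma categories, Nanda's result as a template) matches the paper's proof. But the concrete steps (2)--(4) you lay out diverge into an approach that would not go through, and a genuinely necessary idea is missing.

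The central flaw is in steps (2)--(3). You propose to build a regular CW complex $X$ with one cell per stratum and then identify $\Ent_S(Y)$ with the face poset of $X$. But $\Ent_S(Y)$ is not a poset, and it is not the face poset of any CW complex: there can be several homotopy classes of indecomposable entrance paths between two base points. In the Bondal--Ruan stratification of $\mathbb T^2$ (Example~\ref{Ex: P2 section 5}) there are exactly three indecomposable entrance paths ($x$, $y$, $z$) between consecutive strata, and $B\Ent_S(\mathbb T^2)$ has Euler characteristic $0$ as befits a torus, whereas a complex with one cell per stratum (here, three strata) would have Euler characteristic $1$. So the hoped-for bijection ``indecomposable entrance paths $\leftrightarrow$ facet relations'' that you flag as the crux actually fails, and the block condition cannot rescue it --- the multiplicity of entrance paths is precisely the geometric information the theorem needs to retain. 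Strata in a block stratification are contractible but are also not cells (they are typically not even of pure dimension), so the CW model with cells-indexed-by-strata does not exist in the way step (2) envisions. The block hypothesis already supplies a regular CW structure on $Y$; the paper never tries to replace it.

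The second missing idea is the universal cover. The paper takes the \emph{given} CW structure, forms $\Ent_{S_{CW}}(Y)$ (whose classifying space is the barycentric subdivision of $Y$ by Proposition~\ref{prop: Nanda}), and compares it to $\Ent_S(Y)$ via the localization functor $F$ that inverts paths lying in a single $S$-stratum (Proposition~\ref{prop: CW to block}). To check Theorem~A it lifts everything to $\widetilde Y$: Lemma~\ref{lem: cover fiber} identifies $y\backslash F$ with $\Ent_{\widetilde S_{CW}}\bigl(\widetilde Y_{\Ent}(\widetilde y)\bigr)$, and then Proposition~\ref{prop: Nanda} applied upstairs plus the simpleness hypothesis (entrance spaces in the universal cover are contractible) gives that $B(y\backslash F)$ is contractible. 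Without the lift, the comma categories would be infinite and non-contractible whenever $\pi_1(Y)\neq 0$ --- again, the torus already breaks your sketch here. So the two things your proposal needs and does not have are: the correct intermediate category $\Ent_{S_{CW}}(Y)$ rather than a face poset of a hypothetical collapsed model, and the pass to the universal cover before checking Theorem~A.
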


The topological take on HPAs also provides some pleasant  algebraic consequences. For example, Table~\ref{tab: section4 summary} relates fundamental $A$-modules to $S$-constructible sheaves described by strata, exit paths, and entrance paths. Furthermore, projective resolutions of $A$-modules have topological interpretations similar to CW and Morse homology  as in the following theorem.
\begin{theorem}
Let $A$ be any HPA.  The geometric realization of $A$ (see Definition~\ref{def: geometric realization}) $X_A$ together with the tree stratification $S^{tr}$ recovers $A$ as the algebra of $S^{tr}$-entrance paths.  In addition, $X_A$ is a projective cellular resolution (see Definition~\ref{def: cellular res}) of the diagonal bimodule.  Furthermore, for any internal acyclic matching (see Definition~\ref{def: internal}), the simplicial collapse $\overline{X_A}$ is a projective cellular subresolution.  Finally, if $X_A$ is shellable in a certain sense (see Proposition~\ref{prop: shellable-koszul}) then $A$ is Koszul, and there exists an internal acyclic matching such that the projective cellular resolution $\overline{X_A}$ is minimal.
\end{theorem}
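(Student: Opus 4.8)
The plan is to establish the four assertions separately, since each draws on a different circle of ideas. For the first, I would unwind Definition~\ref{def: geometric realization} and Definition~\ref{def: tree strata} and check directly that the tree stratification $S^{tr}$ of $X_A$ is a block stratification in the sense of Definition~\ref{def: block}: its strata are contractible open cells attached to the chosen basepoints, and the local model near each stratum is the tree-like cone demanded by Definition~\ref{def: block} (in particular $S^{tr}$ is simple, Definition~\ref{def: simple}). Granting this, the homotopy-type theorem for block stratifications proved above identifies $X_A$, up to homotopy, with the classifying space of $S^{tr}$-entrance paths. To upgrade this to an identification of \emph{algebras} rather than just spaces, I would argue combinatorially: the $0$-strata of $S^{tr}$ are the vertices of the quiver underlying $A$; an indecomposable directed $S^{tr}$-entrance path between two strata is a homotopy class of directed paths in the $1$-skeleton of $X_A$, and by construction of $X_A$ these are in bijection with the chosen basis of $e_j A e_i$; and composition of entrance paths is concatenation of arrows modulo path-homotopy, which by Definition~\ref{defn: HPA} is exactly the defining relation of $A$. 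Hence the algebra of $S^{tr}$-entrance paths is canonically $A$.

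For the second assertion, I would write down the twisted cellular chain complex of $X_A$: to a $k$-cell $\sigma$ with source stratum $i$ and target stratum $j$ assign the projective $A$-bimodule $A e_j \otimes_{\mathsf k} e_i A$, with differential the signed cellular boundary, where a face collapsing an initial (resp.\ terminal) arrow of $\sigma$ contributes the corresponding right (resp.\ left) multiplication. This is a complex of projective $A^e$-modules with $H_0 = A$ by construction; the real content is exactness in positive degrees. I would prove this by filtering along pairs of strata and identifying, for each pair, the relevant subquotient with the reduced cellular chain complex of a contractible space — concretely, the poset of cells of $X_A$ refining a fixed entrance path is a cone, and this is precisely where the tree structure of $S^{tr}$ is used. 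That realizes $X_A$ as a projective cellular resolution of the diagonal bimodule in the sense of Definition~\ref{def: cellular res}; morally it is the HPA analogue of the reduced bar resolution, thinned out by the homotopy relations.

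For the third assertion, given an internal acyclic matching in the sense of Definition~\ref{def: internal}, I would invoke Forman's discrete Morse theory: the matching encodes a sequence of elementary collapses exhibiting $\overline{X_A}$ as a deformation retract subcomplex of $X_A$. The internality condition forces the matching to pair only cells sharing source and target strata, so the collapses respect the bimodule structure and induce an inclusion of twisted cellular complexes which is a homotopy equivalence of complexes of projective $A^e$-modules; hence $\overline{X_A}$ is again a resolution of the diagonal and a direct summand of $X_A$, i.e.\ a projective cellular subresolution. For the fourth, under the shellability hypothesis of Proposition~\ref{prop: shellable-koszul} I would construct a specific internal acyclic matching from the shelling order (the ``new-face'' matching familiar from the Batzies--Welker theory of cellular resolutions of monomial ideals), verify acyclicity and internality, and then check that the resulting Morse differential between the surviving critical cells has all entries in the two-sided radical of $A$ — which is exactly what a shelling buys, since it removes precisely the collapsible incidences. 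That makes $\overline{X_A}$ a \emph{minimal} projective cellular resolution of the diagonal; restricting this bimodule resolution on one side yields a minimal resolution of the semisimple subalgebra $A_0$ over $A$, and the shelling condition in Proposition~\ref{prop: shellable-koszul} is calibrated so that its $k$-th term is generated in internal degree $k$, which is the definition of Koszulity.

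I expect the main obstacle to lie in two places. The first is the exactness in the second assertion: one must control the local combinatorics of the tree stratification well enough to recognize the twisted cellular complex as the chains on a cone, and keeping the signs consistent with the interleaved left and right multiplications in the differential is delicate. The second, and the genuine technical heart, is the fourth assertion: verifying that the matching extracted from a shelling is acyclic and that the resulting Morse differential has radical entries in exactly the graded range Koszulity demands — this is where the precise meaning of ``shellable in a certain sense'' in Proposition~\ref{prop: shellable-koszul} must be chosen so that the bookkeeping closes up.
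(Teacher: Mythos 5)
Your high-level plan tracks the paper quite closely: the first assertion is handled via the block-stratification theory (Proposition~\ref{prop: tree is block} and Theorem~\ref{thm: main equivalence}), the second via a complex of projective bimodules $P_{t(\eta_k)} \boxtimes P_{h(\eta_k)}^{\op{op}}$ indexed by cells of $X_A$, the third via discrete Morse theory for internal matchings, and the fourth via shellability of the interval posets $(e_{t(p)}, p)$. The places where you diverge from the paper, and where one should worry, are the following.

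For the second assertion, the paper does not use a filtration argument: it writes down an explicit right-$A$-linear contracting homotopy $h_k$ (Corollary~\ref{cor: cell res}) given by coning a representative $a \otimes [e_{h(a)} < p_1 < \cdots < p_k] \otimes b$ to $1 \otimes [e_{t(a)} < a < ap_1 < \cdots < ap_k] \otimes b$, and verifies $d_{k+1}h_k + h_{k-1}d_k = \mathrm{Id}$ by hand. Your proposed filtration by pairs of strata and path length would not close as sketched, because the associated graded pieces are the reduced chain complexes of the \emph{open} intervals $K((e_{t(p)}, p))$, which are in general \emph{not} contractible; indeed their reduced homology computes $\operatorname{Tor}_i(S_v, S_w)$ (Lemma~\ref{lem: Tor is reduced homology}), and these Tor groups can be nonzero in many degrees. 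The \emph{closed} interval $[e_{t(p)}, p]$ is a cone and contractible, which is the correct form of your intuition, but the length filtration discards the endpoints and thus only sees the open interval. Exactness of the total complex genuinely uses the bimodule structure across different lengths, which the associated graded does not retain; the paper's contracting homotopy is precisely the device that implements the cone over the \emph{closed} interval in a way that respects the bimodule differentials. A filtration approach would require controlling higher spectral-sequence differentials and is not routine.

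For the third assertion, your approach matches the paper's (Proposition~\ref{prop: acyclic} and Lemma~\ref{lem: Morse complex is resolution}): internality guarantees the matching decomposes over source-target pairs $(v_1, v_2)$, and Forman's theorem applies piecewise. For the fourth assertion, your route differs. You propose extracting a Batzies--Welker-style ``new-face'' matching directly from a shelling order, whereas the paper uses the Babson--Hersh matching associated to a lexicographic ordering on maximal chains of each interval $(e_{t(p)}, p)$, then invokes Babson--Hersh's Proposition 4.1 to conclude that all critical cells besides the $0$-cell are facets (saturated chains). Minimality follows because internal gradient paths cannot end at a saturated chain (so the Morse differential lies in the radical), and Koszulity follows because the critical cells in each homological degree sit in a single path-length degree, forcing linear differentials. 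Your ``new-face'' matching would plausibly achieve the same, but it is a distinct construction, and to get Koszulity you would still need the key structural fact that the surviving critical cells are exactly the facets; that is what the Babson--Hersh result supplies and what your write-up would need to establish independently.
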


For quiver algebras with relations, the existence and uniqueness of a minimal resolution of the diagonal bimodule is well-known \cite{Ei56, BK99} (though not always explicit). Explicit minimal resolutions of the diagonal bimodule are of both independent interest and structural importance.  They give interesting numerical invariants called Betti-numbers, allow for efficient calculations of Hochschild homology and cohomology, and provide functorial resolutions of all modules. The close similarity to CW homology, called cellular resolutions, is more recent (see \cite{BS98}).  

Our approach to constructing minimal cellular resolutions is similar to  e.g.\ \cite{JW09}.  We use discrete Morse theory to construct minimal cellular resolutions analogous to the discrete Morse complex of Forman \cite{For98}. We comment that the resulting resolutions are closely related to the minimal cellular bimodule resolutions of abelian skew group algebras provided by Craw-Quintero V\'{e}lez \cite{CQV12}. While their construction uses a different approach, the algebras they consider are nothing more than HPAs which admit cycles from our perspective.

Stringing our results together, we now return to the inspiration for this work: Bondal-Ruan mirror symmetry.  As a consequence of the theorems above we recover, generalize, and extend Bondal-Ruan's original result:
\begin{theorem}[Bondal-Ruan Homological Mirror Symmetry]
Let $\mathcal X$ be a toric DM stack of Bondal-Ruan type (see Definition~\ref{def: BR type}).  There is a stratification $S$ of the torus $\mathbb T^n$ and an associated HPA, $A$, such that the following derived categories are equivalent
\[
D(\mathcal X) \cong D(A) \cong DSh_{S^{tr}}(X_A) \cong DSh_S(\mathbb T^n).
\]
Furthermore, $X_A$ is homotopic to a torus and when $A$ is directable, it is Koszul and there is an explicit minimal resolution of the diagonal bimodule $A$ which corresponds to a resolution of the diagonal $\Delta_{\mathcal X}$ by line bundles.
\end{theorem}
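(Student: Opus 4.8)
The plan is to assemble the four displayed equivalences out of the structural theorems already established, and then transport the minimal cellular resolution across the mirror. First I would fix the toric Frobenius morphism $F$ attached to the stacky fan of $\mathcal X$, as in Definition~\ref{def: BR type}, so that $F_*\mathcal O_{\mathcal X}=\bigoplus_{i} L_i$ is a finite direct sum of line bundles. The content of ``Bondal-Ruan type'' is that it forces two things: that $F_*\mathcal O_{\mathcal X}$ is a tilting bundle on $\mathcal X$, and that the piecewise-linear chamber arrangement on $M_{\mathbb R}$ governing the summands $L_i$ descends to a block stratification $S$ of the mirror torus $\mathbb T^n = M_{\mathbb R}/M$, with base points dictated by the fan. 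Granting the first point, tilting theory gives $D(\mathcal X)\cong D(\End(\bigoplus_i L_i))$; a direct computation — $\Hom(L_i,L_j)$ has a monomial basis and composition is concatenation of the corresponding lattice translations — then identifies $\End(\bigoplus_i L_i)$ with the homotopy path algebra $A$ (Definition~\ref{defn: HPA}) of the quiver sitting inside $(\mathbb T^n,S)$, whose vertices are the base points and whose arrows form a basis of indecomposable directed homotopy classes. Hence $D(\mathcal X)\cong D(A)$, and by construction $A$ is simultaneously the algebra of $S$-entrance paths on $\mathbb T^n$.

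For the remaining equivalences I would use that an equivalence of abelian categories is automatically exact and therefore induces an equivalence of derived categories. The block stratification $S$ is in particular simple (Definition~\ref{def: simple}), so the MacPherson-type theorem above gives $A\Mod\cong Sh_S(\mathbb T^n)$, hence $D(A)\cong DSh_S(\mathbb T^n)$. On the other side, the geometric realization $X_A$ (Definition~\ref{def: geometric realization}) with its tree stratification $S^{tr}$ recovers $A$ as the algebra of $S^{tr}$-entrance paths by the realization theorem above, and tree stratifications (Definition~\ref{def: tree strata}) are block, hence simple; applying the MacPherson-type theorem again gives $A\Mod\cong Sh_{S^{tr}}(X_A)$, hence $D(A)\cong DSh_{S^{tr}}(X_A)$. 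Composing the three isomorphisms yields the displayed chain. For the homotopy statement, both $S$ on $\mathbb T^n$ and $S^{tr}$ on $X_A$ are block, so the block-homotopy theorem above identifies $\mathbb T^n$ and $X_A$, up to homotopy equivalence, with the classifying spaces of $\Ent_S(\mathbb T^n)$ and $\Ent_{S^{tr}}(X_A)$ respectively; since both of these categories are equivalent to the path category determined by $A$, their classifying spaces coincide, whence $X_A\simeq\mathbb T^n$, a torus.

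Now suppose $A$ is directable. I would check that directability supplies exactly the shellability hypothesis of Proposition~\ref{prop: shellable-koszul}; the realization theorem above then gives that $A$ is Koszul and that some internal acyclic matching (Definition~\ref{def: internal}) collapses $X_A$ to a minimal projective cellular resolution (Definition~\ref{def: cellular res}) $\overline{X_A}\to A$ of the diagonal bimodule, whose terms are direct sums of the projective bimodules $Ae_i\otimes_{\mathsf k}e_jA$. Under the identification $A\cong\End(\bigoplus_i L_i)$, the diagonal bimodule $A$ corresponds to $\mathcal O_{\Delta_{\mathcal X}}$, and the projective bimodule $Ae_i\otimes_{\mathsf k}e_jA$ corresponds, up to relabelling the two factors, to the external product of $L_i^\vee$ and $L_j$ on $\mathcal X\times\mathcal X$; hence $\overline{X_A}$ transports to a finite minimal resolution of $\Delta_{\mathcal X}$ by direct sums of external products of the Frobenius line bundles, as required.

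The main obstacle is the first paragraph: verifying from Definition~\ref{def: BR type} that $F_*\mathcal O_{\mathcal X}$ is genuinely tilting — vanishing of all higher self-$\operatorname{Ext}$ together with classical generation of $D(\mathcal X)$ — and that its endomorphism algebra is \emph{precisely} the homotopy path algebra attached to the stratified torus, i.e.\ that concatenation of monomial homomorphisms realizes homotopy classes of directed paths with no additional relations. Once this is settled the rest is essentially bookkeeping on top of the earlier theorems; the only delicate point in the directable case is that the equivalence $D(\mathcal X)\cong D(A)$ be compatible with external tensor products, so that projective bimodules go to box-products of line bundles, which again follows from the explicit tilting description.
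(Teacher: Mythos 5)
Your proposal is correct and takes essentially the same route as the paper: tilting on the FSEC (Definition~\ref{def: BR type}) gives $D(\mathcal X)\cong D(A_\Phi)$, Corollary~\ref{cor: Bondal entrance} identifies $A_\Phi$ with the algebra of $S$-entrance paths on $\mathbb T^n$, Theorem~\ref{thm: main equivalence} and Corollary~\ref{cor: tree strata} give the two sheaf equivalences, Theorem~\ref{thm: torus} gives the homotopy $X_{A_\Phi}\simeq\mathbb T^n$, and Corollary~\ref{cor: brkoszul} handles the directable case. One small point: you identify as the ``main obstacle'' the verification that $F_*\mathcal O_{\mathcal X}$ is tilting, but this is not something to be verified — Definition~\ref{def: BR type} literally \emph{posits} that $\im\Phi$ forms a full strong exceptional collection of line bundles, so the tilting property is the hypothesis, not a deduction; the genuine nontrivial inputs you need (and correctly cite in spirit) are Proposition~\ref{prop: picard order} and Corollary~\ref{cor: Bondal entrance}, which identify the monomial endomorphism algebra with the entrance-path HPA on the stratified torus.
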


We conclude by noting that our approach may, in some cases, be applicable to homological mirror symmetry beyond the toric context. For example, our methods can be deployed on Berglund-H\"ubsch-Krawitz hypersurfaces with maximal symmetry (see Example~\ref{ex: BHK}). 


\section*{Acknowledgement}
We are grateful to Alexey Bondal for suggesting we explore the inspirational Oberwolfach Report \cite{Bon06}.  This is the foundation for many of the results found herein.  We also thank Matt Ballard and Alex Duncan for kindly answering our questions on their talk on \cite{BDM} and useful additional discussions at the Banff International Research Station. 

For the revision of our preprint, we thank the referee for providing useful comments; Peter Haine for their interest in our exodromy result and suggestions on using poset stratifications; Pouya Layeghi and Mykola Sapronov for carefully reading our paper and providing feedback.

J.\ Huang is supported by a Pacific Institute for the Mathematical Sciences Postdoctoral Fellowship.  D.\ Favero is supported by NSERC through the Discovery Grant and Canada Research Chair programs.  

\section{Notation and conventions}

\subsection{The base ring}
We work over an arbitrary unital commutative Noetherian base ring $\mathsf k$ e.g. take $\mathsf k = \mathbb Z$ or $\mathbb C$. 

\subsection{Posets}
For a finite nonempty poset $P$, we denote the unbounded order complex of $P$ by $K(P)$, whose $k$-cells are strictly increasing chains of $k+1$ elements in $P$.

\subsection{Stratification}
In this paper, stratification on a topological space always means \newterm{stratification by a poset} (Definition \ref{defn: poset stratification}).

\subsection{Quivers}
A quiver is denoted by $Q$ and its path algebra by $\mathsf kQ$.  The vertex set of $Q$ is denoted by $Q_0$ and the set of arrows is denoted by $Q_1$.
We also want to emphasize our convention, which may not be standard: the direction of the arrows and their multiplication in the quiver agrees with the entrance path direction and the concatenation of entrance paths. We use \emph{increasing} order of the strata to define entrance paths. For a regular CW complex with cell strata, it means our indexing poset is the usual face poset with opposite ordering.

\subsection{CW complexes}
In this paper, a \newterm{CW complex} is a Hausdorff topological space $X$ filtered by a sequence of closed subspaces
\[
X=X_n\supset X_{n-1}\supset \cdots \supset X_0 \supset X_{-1}=\emptyset,
\]
where $X_k=\coprod_{\alpha\in I, i\leq k} B_\alpha^{i}$ with $B_i^n$ homeomorphic to $\mathbb{R}^n$ and $\overline{B}_i^n\setminus B_i^n\subset X_{n-1}$, satisfying the CW axioms
\begin{itemize}
    \item $\{B_j^k : B_j^k\cap \overline{B}_i^n$ \text{ is finite for all $i\in I$ and $n\geq 0 \}$}.
    \item $A\subset X$ is closed iff $A\cap \overline{B}_i^n$ is closed for all $i
    \in I$ and $n\geq 0$.
\end{itemize}
\emph{Importantly}, all CW complexes we consider satisfy the \newterm{axiom of frontier}:
\begin{itemize}
    \item If $B_i^k\cap \overline{B}_j^n\neq\emptyset$, then $B_i^k\subset \overline{B}_j^n$.
\end{itemize}
A CW complex is \newterm{finite} if the index set $I$ is finite . A CW complex is \newterm{regular} if in addition $\overline{B}_i^n$ is homeomorphic to the closed unit ball in $\mathbb{R}^n$. 

A \newterm{semi-simplicial complex} (or $\Delta$-complex \cite{Hat02}) is the quotient space
\[
X=\coprod_{k,\alpha}\Delta_{k,\alpha}/\sim
\]
where $\Delta_{k,\alpha}$ are closed simplices, and $\sim$ identifies in each dimension a set of faces in that dimension from distinct simplices. By construction, a semi-simplicial complex $X$ is a CW complex where each closed $k$-cell $\overline{B}^k_i$ is homeomorphic to a closed $k$-simplex with a set of faces identified. A semi-simplicial complex is \newterm{regular} if every $\overline{B}^k_i$ is homeomorphic to a $k$-simplex. A regular semi-simplicial complex is \newterm{simplicial} if any finite set of $0$-cells of cardinality $k+1$ is the vertex set of a unique $k$-cell.

For classifying spaces, we follow the convention in \cite{Qui73}.  Namely, we define the \newterm{classifying space} $B\mathcal{C}$ of a category $\mathcal{C}$ to be the geometric realization of its nerve $N(\mathcal{C})$.  By \newterm{geometric realization} of a (semi-)simplicial set, we mean the CW complex with one $n$-cell for each non-degenerate $n$-simplex
as in \cite{Mil57}.

\subsection{Categories}
All categories considered are small categories. All categories and functors are equipped with the derived notation if derived and usual notation if not.  For functors which are already exact (e.g. the proper pushforward for a locally-closed embedding) we do not use the derived notation.

For constructible sheaves, $Sh_S(X)$ is the abelian category of $S$-constructible sheaves on $X$, and $DSh_S(X)$ is the bounded derived category of $Sh_S(X)$.  For a continuous map $f: X \to Y$, the right adjoint to the derived proper pushforward in $DSh_S(X)$ is denoted by $f^!$ since this functor is taken to always live in the derived category.

For a finite dimensional $\mathsf{k}$-algebra $A$, $A\Mod$ means the category of finitely generated left $A$-modules, and $D(A)$ means the bounded derived category of finitely generated left $A$-modules.

For an algebraic stack $\mathfrak X$, $D(\mathfrak X)$ denotes the bounded derived category of coherent sheaves on $\mathfrak X$.

\section{Homotopy Path Algebras}





\subsection{HPAs and classifying spaces}
Consider a finite acyclic quiver $Q$. By a \newterm{path} in $Q$, we mean concatenation of a sequence of arrows and constant paths at the vertices. Let $\Path_Q$ be the set of all paths in $Q$ i.e.\ the standard $\mathsf k$-basis of the finite dimensional path algebra $\mathsf kQ$, equipped with the partial order
\begin{equation}\label{eq: path poset}
   p\leq q \Leftrightarrow \exists r\in \Path_Q \text{\ s.t.\ } q=pr \in \mathsf kQ
\end{equation}
For a path $p$ in the quiver, let $h(p)$ and $t(p)$ denote the head of $p$ and the tail of $p$. 

Given any subset $S\subset \Path_Q$, we can form the following two-sided ideal 
\begin{equation}
    I_S:=\langle p-q: p, q\in S,\ h(p)=h(q) \text{ and } t(p)=t(q) \rangle\subset \mathsf kQ.
\end{equation}

\begin{definition}\label{defn: HPA}
The quiver algebra $A=\mathsf kQ/I_S$ is a \newterm{homotopy path algebra (HPA)} if $I_S$ is left and right cancellative, which means:
\begin{itemize}
    \item For any paths $r,p,q$ and nonzero $rp-rq\in I_S$, $p-q\in I_S$.
    \item For any paths $r,p,q$ and nonzero $pr-qr\in I_S$, $p-q\in I_S$.
\end{itemize}
\end{definition}

\begin{example}
The path algebra of the quiver below, with relations $I=\langle a_1b_1-a_2b_2\rangle$, is a homotopy path algebra.
\[\begin{tikzcd}
	&&& {} \\
	\bullet & \bullet & \bullet
	\arrow["{b_1}", curve={height=-12pt}, from=2-2, to=2-3]
	\arrow["{b_2}"', curve={height=12pt}, from=2-2, to=2-3]
	\arrow["{a_2}"', curve={height=12pt}, from=2-1, to=2-2]
	\arrow["{a_1}", curve={height=-12pt}, from=2-1, to=2-2]
\end{tikzcd}\]
\end{example}

\begin{example}
The path algebras of the two quivers below, with relations $\langle ab-ac \rangle$ (left) and $\langle ac-bc \rangle$ (right), are not homotopy path algebras. The quiver on the left fails the first condition, and the quiver on the right fails the second condition.
\[
    \begin{tikzcd}
	&&&& {} \\
	\bullet & \bullet & \bullet && \bullet & \bullet & \bullet
	\arrow["a", from=2-1, to=2-2]
	\arrow["b", curve={height=-12pt}, from=2-2, to=2-3]
	\arrow["c", curve={height=12pt}, from=2-2, to=2-3]
	\arrow["a", curve={height=-12pt}, from=2-5, to=2-6]
	\arrow["b"', curve={height=12pt}, from=2-5, to=2-6]
	\arrow["c", from=2-6, to=2-7]
    \end{tikzcd}
\]
\end{example}

\begin{definition}
    The \newterm{path poset} of an HPA $A=\mathsf kQ/I_S$ is defined to be
\[
\Path_A := \Path_Q / \sim
\]
where $\sim$ is the equivalence relation given by
$$p\sim q \Leftrightarrow p-q \in I_S$$
\end{definition}
\begin{notation}
    We use $[p]$ to denote an elements in $\Path_A$ i.e. the class of path $p\in \Path_Q$ under this equivalence relation.
\end{notation}

We notice that the partial order we give to $\Path_A$ suggests that if $p\leq q$, then $t(p)=t(q)$. In other words, two paths with distinct starting points are incomparable under $\leq$. One can then write the poset $\Path_A$ as a disjoint union of subposets indexed by $Q_0$ whose elements are incomparable. Namely,
\begin{align*}
\Path_A & = \coprod_{v\in Q_0} \Path_{A,v},\\
\text{where } \Path_{A,v} & := \{ p: t(p)=v \}\subset \Path_A \text{ as a subposet}.
\end{align*}
As a result, the order complex $K(\Path_A)$ has $|Q_0|$ connected components
\[
    K(\Path_A) = \coprod_{v\in Q_0} K(\Path_{A,v}).
\]
\begin{example}\label{P2}
Consider the HPA $A=\mathsf kQ/I$, where
\[Q=\begin{tikzcd}
	{{e_0\ \bullet}} & \textcolor{rgb,255:red,153;green,92;blue,214}{{e_1\ \bullet}} & \textcolor{rgb,255:red,92;green,214;blue,214}{{e_2\ \bullet}}
	\arrow["x", color={rgb,255:red,65;green,186;blue,54}, curve={height=-12pt}, from=1-1, to=1-2]
	\arrow["{x'}", color={rgb,255:red,149;green,213;blue,179}, curve={height=-12pt}, from=1-2, to=1-3]
	\arrow["z", color={rgb,255:red,206;green,59;blue,59}, curve={height=12pt}, from=1-1, to=1-2]
	\arrow["{z'}", color={rgb,255:red,225;green,137;blue,137}, curve={height=12pt}, from=1-2, to=1-3]
	\arrow["y", color={rgb,255:red,63;green,80;blue,228}, from=1-1, to=1-2]
	\arrow["{y'}", color={rgb,255:red,117;green,149;blue,245}, from=1-2, to=1-3]
\end{tikzcd},\ I=\langle xy'-yx', xz'-zx', yz'-zy'\rangle \]

We can write down the elements living in each subposet:
\begin{align*}
    \Path_{A,e_0}&:=\{[e_0],[x],[y],[z], [xx'], [yy'], [zz'], [xy']=[yx'], [xz']=[zx'], [yz']=[zy']\} \\
    \Path_{A,e_1}&:=\{[e_1],[x'],[y'],[z']\} \\
    \Path_{A,e_2}&:=\{[e_2]\} \\
\end{align*}
The order complex $K(\Path_A)$ has three connected components: $K(\Path_{A,e_0})$ (left), $K(\Path_{A,e_1})$ (middle), and $K(\Path_{A,e_2})$ (right). 
\[
    \includegraphics[scale=0.6]{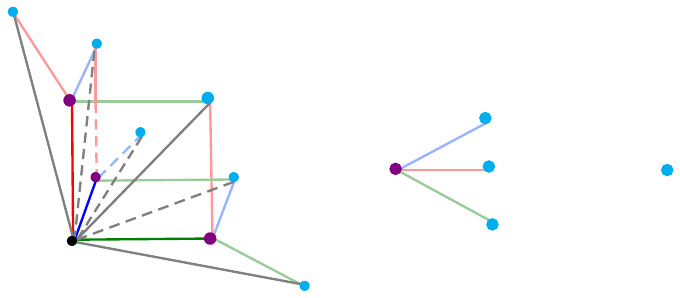}
\]
The $n$-simplices in $K(\Path_{A, e_i})$ are in 1-1 correspondence with increasing chains of elements in $\Path_A$ of length $n$ starting at $[e_i]$. For example, the left most 2-simplex in $K(\Path_{A,e_0})$ corresponds to the increasing chain $[e_0\leq z \leq zz']$.
\end{example}

Next, we discuss the geometric realization of an HPA.

\begin{definition} \label{def: geometric realization}
Given an HPA, $A = \mathsf kQ/I_S$, define its \newterm{path category} $\mathcal C_A$ as follows.
\begin{align*}
\text{Objects of }\mathcal C_A & := Q_0 \\
\op{Hom}_{\mathcal C_A}(v, v') & := \{ [p] \in  \Path_A : t(p) = v \text{ and } h(p) = v' \}
\end{align*}
with composition given by concatenation of paths. The \newterm{geometric realization} of $A$ is the classifying space of $\mathcal C_A$.  We denote this by
\[
X_A := B(\mathcal C_A)=B(\mathcal{C}_A^{op}).
\]
\end{definition}

We notice that the geometric realization of an HPA has an alternative description as a quotient space of $K(\Path_A)$ we have described. 

\begin{proposition}\label{prop: KPath to X_A}
    There is a continuous map 
    \begin{align*}
    f: K(\Path_A)\twoheadrightarrow X_A
    \end{align*}
    which sends each $n$-simplex $[p_0\leq\cdots \leq p_{n-1}]$ in $K(\Path_A)$ homeomorphically onto the $n$-simplex in $X_A$ corresponding to the composition of subquotient paths    
    in $\mathcal C_A$.
\end{proposition}
\begin{proof}
Viewing the poset $(\Path_A, \leq)$ itself as a category with morphisms given by $\leq$, there is a well-defined functor
    \begin{align*}
     F: \Path_A & \rightarrow \mathcal C_A \\
        [q] & \mapsto h(q)\\
        [p]\leq [q]=[pr]& \mapsto [r]   
    \end{align*}
    which induces a continuous map on the geometric realization
    $$
    f:=BF: K(\Path_A)=B(\Path_A) \rightarrow B(\mathcal C_A)=X_A
    $$
    Since $F$ sends a composition of $n$ morphisms in $\Path_A$, which is an increasing chain $[p_0\leq\cdots \leq p_{n-1}]$, to the composition of subquotient paths
    $$t(p_0) \xrightarrow{p_0} h(p_0)\xrightarrow{p_1/p_0} h(p_1)\xrightarrow{p_2/p_1}  \cdots  \xrightarrow{p_{n-2}/p_{n-3}} h(p_{n-2})\xrightarrow{p_{n-1}/p_{n-2}} h(p_{n-1})$$ in $\mathcal C_A$
    the induced map on the classifying space identifies simplices $[p_0 < \dots < p_n]$ with $[e_{t(p_0)}<p_1/p_0<\dots < p_n/p_0]$. In particular, it is surjective and has the claimed property.
\end{proof}




We also have the following corollary:
\begin{corollary}
The geometric realization $X_A$ is a regular semi-simplicial complex.
\end{corollary}
\begin{proof}
By construction, $X_A$ is a semi-simplicial complex. Since the functor $F$ in Proposition \ref{prop: KPath to X_A} sends each ascending sequence of paths starting at a particular vertex to the composition of the subquotient paths, each closed simplex in $X_A$ must have distinct vertices i.e.\ every closed simplex is embedded homeomorphically into $X_A$. Hence $X_A$ is regular.
\end{proof}


We now justify the name ``homotopy path algebra'' by showing that the paths identified by the ideal $I_S$ are in fact homotopic in $X_A$. By construction, the underlying graph of the quiver $Q$ embeds into $X_A$ as a subspace. Any path in the quiver, viewed as a continuous map $p:[0,1]\rightarrow Q$, becomes a path in $X_A$.

\begin{remark}
In general, $Q$ is only a subspace of the 1-skeleton of $X_A$. However, the CW structure of $X_A$ can sometimes be reduced to a minimal one whose 1-skeleton is $Q$. This can be done using a Morse matching as discussed in \S\ref{sec: Morse}). 
\end{remark}

\begin{proposition}\label{prop: homotopy in X_A}
Let $A=\mathsf kQ/I$ be an HPA. Then, given two paths $p=a_1\cdots a_n,\ q= b_1\dots b_m \in \Path_Q$ with $a_i, b_j\in Q_1$, $p-q\in I$, the paths $p:[0,1]\rightarrow X_A$ and $q:[0,1]\rightarrow X_A$ corresponding to these concatenations of arrows are path homotopic. 
\end{proposition}
\begin{proof}
From the composition of morphisms $a_1 \circ \cdots a_n$, we obtain an $n$-simplex in $X_A$.  Inside this simplex, the path $p$ is homotopic to the edge of this simplex corresponding to the composition $a_1 \circ \cdots \circ a_n$.  Now by assumption $a_1 \circ \cdots \circ a_n = b_1 \circ \cdots \circ b_m$ in $\mathcal C_A$ and hence the corresponding edges are equal in $X_A$.
\end{proof}

Conversely, we can show
\begin{proposition} \label{prop: A(Y)}
    Given a topological space $Y$ and an embedding $Q \hookrightarrow Y$, the algebra $A(Y)=\mathsf{k}Q/I_Y$, where
\begin{equation*}
    I=\langle p-q: p \text{ is path homotopic to } q \text { in } Y \rangle,
\end{equation*}
is an HPA.
\end{proposition}
\begin{proof}
    Clearly, the pair of paths identified by the ideal have the same endpoints. Since path homotopy is left and right cancellative, the ideal is left and right cancellative. Therefore $A(Y)$ is an HPA by definition.
\end{proof}

This allows us to give an alternative, topological definition of an HPA

\begin{definition}\label{defn: HPA2}
    An HPA is algebra of the form $A(Y)$ as in Proposition \ref{prop: A(Y)} associated to a topological space $Y$ with an embedded quiver $Q\hookrightarrow Y$.
\end{definition}

Using Propositions \ref{prop: homotopy in X_A} and \ref{prop: A(Y)}, one can see that Definition \ref{defn: HPA2} and Definition \ref{defn: HPA} are equivalent definitions.

We now give some examples of HPAs and their classifying spaces, and implement in these simple examples a procedure known as Morse matching \cite{For98}, which allows us to produce a space homotopic to $X_A$ with a much simpler CW structure whose 2-skeleton is the quiver and generators of the HPA relations.

\subsection{Examples}
\begin{example}[No relation]
For any acyclic quiver $Q$ (not necessarily finite) without relations, one can construct a deformation retraction from $X_{\mathsf kQ}$ onto $Q$ by collapsing each simplex $[p_0<\dots<p_n]$ to the composition of arrows from $t(p_0)$ to $h(p_n)$.
\end{example}

\begin{example}[Example \ref{P2} continued] 
We return to the HPA
$A=\mathsf kQ/I$, where
\[Q=\begin{tikzcd}
	{{e_0\ \bullet}} & \textcolor{rgb,255:red,153;green,92;blue,214}{{e_1\ \bullet}} & \textcolor{rgb,255:red,92;green,214;blue,214}{{e_2\ \bullet}}
	\arrow["x", color={rgb,255:red,65;green,186;blue,54}, curve={height=-12pt}, from=1-1, to=1-2]
	\arrow["{x'}", color={rgb,255:red,149;green,213;blue,179}, curve={height=-12pt}, from=1-2, to=1-3]
	\arrow["z", color={rgb,255:red,206;green,59;blue,59}, curve={height=12pt}, from=1-1, to=1-2]
	\arrow["{z'}", color={rgb,255:red,225;green,137;blue,137}, curve={height=12pt}, from=1-2, to=1-3]
	\arrow["y", color={rgb,255:red,63;green,80;blue,228}, from=1-1, to=1-2]
	\arrow["{y'}", color={rgb,255:red,117;green,149;blue,245}, from=1-2, to=1-3]
\end{tikzcd},\ I=\langle xy'-yx', xz'-zx', yz'-zy'\rangle \]
The map $f$ in Proposition \ref{prop: KPath to X_A} identifies the vertices and edges of the same color and opacity (notice that $x,x'$ etc. have different opacity despite being of the same color) in the picture of $K(\Path_A)$ we described earlier in Example \ref{P2}.

The resulting simplicial complex is $X_A$. It is more challenging to draw a picture of $X_A$. To understand its topology, one can instead notice that $X_A$ is homotopic to the CW complex obtained from $X_A$ by a simplicial collapse of 2-cells with gray edges. At the level of $K(\Path_A)$, the simplicial collapse gives
\[
    \includegraphics[scale=0.6]{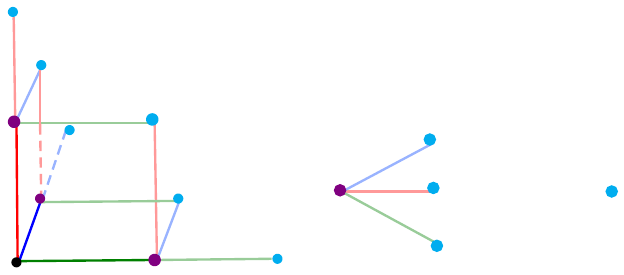}
\]
with gluing induced by the map $f$ on $K(\Path_A)$. The resulting CW complex can now be identified as a 2-torus. We draw it in three different fundamental domains.
\[
    \includegraphics[scale=0.6]{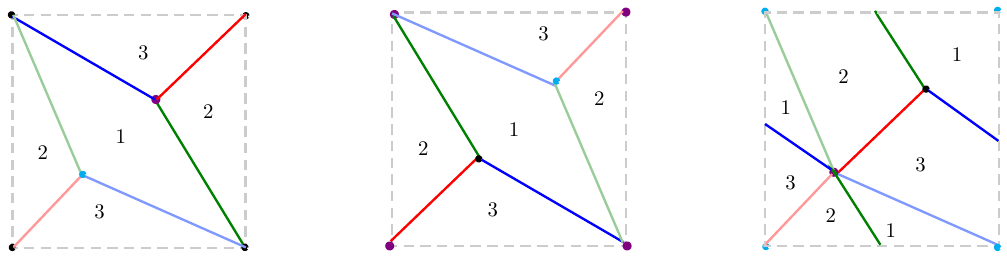}
\]
We can see that the quiver itself embeds into this 2-torus as the 1-skeleton of the CW structure, and the three 2-cells correspond to the generators of the ideal $I$. Call this CW complex $X_{\mathbb{P}^2}^{min}$.
\end{example}

We note there is a K\"{u}nneth-type formula for our construction
\begin{proposition}
Let $A$, $A'$ be homotopy path algebras. Let $A\otimes A'$ be the homotopy path algebra for the product quiver with relations. Then $X_{A\otimes A'}\simeq X_A\times X_A'$.
\end{proposition}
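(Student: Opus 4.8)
The plan is to unwind the definition of the geometric realization $X_{A \otimes A'}$ as the classifying space of the category $\mathcal{C}_{A \otimes A'}$ and compare it to the product $X_A \times X_{A'} = B(\mathcal{C}_A) \times B(\mathcal{C}_{A'})$. First I would identify the path poset of $A \otimes A'$: a path in the product quiver $Q \times Q'$ from $(v,v')$ to $(w,w')$ is, up to the product relations, the same data as a pair consisting of a path from $v$ to $w$ in $Q$ and a path from $v'$ to $w'$ in $Q'$ — in other words, $\mathcal{C}_{A \otimes A'} \cong \mathcal{C}_A \times \mathcal{C}_{A'}$ as categories (one should check that the product relations $I_{S \otimes S'}$ are exactly what is needed for the HPA axioms to hold and for this identification of $\Path_{A \otimes A'}$ with a product to be valid — this uses that commuting squares of the form ``arrow in $Q$ then arrow in $Q'$ equals arrow in $Q'$ then arrow in $Q$'' are imposed, together with the HPA cancellation conditions).

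Once the categorical isomorphism $\mathcal{C}_{A \otimes A'} \cong \mathcal{C}_A \times \mathcal{C}_{A'}$ is established, the result follows from the standard fact that the classifying space functor commutes with finite products up to homotopy equivalence: $B(\mathcal{C} \times \mathcal{D}) \simeq B\mathcal{C} \times B\mathcal{D}$. Concretely, on nerves one has $N(\mathcal{C} \times \mathcal{D}) \cong N(\mathcal{C}) \times N(\mathcal{D})$ as simplicial sets, and geometric realization of simplicial sets commutes with finite products (the Eilenberg–Zilber theorem, or Milnor's theorem that $|X \times Y| \cong |X| \times |Y|$ for the CW realization when one of them is locally finite / countable — which holds here since $Q$ is finite). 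So $X_{A \otimes A'} = |N(\mathcal{C}_{A \otimes A'})| \cong |N(\mathcal{C}_A) \times N(\mathcal{C}_{A'})| \cong |N(\mathcal{C}_A)| \times |N(\mathcal{C}_{A'})| = X_A \times X_{A'}$.

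The main obstacle I anticipate is purely bookkeeping at the first step: verifying that the product HPA $A \otimes A'$, with its relation ideal generated by the old relations together with the ``interchange'' squares, really does satisfy the two HPA cancellation axioms of Definition~\ref{defn: HPA}, and that the resulting path poset $\Path_{A \otimes A'}$ is isomorphic as a poset to $\Path_A \times \Path_A'$ with the product order (not merely as sets). This amounts to showing that a homotopy between two paths in the product decomposes into a homotopy in each factor — morally the algebraic shadow of the topological statement we are trying to prove — and care is needed to confirm the equivalence relation $\sim$ defining $X_{A \otimes A'}$ in the explicit order-complex description matches the product of the two relations. Everything after that is the formal nerve/realization argument, which requires no real work beyond citing Milnor.
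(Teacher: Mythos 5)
Your proposal takes essentially the same route as the paper: identify $\mathcal{C}_{A \otimes A'}$ with the product category $\mathcal{C}_A \times \mathcal{C}_{A'}$, then invoke that the classifying space functor preserves finite products (the paper cites Quillen \cite{Qui73}, Eq.\ (4), where you cite Milnor, but these are the same standard fact). The bookkeeping you flag about the path poset of the tensor product being a product poset is real but routine, and the paper simply asserts the equality $\mathcal{C}_{A\otimes A'} = \mathcal{C}_A \times \mathcal{C}_{A'}$ without comment.
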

\begin{proof}
This follows from
\begin{align*}
    B(\mathcal{C}_{A\otimes A'}) & =B(\mathcal{C}_A\times \mathcal{C}_{A'}) & \\
&  \simeq  B(\mathcal{C}_A)\times B(\mathcal{C}_{A'}) & \text{ by \cite{Qui73}, Eq. (4), } .
\end{align*}
\end{proof}
\begin{example}\label{A_k}
Let $A_k$ be the path algebra for the $A_k$-quiver with no relations and fix positive integers $k_0, ..., k_n$.  Then, we can consider the tensor product 
\[
A:= A_{k_0}\otimes \cdots \otimes A_{k_n}
\]
as an HPA.
In this case, each  $X_{A_{k_i}}$ is itself homotopic to the $A_{k_i}$ Dynkin diagram.  Therefore, $X_A$, by the above, is homotopic to a union of $n+1$-cubes.  Specifically, this is a CW complex structure on the cube $[0, k_0] \times ... \times [0, k_n]$ broken into cubes of unit volume.  In what follows, we call this CW complex $X_A^{min}$.
\end{example}

The reduction of the CW structures in these examples can be formulated rigorously using discrete Morse theory \cite{For98}, as discussed in \S\ref{sec: Morse}. The CW complexes $Q$ (forgetting directions of the arrows), $X_{\mathbb{P}^2}^{min}$, and $X^{min}_A$, are examples of a \newterm{minimal cellular resolution} of the diagonal bimodule of the quiver algebra, which can sometimes be obtained from a Morse matching.

\section{Stratifications and Entrance Paths}
\subsection{Poset stratification and entrance/exit paths}
Let $Y$ be a locally-compact, Hausdorff, path-connected, locally-path connected, semilocally simply-connected\footnote{Locally-compact ensures the existence of a Verdier dual \cite{Iver86} and  path-connected, locally-path connected, semilocally simply-connectedness provides the existence of a universal cover \cite{Hat02}, \S 1.3.} topological space. By stratification of $Y$, we mean \newterm{stratification by a poset}. We now give the definition.

\begin{definition}
    Let $\mathscr{I}$ be a poset. The \newterm{Alexandrov topology} on $\mathscr I$ is a topology on $\mathscr I$ whose open sets are upwardly closed sets
\begin{equation}
U\subset \mathscr I \text{ is open iff } \forall x\in U,\ y>x\Rightarrow y\in U
\end{equation}
\end{definition}

\begin{definition}[Stratification by a poset]\label{defn: poset stratification}
Let $\mathscr I$ be a poset. An \newterm{$\mathscr I$-stratification} on $Y$ is a partition of $Y$ into the level sets of a surjective continuous map $f: Y \to \mathscr I^{op}$.\footnote{To be consistent with the fact that we took our entrance path to be an increasing sequence, we need to take the opposite ordering here.}
\end{definition}

We give a simple example that is \emph{not} a poset stratification

\begin{example}
    Let $Y=S^1$. Consider a partition of $Y=S_1\coprod S_2$ into half-open, half-closed semicircles $S_1$ (left) and $S_2$ (right).
\[
\begin{tikzpicture}
\draw[black] (1,0) arc (90:270:1);
\draw[black] (2,0) arc (90:-90:1);
\filldraw[black] (1,0) circle (2pt);
\filldraw[black] (2,-2) circle (2pt);
\draw[black] (1.92,0) circle (2pt);
\draw[black] (1.08,-2) circle (2pt);
\end{tikzpicture}
\]
There is no continuous surjection from $Y$ to $\mathscr I^{op}=\{1,2\},\ 1>2$ with $S_1$ and $S_2$ as level sets, since it would require $S_1$ to be open in $Y$ under our definition.
\end{example}

Entrance and exit paths can be defined naturally on a space stratified by poset.
\begin{definition}
An \newterm{entrance path} is a continuous map
\[
\gamma : [0,1] \to Y
\]
such that there exists a sequence of increasing strata $S_1<  ... < S_s$ such that $\gamma^{-1}(S_1) < ... < \gamma^{-1}(S_s)$ is an increasing sequence of intervals stratifying $[0,1]$. 

Choose a collection of base points $y_i \in S_i$ (indexed by $i \in I$). The \newterm{entrance path category} with respect to $S$ is the category whose objects are the base points $y_i$ and whose morphisms from $y_i$ to $y_j$ are homotopy classes of entrance paths $\gamma$ with $\gamma(0) = y_i$ and $\gamma(1) = y_j$.  We denote this category by $\Ent_S(Y)$.

Similarly, an \newterm{exit path} is a continuous map
\[
\gamma : [0,1] \to Y
\]
such that there exists a sequence of decreasing strata $S_1>  ... > S_s$ such that $\gamma^{-1}(S_1) < ... < \gamma^{-1}(S_s)$ is an increasing sequence of intervals stratifying $[0,1]$.  The \newterm{exit path category} is defined similarly and denoted $\op{Exit}_S(Y)$.
\end{definition}

We recall the following standard fact about regular CW complexes:
\begin{proposition} \label{prop: Nanda}
Given a regular CW complex $Y$, let $S_{CW}$ be the stratification by open cells regarded as a poset under the face relations.  The classifying space of entrance paths, $B(\Ent_{S_{CW}}(Y))$, is the barycentric subdivision of $Y$. 
\end{proposition}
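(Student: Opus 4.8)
The plan is to unwind both sides to a combinatorial identification. Recall that for a regular CW complex $Y$, the face poset $\mathcal F(Y)$ (ordered so that $c \le c'$ iff $c \subseteq \overline{c'}$) has order complex $K(\mathcal F(Y))$ equal to the barycentric subdivision $\mathrm{sd}(Y)$; this is the classical fact that the barycentric subdivision of a regular CW complex is the simplicial complex on the face poset. So the content of the statement is that $B(\Ent_{S_{CW}}(Y))$ is naturally homeomorphic to $K(\mathcal F(Y))$, where we must take care of the ordering convention in the paper (increasing strata for entrance paths means the indexing poset is $\mathcal F(Y)$ with the \emph{opposite} order, but since $K(P) = K(P^{\op})$ as a simplicial complex, this does not affect the conclusion).

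First I would compute $\Ent_{S_{CW}}(Y)$ as a poset-category. Choosing a base point (the barycenter) $b_c$ in each open cell $c$, I claim $\mathrm{Hom}_{\Ent}(b_c, b_{c'})$ is a singleton when $c' \le c$ in $\mathcal F(Y)$ (i.e.\ $c \subseteq \overline{c'}$... with the entrance-path/opposite convention, when $c' \subseteq \overline{c}$ so that dimension decreases along the path) and empty otherwise. The key point is regularity: a path from $b_c$ that enters lower strata can be pushed, cell by cell, into a straight radial path within a single closed cell $\overline{c}$ toward $b_{c'}$, because each closed cell is a closed ball and the attaching maps are embeddings; and any two such are homotopic through entrance paths by convexity of the ball. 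This shows $\Ent_{S_{CW}}(Y)$ is the poset $\mathcal F(Y)^{\op}$ viewed as a category (at most one morphism between any two objects, composable exactly when comparable).

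Then $B(\Ent_{S_{CW}}(Y)) = B(\mathcal F(Y)^{\op}) = |N(\mathcal F(Y)^{\op})|$, and the nerve of a poset-category is exactly the order complex: non-degenerate $n$-simplices are chains $c_0 < c_1 < \dots < c_n$. Hence $B(\Ent_{S_{CW}}(Y)) \cong K(\mathcal F(Y))$, which is $\mathrm{sd}(Y)$ by the classical fact quoted above. I would finish by checking the homeomorphism is the evident one carrying the vertex for a chain-representative to the corresponding barycenter, so it is natural in $Y$.

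The main obstacle is the first step: verifying carefully that every entrance path between barycenters is homotopic \emph{rel endpoints through entrance paths} to the canonical radial one, and that there is only one homotopy class. This is where regularity and the axiom of frontier are essential — one must use that each closed cell is homeomorphic to a ball (so radial straightening makes sense and homotopies stay within strata-respecting paths) and that the frontier of a cell is a union of cells (so the stratification of $[0,1]$ pulled back along the path is genuinely by intervals). The rest is formal: identifying the nerve of a poset with its order complex, and invoking that $\mathrm{sd}$ of a regular CW complex is the order complex of its face poset.
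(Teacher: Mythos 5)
The paper states Proposition~\ref{prop: Nanda} as a ``standard fact'' and gives no proof (it is attributed in spirit to Nanda, cf.\ \cite{Nan19}), so there is no in-text argument to compare against. Your proof is correct and is the expected one: identify $\Ent_{S_{CW}}(Y)$ with the face poset (up to opposite) as a category, observe that the classifying space of a poset is its order complex, and quote the classical fact that the order complex of the face poset of a regular CW complex is its barycentric subdivision.

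On the one substantive step --- that any two entrance paths from $b_c$ to $b_{c'}$ are homotopic through entrance paths rel endpoints, so that the category is a poset --- your ``cell by cell'' phrasing is vaguer than necessary. A single straight-line homotopy does the job: pull everything back along the characteristic homeomorphism $\Phi_c : B^n \xrightarrow{\sim} \overline{c}$ (this is where regularity is used) so that $b_c$ is the origin of a round ball; take $\ell(t) := t\,\Phi_c^{-1}(b_{c'})$ and set $H(s,t) := (1-s)\Phi_c^{-1}\gamma(t) + s\,\ell(t)$. Since $\ell(t)$ is interior for $t<1$ and a convex combination with an interior point of a strictly convex body is interior, $H(s,\cdot)$ for $0<s<1$ stays in $c$ on $[0,1)$ and jumps to $b_{c'}$ at $t=1$; in particular it is an entrance path for every $s$. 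The same convexity argument handles $\Hom(b_c,b_c)=\{\mathrm{id}\}$. The axiom of frontier is what guarantees an entrance path starting at $b_c$ remains in $\overline{c}$, so that this single-cell straightening applies globally. With that tightened, the rest of your argument (nerve of a poset $=$ order complex, $K(P)=K(P^{\mathrm{op}})$, $K(\mathcal F(Y))=\mathrm{sd}(Y)$) is formal and correct. One cosmetic slip: early on you write ``$c'\le c$ in $\mathcal F(Y)$ (i.e.\ $c\subseteq\overline{c'}$),'' which reverses your own convention; the parenthetical should read $c'\subseteq\overline{c}$, as you in fact correct a clause later.
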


\begin{example}
A nice consequence of the above is that any regular CW complex is homeomorphic to the geometric realization of an HPA.
Given a regular CW complex $Y$, we can construct a quiver $Q_Y$  together with an ideal $I_Y \subseteq kQ_Y$ as follows:
\begin{align*}
(Q_Y)_0 & := \{ \text{cells of }Y \} \\
(Q_Y)_1 & := \{ \text{boundary relations} \} \\
I_Y & := \langle p-q : p \sim_Y q \rangle
\end{align*}
The HPA associated to $Y$ is then defined as $A_Y := kQ_Y / I_Y$, also known as the incidence algebra of the cell poset.  Notice that by Proposition~\ref{prop: Nanda}, $X_{A_Y}$ is homeomorphic to $Y$ since $\mathcal C_{A_Y} = \Ent_{S_{CW}}Y$.
\end{example}

\begin{example}\label{A_k CW}
Let us consider an example of the above example!  Fix positive integers $k_0, ..., k_n$.  We consider the cube from Example~\ref{A_k}:
\[
X_A^{min} := [0, k_0] \times \cdots \times [0, k_n] \subseteq \mathbb R^{n+1}.
\]
This is a union of $\prod k_i$ unit cubes $[0,1]^{n+1} +v$ where $v \in \mathbb Z^{n+1}$ is an integer valued point satisfying $0 \leq v_i < k_i$.  We can stratify this by the relative interiors of all faces of these cubes.  This is a CW stratification so, as above, the associated HPA is the incidence algebra of the cell poset which in this case, is isomorphic to the tensor product of path algebras of $A_{k_i}$ Dynkin diagrams:
\[
A \cong A_{k_0} \otimes ... \otimes A_{k_n}.
\]
\end{example}

The definition of poset stratification applies to the cell partition for the CW complexes satisfying the axiom of the frontier, and Whitney stratified spaces. In both cases, the poset ordering is induced by the ``axiom of frontier" condition 
$$\overline S_i\cap S_j \neq \emptyset \iff S_j \subset \overline S_i \iff S_i\leq S_j$$

Note that if the quotient map from the strata of a poset stratified space to a coarsening induces a poset ordering on the coarsening, then the space with coarsened strata is also a poset stratified space. Furthermore, the poset ordering is the one we gave
$$\overline S_i\cap S_j \neq \emptyset \iff S_i \leq S_j.$$ 

We will be interested in poset stratifications of the above type in this paper. Poset stratifications are oftentimes used as a natural setup to investigate the relation between representations of exit paths and constructible sheaves, which is also our goal in this section. 

\subsection{Exceptional stratifications}

\begin{definition}
A stratification $Y = \coprod_{i \in \mathscr I} S_i$ indexed by a finite poset $\mathscr I$ is called \newterm{exceptional} if each of the $S_i$ is contractible to a point and the transitive closure of the condition $\overline{S_i} \cap S_j \neq \emptyset \Leftrightarrow i\leq j$ recovers the poset ordering.
\end{definition}

We give a simple example of a non-exceptional poset stratification. 

\begin{example}
Let $Y$ be the 1-dimensional topological space
\[
\begin{tikzpicture}
\draw[black] (0,0) circle (0.5) ;
\draw[black] (0.5,0) -- (1.5,0) ;
\draw[black] (2,0) circle (0.5);
\end{tikzpicture}
\]
with strata $S_1$ (left) and $S_2$ (right).
\[
\begin{tikzpicture}
\draw[black] (0,0) circle (0.5) ;
\draw[black] (0.5,0) -- (0.9,0) ;
\draw[black] (1,0) circle (2pt);
\filldraw[black] (1.3, 0) circle (2pt);
\draw[black] (1.3,0) -- (1.8,0) ;
\draw[black] (2.3,0) circle (0.5) ;
\end{tikzpicture}
\]
We have $\overline{S_1}\cap S_2\neq \emptyset$ and $\overline{S_2}\cap S_1= \emptyset$ which gives a poset ordering $S_1\leq S_2$ of the strata, however neither stratum is contractible to a point, so this poset stratification is not exceptional.
\end{example}

\begin{lemma}\label{lem: closed}
Let $Y = \coprod_{i \in \mathscr I} S_i$ be an exceptional stratification.  Then, the set
\[
S_{\geq i_0} := \bigcup_{i \geq i_0} S_i 
\]
is closed and
\[
S_{\leq i_0} := \bigcup_{i \leq i_0} S_i 
\]
is open.
\end{lemma}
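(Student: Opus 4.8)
The plan is to prove that $S_{\geq i_0}$ is closed, from which the openness of $S_{\leq i_0}$ follows immediately by taking complements (note $S_{\leq i_0}$ is not literally the complement of $S_{\geq i_0}$, so one instead runs the dual argument, or observes that $Y \setminus S_{\leq i_0} = S_{\not\leq i_0}$ and shows that this union is closed by the same method). First I would recall that in an exceptional stratification the poset $I$ is finite, so every stratum $S_j$ has the property that $\overline{S_j} = \bigcup_{k} S_k$ where the union ranges over those $k$ with $\overline{S_j} \cap S_k \neq \emptyset$; by the defining condition and the fact that the transitive closure of $\overline{S_i}\cap S_j \neq \emptyset$ recovers $\leq$, this forces $\overline{S_j} \subseteq \bigcup_{k \leq j} S_k = S_{\leq j}$. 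Conversely, since each relation $k \le j$ in $I$ arises (via transitive closure) from a chain of strata whose closures meet, and closures of closures are closures, one gets $S_k \subseteq \overline{S_j}$ whenever $k \le j$, hence $\overline{S_j} = S_{\leq j}$ exactly. This identity is the crux.

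With that in hand, the argument for closedness of $S_{\ge i_0}$ is short: I claim $S_{\geq i_0} = \bigcup_{j \geq i_0} \overline{S_j}$. The inclusion $\subseteq$ is clear since $S_j \subseteq \overline{S_j}$. For $\supseteq$, take $j \ge i_0$; then $\overline{S_j} = S_{\leq j} = \bigcup_{k \leq j} S_k$, and I need each such $S_k$ with $k \le j$ to actually lie in $S_{\ge i_0}$ — but that is false in general, so this naive identity fails, and the correct statement must be argued differently. Instead, the right approach is to show directly that the complement $Y \setminus S_{\geq i_0} = \bigcup_{j \not\geq i_0} S_j$ is open, equivalently that $S_{\geq i_0}$ contains all its limit points. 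Suppose $y \in \overline{S_{\geq i_0}}$ and $y \in S_j$. Then every neighborhood of $y$ meets $S_{\ge i_0}$, so $y$ lies in $\overline{S_k}$ for some $k \ge i_0$ (using finiteness of $I$ to pass to a single stratum). By the defining biconditional, $\overline{S_k} \cap S_j \neq \emptyset$ gives $k \le j$, whence $i_0 \le k \le j$ and $y \in S_j \subseteq S_{\ge i_0}$. Thus $S_{\geq i_0}$ is closed.

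The dual statement: $Y \setminus S_{\le i_0} = \bigcup_{j \not\le i_0} S_j$; I show this is closed by the identical limit-point argument — if $y \in S_j$ is a limit of points in this union, then $\overline{S_k}\cap S_j \ne \emptyset$ for some $k \not\le i_0$, so $k \le j$; were $j \le i_0$ we'd get $k \le i_0$, a contradiction, so $j \not\le i_0$ and $y$ is in the union. Hence $S_{\le i_0}$ is open. The main obstacle I anticipate is the passage "every neighborhood of $y$ meets $S_{\ge i_0}$, so $y \in \overline{S_k}$ for a single $k \ge i_0$": this uses that there are only finitely many strata, so the neighborhood filter cannot evade all of the finitely many closed sets $\overline{S_k}$ simultaneously — a standard but worth-stating pigeonhole on closed sets. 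Everything else is bookkeeping with the biconditional defining the exceptional stratification.
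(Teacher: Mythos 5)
Your final (pivoted) argument is correct and is essentially the paper's proof rephrased: the paper computes $\overline{S_{\geq i_0}} = \bigcup_{i\geq i_0}\overline{S_i} = \bigcup_{i\geq i_0,\, j}(\overline{S_i}\cap S_j)$ using finiteness, then applies $\overline{S_i}\cap S_j\neq\emptyset\Rightarrow i\leq j$ to see each nonempty piece sits inside some $S_j$ with $j\geq i\geq i_0$; your ``limit point lands in $S_j$, lies in some $\overline{S_k}$ with $k\geq i_0$ by finiteness, so $k\leq j$'' is the same computation unwound. For the open half the paper reduces to the closed half via $Y\setminus S_{\leq i_0}=\bigcup_{j\nleq i_0}S_{\geq j}$, while you re-run the limit-point argument; both are fine.

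One substantive correction to your abandoned preliminary: you claim $\overline{S_j}\subseteq S_{\leq j}$, but the biconditional $\overline{S_i}\cap S_j\neq\emptyset\Leftrightarrow i\leq j$ gives $\overline{S_j}\cap S_k\neq\emptyset\Rightarrow j\leq k$, hence $\overline{S_j}\subseteq S_{\geq j}$, not $S_{\leq j}$ (recall the paper's ordering convention: for CW strata the indexing poset is the \emph{opposite} of the face poset, so closures go \emph{up}). With the correct sign, the ``naive identity'' you discarded actually works: $\bigcup_{j\geq i_0}\overline{S_j}\subseteq\bigcup_{j\geq i_0}S_{\geq j}\subseteq S_{\geq i_0}$, and together with the trivial reverse inclusion this exhibits $S_{\geq i_0}$ as a finite union of closed sets. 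So there was no need to pivot; the sign error, not the approach, was the obstacle. Everything else — the use of finiteness to replace $\overline{\bigcup S_i}$ by $\bigcup\overline{S_i}$, and the deduction $k\leq j$ from $\overline{S_k}\cap S_j\neq\emptyset$ — is exactly what the paper uses.
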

\begin{proof}
Notice that
\begin{align*}
\overline{S_{\geq i_0}} & =  \overline{S_{\geq i_0}} \cap Y & \\
& = \overline{S_{\geq i_0}} \cap \bigcup S_j & \\
& = \bigcup \overline{S_{\geq i_0}} \cap S_j & \\
& = \bigcup_{i \geq i_0, j} \overline{S_{i_0}} \cap S_j & \text { because $\mathscr I$ is finite }\\
& =  S_{\geq i_0} & \text{ because } \overline{S_{i_0}} \cap S_j \subseteq  \begin{cases}
S_j  & \text{if } j \geq i_0 \\
\emptyset & \text{if } j < i_0
\end{cases}
\end{align*}
i.e.\ $S_{\geq i_0}$ is closed.

Now 
\begin{align*}
Y \backslash S_{\leq i_0} & = Y \backslash \bigcup_{i \leq i_0} S_i \\
& = \bigcup_{j \nleq i_0} S_j \\
& = \bigcup_{j \nleq i_0} S_{\geq j}
\end{align*}
is a finite union of closed sets by the above (hence closed).
\end{proof}


\begin{proposition} \label{prop: exc is exc}
If $Y = \coprod_{i \in \mathscr I} S_i$ is an exceptional stratification, then the set 
\[
\{ (\iota_i)_! \mathsf k_{S_i} : i \in \mathscr I \}
\]
forms a full exceptional collection in $DSh_S(Y)$.
\end{proposition}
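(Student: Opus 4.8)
The plan is to verify the two defining properties of a full exceptional collection: (1) the appropriate Hom-vanishing and endomorphism conditions, and (2) generation of $DSh_S(Y)$. I would first fix a linear extension of the finite poset $I$, say $i_1, \dots, i_n$, compatible with $\leq$, and order the objects as $E_k := (\iota_{i_k})_! \mathsf k_{S_{i_k}}$. The key computational input is the adjunction $\op{RHom}_{DSh_S(Y)}((\iota_i)_! \mathsf k_{S_i}, (\iota_j)_* \mathcal F) = \op{RHom}_{DSh_S(S_i)}(\mathsf k_{S_i}, \iota_i^* (\iota_j)_* \mathcal F)$ together with the base-change / support analysis of $\iota_i^* (\iota_j)_! \mathsf k_{S_j}$.

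First I would compute $\op{RHom}((\iota_j)_! \mathsf k_{S_j}, (\iota_i)_! \mathsf k_{S_i})$. By the $((\iota_j)_!, \iota_j^!)$ adjunction this equals $\op{RHom}_{DSh(S_j)}(\mathsf k_{S_j}, \iota_j^! (\iota_i)_! \mathsf k_{S_i})$. Using Lemma~\ref{lem: closed}, $S_{\geq i}$ is closed and $S_{\leq i}$ is open, so I can factor $\iota_i$ through these locally closed pieces and use the fact that for an open inclusion $u$ one has $u^! = u^*$, while for a closed inclusion $z$ one has $z_! = z_*$. When $j \not\leq i$ and $i \not\leq j$ the supports are disjoint (by the transitive-closure condition defining exceptionality, $\overline{S_i} \cap S_j = \emptyset$), so the Hom is zero. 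When $j > i$: here $S_j \subseteq \overline{S_i}$ but $S_i \not\subseteq \overline{S_j}$; restricting the costalk computation to $S_j$, one finds $\iota_j^! (\iota_i)_! \mathsf k_{S_i} = 0$ because $S_j$ meets $S_i$'s closure only on its frontier, not on $S_i$ itself — concretely, $(\iota_i)_! \mathsf k_{S_i}$ restricted to the open set $S_{\leq j} \cap (\text{something})$ vanishes near $S_j$. The surviving case is $j \leq i$: then I compute $\op{RHom}((\iota_i)_! \mathsf k_{S_i}, (\iota_j)_! \mathsf k_{S_j}) = \op{RHom}_{DSh(S_i)}(\mathsf k_{S_i}, \iota_i^* (\iota_j)_! \mathsf k_{S_j})$, and since $S_i \subseteq \overline{S_j}$ with $S_i$ contractible and $\iota_i^*(\iota_j)_!\mathsf k_{S_j}$ a constant-sheaf contribution on a contractible stratum, this gives $\mathsf k$ concentrated in degree $0$ when $i = j$ and is computed by a relative-cohomology term otherwise; contractibility of strata forces it to be $\mathsf k$ in degree $0$ for $i=j$. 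This establishes that $\{E_k\}$ is an exceptional collection in the chosen order (Hom's vanish in the "wrong" direction, and $\op{End}(E_k) = \mathsf k$).

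Next, for fullness, I would argue that the $(\iota_i)_! \mathsf k_{S_i}$ generate $DSh_S(Y)$ as a triangulated category. Given any $S$-constructible sheaf $\mathcal F$, pick a minimal stratum $S_i$ in the support (minimal in $I$), so $S_i$ is open in $\op{supp}(\mathcal F)$ relative to the induced stratification, or dually pick a closed stratum; using the recollement attached to the open-closed decomposition $S_{\leq i} \hookrightarrow Y \hookleftarrow S_{\geq i'}$ from Lemma~\ref{lem: closed}, induct on the number of strata. The restriction of $\mathcal F$ to a single contractible stratum is a constant sheaf $\mathsf k_{S_i}^{\oplus r}$ (since constructible + contractible stratum ⇒ locally constant ⇒ constant on a contractible, simply-connected piece), and the corresponding $(\iota_i)_! \mathsf k_{S_i}^{\oplus r}$ fits into a triangle with $\mathcal F$ and a sheaf supported on fewer strata, which by induction lies in the triangulated subcategory generated by the $E_k$. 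Hence the subcategory they generate is all of $DSh_S(Y)$.

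The main obstacle I anticipate is the careful bookkeeping in the costalk/stalk computations of $\iota_j^! (\iota_i)_! \mathsf k_{S_i}$ and $\iota_i^* (\iota_j)_! \mathsf k_{S_j}$ in the boundary cases $j > i$ and $j < i$ — specifically, ensuring that the relative cohomology groups that appear (cohomology of $S_i$ or a link thereof with supports/coefficients) actually vanish or reduce to $\mathsf k$ in degree zero. This is where contractibility of each stratum and the axiom-of-frontier-type condition packaged into "exceptional" must be used in an essential way; a clean way to organize it is to push everything through Lemma~\ref{lem: closed} so that every inclusion is either open (so $!$ = $*$) or closed (so $!$ = $*$ on the functor side), reducing all the $\op{RHom}$'s to cohomology of constant sheaves on contractible strata or on complements thereof.
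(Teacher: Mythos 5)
Your overall strategy matches the paper's: adjunction plus contractibility of strata for the endomorphism computation, a support argument for the vanishing, and a recollement triangle together with induction on the number of strata for generation. However, there is a genuine error in the vanishing step.

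You claim that for $j > i$ the costalk $\iota_j^!(\iota_i)_!\mathsf k_{S_i}$ vanishes because $S_j$ sits in the frontier of $\overline{S_i}$ rather than in $S_i$ itself. This is false: a costalk along a frontier stratum computes local (relative) cohomology, which is generically nonzero -- this is exactly where the forward morphisms of the exceptional collection live. For a concrete counterexample, stratify $Y=[0,1]$ by $S_m=(0,1)$ (open, minimal) and $S_0=\{0\}$, $S_1=\{1\}$ (closed, maximal); then $0>m$ but $\iota_0^!(\iota_m)_!\mathsf k_{S_m}\cong\mathsf k[-1]$, so $R\Hom((\iota_0)_!\mathsf k_{S_0},(\iota_m)_!\mathsf k_{S_m})\cong\mathsf k[-1]\neq 0$. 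Your paragraph is in fact internally inconsistent: you begin by computing $R\Hom((\iota_j)_!\mathsf k_{S_j},(\iota_i)_!\mathsf k_{S_i})$ and assert vanishing for $j>i$, then call $j\le i$ the ``surviving case'' but silently swap source and target to $R\Hom((\iota_i)_!\mathsf k_{S_i},(\iota_j)_!\mathsf k_{S_j})$; relabeling $(i,j)\mapsto(j,i)$ in the first claim gives $R\Hom((\iota_i)_!\mathsf k_{S_i},(\iota_j)_!\mathsf k_{S_j})=0$ for $i>j$, which contradicts the nonvanishing you need in the surviving case. The correct statement is that $R\Hom((\iota_i)_!\mathsf k_{S_i},(\iota_j)_!\mathsf k_{S_j})=0$ whenever $\overline{S_j}\cap S_i=\emptyset$, i.e.\ whenever $j\not\le i$ (so $j>i$ or $i,j$ incomparable); the exceptional ordering therefore places the \emph{larger} (more closed) strata first, opposite to the poset order. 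The paper gets this cleanly via Verdier duality: $\iota_i^!(\iota_j)_!\mathsf k_{S_j}\cong\mathbb D\,\iota_i^*(\iota_j)_*\mathbb D\,\mathsf k_{S_j}$, and $\iota_i^*(\iota_j)_*$ has support in $\overline{S_j}\cap S_i$, which is empty by the definition of an exceptional stratification. Your generation argument by recollement and induction on strata is fine and matches the paper, which peels off a \emph{maximal} (hence closed, by Lemma~\ref{lem: closed}) stratum via the triangle ${\kappa_{i_0}}_!\kappa_{i_0}^!\mathcal F\to\mathcal F\to{\iota_{i_0}}_*\iota_{i_0}^*\mathcal F$ and uses that $\iota_{i_0*}=\iota_{i_0!}$ on the closed stratum.
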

\begin{proof}
First we prove the objects are exceptional.  Namely, we calculate:
\[
R\Hom((\iota_i)_! \mathsf k_{S_i}, (\iota_i)_! \mathsf k_{S_i}) = \mathsf k.
\]
We have
\begin{align*}
R\Hom((\iota_i)_! \mathsf k_{S_i}, (\iota_i)_! \mathsf k_{S_i}) & =  R\Hom( \mathsf k_{S_i}, (\iota_i)^!(\iota_i)_! \mathsf k_{S_i}) & \text{ by adjunction} \\
& = R\Hom(\mathsf k_{S_i}, \mathsf k_{S_i}) & \text{ by Proposistion 6.6 of \cite{Iver86}}\\
& = \mathsf k & \text{ since } S_i \text{ is contractible}
\end{align*}

Now we check that morphisms give a partial ordering on the exceptional objects, $(\iota_i)_! \mathsf k_{S_i}$.
For this, suppose $\overline S_j \cap S_i = \emptyset$.  Then
\begin{align*}
R\Hom((\iota_i)_! \mathsf k_{S_i}, (\iota_j)_! \mathsf k_{S_j}) & =  R\Hom( \mathsf k_{S_i}, (\iota_i)^!(\iota_j)_! \mathsf k_{S_j}) \\
& = R\Hom( \mathsf k_{S_i}, \mathbb{D}\iota_i^*\iota_{j*}\mathbb{D}\mathsf k_{S_j}) \\
& = 0
\end{align*}
since $\iota_i^*\iota_{j*} \mathcal F$ is supported on $\overline S_j \cap S_i = \emptyset$.  Now, by the assumption that the stratification is exceptional, the partial ordering on the strata is the opposite partial ordering on the exceptional objects.

Now we prove that the strata generate.   Let $\kappa_i: M \backslash S_i \to M$ be the inclusions and $i_0$ be a maximal element of $I$.  
Then, for any $\mathcal F \in DSh_S(Y)$ we have an exact triangle
\[
{\kappa_{i_0}}_!\kappa_{i_0}^!\mathcal F \to  \mathcal F \to {\iota_{i_0}}_*{\iota_{i_0}}^*\mathcal F \xrightarrow{[1]}
\]
Notice that since $\mathcal F$ is constructible with respect to the stratification, $(\iota_{i_0})^*\mathcal F$ is locally constant, hence generated by $\mathsf k_{S_{i_0}}$ since $S_{i_0}$ is contractible.  Since $S_{i_0}$ is closed by Lemma~\ref{lem: closed}, it follows that $(\iota_{i_0})_*(\iota_{i_0})^*\mathcal F = (\iota_{i_0})_!(\iota_{i_0})^*\mathcal F$ is generated by $(\iota_{i_0})_!\mathsf k_{S_{i_0}}$.  Now we are done by induction on the number of strata.
\end{proof}

\begin{definition}\label{defn: ent/exit space}
Let $S$ be an exceptional stratification of $Y$ with universal cover $\pi: \widetilde{Y} \to Y$.   This induces a stratification $\widetilde{S}$ of $\widetilde{Y}$.  For a point $\widetilde{y} \in \widetilde{Y}$ we define the \newterm{entrance space} at $\widetilde{y}$ to be the subspace
\[
\widetilde{Y}_{\Ent}(\widetilde{y}) := \{ x \in \widetilde{Y} : \exists \widetilde{\gamma} \in \Ent_{\widetilde{S}}(\widetilde{Y}) \text{ with } \widetilde{\gamma}(0) = \widetilde{y}, \widetilde{\gamma}(1) = x \}\xhookrightarrow{i_{\widetilde y}} \widetilde Y.
\]
Similarly, we define the \newterm{exit space} at $\widetilde{y}$ to be the subspace
\[
\widetilde{Y}_{\Exit}(\widetilde{y}) := \{ x \in \widetilde{Y} : \exists \widetilde{\gamma} \in \Exit_{\widetilde{S}}(\widetilde{Y}) \text{ with } \widetilde{\gamma}(0) = \widetilde{y}, \widetilde{\gamma}(1) = x \} \xhookrightarrow{j_{\widetilde y}} \widetilde Y.
\]

\end{definition}

\begin{definition}\label{def: simple}
An exceptional stratification of $Y$ is called \newterm{simple} if for all $\widetilde y \in \widetilde Y$ the entrance space at $\widetilde y$ is contractible and for all $\widetilde y, \widetilde y'$ the difference  $\widetilde{Y}_{\Ent}(\widetilde{y}) \backslash \widetilde{Y}_{\Ent}(\widetilde{y'})$ is contractible whenever it is non-empty.
\end{definition}

\begin{definition}
Given $y \in Y$ choose a lift $\widetilde y \in \widetilde Y$.  The \newterm{entrance sheaf} at $y$ is defined as follows
\[
\mathcal I_y := (\pi \circ i_{\widetilde{y}})_* \mathsf k_{\widetilde{Y}_{\Ent}(\widetilde{y})}.
\]
Similarly, the \newterm{exit sheaf} at $y$ is defined as follows
\[
\mathcal P_y := (\pi \circ j_{\widetilde y})_! \mathsf k_{\widetilde Y_{\Exit}(\widetilde y)}.
\]
\end{definition}

\begin{definition} \label{def: A=end Iw}
Let $A := \End(\bigoplus_{w \in I} \mathcal I_w)^{\op{op}}$ and $e_w \in A$ be the idempotent corresponding to the summand $\mathcal I_w$.  We define
\[
P_w := Ae_w \ \ \ \text{ and } \ \ \ I_w := (e_wA)^* \ \ \ \text{ and } \ \ \ \mathsf k_w := e_w Ae_w. 
\]
\end{definition}

\begin{remark}
By Serre duality for finite dimensional algebras, $P_w$ is a projective left $A$-module and $I_w$ is an injective left $A$-module.  Also note that $P_w^* = (Ae_w)^*= A^{\op{op}}e_w$ is a projective left $A^{op}$-module or right $A$-module.
\end{remark}

\begin{lemma} \label{lem: simple injectives}
Let $S$ be a simple stratification of $Y$.
Then,
\[
R\Hom((i_v)_!\mathsf{k}_{S_i}, \mathcal I_j) = \begin{cases}
\mathsf{k} & \text{if } i=j \\
0 & \text{otherwise}
\end{cases}
\]
\end{lemma}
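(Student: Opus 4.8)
The plan is to compute $R\Hom((i_v)_!\mathsf{k}_{S_i}, \mathcal I_j)$ by unwinding the definition of the entrance sheaf and using adjunction, exactly mirroring the computation in the proof of Proposition~\ref{prop: exc is exc}. Recall $\mathcal I_j = (\pi\circ i_{\widetilde y_j})_* \mathsf k_{\widetilde Y_{\Ent}(\widetilde y_j)}$, where $\widetilde y_j$ is a chosen lift of the base point $y_j \in S_j$. The first step is to pull everything up to the universal cover: since $\pi$ is a covering map, $\pi^!=\pi^*$, and $(i_v)_!\mathsf k_{S_i}$ pulls back to $(\widetilde\iota)_!\mathsf k_{\widetilde S_i^{(0)}}$ where $\widetilde S_i^{(0)}$ is the sheet of $\pi^{-1}(S_i)$ containing $\widetilde y_i$ — or more carefully, a disjoint union of sheets. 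I would then apply $(\pi\circ i_{\widetilde y})$-adjunction to rewrite
\[
R\Hom_Y((i_v)_!\mathsf k_{S_i}, (\pi\circ i_{\widetilde y_j})_* \mathsf k_{\widetilde Y_{\Ent}(\widetilde y_j)}) \;=\; R\Hom_{\widetilde Y_{\Ent}(\widetilde y_j)}\bigl((\pi\circ i_{\widetilde y_j})^*(i_v)_!\mathsf k_{S_i},\; \mathsf k_{\widetilde Y_{\Ent}(\widetilde y_j)}\bigr),
\]
using that $(\pi\circ i_{\widetilde y_j})_*$ has left adjoint $(\pi\circ i_{\widetilde y_j})^*$ (note $i_{\widetilde y_j}$ is a closed embedding by Lemma~\ref{lem: closed}, applied in $\widetilde Y$, so pushforward here is also $!$-pushforward, which is why only the $*$-pullback appears).

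The heart of the matter is then to understand $(\pi\circ i_{\widetilde y_j})^*(i_v)_!\mathsf k_{S_i}$, i.e.\ the restriction of the sheaf $\mathsf k_{S_i}$ (extended by zero) to the entrance space $\widetilde Y_{\Ent}(\widetilde y_j)$ inside $\widetilde Y$. This is $\mathsf k_{Z}$ where $Z = \widetilde Y_{\Ent}(\widetilde y_j)\cap \pi^{-1}(S_i)$, again a disjoint union of sheets of $\pi^{-1}(S_i)$. Here I would invoke the structure of entrance spaces: a point $x$ lies in $\widetilde Y_{\Ent}(\widetilde y_j)$ iff there is an entrance path from $\widetilde y_j$ to $x$, which forces $x$ to lie in a stratum $\widetilde S_k$ with $\widetilde y_j$ "above" it in the relevant sense, i.e.\ $k \leq j$ (in the poset of $\widetilde S$, with the convention that entrance paths go to smaller strata). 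So $Z$ is nonempty only when $i \leq j$, and in that case one expects exactly one sheet of $\pi^{-1}(S_i)$ to meet $\widetilde Y_{\Ent}(\widetilde y_j)$ — this uniqueness is where simple-ness (or at least exceptional-ness lifted to the cover, plus contractibility of the entrance space) does the work. Assuming this, $Z$ is a single copy of $S_i$, which is contractible, so $\mathsf k_Z$ is the constant sheaf on a contractible closed subspace of the contractible space $\widetilde Y_{\Ent}(\widetilde y_j)$.

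For the final step, when $i\not\leq j$ we get $Z=\emptyset$ and the $R\Hom$ vanishes. When $i\leq j$ we must compute $R\Hom_{\widetilde Y_{\Ent}(\widetilde y_j)}(\mathsf k_Z, \mathsf k_{\widetilde Y_{\Ent}(\widetilde y_j)})$ with $Z$ a closed contractible subspace. Writing $\kappa\colon Z\hookrightarrow \widetilde Y_{\Ent}(\widetilde y_j)$ for the closed inclusion, $\mathsf k_Z = \kappa_!\kappa^*\mathsf k = \kappa_*\kappa^*\mathsf k$, so by adjunction this is $R\Hom_Z(\mathsf k_Z, \kappa^!\mathsf k_{\widetilde Y_{\Ent}(\widetilde y_j)})$. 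For the answer to be $\mathsf k$ I would use that the entrance space deformation retracts onto $Z$ — or more precisely, that $\widetilde Y_{\Ent}(\widetilde y_j)\setminus Z$ is handled by the "difference is contractible" clause of Definition~\ref{def: simple}: the triangle $\lambda_!\lambda^*\mathsf k_{\widetilde Y_{\Ent}(\widetilde y_j)} \to \mathsf k_{\widetilde Y_{\Ent}(\widetilde y_j)} \to \kappa_*\mathsf k_Z$ (with $\lambda$ the open complement inclusion) together with contractibility of both $\widetilde Y_{\Ent}(\widetilde y_j)$ and its complement in $Z$ pins down $R\Hom(\kappa_*\mathsf k_Z,\mathsf k_{\widetilde Y_{\Ent}(\widetilde y_j)})=\mathsf k$. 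The main obstacle I anticipate is the bookkeeping on the universal cover: verifying that precisely one sheet of $\pi^{-1}(S_i)$ meets the entrance space $\widetilde Y_{\Ent}(\widetilde y_j)$, and that $\widetilde Y_{\Ent}(\widetilde y_j)$ is itself a "nice enough" closed subspace for Verdier duality and the adjunctions to apply verbatim — this is exactly the point where the hypotheses bundled into "simple" need to be used with care rather than waved at.
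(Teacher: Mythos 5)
Your high-level strategy---pull everything up to the universal cover, restrict to the entrance space $\widetilde Y_{\Ent}(\widetilde y_j)$, then use an open/closed decomposition triangle and the contractibility hypotheses of Definition~\ref{def: simple}---is very much in the spirit of the paper's proof, which ultimately lands on a relative cohomology group $H^*(\widetilde Y_{\Ent}(\widetilde y_{j}), \widetilde Y_{\Ent}(\widetilde y_{j}) \setminus \widetilde Y_{\Ent}(\widetilde y_i))$ via Lemma~\ref{lem: rel coh}. But there are real gaps in the details that would keep this proposal from compiling into a correct proof.

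First, a bookkeeping error with consequences. The claim that $Z := \widetilde Y_{\Ent}(\widetilde y_j)\cap \pi^{-1}(S_i)$ is closed in the entrance space is false in general: $S_i$ is only locally closed in $Y$. Indeed, when $i=j$ (the case that must produce $\mathsf k$), $Z$ is \emph{open} in $\widetilde Y_{\Ent}(\widetilde y_j)$, since the entrance space sits inside $\pi^{-1}(S_{\geq j})$ and $S_j$ is open in $S_{\geq j}$ (by Lemma~\ref{lem: closed}). For intermediate $i>j$ that are not maximal, $Z$ is only locally closed. So the single triangle with $\kappa$ closed and $\lambda$ open cannot be set up uniformly. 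Worse, if you check what your triangle actually gives in the case where $Z$ is closed and \emph{both} $\widetilde Y_{\Ent}(\widetilde y_j)$ and $U:=\widetilde Y_{\Ent}(\widetilde y_j)\setminus Z$ are contractible: the restriction map $H^0(\widetilde Y_{\Ent}(\widetilde y_j))\to H^0(U)$ is an isomorphism, forcing $R\Hom(\kappa_*\mathsf k_Z,\mathsf k_{\widetilde Y_{\Ent}(\widetilde y_j)})=0$, not $\mathsf k$. Your proposal announces $\mathsf k$ whenever $Z$ is nonempty; that contradicts the statement being proved, which is $\mathsf k$ only when $i=j$ and $0$ in every other case. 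You have not built anything into the argument that distinguishes these cases.

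Second, a mismatch with the simplicity hypothesis. Definition~\ref{def: simple} gives contractibility of $\widetilde Y_{\Ent}(\widetilde y)\setminus \widetilde Y_{\Ent}(\widetilde y')$, that is, of a \emph{difference of two entrance spaces}. Your triangle wants contractibility of $\widetilde Y_{\Ent}(\widetilde y_j)\setminus \pi^{-1}(S_i)$, which is a difference of an entrance space with (a preimage of) a single stratum. These are not the same set, and nothing in the proposal converts one into the other. This is precisely where the paper's induction on the number of strata earns its keep: by the inductive hypothesis the paper replaces $\mathsf k_{\pi^{-1}(S_i)\cap\cdot}$ with $\mathsf k_{\pi^{-1}(S_{\geq i})\cap\cdot}$, and since $\pi^{-1}(S_{\geq i})\cap \widetilde Y_{\Ent}(\widetilde y_{j})=\widetilde Y_{\Ent}(\widetilde y_i)$, the complement then really is of the form $\widetilde Y_{\Ent}(\widetilde y_{j})\setminus \widetilde Y_{\Ent}(\widetilde y_i)$ that the simple hypothesis governs, and Lemma~\ref{lem: rel coh} finishes. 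Without that replacement step, your argument cannot invoke simplicity. (The paper also handles the case where $j$ is the minimal stratum separately, using that $S_{j}$ is then open so $\mathcal I_{j'}|_{S_j}=0$ for $j'\neq j$; your proposal has no analogue of this base case.)

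Finally, a smaller point: your description has the entrance-path direction reversed. With the paper's conventions, entrance paths run from smaller to larger strata, so a point of $\widetilde Y_{\Ent}(\widetilde y_j)$ lies in a stratum $S_k$ with $k\geq j$, and $Z$ is nonempty only when $i\geq j$, not $i\leq j$. This does not change the shape of the final claim, but compounded with the issues above it makes the proposal hard to repair without essentially reinstating the induction that the paper runs.
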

\begin{proof}
We proceed by induction on the number of strata.  For the base case with $\mathscr I = \{ i \}$, the assertion follows from that fact that $S_i$ is contractible.

Now for the inductive step, consider a minimal element $i_0 \in \mathscr I$.  The induction hypothesis says that the statement holds for $\mathscr I \backslash \{i_0\}$, hence it remains to compute $\Hom((i_{i_0})_!k_{S_{i_0}}, \mathcal I_j)$ and $\Hom((i_{i})_!k_{S_{i}}, \mathcal I_{i_0})$.

Notice that $S_{i_0}$ is open by Lemma~\ref{lem: closed} hence, for $j \neq i_0$ 
\[
R\Hom((i_{i_0})_!k_{S_{i_0}}, \mathcal I_j) = R\Hom(k_{S_{i_0}}, \mathcal I_j|_{S_{i_0}}) = 0
\]
since $\mathcal I_j|_{S_{i_0}} = 0$.

Now we compute
\begin{align*}
R\Hom((i_i)_!k_{S_{i}}, \mathcal I_{i_0}) 
& = R\Hom(k_{S_{i}}, (i_{i})^! (\pi \circ i_{\widetilde{y_{i_0}}})_* \mathsf k_{\widetilde{Y}_{\Ent}(\widetilde{y_{i_0}})}) & \text{ by adjunction}\\
& = R\Hom(k_{S_{i}}, \pi_* (\widetilde i_i)^! \mathsf k_{\widetilde{Y}_{\Ent}(\widetilde{y_{i_0}})}) & \text{ by base change}\\
& = R\Hom(k_{\pi^{-1}(S_{i}) \cap {\widetilde{Y}_{\Ent}(\widetilde{y_{i_0})}}},  (\widetilde i_i)^! \mathsf k_{\widetilde{Y}_{\Ent}(\widetilde{y_{i_0}})}) & \text{ by adjunction}\\
& = R\Hom(k_{\pi^{-1}(S_{\geq i}) \cap {\widetilde{Y}_{\Ent}(\widetilde{y_{i_0})}}},  (\widetilde i_i)^! \mathsf k_{\widetilde{Y}_{\Ent}(\widetilde{y_{i_0}})}) & \text{ by induction for $j > i $}\\
& = R\Hom(k_{ {\widetilde{Y}_{\Ent}(\widetilde{y_{i})}}},  (\widetilde i_i)^! \mathsf k_{\widetilde{Y}_{\Ent}(\widetilde{y_{i_0}})}) & \\
& = R\Hom( (\widetilde i_i)_!\mathsf k_{ {\widetilde{Y}_{\Ent}(\widetilde{y_{i})}}},  \mathsf k_{\widetilde{Y}_{\Ent}(\widetilde{y_{i_0}})}) & \text{ by adjunction} \\
& = R\Hom( (\widetilde i_i)_* \mathsf k_{ {\widetilde{Y}_{\Ent}(\widetilde{y_{i})}}},  \mathsf k_{\widetilde{Y}_{\Ent}(\widetilde{y_{i_0}})}) &  \\
& = H^*(\widetilde{Y}_{\Ent}(\widetilde{y_{i_0}}), \widetilde{Y}_{\Ent}(\widetilde{y_{i_0}}) \backslash \widetilde{Y}_{\Ent}(\widetilde{y_{i}}))
& \text{ by Lemma~\ref{lem: rel coh}} \\
& = \begin{cases} \mathsf k & \text{ if } i = i_0 \\ 0 & \text{ if } i \neq i_0 \end{cases} & \text{ since $S$ is simple}
\end{align*}

\end{proof}

\begin{proposition}
Let $S$ be a simple stratification of $Y$. On each stratum $S_w$, pick a base point $x_w\in S_w$. Then, the functor
\begin{align*}
    F_w: Sh_S(Y)  & \rightarrow \mathsf k\Mod \\
    \mathcal{F} & \rightarrow  (\mathcal{F}_{x_w})^*
\end{align*}
is corepresented by $\mathcal I_w$. 
\label{prop: rep stalk}
\end{proposition}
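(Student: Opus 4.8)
The plan is to establish a natural isomorphism $\Hom_{Sh_S(Y)}(\mathcal{F}, \mathcal{I}_w) \cong (\mathcal{F}_{y_w})^* = \Hom_{\mathsf{k}}(\mathcal{F}_{y_w}, \mathsf{k})$ for all $\mathcal{F} \in Sh_S(Y)$. First I would use Proposition~\ref{prop: exc is exc}: the objects $(\iota_i)_!\mathsf{k}_{S_i}$, $i \in I$, form a full exceptional collection in $DSh_S(Y)$, so they generate. Restricting to the abelian category $Sh_S(Y)$, every $S$-constructible sheaf $\mathcal{F}$ is built (via finitely many extensions, using the exact triangles appearing in the proof of Proposition~\ref{prop: exc is exc}) from the objects $(\iota_i)_!\mathsf{k}_{S_i}$ and the costandard/injective-type objects. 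Since both functors in question, $\mathcal{F} \mapsto \Hom(\mathcal{F}, \mathcal{I}_w)$ and $\mathcal{F} \mapsto (\mathcal{F}_{y_w})^*$, are contravariant and left-exact (the latter because stalk-at-$y_w$ is exact and $(-)^*$ is left-exact on $\mathsf{k}\Mod$; the former by definition of $\Hom$), it suffices to produce a natural transformation between them and check it is an isomorphism on the generating class $\{(\iota_i)_!\mathsf{k}_{S_i}\}$.

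For the comparison map: given $\phi \in \Hom(\mathcal{F}, \mathcal{I}_w)$, I would compose with the stalk functor at $y_w$ to get $\phi_{y_w} : \mathcal{F}_{y_w} \to (\mathcal{I}_w)_{y_w}$, and then observe that $(\mathcal{I}_w)_{y_w} = \mathsf{k}$ — this is because $\mathcal{I}_w = (\pi \circ i_{\widetilde{y_w}})_* \mathsf{k}_{\widetilde{Y}_{\Ent}(\widetilde{y_w})}$ and the entrance space $\widetilde{Y}_{\Ent}(\widetilde{y_w})$ contains (a neighborhood basis of) $\widetilde{y_w}$ in the relevant sense, so the stalk of the pushforward at $y_w$ recovers $\mathsf{k}$; contractibility of the entrance space (simplicity) and the fact that $S_w$ is open in the closure-direction are what make this precise. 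Dualizing gives a map $(\mathcal{I}_w)_{y_w}^* = \mathsf{k} \to (\mathcal{F}_{y_w})^*$... rather, one wants the map in the stated direction, so more carefully: evaluation at the canonical generator $1 \in (\mathcal{I}_w)_{y_w} = \mathsf{k}$ composed with $\phi_{y_w}$ yields an element of $(\mathcal{F}_{y_w})^*$, and this assignment $\phi \mapsto (1 \circ \phi_{y_w})$ is $\mathsf{k}$-linear and natural in $\mathcal{F}$.

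To check this is an isomorphism it is enough to check on $\mathcal{F} = (\iota_i)_!\mathsf{k}_{S_i}$. The right-hand side is $((\iota_i)_!\mathsf{k}_{S_i})_{y_w}^*$, which is $\mathsf{k}$ if $y_w \in S_i$, i.e.\ $i = w$, and $0$ otherwise. The left-hand side is $\Hom((\iota_i)_!\mathsf{k}_{S_i}, \mathcal{I}_w)$, which by Lemma~\ref{lem: simple injectives} (taking $H^0$ of the computed $R\Hom$, which was concentrated in degree $0$) equals $\mathsf{k}$ if $i = w$ and $0$ otherwise. So the two functors agree on objects of the generating class; one then verifies the comparison map realizes this agreement, i.e.\ that on $(\iota_w)_!\mathsf{k}_{S_w}$ the canonical generator of $\Hom$ (the adjunction unit composed appropriately) maps to a generator of $((\iota_w)_!\mathsf{k}_{S_w})_{y_w}^* = \mathsf{k}$ — this is a direct check since both sides are literally $\mathsf{k}$ and the map is nonzero. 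Finally, invoke the long exact sequence / five-lemma argument along the filtration of an arbitrary $\mathcal{F}$ by the generators to conclude the comparison is an isomorphism for all $\mathcal{F}$.

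The main obstacle I anticipate is the bookkeeping in the last step: ensuring that an arbitrary $S$-constructible \emph{sheaf} (not just a complex) admits a filtration whose subquotients lie in the generating class in a way compatible with both left-exact functors, so that the five-lemma genuinely applies. This requires knowing that $Sh_S(Y)$ is, as an abelian category, "highest-weight-like" with the $(\iota_i)_!\mathsf{k}_{S_i}$ and their injective hulls $\mathcal{I}_i$ playing the role of standard and injective objects — which is essentially the content of the surrounding results (Lemma~\ref{lem: simple injectives} identifies the $\mathcal{I}_w$ as the indecomposable injectives), but must be assembled carefully. An alternative, possibly cleaner route is to skip the explicit filtration and instead argue that $(-)_{y_w}^*$ is left-exact and sends the generators to the correct values, that $\mathcal{I}_w$ is injective (by the Remark following its definition, via Serre duality), hence $\Hom(-, \mathcal{I}_w)$ is exact, and that two left-exact contravariant functors on a finite-length abelian category agreeing on a set of objects which together with enough injectives generate must be isomorphic once a natural transformation is fixed — but making "enough" precise again routes through Lemma~\ref{lem: simple injectives}.
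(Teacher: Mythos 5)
Your strategy---build a natural transformation $\Hom(\mathcal F,\mathcal I_w)\to(\mathcal F_{y_w})^*$ by taking stalks, verify it on a generating class, and propagate via filtrations---differs from the paper's proof, and the gap you flag at the end is a genuine one, not merely bookkeeping. The five-lemma step fails as stated: for a short exact sequence $0\to\mathcal F'\to\mathcal F\to\mathcal F''\to 0$, both contravariant functors produce only left-exact four-term sequences $0\to F(\mathcal F'')\to F(\mathcal F)\to F(\mathcal F')$, and agreement of the outer maps gives injectivity of the middle comparison but not surjectivity. To get surjectivity you need $\Hom(-,\mathcal I_w)$ to be \emph{exact}, i.e.\ $\mathcal I_w$ to be injective in $Sh_S(Y)$. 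Your proposed escape---cite the Remark after the definition of $I_w$ that uses Serre duality---is circular: that Remark concerns the abstract module $I_w=(e_wA)^*$ over the algebra $A$, and identifying the sheaf $\mathcal I_w$ with that module is exactly the content of Proposition~\ref{prop: main equivalence}, which is proved \emph{after} and \emph{using} the present proposition. You also leave the stalk computation $(\mathcal I_w)_{y_w}=\mathsf k$ to a gesture about ``the relevant sense''; this is the crucial step in constructing the comparison map and deserves an argument (it involves the interaction of the proper pushforward, the covering map $\pi$, and the structure of the entrance space near $\widetilde y_w$, none of which is immediate).

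The paper takes a more direct route that sidesteps all of this: it computes $R\Hom(\mathcal F,\mathcal I_w)$ outright by induction on the number of strata, peeling off a maximal stratum $i_0$ at each step via the excision triangle $j_{i_0!}j_{i_0}^!\mathcal F\to\mathcal F\to i_{i_0*}i_{i_0}^*\mathcal F\to{}$. The base case (maximal $w$, so $S_w$ is closed and $\mathcal I_w=(i_w)_*\mathsf k_{S_w}$) is an adjunction plus contractibility computation using Eq.~\eqref{eq: maximal}, entirely elementary and independent of Lemma~\ref{lem: simple injectives}. Lemma~\ref{lem: simple injectives} then enters only to kill the contribution of the top stratum in the inductive step. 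Because the computation is done at the level of $R\Hom$ and lands in degree $0$, the injectivity of $\mathcal I_w$ comes out as a by-product rather than as an input---exactly the piece your proposal lacks. If you want to salvage your approach, the honest version of it would be to \emph{first} prove $\op{Ext}^{>0}(\mathcal F,\mathcal I_w)=0$ for all $S$-constructible $\mathcal F$ by the same strata induction, at which point you have effectively reproduced the paper's proof anyway; so the detour through generation and filtration does not in the end buy a shorter argument.
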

\begin{proof}
Let $w$ be a maximal element of the poset $\mathscr I$.  Then we observe
\begin{equation} \label{eq: maximal}
i_w^*\mathcal F = \mathsf k_{S_w} \otimes_{\mathsf k}\mathcal F_{x_w}
\end{equation}
since $\mathcal F$ is $S$-constructible, $S_w$ is contractible, and closed by Lemma~\ref{lem: closed}.

Again, we proceed by induction on the number of strata.  Fix consider the (base) case where $w$ is a maximal element of the poset $\mathscr I$.  Then $\mathcal I_w = (i_w)_*\mathsf k_{S_w}$ so
\begin{align*}
    R\Hom(\mathcal F, \mathcal I_w) & =  R\Hom(\mathcal F, {i_w}_*\mathsf k_{S_w}) & \\
    & =  R\Hom(i_w^*\mathcal F, \mathsf k_{S_w}) &\text{ by adjunction} \\
     & =  R\Hom(\mathsf k_{S_w} \otimes_{\mathsf k}\mathcal F_{x_w}, \mathsf k_{S_w}) & \text{ by Eq.~\eqref{eq: maximal}}\\
          & = (\mathcal F_{x_w})^* & \text {since $S_w$ is contractible.}
\end{align*}

Now, suppose $w$ is not maximal and let $i_0$ be a maximal element.
Let $j_{i_0} : Y \backslash S_{i_0} \to Y$ be the inclusion of the open set.  Consider the exact triangle
 \[
 j_{i_0!}j_{i_0}^!\mathcal F \to \mathcal F \to  {i_{i_0}}_* i_{i_0}^*\mathcal F \xrightarrow{[1]}
\]
and apply $R\Hom( -, \mathcal I_w)$ to get
 \[
 R\Hom({i_{i_0}}_*i_{i_0}^*\mathcal F, \mathcal I_w) \to R\Hom(\mathcal F, \mathcal I_w) \to R\Hom({j_{i_0}}_!{j_{i_0}}^!\mathcal F,  \mathcal I_w) \xrightarrow{[1]}
 \]
 which by adjunction, Eq.~\eqref{eq: maximal}, and the fact that $i_{i_0}$ is proper (Lemma~\ref{lem: closed}), is the same as 
  \[
 R\Hom({i_{i_0}}_!(\mathsf k_{S_{i_0}} \otimes_{\mathsf k} \mathcal F_{x_{i_0}}), \mathcal I_w) \to R\Hom(\mathcal F, \mathcal I_w) \to R\Hom(\mathcal F|_{Y \backslash S_{i_0}},  \mathcal I_w|_{Y \backslash S_{i_0}}) \xrightarrow{[1]}.
 \]
 Hence by Lemma~\ref{lem: simple injectives} we get an isomorphism
\[
R\Hom(\mathcal F, \mathcal I_w) \cong R\Hom(\mathcal F|_{Y \backslash S_{i_0}},  \mathcal I_w|_{Y \backslash S_{i_0}})
\]
and we are done by the induction hypothesis.

\end{proof}

\begin{remark}
   We remind the reader that $\mathcal I_w$ corepresents costalk at $w$ and is an injective object in the abelian category $Sh_S(Y)$. In our setup, they are Serre dual to Nadler's microlocal skyscraper sheaves \cite{GPS18, Nad16}, which we notate by $\mathcal P_w$. See Table \ref{tab: section4 summary} and Remark \ref{rem: Serre functor} for a detailed comparison.
\end{remark}

\begin{proposition} \label{prop: main equivalence}
Let $S$ be a simple stratification of $Y$.
Then, the stalk functor
\begin{align*}
    F: Sh_S(Y)  & \rightarrow \op{mod-}\End(\bigoplus_i \mathcal{I}_i) = A\Mod\\
    \mathcal{F} & \rightarrow \bigoplus_{i} \Hom( \mathcal{F}, \mathcal{I}_i)^*=\bigoplus_i \mathcal{F}_{x_i}
\end{align*}
has an inverse
\begin{align*}
    G: \op{mod-}\End(\bigoplus_i \mathcal{I}_i) = A\Mod & \rightarrow Sh_S(Y) \\
    M & \mapsto \bigoplus_i \mathcal{I}_i \otimes_{\mathsf k_A} \mathsf k_M 
\end{align*}
\end{proposition}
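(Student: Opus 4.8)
The plan is to show that the stalk functor $F$ is exact, faithful, full, and essentially surjective, and that $G$ furnishes an explicit quasi-inverse; essentially all of the homological content is already packaged in Proposition~\ref{prop: rep stalk} and Lemma~\ref{lem: simple injectives}, and the bookkeeping is organized around the ``standard'' objects $\Delta_i := (\iota_i)_!\mathsf k_{S_i}$ of Proposition~\ref{prop: exc is exc} together with the ``costandard'' objects $\mathcal I_i$. First I would observe that $F$ is exact, since $F(\mathcal F)=\bigoplus_i\mathcal F_{x_i}$ is a finite sum of stalk functors, and that $F$ is faithful, since an $S$-constructible sheaf is locally constant (with value $\mathcal F_{x_i}$) on the contractible stratum $S_i$, so $\mathcal F_{x_i}=0$ for all $i$ forces $\mathcal F|_{S_i}=0$ for all $i$, hence $\mathcal F=0$. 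By Proposition~\ref{prop: rep stalk} I may identify $F(\mathcal F)$ with $\Hom_{Sh}(\mathcal F,\bigoplus_i\mathcal I_i)^{*}$ carrying its natural $A$-module structure; evaluating on $\Delta_i$ and invoking Lemma~\ref{lem: simple injectives} gives $F(\Delta_i)=\mathsf k_i$, the simple $A$-module at the vertex $i$, while $F(\mathcal I_i)=(e_iA)^{*}=I_i$.

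Next I would prove full faithfulness. The point is to reduce $\Hom_{Sh}(\mathcal F,\mathcal G)\to\Hom_A(F\mathcal F,F\mathcal G)$ to a single identity by resolving the target. Using Lemma~\ref{lem: simple injectives} together with the standard dévissage --- every object of $Sh_S(Y)$ carries a finite filtration with subquotients of the form $(\iota_i)_! N$, obtained by peeling off a maximal (hence closed, by Lemma~\ref{lem: closed}) stratum and inducting on the complement --- one upgrades the $\op{Ext}$-vanishing to $\op{Ext}^{>0}_{Sh}(-,\mathcal I_j)=0$, so each $\mathcal I_j$ is injective, and one sees that $\bigoplus_i\mathcal I_i$ detects nonzero objects; hence $\mathcal G$ admits a copresentation $0\to\mathcal G\to\mathcal I^{0}\to\mathcal I^{1}$ with each $\mathcal I^{k}$ a finite sum of $\mathcal I_i$'s. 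Applying the exact functor $F$ produces an injective copresentation of $F\mathcal G$ in $A\Mod$, so it suffices to check that $\Hom_{Sh}(\mathcal F,\mathcal I_i)\to\Hom_A(F\mathcal F,I_i)$ is an isomorphism; the left side is $(\mathcal F_{x_i})^{*}$ by Proposition~\ref{prop: rep stalk}, and the right side is $\Hom_A(F\mathcal F,(e_iA)^{*})\cong(e_i\,F\mathcal F)^{*}=(\mathcal F_{x_i})^{*}$ by Hom-tensor adjunction and duality, and one verifies the comparison map is exactly this identification. A four-lemma argument then gives full faithfulness on all Hom-groups. (Equivalently, one can phrase this via the projective generator of $Sh_S(Y)$ given by the exit sheaves $\mathcal P_i$, identify $F$ with $\Hom_{Sh}(\bigoplus_i\mathcal P_i,-)$, and invoke Morita theory; this route is more robust over a non-field base.)

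For essential surjectivity and the identification of the inverse, I would run the previous argument backwards. Given $M\in A\Mod$, choose an injective copresentation $0\to M\to\bigoplus_i I_i^{a}\to\bigoplus_i I_i^{b}$; since $\bigoplus_iI_i^{a}=F(\bigoplus_i\mathcal I_i^{a})$ and $\bigoplus_iI_i^{b}=F(\bigoplus_i\mathcal I_i^{b})$ and $F$ is full, the structure map lifts to a morphism $\bigoplus_i\mathcal I_i^{a}\to\bigoplus_i\mathcal I_i^{b}$ whose kernel $\mathcal G$ satisfies $F(\mathcal G)\cong M$ by exactness of $F$. Hence $F$ is an equivalence. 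To see its quasi-inverse is the functor $G$ of the statement, one unpacks $G$ as an additive functor with $G(\bigoplus_i I_i)=\bigoplus_i\mathcal I_i$ compatibly with the maps among these objects, and checks that $G$ applied to the copresentation of $M$ reproduces $\mathcal G$; this is formal from $F(\mathcal I_i)=I_i$ once one knows the two functors agree on a generating family. Concluding, $FG\cong\op{id}$ and $GF\cong\op{id}$, so $F$ and $G$ are mutually inverse.

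The step I expect to be the main obstacle is making $G$ precise and exhibiting the natural isomorphisms $FG\cong\op{id}$, $GF\cong\op{id}$. The object $\bigoplus_i\mathcal I_i$ lives in a sheaf category rather than being a ring, so the ``tensor'' in the definition of $G$ must be set up through (co)presentations and shown to be functorial and independent of choices; and because $\mathsf k$ is only assumed commutative Noetherian, the linear dual $(-)^{*}$ occurring in $F$ is not an exact auto-duality --- so the statements ``$\{\mathcal I_i\}$ cogenerates'' and ``$I_i$ is injective'' and the identity $((\mathcal F_{x_i})^{*})^{*}=\mathcal F_{x_i}$ implicit in Proposition~\ref{prop: rep stalk} all need care about reflexivity of stalks (for instance one should either restrict to the case where constructible sheaves have free stalks, or, as noted above, replace the costandard/injective side by the standard/projective side via the exit sheaves and use Morita theory, which avoids duality entirely). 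Everything else --- exactness, faithfulness, the $\op{Ext}$-vanishing, and the key Hom-identity --- is immediate or already supplied by Lemma~\ref{lem: simple injectives} and Proposition~\ref{prop: rep stalk}.
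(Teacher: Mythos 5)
Your proposal is correct in outline but takes a genuinely different route from the paper. The paper's proof is direct and computational: it writes down the coevaluation map $\mathcal F \to G\circ F(\mathcal F)$, checks on stalks at the base points $y_t$ using Proposition~\ref{prop: rep stalk} twice (once to identify $(\mathcal I_i)_{y_t}$ with $\Hom(\mathcal I_i,\mathcal I_t)^*$ and once to reinterpret $\Hom(\mathcal F,\mathcal I_t)^*$ as $\mathcal F_{y_t}$), collapses the $\otimes_A$ using the idempotent $e_t$, and then verifies $F\circ G\cong\op{id}$ on $A\Mod$ by the analogous two-line calculation $\bigoplus_{i,j}(\mathcal I_i)^*_{y_j}\otimes_A M = A\otimes_A M = M$. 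Your proof is the abstract ``Morita-theoretic'' version: exactness + faithfulness via detection of zero on stalks, fullness via injective copresentations by $\mathcal I$'s reducing the Hom-comparison to the single identity $\Hom(\mathcal F,\mathcal I_i)\cong(\mathcal F_{x_i})^*\cong\Hom_A(F\mathcal F,I_i)$, and essential surjectivity by lifting copresentations back through $F$. Both routes funnel through exactly the same two inputs --- Lemma~\ref{lem: simple injectives} and Proposition~\ref{prop: rep stalk} --- but the paper never needs to establish that $\mathcal I_j$ is injective or that the $\mathcal I$'s cogenerate, whereas you do; in exchange your argument makes the unit/counit checks disappear into the general Morita formalism, which is cleaner structurally but leaves the identification of $G$ (as opposed to ``some quasi-inverse'') to a final dévissage that you correctly flag as the remaining bookkeeping.

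Your last paragraph is a genuinely useful observation: the paper's proof does quietly use reflexivity $(\mathcal F_{y_t})^{**}\cong\mathcal F_{y_t}$ (when combining Proposition~\ref{prop: rep stalk}, which produces $(\mathcal F_{y_w})^*$, with the definition of $F$, which undoes the dual), and over a general commutative Noetherian $\mathsf k$ this requires stalks to be finitely generated and projective. Your suggested workaround --- replace the injective side $\mathcal I_i$ by the projective generator $\bigoplus_i\mathcal P_i$ of exit sheaves, identify $F$ with $\Hom(\bigoplus_i\mathcal P_i,-)$, and run straight Morita theory with no $(-)^*$ anywhere --- is a legitimate alternative that sidesteps the issue entirely, and is arguably the cleaner formulation for the level of generality the paper advertises. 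The only thing you should add if you pursue that route is the analogue of Proposition~\ref{prop: rep stalk} representing the stalk functor by $\mathcal P_i$ rather than corepresenting the dual stalk by $\mathcal I_i$; that proof would be essentially the Verdier-dual of the one in the paper.
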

\begin{proof}
We have a coevaluation map
\[
 \mathcal F  \xrightarrow{ev^*}  \bigoplus_{i} \mathcal I_i \otimes_{\mathsf k_A} \bigoplus_{j}  \mathsf k_{\Hom(\mathcal F, \mathcal I_j)^*}   =  G \circ F (\mathcal F)  
\]

It is enough to check that $ev^*$ is an isomorphism on stalks and since the  sheaves are constructible we can check this on the base points of the strata.  For $y_t$ we compute the stalk as
\small
\begin{align*}
& (\bigoplus_{i} \mathcal I_i \otimes_{\mathsf k_A} \bigoplus_{j} \mathsf k_{\Hom(\mathcal F, \mathcal I_j)^*}  )_{y_t} & \\
= & \bigoplus_{i}  (\mathcal I_i)_{y_t} \otimes_A \bigoplus_j \Hom( \mathcal F, \mathcal I_j)^*  & \text{ since stalks commutes with tensor product }\\
= &  \bigoplus_{i}  \Hom(\mathcal I_i, \mathcal I_t)^* \otimes_A \bigoplus_{j} \Hom( \mathcal F, \mathcal I_j)^* & \text{ by Proposition~\ref{prop: rep stalk}}\\
= &  \bigoplus_i \Hom(\mathcal I_i, \mathcal I_t)^* \otimes_A \Hom(\mathcal F, \mathcal I_t)^* & \text{ since $e_t$ is the identity on the left of the tensor product} \\
= & P_t^* \otimes_A  \Hom(\mathcal F, \mathcal I_t)^* & \text{ since $\bigoplus_i \Hom(\mathcal I_i, \mathcal I_t) = P_t$ by definition}\\
= & \Hom(\mathcal F, \mathcal I_t)^* & \text{ since $e_t$ is the identity on the right of the tensor product}\\
= &  \mathcal F_{y_t} & \text{ by Proposition~\ref{prop: rep stalk}}
\end{align*}
\normalsize
and the coevaluation map induces this isomorphism on stalks.

Next we observe that there are natural isomorphisms
\begin{align*}
G \circ F(M) &= \bigoplus_{i,j} (\mathcal I_i \otimes_{\mathsf k_A} \mathsf k_M )^*_{y_j} \\
& = \bigoplus_{i,j} (\mathcal I_i)^*_{y_j} \otimes_{A} M \\
& = A \otimes_{A} M \\
& = M
\end{align*}
\end{proof}

\begin{lemma} \label{lem: rel coh}
Let $X$ be a finite CW complex. For closed subcomplexes $F_1\xhookrightarrow{i} X$ and $F_2\xhookrightarrow{j} X$,
\begin{equation}
    R\Hom(i_*\mathsf{k}_{F_1}, j_*\mathsf{k}_{F_2})\simeq H^*(F_2, F_2 \backslash F_1; \mathsf{k})
\end{equation}
\end{lemma}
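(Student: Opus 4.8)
The plan is to reduce the computation of $R\Hom(i_*\mathsf{k}_{F_1}, j_*\mathsf{k}_{F_2})$ to a sheaf-theoretic statement on $F_1$ (or $F_2$) and then identify the resulting hypercohomology with relative singular cohomology. First I would apply adjunction twice: since $i$ is a closed embedding it is proper, so $i_* = i_!$ and its right adjoint is $i^!$; thus
\[
R\Hom_X(i_*\mathsf{k}_{F_1}, j_*\mathsf{k}_{F_2}) \cong R\Hom_{F_1}(\mathsf{k}_{F_1}, i^! j_*\mathsf{k}_{F_2}) = R\Gamma(F_1, i^! j_* \mathsf{k}_{F_2}).
\]
The next step is to understand $i^! j_* \mathsf{k}_{F_2}$ on $F_1$. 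Here I would use the fact that $j$ is a closed embedding and proper base change (or rather, the interplay of $i^!$ with $j_* = j_!$): restricting to $F_1$, the sheaf $i^! j_! \mathsf{k}_{F_2}$ should be computed via the Cartesian square with $F_1 \cap F_2$, giving $(i')_! (i'')^! \mathsf{k}_{F_2}$ where $i'': F_1 \cap F_2 \hookrightarrow F_2$ and $i': F_1 \cap F_2 \hookrightarrow F_1$. So we get
\[
R\Gamma(F_1, i^! j_* \mathsf{k}_{F_2}) \cong R\Gamma_c(F_1 \cap F_2, (i'')^!\mathsf{k}_{F_2}) \cong R\Gamma(F_1\cap F_2, (i'')^!\mathsf{k}_{F_2}),
\]
the last equality because everything is compact.

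The core step is then to identify $R\Gamma(F_1 \cap F_2, (i'')^! \mathsf{k}_{F_2})$ with $H^*(F_2, F_2 \setminus F_1; \mathsf{k})$. For this I would invoke the standard description of $!$-restriction to a closed subset in terms of sections-with-support: for the closed embedding $i'': Z := F_1 \cap F_2 \hookrightarrow F_2$ with open complement $u: F_2 \setminus Z \hookrightarrow F_2$, there is an exact triangle $i''_* i''^! \mathsf{k}_{F_2} \to \mathsf{k}_{F_2} \to Ru_* u^* \mathsf{k}_{F_2} \xrightarrow{[1]}$, and taking $R\Gamma(F_2, -)$ yields
\[
R\Gamma(Z, i''^!\mathsf{k}_{F_2}) \to R\Gamma(F_2, \mathsf{k}_{F_2}) \to R\Gamma(F_2 \setminus Z, \mathsf{k}_{F_2}) \xrightarrow{[1]},
\]
which is exactly the long exact sequence of the pair $(F_2, F_2 \setminus Z)$; since $F_2\setminus Z = F_2 \setminus F_1$ (as $Z = F_1 \cap F_2$), the first term is $H^*(F_2, F_2\setminus F_1;\mathsf{k})$, using that sheaf cohomology of the constant sheaf on a CW complex agrees with singular cohomology.

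The main obstacle I anticipate is the base-change step: justifying $i^! j_! \cong (i')_! (i'')^!$ cleanly. This is a standard fact (proper base change for $!$-functors, valid because $j$ is proper/a closed embedding), but it needs the hypotheses of the setup — locally compact Hausdorff spaces, finite CW complexes — to ensure the six-functor formalism of \cite{Iver86} applies, and one must be careful that $F_1 \cap F_2$ is again a closed subcomplex so all embeddings in sight are proper. A secondary point is checking that when $F_1 \cap F_2 = \emptyset$ both sides vanish (giving $H^*(F_2, F_2) = 0$), which is the degenerate case but worth noting since it is exactly the vanishing used in Proposition~\ref{prop: exc is exc} and Lemma~\ref{lem: simple injectives}. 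Everything else is formal manipulation with adjunctions and the comparison between sheaf and singular cohomology on CW complexes.
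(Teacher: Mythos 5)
Your proof is correct, and it is the mirror image of the paper's own argument under the duality $j^*\dashv j_*$ versus $i_!\dashv i^!$: where the paper applies the left adjoint $j^*$ to land on $F_2$ and then invokes proper base change in the form $j^*i_! \cong g_!f^*$, you apply the right adjoint $i^!$ and use the corresponding base-change identity $i^!j_* \cong (i')_*(i'')^!$ (which, as you note, requires $j$ proper; your $(i')_!$ should be read as $(i')_*$, but these coincide because $i'$ is a closed embedding). Both proofs then reduce to the same final step — identifying $R\Gamma(F_1\cap F_2, (i'')^!\mathsf k_{F_2})$ with $H^*(F_2, F_2\setminus F_1;\mathsf k)$. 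The paper packages this step as the short exact sequence $0 \to s_!\mathsf k_{F\setminus C} \to \mathsf k_F \to t_!\mathsf k_C\to 0$ followed by $R\Hom(-,\mathsf k_F)$, which after adjunction is precisely the sections-with-support (recollement) triangle $i''_*i''^! \to \mathrm{id} \to Ru_*u^*$ you use; so there is no mathematical difference, only a choice of presentation. The one thing worth making explicit, which you mention only in passing, is the comparison between sheaf cohomology of the constant sheaf and singular (relative) cohomology, which is where the finite CW hypothesis earns its keep (both in guaranteeing compactness so that $R\Gamma_c = R\Gamma$, and in making the pair $(F_2, F_2\setminus F_1)$ well behaved for the comparison). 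Your observation that the degenerate case $F_1\cap F_2=\emptyset$ gives vanishing on both sides is a useful sanity check and is indeed exactly the vanishing used downstream in the paper.
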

\begin{proof}
Let $s: F_1 \backslash F_2 \to F_1$  and $t: F_1 \cap F_2 \to F_1$ and be the inclusions.
Consider the exact sequence
\[
0 \to  s_!\mathsf{k}_{F_1 \backslash F_2} \to \mathsf{k}_{F_1} \to t_!\mathsf{k}_{F_1 \cap F_2} \to 0
\]
Applying $R\Hom(-, \mathsf{k}_{F_1})$, we get an exact triangle
\[
R\Hom(t_!\mathsf k_{F_1 \cap F_2}, \mathsf k_{F_1}) \to R\Hom(\mathsf k_{F_1}, \mathsf k_{F_1}) \to R\Hom(s_!\mathsf k_{F_1 \backslash F_2}, \mathsf k_{F_1}),
\]
which is equal to
\[
R\Hom(t_!\mathsf k_{F_1 \cap F_2}, \mathsf k_{F_1}) \to H^*(F_1; \mathsf{k}) \to H^*(F_1 \backslash F_2; \mathsf{k}).
\]
Hence we identify 
$R\Hom(t_!\mathsf k_{F_1 \cap F_2}, \mathsf k_{F_1}) $ with the relative cohomology of the pair $(F_1, F_1 \backslash F_2)$
i.e.
\begin{equation}
    R\Hom(t_!\mathsf k_{F_1 \cap F_2}, \mathsf k_{F_1}) \cong H^*(F_1, F_1 \backslash F_2; \mathsf{k}).
    \label{eq: rel coh}
\end{equation}

The following cartesian square comes from the fiber product of two closed subsets:
\begin{equation}
   \begin{tikzcd}
F_1 \cap F_2 \ar[r, "f"] \ar[d, "g"]& F_1 \ar[d, "i"]\\
F_2 \ar[r, "j"] & X
\end{tikzcd}
\label{eq: fiber product}
\end{equation}

Now we compute
\begin{align*}
    R\Hom(i_*\mathsf k_{F_1}, j_*\mathsf k_{F_2}) & = R\Hom(i_!\mathsf k_{F_1}, j_*\mathsf k_{F_2}) & \text{ since } F_2 \text{ is closed} \\
    & = R\Hom(j^*i_!\mathsf k_{F_1}, \mathsf k_{F_2})
    & \text{ by adjunction} \\
      & = R\Hom(g_!f^*\mathsf k_{F_1}, \mathsf k_{F_2})
    & \text{ by proper base change of \eqref{eq: fiber product}} \\
          & = R\Hom(g_*f^*\mathsf k_{F_1}, \mathsf k_{F_2})
    & \text{ since } F_1 \cap F_2 \text{ is closed}\\
              & = R\Hom(g_* \mathsf k_{F_1 \cap F_2}, \mathsf k_{F_2})
    & \\
    & = H^*(F_2, F_2 \backslash F_1; \mathsf k). & \text{ by \eqref{eq: rel coh}}.
\end{align*}
\end{proof}

\begin{proposition}\label{QCC}
Let $A'$ be the HPA whose vertices are base points of the stratification $S$ and whose paths are homotopy classes of entrance paths between the base points.  We have the following identification
\begin{align*}
A'& = \End(\bigoplus_w \mathcal I_w)^{\op{op}} \\
& = A\  (\text{Definition \ref{def: A=end Iw}}). & 
\end{align*}
\end{proposition}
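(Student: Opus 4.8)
The goal is to identify the combinatorially-defined HPA $A = \mathsf kQ/I_S$ — whose vertices are the base points $y_w$ of the stratification and whose arrows are (a chosen basis of indecomposable) homotopy classes of entrance paths, with relations coming from path homotopy — with the endomorphism algebra $\End(\bigoplus_w \mathcal I_w)^{\op{op}}$ of the direct sum of entrance sheaves. My strategy is to go through the path poset and build a $\mathsf k$-linear map in each direction, or better, to directly produce an isomorphism at the level of idempotent-decomposed pieces: for each pair $(v,w)$ I want a natural identification
\[
\Hom_A(P_v, P_w) \;\cong\; e_w A e_v \;\cong\; \Hom_{DSh_S(Y)}(\mathcal I_v, \mathcal I_w),
\]
where on the left $A$ denotes the HPA and on the right it denotes the endomorphism algebra, compatibly with composition.

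The key computation is the right-hand side: I would compute $\Hom(\mathcal I_v, \mathcal I_w)$ using the machinery already set up. Lifting to the universal cover, $\mathcal I_w = (\pi\circ i_{\widetilde y_w})_* \mathsf k_{\widetilde Y_{\Ent}(\widetilde y_w)}$, and an argument parallel to the one in Lemma~\ref{lem: simple injectives} (adjunction, base change, and Lemma~\ref{lem: rel coh}) should reduce $R\Hom(\mathcal I_v, \mathcal I_w)$ to a relative cohomology group, or more usefully to the free $\mathsf k$-module on the set of cosets $\{\widetilde y \in \pi^{-1}(y_v)\}$ that can be connected to $\widetilde y_w$ by an entrance path in $\widetilde Y$ — equivalently, the set of homotopy classes of entrance paths from $y_w$ to $y_v$ in $Y$ (note the op: a morphism $\mathcal I_v\to\mathcal I_w$ of sheaves corresponds via the contravariance of $\mathcal I_{(-)}$ to an entrance path $y_w\to y_v$). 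The simplicity hypothesis ensures that these Hom-spaces are free and concentrated in degree zero, so that $\End(\bigoplus_w\mathcal I_w)$ is an ordinary (not dg) algebra; this is essentially the content already extracted in Lemma~\ref{lem: simple injectives} and Proposition~\ref{prop: rep stalk}, applied with $(\iota_v)_!\mathsf k_{S_v}$ replaced by the projective-type object. I would then check that the multiplication in $\End(\bigoplus\mathcal I_w)^{\op{op}}$, which is composition of sheaf maps, matches concatenation of entrance paths under this identification — this follows from tracing through the adjunction isomorphisms and the fact that composing the "restriction" maps between entrance spaces $\widetilde Y_{\Ent}(\widetilde y)$ corresponds to concatenating the paths that witness the inclusions.

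Finally, I must see that the relations match: two entrance paths $p,q$ from $y_w$ to $y_v$ define the same element of $\End(\bigoplus\mathcal I_w)^{\op{op}}$ if and only if they are homotopic as entrance paths, which is exactly the defining relation $I_S$ of the HPA; and that the HPA conditions in Definition~\ref{defn: HPA} (left/right cancellation of homotopies) are automatically satisfied because homotopy of paths in a topological space has this cancellation property — this is the same observation used implicitly when forming $A(Y) = \mathsf kQ/I_Y$ in the Remark after Proposition~\ref{prop: exc is exc}. The main obstacle I anticipate is the bookkeeping around the universal cover and the direction conventions: making sure the entrance-path direction, the arrow direction in $Q$, the $\op{op}$ on $\End$, and the contravariance $\widetilde y \mapsto \widetilde Y_{\Ent}(\widetilde y)$ all line up so that concatenation corresponds to composition rather than to its reverse, and that the "indecomposable" entrance paths chosen as arrows of $Q$ generate all of $\Hom(\mathcal I_v,\mathcal I_w)$ as claimed (which again should reduce to the connectivity/contractibility statements packaged in the definition of a simple stratification). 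Modulo that care, the proposition is a formal consequence of Lemma~\ref{lem: simple injectives}, Lemma~\ref{lem: rel coh}, and Proposition~\ref{prop: rep stalk}.
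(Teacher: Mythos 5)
Your overall strategy---lift to the universal cover, use proper base change and Lemma~\ref{lem: rel coh} to compute $\Hom(\mathcal I_v,\mathcal I_w)$ as a free $\mathsf k$-module indexed by entrance paths, then match multiplication with concatenation---is indeed the paper's route. But two things need to be fixed, one conceptual and one a missing step.

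First, your direction is backwards, and so is your explanation of the \(\op{op}\). A morphism \(\mathcal I_v\to\mathcal I_w\) corresponds to a homotopy class of entrance paths from \(v\) to \(w\), not from \(w\) to \(v\). You can see this directly from Proposition~\ref{prop: rep stalk}: \(\Hom(\mathcal I_v,\mathcal I_w)=((\mathcal I_v)_{y_w})^*\), and the stalk \((\mathcal I_v)_{y_w}\) is free on the fiber \(\pi^{-1}(y_w)\cap\widetilde Y_{\Ent}(\widetilde v)\), i.e.\ on lifted entrance paths starting at \(\widetilde v\) and ending over \(y_w\). There is no contravariance of \(w\mapsto\mathcal I_w\); the \(\op{op}\) arises purely because applying \(\phi:\mathcal I_v\to\mathcal I_w\) and then \(\psi:\mathcal I_w\to\mathcal I_u\) produces the ring element \(\psi\phi\) in \(\End(\bigoplus\mathcal I_w)\), whereas the corresponding concatenation in the HPA is \(pq\) with \(p\) written first---i.e.\ the natural identification is an anti-isomorphism. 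If you run your proposed argument through \(P_v\) and \(e_wAe_v\) with the wrong direction of paths, the composition check at the end will fail.

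Second, the reduction you sketch (``should reduce \(R\Hom(\mathcal I_v,\mathcal I_w)\) to a relative cohomology group, or\dots to the free \(\mathsf k\)-module on\dots'') skips the actual key step. The paper computes \(R\Hom(\mathcal I_v,\mathcal I_w)\) via proper base change along the fiber square over \(Y\) with corners \(\widetilde Y_{\Ent}(\widetilde v)\) and \(\widetilde Y_{\Ent}(\widetilde w)\), then identifies the fiber product \(\widetilde Y_{\Ent}(\widetilde v)\times_Y\widetilde Y_{\Ent}(\widetilde w)\) \emph{explicitly} as a disjoint union of entrance spaces \(\widetilde Y_{\Ent}(\widetilde q(1))\) indexed by pairs of lifted paths \((p,q)\) with \(p(0)=v\), \(q(0)=w\), \(p(1)=q(1)\). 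Only then does Lemma~\ref{lem: rel coh} plus the simplicity hypothesis (contractibility of the entrance spaces and their differences) kill all summands except those with \(\widetilde q(1)=\widetilde w\), leaving exactly one copy of \(\mathsf k\) per entrance path from \(v\) to \(w\), concentrated in degree zero. Without that decomposition the relative-cohomology computation has nothing to bite on; citing Lemma~\ref{lem: simple injectives} as a ``parallel argument'' is not enough, since that lemma computes \(\Hom\) out of a corepresenting object \((i_v)_!\mathsf k_{S_v}\), not between two \(\mathcal I\)'s, and proceeds by induction on strata rather than by unwinding a fiber product.

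Finally, your observation that the HPA cancellation axioms in Definition~\ref{defn: HPA} hold automatically for path homotopy is correct and is implicitly assumed, but it is not part of what Proposition~\ref{QCC} proves; the real content is the \(\Hom\)-space computation above.
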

\begin{proof}
We need to compute
$\Hom(\mathcal{I}_v, \mathcal{I}_w)$. We have a fiber square
$$
\begin{tikzcd} \widetilde Y_{\Ent}(\widetilde{v}) \times_{Y} \widetilde Y_{\Ent}(\widetilde{w}) \ar[d, "f"] \ar[r] & \widetilde Y_{\Ent}(\widetilde{v}) \ar[d, "\alpha=\pi i_{\widetilde{w}}"] \\
\widetilde Y_{\Ent}(\widetilde{w}) \ar[r, "\beta=\pi i_{\widetilde{v}}"] & Y 
\end{tikzcd}
$$
Since there are finitely many arrows between any two vertices,  $\alpha$ and $\beta$ are both proper. By proper base change we get 
\begin{align*}
R\Hom(\mathcal{I}_v, \mathcal{I}_w) & = R\Hom(\alpha_* \mathsf k_{\widetilde Y_{\Ent}(\widetilde{v})}, \beta_* \mathsf k_{\widetilde Y_{\Ent}(\widetilde{w})})\\
& = R\Hom( \beta^*\alpha_* \mathsf k_{\widetilde Y_{\Ent}(\widetilde{v})},  \mathsf k_{\widetilde Y_{\Ent}(\widetilde{w})})\\
& = 
R\Hom ( f_*\mathsf k_{\widetilde Y_{\Ent}(\widetilde{v}) \times_{Y} \widetilde Y_{\Ent}(\widetilde{w})}, \mathsf k_{\widetilde Y_{\Ent}(\widetilde{w})})
\end{align*}
Now we compute 
\begin{align*}
& \widetilde Y_{\Ent}(\widetilde{v}) \times_{Y} \widetilde Y_{\Ent}(\widetilde{w})\\
= & \coprod_{ \{ (p,q) |  p(0) =v, q(0) = w, p(1) = q(1) \} } \widetilde Y_{\Ent}(\widetilde{q}(1)))
\end{align*}
Hence 
\begin{align*}
R\Hom ( f_*\mathsf k_{\widetilde Y_{\Ent}(\widetilde{v}) \times_{Y} \widetilde Y_{\Ent}(\widetilde{w})}, \mathsf k_{\widetilde Y_{\Ent}(\widetilde{w})}) &
= R\Hom( \bigoplus_{\{ (p,q) |  p(0) =v, q(0) = w, p(1) = q(1) \} } \mathsf k_{\widetilde Y_{\Ent}(\widetilde{q}(1))}, \mathsf k_{\widetilde Y_{\Ent}(\widetilde{w})} )\\
&
= \bigoplus_{\{ (p,q) |  p(0) =v, q(0) = w, p(1) = q(1) \} } 
R\Hom(\mathsf k_{\widetilde Y_{\Ent}(\widetilde{q}(1))}, \mathsf k_{\widetilde Y_{\Ent}(\widetilde{w})})
    \end{align*}
By Lemma~\ref{lem: rel coh}, this amounts to computing relative cohomology.  Hence, all terms vanish when $\widetilde{q}(1)\neq \widetilde{w}$ since $H^*(\widetilde Y_{\Ent}(\widetilde{w}), \widetilde Y_{\Ent}(\widetilde{w})\setminus Y_{\Ent}(\widetilde{q}(1))) = 0$ as both spaces are contractible (by assumption).  When $\widetilde{q}(1) = \widetilde{w}$ this contributes a copy of $\mathsf k$ in degree 0 since  $R\Hom(\mathsf k_{\widetilde Y_{\Ent}(\widetilde w)}, \mathsf k_{\widetilde Y_{\Ent}(\widetilde{w})}) = \mathsf k$ since $Y_{\Ent}(\widetilde{w})$ is contractible.

This gives  
\begin{align*}
    R\Hom(\mathcal{I}_v, \mathcal{I}_w) & = \Hom(\mathcal{I}_v, \mathcal{I}_w) \\
    & = \bigoplus_{\{ p :  p(0) =v, p(1) = w \}} \mathsf k
\end{align*}
On the other hand, we have injective left modules $I_v=(e_vA')^*$, and 
\[
\Hom_{A'}(I_v,I_w)= \bigoplus_{\{ p :  p(0) =v, p(1) = w \}} \mathsf k.
\]
\end{proof}

\begin{theorem} \label{thm: main equivalence}
Let $S$ be a simple stratification. There are equivalences of categories
\[
Sh_S(Y)\cong A^{\op{op}}\Mod \cong Fun(\Ent_S(Y), \mathsf k\Mod)^{\op{op}}
\]
\end{theorem}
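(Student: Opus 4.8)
The plan is to deduce Theorem~\ref{thm: main equivalence} by stitching together equivalences already established, so the argument is short.

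\emph{First equivalence.} To obtain $Sh_S(Y)\cong A^{\op{op}}\Mod$ I would simply combine Proposition~\ref{prop: main equivalence} with Proposition~\ref{QCC}. The former supplies the mutually inverse functors --- the stalk functor $F$ and its partner $G$ --- exhibiting $Sh_S(Y)$ as equivalent to the category of modules over $\End(\bigoplus_i\mathcal I_i)$; the latter identifies $\End(\bigoplus_i\mathcal I_i)$ with $A^{\op{op}}$, where $A$ is the HPA whose vertices are the base points $y_i$, whose arrows form a basis of indecomposable homotopy classes of entrance paths, and whose relations are $I_S$. Composing gives $Sh_S(Y)\cong A^{\op{op}}\Mod$. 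The only point meriting a line of checking is the handedness of the module structure: post-composition makes $\Hom(\mathcal F,\bigoplus_i\mathcal I_i)$ a module over $\End(\bigoplus_i\mathcal I_i)$, and dualizing (as in Proposition~\ref{prop: rep stalk}) records on which side $A$, as opposed to $A^{\op{op}}$, acts --- a direct unwinding of the definitions.

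\emph{Second equivalence.} To obtain $A^{\op{op}}\Mod\cong Fun(\Ent_S(Y),\mathsf k\Mod)^{\op{op}}$ I would first observe that $\Ent_S(Y)$ is, by construction and by Proposition~\ref{QCC}, the category $\mathcal C_A$ with object set $Q_0$ and $\Hom_{\mathcal C_A}(v,v')=\{p\in\Path_A:t(p)=v,\ h(p)=v'\}$, whose $\mathsf k$-linearization (category algebra) is precisely $A$. The standard dictionary between representations of a category with finitely many objects and modules over its category algebra then yields $Fun(\Ent_S(Y),\mathsf k\Mod)\cong A\Mod$; well-definedness here is exactly the statement that a functor respects the relations $I_S$, which holds because $I_S$ is generated by homotopies among entrance paths. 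Passing to opposite categories and applying $\mathsf k$-linear duality $M\mapsto\Hom_{\mathsf k}(M,\mathsf k)$, which on the relevant (finitely generated, $\mathsf k$-projective) modules is a contravariant equivalence $A\Mod\to A^{\op{op}}\Mod$, gives $Fun(\Ent_S(Y),\mathsf k\Mod)^{\op{op}}\cong(A\Mod)^{\op{op}}\cong A^{\op{op}}\Mod$. The whole chain is \[ Sh_S(Y)\xrightarrow{\ F\ }\op{mod-}\End(\textstyle\bigoplus_i\mathcal I_i)\cong A^{\op{op}}\Mod\cong Fun(\Ent_S(Y),\mathsf k\Mod)^{\op{op}}. \]

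\emph{Where the difficulty lies.} All of the substance is carried by Propositions~\ref{prop: main equivalence} and \ref{QCC}; what remains is bookkeeping, and the one place it bites is the $\mathsf k$-linear duality used to convert $(A\Mod)^{\op{op}}$ into $A^{\op{op}}\Mod$ (equivalently, to match the sheaf side to the opposite functor category). Over a field this is ordinary finite-dimensional duality; over a general unital commutative Noetherian $\mathsf k$ one must restrict to a subcategory on which $\Hom_{\mathsf k}(-,\mathsf k)$ is a genuine duality --- using that $A$ is $\mathsf k$-projective of finite type --- and verify that the constructible-sheaf side and the $\Ent_S(Y)$-representation side both land in the matching subcategories. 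Apart from tracking the opposites through this duality, the proof is entirely formal.
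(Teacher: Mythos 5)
Your proof proposal matches the paper's own argument almost exactly: the paper assembles the theorem from the same two ingredients, using Proposition~\ref{prop: main equivalence} for $Sh_S(Y)\cong \End(\bigoplus_w\mathcal I_w)\Mod$, Proposition~\ref{QCC} to identify this with $A^{\op{op}}\Mod$, and then a one-line appeal to $A$ being ``by definition, the HPA of entrance paths'' for the final leg. Where you differ is purely in the amount of care: the paper never writes out the step $(A\Mod)^{\op{op}}\cong A^{\op{op}}\Mod$ nor mentions the $\mathsf k$-linear duality underlying it, whereas you correctly observe that this requires $\Hom_{\mathsf k}(-,\mathsf k)$ to be an actual anti-equivalence, which over a general unital commutative Noetherian $\mathsf k$ holds only on $\mathsf k$-projective modules of finite type. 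That is a real subtlety that the paper elides, and flagging it strengthens rather than weakens the argument; otherwise your route and the paper's are the same.
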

\begin{proof}
We have
\begin{align*}
    Sh_S(Y) & \cong \End(\bigoplus_w \mathcal I_w)\Mod & \text{ by Proposition~\ref{prop: main equivalence} } \\
    & \cong A^{\op{op}}\Mod & \text{ by Proposition~\ref{QCC}} \\
    & \cong Fun(\Ent_S(Y), \mathsf k\Mod)^{\op{op}} & \text{ because $A$ is, by definition, the HPA of entrance paths.} 
\end{align*}

\end{proof}

\begin{remark} A version of the above theorem was proven in \cite{CP16}, Theorem 6.1, for conical stratifications.  Unfortunately, we were unable to apply this result as the stratifications of primary interest herein are not conical.  
\end{remark}


\subsection{Localization and homotopy}
In this section, we introduce block stratifications and demonstrate that for a block stratification $S$ of $Y$ there is a homotopy between $Y$ and $B \Ent_S(Y)$. 
For this purpose, we recall the following from Quillen's Higher Algebraic K-Theory \cite{Qui73}, \S 1.

\begin{definition}[Quillen, \cite{Qui73} pg 6]
Consider a functor
\[
F : \mathcal C \to \mathcal C'
\]
Given $v' \in \mathcal C'$, we denote by $v' \backslash F$ the category whose objects are pairs $(v, \alpha)$ where $v$ is an object of $C$ and $\alpha$ is a morphism from $v'$ to $F(v)$.  A morphism in $v' \backslash F$ from $(v_1, \alpha_1)$ to $(v_2, \alpha_2)$ is a morphism $\phi$ in $\mathcal C$ from $v_1$ to $v_2$ such that $F(\phi) \circ \alpha_1 = \alpha_2$.
\end{definition}

\begin{theorem}[Quillen, \cite{Qui73} Theorem A] \label{thm: Quillen}
Assume that $F$ is essentially surjective and $B(v' \backslash F)$ is contractible for all $v' \in \mathcal C'$.  Then $B \mathcal C$ is homotopic to $B \mathcal C'$.
\end{theorem}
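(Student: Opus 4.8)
In the paper this is invoked as Theorem A of \cite{Qui73}, so strictly speaking the ``proof'' is the citation; for completeness, here is the idea and the route I would take to reconstruct it.

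The quickest argument is a formal consequence of Quillen's Theorem B. Theorem B states that, under a mild hypothesis which is automatic here, the comma category $v'\backslash F$ is the homotopy fiber of $BF : B\mathcal C \to B\mathcal C'$ over $v'$, fitting into a homotopy-cartesian square together with $B(v'\backslash\mathcal C')$; and $B(v'\backslash\mathcal C')$ is contractible since $v'\backslash\mathcal C'$ has the initial object $(v',\mathrm{id})$. Given our hypothesis that every $B(v'\backslash F)$ is contractible, all homotopy fibers of $BF$ are contractible. Since moreover every $v'\backslash F$ is nonempty, each $v'\in\mathcal C'$ is connected in $B\mathcal C'$ to some $F(v)$, so $BF$ is surjective on $\pi_0$; and a map of CW complexes with contractible homotopy fibers over a point of each component is a homotopy equivalence. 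So I would first set up this deduction.

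If one wishes to avoid quoting Theorem B, I would instead interpolate through the ``two-sided comma'' category $\mathcal S$ whose objects are triples $(v',v,\alpha)$ with $\alpha : v'\to F(v)$ and whose morphisms $(v'_1,v_1,\alpha_1)\to(v'_2,v_2,\alpha_2)$ are pairs $(g:v'_1\to v'_2,\ \phi:v_1\to v_2)$ with $F(\phi)\circ\alpha_1=\alpha_2\circ g$, together with the two projections $p:\mathcal S\to\mathcal C'$, $(v',v,\alpha)\mapsto v'$ and $q:\mathcal S\to\mathcal C$, $(v',v,\alpha)\mapsto v$. The fiber of $q$ over $v$ is the category of objects over $F(v)$, which has a terminal object and is contractible; the fiber of $p$ over $v'$ is exactly $v'\backslash F$, contractible by hypothesis; and $q$ is a Grothendieck opfibration while $p$ is a Grothendieck fibration, with every base-change functor an equivalence because its source and target are contractible. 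One then proves directly --- via a bisimplicial-set computation and the realization lemma --- that a (co)fibered functor with contractible fibers induces a homotopy equivalence of classifying spaces; applying this to $p$ and to $q$ gives $B\mathcal C\simeq B\mathcal S\simeq B\mathcal C'$, and the tautological natural transformation $\alpha:p\Rightarrow F\circ q$ shows the resulting equivalence is realized by $BF$. In this route the essential-surjectivity hypothesis of the statement plays no role beyond bookkeeping on $\pi_0$.

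The step I expect to be the real obstacle is exactly the non-formal core of Quillen's argument: identifying a strict fiber with a homotopy fiber --- that is, Theorem B itself --- which requires the bisimplicial model and a quasifibration check (equivalently, the Bousfield--Friedlander realization lemma). Everything else, the comma-category manipulations, the Grothendieck (op)fibration structures, and the contractibility of $v'\backslash\mathcal C'$, is routine.
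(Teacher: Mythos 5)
The paper does not prove this statement; it is quoted verbatim, with attribution, as Theorem A of Quillen's \emph{Higher algebraic K-theory: I} (\cite{Qui73}). So there is no in-paper argument to compare against, and what you have written is really an account of how Quillen's theorem is (or can be) proved. Both routes you sketch are sound, but a couple of remarks are in order. Deducing A from B is logically valid: when every $B(v'\backslash F)$ is contractible, the hypothesis of Theorem B (that all base-change maps $u^*:v'\backslash F\to v''\backslash F$ induce homotopy equivalences) holds automatically, each $B(v'\backslash F)$ is then the homotopy fiber of $BF$, and contractible homotopy fibers over each component force $BF$ to be a weak equivalence, hence a homotopy equivalence of CW complexes. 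However, Quillen does not obtain A as a corollary of B --- he proves both by the same bisimplicial computation, with A typically regarded as the simpler of the two --- so this route is not a shortcut, and it front-loads the harder theorem. (Also note that contractibility of the homotopy fiber over $v'$ already forces $v'$ to lie in the image on $\pi_0$, so the ``essentially surjective'' clause in the statement is redundant, as you observe.) Your second route is much closer to Quillen's actual argument: his bisimplicial set $S(F)$, with $(p,q)$-simplices given by a $p$-chain in $\mathcal C$, a $q$-chain in $\mathcal C'$, and a connecting morphism, is essentially the nerve of your two-sided comma category $\mathcal S$, and the two projections together with the realization lemma for bisimplicial sets implement exactly the ``contractible fibers'' argument you describe. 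The one subtlety to flag is that the auxiliary claim that a pre(co)fibred functor with contractible fibres induces a homotopy equivalence of classifying spaces is itself a special case of Theorem A and must be established by the bisimplicial computation, not cited; as stated, that step of your sketch borders on circular. Your final observation that the tautological natural transformation $p\Rightarrow F\circ q$ shows the composite equivalence is realized by $BF$ is correct and is the step many expositions omit.
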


\begin{lemma}[Quillen, \cite{Qui73} Corollary 1] \label{lem: equivalent implies homotopic}
If a functor has a left or right adjoint then it induces a homotopy equivalence on classifying spaces.
\end{lemma}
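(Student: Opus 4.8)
The plan is to reduce the statement to the classical fact that a natural transformation of functors induces a homotopy between the induced maps on classifying spaces, and then to apply this to the unit and counit of the adjunction. Let $F : \mathcal C \to \mathcal C'$ be the given functor and $G$ an adjoint of $F$. Without loss of generality assume $F \dashv G$ (the case $G \dashv F$ is identical, interchanging the roles of the unit and counit below). Write $\eta : \op{id}_{\mathcal C} \Rightarrow GF$ and $\epsilon : FG \Rightarrow \op{id}_{\mathcal C'}$ for the unit and counit of the adjunction.

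The key input I would record is the following: for any two functors $H_0, H_1 : \mathcal C \to \mathcal D$, a natural transformation $\theta : H_0 \Rightarrow H_1$ induces a homotopy $BH_0 \simeq BH_1$. Indeed, $\theta$ is exactly the data of a functor $\Theta : \mathcal C \times \mathbf 2 \to \mathcal D$, where $\mathbf 2$ denotes the arrow category $\{0 \to 1\}$, with $\Theta$ restricting to $H_0$ on $\mathcal C \times \{0\}$ and to $H_1$ on $\mathcal C \times \{1\}$. Applying $B(-)$ together with the product formula $B(\mathcal C \times \mathbf 2) \cong B\mathcal C \times B\mathbf 2 = B\mathcal C \times [0,1]$ (\cite{Qui73}, Eq.~(4), already invoked above for the K\"unneth statement), the functor $B\Theta$ becomes precisely a homotopy from $BH_0$ to $BH_1$.

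It then remains to apply this twice. The unit $\eta$ yields $\op{id}_{B\mathcal C} = B(\op{id}_{\mathcal C}) \simeq B(GF) = BG \circ BF$, and the counit $\epsilon$ yields $BF \circ BG = B(FG) \simeq B(\op{id}_{\mathcal C'}) = \op{id}_{B\mathcal C'}$. Hence $BF$ is a homotopy equivalence, with $BG$ a homotopy inverse, which is the assertion.

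The argument is entirely formal once the product formula for classifying spaces is granted, and that formula (nerves preserve products and geometric realization preserves finite products) is classical and already used in the paper, so I do not anticipate a genuine obstacle; the only step needing any care is the bookkeeping that a natural transformation is literally a functor out of $\mathcal C \times \mathbf 2$ restricting correctly on the two copies of $\mathcal C$. As an alternative I could instead deduce the lemma from Theorem~\ref{thm: Quillen}, observing that when $F \dashv G$ the comma category $v' \backslash F$ has $(Gv', \eta_{v'})$ as an initial object and hence contractible classifying space for every $v' \in \mathcal C'$; I prefer the direct route above, however, since the form of Theorem~\ref{thm: Quillen} recalled here additionally hypothesizes essential surjectivity, which is not automatic for an arbitrary functor with an adjoint.
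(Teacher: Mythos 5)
Your proof is correct, and since the paper itself offers no argument beyond the citation to \cite{Qui73}, the right comparison is to Quillen's own proof: he first shows (his Proposition~2) that a natural transformation $H_0 \Rightarrow H_1$ gives a homotopy $BH_0 \simeq BH_1$ via the functor $\mathcal C \times \mathbf 2 \to \mathcal D$ and the product formula, and then derives Corollary~1 exactly as you do, by applying this to the unit and counit of the adjunction. Your remark that the Theorem~A route is slightly awkward here (because the form of Theorem~A recalled in the paper carries an extra essential-surjectivity hypothesis) is a sensible observation, and your direct argument is the one that actually underlies the cited result.
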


Our goal now is to combine Quillen's work with Proposition~\ref{prop: Nanda}.  This will allow us to understand the homotopy type of the classifying space of entrance path categories for a more general class of stratifications which we now define.

\begin{definition} \label{def: block}
Let $Y$ be a regular CW complex.  We say that a simple stratification $Y = \coprod S_i$ is a \newterm{block stratification} if each stratum is a union of cell interiors
\[
S_i = \bigcup_{e \in I_i} \op{int}(e).
\]
\end{definition}

\begin{proposition}\label{prop: CW to block}
Let $S$ be a block stratification of a regular CW complex $Y$.  Then the natural map
\[
F: \Ent_{S_{CW}}(Y) \to \Ent_{S}(Y)
\]
induces an equivalence
\[
\overline{F}: \Ent_{S_{CW}}(Y) / L_S \cong \Ent_{S}(Y)
\]
where $L_S$ is the full subcategory of paths which lies entirely in a single stratum of $S$.
\end{proposition}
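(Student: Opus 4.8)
### Proof Proposal

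The plan is to identify $\Ent_S(Y)$ as a localization of $\Ent_{S_{CW}}(Y)$ at the morphisms lying in a single $S$-stratum, and to verify this using the universal property of localization together with the fact that $S$-strata are contractible. First I would construct the functor $F$ explicitly: a barycentric-subdivision vertex of $Y$ (i.e.\ an open cell $e$) maps to the $S$-stratum $S_i$ containing $\op{int}(e)$, and an $S_{CW}$-entrance path (a chain of cells related by the face relation) maps to the induced entrance path in $Y$ with respect to the coarser stratification $S$, which is well defined because each $S_i$ is a union of cell interiors and entrance sequences for $S_{CW}$ refine entrance sequences for $S$. Because $S$ is a block stratification, $F$ is essentially surjective (every base point $y_i$ of $S_i$ is connected to the barycenter of some cell in $S_i$ by a path inside $S_i$, hence by an $S$-entrance path of length zero to that barycenter's image).

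Next I would check that $F$ inverts precisely the morphisms in $L_S$. A path $\gamma$ in $\Ent_{S_{CW}}(Y)$ lies entirely in a single $S$-stratum $S_i$ exactly when every cell it traverses has interior contained in $S_i$; since $S_i$ is contractible and, more importantly, the paths-in-$S_i$ form a connected groupoid on the vertices inside $S_i$ (any two cells with interiors in $S_i$ are joined by a zig-zag of face relations that stays in $S_i$, and in the entrance-path category such a zig-zag can be replaced by a genuine morphism because $S_i$ is contractible — this is where simplicity of the stratification enters via Lemma~\ref{lem: simple injectives} and the structure of $X_{A}$), $F$ sends each such $\gamma$ to an identity morphism on $y_i$. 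Thus $F$ factors through the quotient $\overline F : \Ent_{S_{CW}}(Y)/L_S \to \Ent_S(Y)$.

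To show $\overline F$ is an equivalence I would produce an inverse on objects and morphisms. On objects: send $y_i \in S_i$ to (the image of) any chosen cell with interior in $S_i$ — well defined up to the identifications in $L_S$. On morphisms: an $S$-entrance path $\delta$ from $y_i$ to $y_j$ traverses a sequence of $S$-strata $S_{i} = S_{k_0} < \dots < S_{k_m} = S_j$; lift each ``stratum change'' to a single $S_{CW}$-entrance arrow between appropriately chosen cells (possible because $\overline{S_{k_{\ell+1}}}\cap S_{k_\ell}\neq\emptyset$ forces a face relation between cells in the closure), and absorb all motion within each $S_{k_\ell}$ into $L_S$-morphisms. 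Functoriality and independence of choices both reduce to the assertion that two such lifts differ by a composite of $L_S$-morphisms, which again follows from contractibility of the individual strata and the fact that $B(\Ent_{S_{CW}})$ is the barycentric subdivision of $Y$ (Proposition~\ref{prop: Nanda}), so homotopies of entrance paths within a stratum are realized combinatorially. Composing the two functors gives the identity up to the defining relations of the quotient on one side and up to identity on the other.

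The main obstacle I anticipate is the well-definedness and functoriality of the inverse functor — specifically, showing that different choices of lifts of an $S$-entrance path to an $S_{CW}$-entrance path become equal in the localization. This is really a statement that the ``space of lifts'' is connected, and I expect to prove it by an induction on the number of strata (as in Lemma~\ref{lem: simple injectives} and Proposition~\ref{prop: rep stalk}): peel off a maximal stratum, use that any two entrance paths into a contractible stratum that agree after leaving it are homotopic rel endpoints, and conclude that the corresponding $S_{CW}$-paths are connected by a zig-zag in $L_S$. The remaining verifications (essential surjectivity, faithfulness, that $\overline F$ is full) are then comparatively routine consequences of the block hypothesis and Proposition~\ref{prop: Nanda}.
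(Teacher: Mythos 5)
Your overall strategy — show $F$ factors through the localization and then build an explicit inverse — matches the paper's, but your construction of the inverse diverges from the text and is considerably more involved than it needs to be. The paper's inverse $\overline G$ is defined by subdividing an arbitrary $S$-entrance path $\gamma$ at the $S_{CW}$-cell crossings, so that $\gamma = \gamma_1 \star \cdots \star \gamma_n$ with each $\gamma_j([0,1))$ contained in a single cell; each $\gamma_j$ or its reverse is then an $S_{CW}$-entrance path, and $\overline G(\gamma)$ is the composite in $\Ent_{S_{CW}}(Y)/L_S$ obtained by formally inverting the reversed pieces. No choice of cells or of ``a single arrow per stratum change'' is made. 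By contrast, your inverse tries to select, for each stratum transition, a face relation between ``appropriately chosen cells'' and then absorb the rest into $L_S$ — this is why you then have to argue that the space of such lifts is connected (your proposed induction on strata), a step that is absent from, and not needed in, the paper's argument.

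Two further comments. To factor $F$ through $\Ent_{S_{CW}}(Y)/L_S$ one only needs that $F$ carries $L_S$-morphisms to isomorphisms, and the paper observes simply that the reverse of a path lying in a single $S$-stratum is again an $S$-entrance path, giving a two-sided inverse. Your stronger assertion that $F(\gamma)$ is an identity is correct in spirit (because each $S_i$ is contractible, so a loop in $S_i$ is null-homotopic through loops in $S_i$), but the appeal to Lemma~\ref{lem: simple injectives} is misplaced: that lemma is a sheaf-theoretic $\Hom$ computation and has no bearing on homotopies of entrance paths. Finally, your worry about well-definedness of the inverse on homotopy classes is legitimate; the paper's proof asserts $\overline F \circ \overline G = \op{Id}$ and $\overline G \circ \overline F = \op{Id}$ without checking independence of the chosen representative and its subdivision, which requires an argument (for instance, subdividing an entrance-path homotopy compatibly at cell crossings). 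Your inductive scheme would establish something of this kind, but it is heavier machinery than the statement warrants and is not what the paper does.
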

\begin{proof}
The functor $F$ simply takes a point $y \in Y$ to the same point in $Y$ and a path $\gamma \in \Ent_{S_{CW}}(Y)$ to the same path viewed as a path in $\Ent_{S}(Y)$.  Clearly, if $\gamma$ is a morphism in $L_S$, then $F(\gamma)$ is invertible with inverse the reverse path.

On the other hand, if $\gamma$ is a path in $\Ent_{S}(Y)$, then we can write $\gamma$ as a concatenation
\[
\gamma = \gamma_1 \star ... \star \gamma_n
\]
where each $\gamma_j([0,1))$ lies in a single stratum of $S_{CW}$.  Notice that $\gamma_j$ is an entrance path or $\gamma_j^{-1}$ is an entrance path for all $j$.  Hence, we can construct an inverse functor
\[
\overline{G}: \Ent_{S}(Y) \to \Ent_{S_{CW}}(Y) / L_S
\]
where $\overline{G}(\gamma)$ formally reverses all $\gamma_j$ which are not entrance paths.  Clearly $\overline F \circ \overline G = \op{Id}_{\Ent_{S}(Y)}$ and $\overline G \circ \overline F = \op{Id}_{\Ent_{S_{CW}}(Y)}$ i.e. $\overline F$ is an equivalence.  

\end{proof}


\begin{lemma} \label{lem: cover fiber}
Let $S$ be a block stratification of $Y$. For all $y \in Y$ and all lifts $\widetilde y$, there is an equivalence of categories
\begin{align*}
    L: y \backslash F & \to \Ent_{\widetilde{S}}(\widetilde{Y}_{\Ent}(\widetilde{y})). \\
\end{align*}
\end{lemma}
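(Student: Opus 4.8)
The plan is to exhibit the equivalence $L$ concretely and then check it is essentially surjective and fully faithful, or—what is cleaner—to build an inverse functor $R$ and verify that $L\circ R$ and $R\circ L$ are naturally isomorphic to the identities. The source category $y\backslash F$ has objects $(z,\alpha)$ where $z\in\Ent_{S_{CW}}(Y)$ (i.e.\ $z$ is a base point / point of $Y$) and $\alpha$ is a morphism in $\Ent_S(Y)$ from $y$ to $F(z)=z$, that is, a homotopy class of $S$-entrance paths from $y$ to $z$; morphisms $(z_1,\alpha_1)\to(z_2,\alpha_2)$ are $S_{CW}$-entrance paths $\phi$ with $F(\phi)\circ\alpha_1=\alpha_2$. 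On the target side, $\Ent_{\widetilde S}(\widetilde Y_{\Ent}(\widetilde y))$ has objects the points of the entrance space $\widetilde Y_{\Ent}(\widetilde y)$ and morphisms the homotopy classes of $\widetilde S$-entrance paths within that subspace. The key geometric input is standard covering-space theory together with Lemma~\ref{lem: closed}: an $S$-entrance path from $y$ lifts uniquely to $\widetilde Y$ once a lift $\widetilde y$ of its starting point is fixed, and its endpoint then lands in $\widetilde Y_{\Ent}(\widetilde y)$ by the very definition of the entrance space.

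First I would define $L$ on objects by sending $(z,\alpha)$ to $\widetilde\alpha(1)\in\widetilde Y_{\Ent}(\widetilde y)$, where $\widetilde\alpha$ is the unique lift of (a representative of) $\alpha$ starting at $\widetilde y$; this is well defined on homotopy classes because lifts of homotopic paths are homotopic rel endpoints. On morphisms, given $\phi:(z_1,\alpha_1)\to(z_2,\alpha_2)$, the relation $F(\phi)\circ\alpha_1=\alpha_2$ forces the lift $\widetilde\phi$ starting at $\widetilde\alpha_1(1)$ to end at $\widetilde\alpha_2(1)$, and since $\phi$ is an $S_{CW}$-entrance path—hence in particular an $\widetilde S$-entrance path after lifting—its image lies in $\widetilde Y_{\Ent}(\widetilde y)$; set $L(\phi)=[\widetilde\phi]$. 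Functoriality is immediate from uniqueness of lifts and the fact that concatenation commutes with lifting. For the inverse $R$: given a point $x\in\widetilde Y_{\Ent}(\widetilde y)$, choose an $\widetilde S$-entrance path $\widetilde\alpha_x$ from $\widetilde y$ to $x$ (it exists by definition of the entrance space, and all such are homotopic rel endpoints since $\widetilde Y_{\Ent}(\widetilde y)$ is contractible because $S$ is simple—this is where simplicity enters), and send $x$ to $(\pi(x),[\pi\circ\widetilde\alpha_x])$; send an entrance path $\widetilde\psi$ in $\widetilde Y_{\Ent}(\widetilde y)$ to $\pi\circ\widetilde\psi$. That $R$ lands in $y\backslash F$ and is functorial again uses uniqueness of lifts.

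Then I would check $L\circ R\cong\mathrm{Id}$ and $R\circ L\cong\mathrm{Id}$. The composite $L\circ R$ sends $x$ to $\widetilde{(\pi\circ\widetilde\alpha_x)}(1)=\widetilde\alpha_x(1)=x$ on objects and is visibly the identity on morphisms, so $L\circ R=\mathrm{Id}$ on the nose. For $R\circ L$, the object $(z,\alpha)$ is sent to $(z,[\pi\circ\widetilde\alpha_z])$ where $\widetilde\alpha_z$ is \emph{our chosen} entrance path to $\widetilde\alpha(1)$; since $\widetilde Y_{\Ent}(\widetilde y)$ is contractible, $\widetilde\alpha_z$ is homotopic rel endpoints to $\widetilde\alpha$ inside the entrance space, so $[\pi\circ\widetilde\alpha_z]=\alpha$ in $\Ent_S(Y)$, giving a canonical isomorphism $R\circ L(z,\alpha)\cong(z,\alpha)$; naturality is routine. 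I expect the main obstacle to be purely bookkeeping rather than conceptual: namely verifying carefully that a path which is an $S$-entrance path downstairs lifts to an $\widetilde S$-entrance path upstairs and conversely (so that the two morphism sets genuinely match), and that the chosen $\widetilde\alpha_x$ can be made to fit together compatibly enough for naturality—both handled by the unique-lifting property of the covering $\pi$ together with the contractibility of the entrance spaces guaranteed by $S$ being simple, so no serious difficulty arises.
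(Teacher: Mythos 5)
Your proof matches the paper's: both define $L$ on objects by lifting the $S$-entrance path $\gamma_0$ to $\widetilde y$ and taking its endpoint, define $L$ on morphisms by lifting, and build the inverse by projecting via $\pi$, with well-definedness resting on unique path lifting together with contractibility (equivalently, simple connectedness) of $\widetilde Y_{\Ent}(\widetilde y)$. The paper simply asserts the functors are mutually inverse, whereas you spell out the checks $L\circ R=\mathrm{Id}$ and $R\circ L\cong\mathrm{Id}$, but the construction and the key inputs are the same.
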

\begin{proof}
By definition, objects of $y \backslash F$ are pairs $(y_0, \gamma_0)$ where $\gamma_0$ is an entrance path from $y$ to $y_0$ in $\Ent_{S}(Y)$.  We define $L$ on objects by $L(y_0, \gamma_0) = \widetilde{\gamma_0}(1)$ where $\widetilde{\gamma_0}$ is the unique lift of $\gamma_0$ starting at $\widetilde{y}$.
By definition, a morphism from $(y_1, \gamma_1)$ to $(y_2, \gamma_2)$ is an entrance path $\alpha$ from $y_1$ to $y_2$ in $\Ent_{S}(Y)$ with
\[
\gamma_2  \sim_p \alpha \circ \gamma_1
\]
We define $L(\alpha)$ to be the unique lift of $\alpha$ starting at $\widetilde{\gamma}(1)$.

Conversely, we can define $L^{-1}$ which takes objects $\widetilde{y_0}$ to $(y_0, \pi(\widetilde{\gamma_0}))$ where $\widetilde{\gamma_0}$ is the unique homotopy class of path from $\widetilde y$ to $\widetilde{y_0}$.  Similarly, $L^{-1}$ takes morphisms $\widetilde{\alpha}: \widetilde{y_1} \to \widetilde{y_2}$ to $\pi(\widetilde{\alpha})$.  These functors are mutually inverse.



\end{proof}
 
The following is now a consequence of the cited result of Quillen.
\begin{theorem} \label{thm: block homotopy}
Let $S$ be a block stratification of a regular CW complex $Y$.  Then $B(\Ent_S(Y))$ is homotopic to $Y$.
\end{theorem}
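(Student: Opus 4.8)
The plan is to apply Quillen's Theorem~A (Theorem~\ref{thm: Quillen}) to the natural functor
\[
F \colon \Ent_{S_{CW}}(Y) \longrightarrow \Ent_S(Y)
\]
of Proposition~\ref{prop: CW to block}. By Proposition~\ref{prop: Nanda}, $B(\Ent_{S_{CW}}(Y))$ is the barycentric subdivision of $Y$, hence homeomorphic to $Y$; so once the hypotheses of Theorem~\ref{thm: Quillen} are verified for $F$, we conclude $Y \cong B(\Ent_{S_{CW}}(Y)) \simeq B(\Ent_S(Y))$, which is the assertion.

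Essential surjectivity of $F$ is clear: every object of $\Ent_S(Y)$ is a base point $y_i$ of a stratum $S_i$, and since $S_i$ is a union of cell interiors we may take $y_i$ to be the barycenter of a cell, so that it is an object of $\Ent_{S_{CW}}(Y)$ on which $F$ restricts to the identity (in general a within-stratum path, which is invertible in $\Ent_S(Y)$, furnishes the isomorphism). The content is therefore to show that $B(y \backslash F)$ is contractible for every object $y$ of $\Ent_S(Y)$. Fixing a lift $\widetilde y$, Lemma~\ref{lem: cover fiber} identifies $y\backslash F$ with $\Ent_{\widetilde S}(\widetilde Y_{\Ent}(\widetilde y))$, so it suffices to show this category has contractible classifying space.

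I would do this by exhibiting $\widetilde y$ (the base point of its stratum) as an \emph{initial} object of $\Ent_{\widetilde S}(\widetilde Y_{\Ent}(\widetilde y))$; a small category with an initial object has contractible classifying space, because the inclusion of the initial object is left adjoint to the projection onto the terminal category, which then induces a homotopy equivalence on classifying spaces by Lemma~\ref{lem: equivalent implies homotopic}. To identify the initial object, observe first that $\widetilde Y_{\Ent}(\widetilde y)$ is a union of cells of $\widetilde Y$ closed under passing to faces --- an entrance path landing in $\widetilde Y_{\Ent}(\widetilde y)$ is prolonged by a short entrance path into any face of the cell in which it ends --- hence it is a subcomplex, and the restriction of $\widetilde S$ to it is again a simple stratification (its strata are contractible, the transitive-closure condition is inherited, and its entrance spaces coincide with the corresponding entrance spaces in $\widetilde Y$, which are contractible by simplicity of $S$), with a unique minimal stratum, that of $\widetilde y$, since every stratum it meets is reached from $\widetilde y$ by an entrance path and entrance paths only increase strata. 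Now Proposition~\ref{QCC}, applied to $(\widetilde Y_{\Ent}(\widetilde y), \widetilde S)$, shows that the number of morphisms from $\widetilde y$ to any object $z$ equals $\dim_{\mathsf k}\Hom(\mathcal I_{\widetilde y}, \mathcal I_z)$ for the associated entrance sheaves. Since $\widetilde Y_{\Ent}(\widetilde y)$ is its own entrance space at $\widetilde y$, we have $\mathcal I_{\widetilde y} = \mathsf k_{\widetilde Y_{\Ent}(\widetilde y)}$, whence
\[
R\Hom(\mathcal I_{\widetilde y}, \mathcal I_z) = R\Gamma(\widetilde Y_{\Ent}(\widetilde y); \mathcal I_z) = R\Gamma(\widetilde Y_{\Ent}(z); \mathsf k) = \mathsf k,
\]
because $\widetilde Y_{\Ent}(z)$ is contractible. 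Thus there is a unique morphism $\widetilde y \to z$ (necessarily the identity when $z = \widetilde y$), so $\widetilde y$ is initial.

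Combining these, Theorem~\ref{thm: Quillen} yields $B(\Ent_{S_{CW}}(Y)) \simeq B(\Ent_S(Y))$ and hence $Y \simeq B(\Ent_S(Y))$. The step I expect to demand the most care is the reduction in the previous paragraph: confirming that $(\widetilde Y_{\Ent}(\widetilde y), \widetilde S)$ genuinely lies within the hypotheses needed to invoke Proposition~\ref{QCC} --- in particular that the restricted stratification has only finitely many strata, and that the entrance-sheaf computation goes through on this (possibly non-finite) subcomplex of the universal cover. Granting that, the identification of the initial object together with Quillen's theorem completes the argument.
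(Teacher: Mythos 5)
Your outline coincides with the paper's up to the point of showing $B(y\backslash F)$ is contractible: both apply Quillen's Theorem~\ref{thm: Quillen} to $F\colon \Ent_{S_{CW}}(Y)\to\Ent_S(Y)$ and use Proposition~\ref{prop: Nanda} to identify $B(\Ent_{S_{CW}}(Y))$ with $Y$. Where you diverge is in the contractibility step, and your route is genuinely different. The paper's proof applies Lemma~\ref{lem: cover fiber} together with Lemma~\ref{lem: equivalent implies homotopic} to get $B(y\backslash F)\simeq B(\Ent_{\widetilde{S_{CW}}}(\widetilde Y_{\Ent}(\widetilde y)))$, then invokes Proposition~\ref{prop: Nanda} once more (now on the CW stratification of the entrance space) to identify this with $\widetilde Y_{\Ent}(\widetilde y)$ itself, and finally uses contractibility of the entrance space, which is part of the simplicity hypothesis. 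You instead take the identification with $\Ent_{\widetilde S}(\widetilde Y_{\Ent}(\widetilde y))$ from the statement of Lemma~\ref{lem: cover fiber} and exhibit $\widetilde y$ as an initial object, establishing uniqueness of morphisms by applying Proposition~\ref{QCC} (via Lemma~\ref{lem: rel coh}) to the restricted stratification and computing $R\Hom(\mathcal I_{\widetilde y},\mathcal I_z)=\mathsf k$. This is logically sound but considerably more roundabout: it requires you to re-verify the simple-stratification hypotheses on $(\widetilde Y_{\Ent}(\widetilde y),\widetilde S)$ (including finiteness of the strata poset, which you correctly flag --- the lifted stratification has infinitely many strata on $\widetilde Y$ in general, and the needed finiteness of those met by a given entrance space is genuinely something to establish), whereas the paper's argument uses contractibility of $\widetilde Y_{\Ent}(\widetilde y)$ directly with no further sheaf-theoretic input.

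One concrete point deserving scrutiny: the statement of Lemma~\ref{lem: cover fiber} reads $\Ent_{\widetilde S}$, but the paper's own proof of Theorem~\ref{thm: block homotopy} uses $\Ent_{\widetilde{S_{CW}}}$, and the two are not the same category --- the comma category $y\backslash F$ for $F\colon\Ent_{S_{CW}}(Y)\to\Ent_S(Y)$ has one object per cell of $\widetilde Y_{\Ent}(\widetilde y)$, not one per $\widetilde S$-stratum. Your initial-object argument applies to $\Ent_{\widetilde S}(\widetilde Y_{\Ent}(\widetilde y))$ (in $\Ent_{\widetilde{S_{CW}}}$ the object $\widetilde y$ is a maximal, not minimal, element of the face poset and is not initial). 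So if you follow the lemma's statement literally your proof is self-consistent, but if (as the rest of the paper suggests) $y\backslash F$ is actually $\Ent_{\widetilde{S_{CW}}}(\widetilde Y_{\Ent}(\widetilde y))$, you would need an additional step --- for instance a second application of Theorem~\ref{thm: Quillen} or a comparison along the lines of Proposition~\ref{prop: CW to block} --- to pass from $\Ent_{\widetilde{S_{CW}}}$ to $\Ent_{\widetilde S}$ before your initial-object argument can be deployed. This is worth reconciling explicitly; the paper's version sidesteps the issue by never needing the block-stratified entrance path category on the entrance space at all.
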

\begin{proof}
Consider the functor $F: \Ent_{S_{CW}}(Y) \to \Ent_S(Y)$.  We have
\begin{align*}
B(y\backslash F) & \simeq B(\Ent_{\widetilde{S_{CW}}}(\widetilde{Y}_{\Ent}(\widetilde{y})) & \text{ by Lemma~\ref{lem: equivalent implies homotopic} and Lemma~\ref{lem: cover fiber}} \\
& \simeq \widetilde{Y}_{\Ent}(\widetilde{y}) & \text{ by Proposition~\ref{prop: Nanda}}.
\end{align*}
Hence, since $S$ is simple, $B(y\backslash F)$ is contractible. 
Therefore
\begin{align*}
Y & \simeq B(\Ent_{S_{CW}}(Y))&  \text{ by Proposition~\ref{prop: Nanda}} \\
& \simeq B(\Ent_S(Y)) & \text{ by Theorem~\ref{thm: Quillen} and Proposition~\ref{prop: CW to block}}.
\end{align*}
\end{proof}

\begin{example}
There are easy counterexamples to Theorem~\ref{thm: block homotopy} when $S$ is not block (that is, when $\widetilde Y_{\Ent}(\widetilde y)$ is not contractible).  For example, consider the stratification of the sphere into the closed northern hemisphere and open southern hemiphere.  The geometric realization of the entrance path category in this case is an interval (which is not homotopic to a sphere).
\end{example}

\subsection{The tree stratification}
Let $A$ be an HPA with vertices the poset $\mathscr I$ and $\mathcal C := \mathcal C_A$.
For each $i \in \mathscr I$ we may define the full subcategory $\mathcal C_{\geq i}$ consisting of all objects greater than or equal to $i$.  We view $B(\mathcal C_{\geq i})$ as a subset of $B(\mathcal C)$.

\begin{definition} \label{def: tree strata}
 The \newterm{tree stratification}, denoted by $S^{tr}$ of $B\mathcal C$ is the stratification given by
 \begin{align}
X_A = \coprod_{i \in \mathscr I} S^{tr}_i
 \end{align}
 where
 \begin{align}
      S^{tr}_i := B(\mathcal C_{\geq i}) \backslash \bigcup_{j > i} B(\mathcal C_{\geq j}) = B(\mathcal C^{op}_{\leq i}) \backslash \bigcup_{j < i} B(\mathcal C^{op}_{\leq j}).
 \end{align}
 We denote the exit sheaves for the tree stratification by $\mathcal Q_w$ and the entrance sheaves by $\mathcal T_w$.
\end{definition}


\begin{proposition} \label{prop: tree is block}
For each $v \in \mathscr I$, choose a lift $\widetilde v$ to the universal cover $X_A$.  Then
\[
(\widetilde{X_A})_{\Ent}(\widetilde v) = K(\Path_{A,\widetilde{v}}).
\]
Hence, the tree stratification is a block stratification.
\end{proposition}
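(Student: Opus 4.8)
The plan is to make the cell structure of the universal cover $\widetilde{X_A}$ and of the lifted tree stratification completely explicit, and then read off both sides of the claimed equality from that description. First I would record the combinatorics of $S^{tr}$. Every cell of $X_A=B\mathcal C_A$ has a unique normalized representative $[e_w<g_1<g_1g_2<\cdots<g_1\cdots g_n]$ with the $g_i$ nontrivial; I will call $w$ its \emph{source vertex} and note that its vertex set is $\{w<h(g_1)<\cdots<h(g_1\cdots g_n)\}$, all of which are $\geq w$. Since $B(\mathcal C_{\geq j})$ is the subcomplex spanned by the cells all of whose vertices are $\geq j$ — equivalently those with source vertex $\geq j$ — the definition of $S^{tr}_i=B(\mathcal C_{\geq i})\setminus\bigcup_{j>i}B(\mathcal C_{\geq j})$ unwinds to: $S^{tr}_i$ is exactly the union of the open cells with source vertex $i$, and $\overline{S^{tr}_i}=B(\mathcal C_{\geq i})$ is the subcomplex of cells with source vertex $\geq i$. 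In particular every stratum is a union of cell interiors. Pulling back along $\pi\colon\widetilde{X_A}\to X_A$, the stratum $\widetilde S^{tr}_{\widetilde w}$ is the union of the interiors of the lifts, pinned at $\widetilde w$, of the source-$w$ cells, and $\overline{\widetilde S^{tr}_{\widetilde v}}$ is the subcomplex of all cells of $\widetilde{X_A}$ with source vertex $\geq\widetilde v$, i.e.\ $\bigcup_{\widetilde u\geq\widetilde v}\widetilde S^{tr}_{\widetilde u}$.

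Next I would identify $\overline{\widetilde S^{tr}_{\widetilde v}}$ with $K(\Path_{A,\widetilde v})$. The order complex $K(\Path_{A,e_v})$ is the cone with apex $e_v$, hence simply connected, so the tautological simplicial map $\phi_v\colon K(\Path_{A,e_v})\to X_A$ sending a chain to the cell it represents lifts uniquely to $\widetilde\phi_v\colon K(\Path_{A,e_v})\to\widetilde{X_A}$ with $\widetilde\phi_v(e_v)=\widetilde v$. The crucial point is that $\widetilde\phi_v$ is injective: being simplicial it suffices to see it is injective on vertices, i.e.\ that distinct $r\neq r'$ in $\Path_{A,e_v}$ reach distinct vertices of $\widetilde{X_A}$; but the $1$-cell $[e_v<r]$ is homotopic rel endpoints to $\iota_Q(r)$, so the lift of $[e_v<r]$ starting at $\widetilde v$ and the lift of $[e_v<r']$ starting at $\widetilde v$ have the same endpoint precisely when $\iota_Q(r)\ast\overline{\iota_Q(r')}$ is null-homotopic, which by the proposition relating path homotopy in $X_A$ to equality in $A$ happens iff $r=r'$. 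Thus $K(\Path_{A,\widetilde v}):=\widetilde\phi_v(K(\Path_{A,e_v}))$ is an embedded copy of the cone; it is a subcomplex, and it coincides with $\overline{\widetilde S^{tr}_{\widetilde v}}$ (both being the cells of $\widetilde{X_A}$ with source vertex $\geq\widetilde v$). In particular $K(\Path_{A,\widetilde v})$ is contractible, and the same reasoning identifies each $S^{tr}_i$ (homeomorphic to its lift $\widetilde S^{tr}_{\widetilde i}$) with the complement of the base in the cone $K(\Path_{A,\widetilde i})$, which deformation retracts onto the apex; together with the observation that $\overline{S^{tr}_i}\cap S^{tr}_j\neq\emptyset$ holds iff there is a path $i\to j$, i.e.\ iff $i\leq j$ (a transitive relation), this shows $S^{tr}$ is exceptional.

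For the equality $(\widetilde{X_A})_{\Ent}(\widetilde v)=K(\Path_{A,\widetilde v})$ I would argue by two inclusions. For $\supseteq$: given $x\in K(\Path_{A,\widetilde v})$, the straight segment from the apex $\widetilde v$ to $x$ inside the cone lies, for $t\in[0,1)$, in open cells with source vertex $\widetilde v$ (namely the open cell of $x$ if it already contains $\widetilde v$, and otherwise that cell with $\widetilde v$ adjoined), and at $t=1$ it lands in the open cell of $x$, whose source vertex is $\geq\widetilde v$; this is an entrance path from $\widetilde v$ to $x$. For $\subseteq$: any entrance path $\gamma$ with $\gamma(0)=\widetilde v$ begins in $\widetilde S^{tr}_{\widetilde v}$, which is therefore the least stratum of its defining increasing chain, so $\gamma$ meets only strata $\widetilde S^{tr}_{\widetilde u}$ with $\widetilde u\geq\widetilde v$ and hence stays in $\bigcup_{\widetilde u\geq\widetilde v}\widetilde S^{tr}_{\widetilde u}=K(\Path_{A,\widetilde v})$.

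To conclude that $S^{tr}$ is a block stratification it then remains to check the last hypothesis of simplicity, that $K(\Path_{A,\widetilde v})\setminus K(\Path_{A,\widetilde{v'}})$ is contractible whenever non-empty; the difference is non-empty exactly when $\widetilde v\not\geq\widetilde{v'}$, and in that case the straight-line-to-$\widetilde v$ deformation of the cone $K(\Path_{A,\widetilde v})$ preserves the difference — for $t>0$ it carries any point into an open cell whose source vertex is $\widetilde v\not\geq\widetilde{v'}$ — and retracts it onto $\widetilde v$. Combining this with $X_A$ being a regular CW complex (established earlier) and each $S^{tr}_i$ being a union of cell interiors yields the ``block'' assertion. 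The step I expect to be the main obstacle is the injectivity of $\widetilde\phi_v$ — that distinct paths out of $v$ reach distinct vertices of the universal cover — since this is precisely where the homotopy path algebra axioms enter, through the earlier proposition; the remaining steps are bookkeeping with normalized cells and with the straight-line homotopy of a cone.
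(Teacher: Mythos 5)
Your proof is correct and takes essentially the same route as the paper: identify each entrance space of the lifted tree stratification with the order complex $K(\Path_{A,\widetilde v})$ embedded as the subcomplex of cells with all vertices $\geq \widetilde v$, then read off contractibility of the cone and of the differences of cones to get the simple condition, and observe that the strata are unions of open cells to get the block condition. The paper's own proof is terse — it asserts $K(\Path_{A,\widetilde v}) = B\widetilde{\mathcal C}_{\geq \widetilde v} = (\widetilde{X_A})_{\Ent}(\widetilde v)$ in one line — and your write-up supplies the bookkeeping it elides: the normalized-representative/source-vertex description of the cells of $X_A$ and of $S^{tr}_i$, the explicit simplicial map $\widetilde\phi_v$ and the check that it is injective on vertices (which is exactly where the HPA proposition relating path-homotopy in $X_A$ to equality in $A$ does its work), and the two-inclusion argument for the entrance space. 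You also verify the exceptional condition explicitly, which the paper leaves implicit. No gap.
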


\begin{proof}
Let $\widetilde{\mathcal{C}}$ be the category whose objects are vertices of $\widetilde{X_A}$, and morphisms are the edges of $\widetilde{X_A}$. Then,
\begin{align*}
    K(\Path_{A,\widetilde{v}}) & =B\widetilde{\mathcal C}_{\geq \widetilde{v}} \\
    & =(\widetilde{X_A})_{\Ent}(\widetilde v).
\end{align*}
To justify that the tree stratification is a block stratification,
we regard $B\mathcal C$ as a regular CW complex and observe that the strata are unions of open cells by definition.  Furthermore, since $K(\Path_{A,\widetilde{v}})$ is the order complex of a bounded below poset, it is contractible and the contraction (to the vertex $\widetilde v$) also contracts the subspace $K(\Path_{A,\widetilde{v}}) \backslash K(\Path_{A,\widetilde{v'}})$.
\end{proof}


Consider a point $y \in Y$.  The trivial path $e_y$ is an entrance path hence becomes a vertex in the simplicial complex $B(\Ent_S(Y))$.  Similarly, for an entrance path $\gamma \in \Ent_S(Y)$.  As a morphism in $\Ent_S(Y)$ this naturally corresponds to the edge in $B(\Ent_S(Y))$ which we denote by $F(\gamma)$.  

\begin{lemma} \label{lem: entrance to tree entrance}
Let $S$ be a block stratification of a regular CW complex, $Y$. The functor  \begin{align*}
F: \Ent_S(Y) & \to \Ent_{S^{tr}}(B(\Ent_S(Y))) \\
y & \mapsto e_y \in B(\Ent_S(Y)) \\
\gamma & \mapsto F(\gamma)
 \end{align*}
is an equivalence. 
\end{lemma}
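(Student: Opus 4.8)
The plan is to exhibit $F$ as a composition of functors already shown to be equivalences, together with one new equivalence whose verification is essentially bookkeeping. Concretely, I would first invoke Proposition~\ref{prop: tree is block}, which tells us that the tree stratification $S^{tr}$ on $B(\Ent_S(Y))$ is itself a block stratification of the regular CW complex $B(\Ent_S(Y))$. This is the key structural input: it means Theorem~\ref{thm: block homotopy} and, more importantly, Proposition~\ref{prop: CW to block} apply to the pair $(B(\Ent_S(Y)), S^{tr})$. In particular we get
\[
\Ent_{S^{tr}}(B(\Ent_S(Y))) \cong \Ent_{S_{CW}}(B(\Ent_S(Y))) / L_{S^{tr}},
\]
where $L_{S^{tr}}$ is the localizing subcategory of paths contained in a single $S^{tr}$-stratum.

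Next I would identify the middle term. By Proposition~\ref{prop: Nanda}, $B(\Ent_{S_{CW}}(B(\Ent_S(Y))))$ is the barycentric subdivision of $B(\Ent_S(Y))$, and more to the point $\Ent_{S_{CW}}(B(\Ent_S(Y)))$ is the category whose objects are the cells of $B(\Ent_S(Y))$ and whose morphisms are generated by face relations. Since $B(\Ent_S(Y))$ is the nerve of $\Ent_S(Y)$, its cells are the non-degenerate simplices, i.e.\ chains of composable entrance paths, and the face maps are exactly the ones obtained by composing or deleting entrance paths. The functor $F$ of the statement sends $y$ to the $0$-cell $e_y$ and an entrance path $\gamma$ to the $1$-cell it represents. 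I would show that $F$ factors as $\Ent_S(Y) \to \Ent_{S_{CW}}(B(\Ent_S(Y))) \to \Ent_{S_{CW}}(B(\Ent_S(Y)))/L_{S^{tr}} \cong \Ent_{S^{tr}}(B(\Ent_S(Y)))$, where the first arrow sends a vertex $y$ to the $0$-cell $e_y$ and an arrow $\gamma$ to the edge $F(\gamma)$ (landing in $S^{tr}_{h(\gamma)}$ once one checks the tree-stratification combinatorics from Definition~\ref{def: tree strata}), and the second is the localization functor from Proposition~\ref{prop: CW to block}. The content is then that this composite is an equivalence: essential surjectivity is clear since every cell of $B(\Ent_S(Y))$ is a chain of entrance paths, which in the localized category is isomorphic to its terminal vertex and hence to an object in the image of $F$; fullness and faithfulness amount to checking that homotopy classes of $S^{tr}$-entrance paths between the $e_y$'s biject with homotopy classes of $S$-entrance paths between the $y$'s, which is precisely the statement that composing the edges $F(\gamma_1),\dots,F(\gamma_n)$ along a non-degenerate simplex recovers $F(\gamma_n\star\cdots\star\gamma_1)$ up to path homotopy in $B(\Ent_S(Y))$ — i.e.\ the defining $2$-cells of the nerve.

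The main obstacle will be the faithfulness/fullness bookkeeping: one must carefully match the relations. An $S^{tr}$-entrance path in $B(\Ent_S(Y))$ moving ``up'' the tree stratification, once pushed into the CW entrance category and localized, is a zig-zag of edges (some reversed, since they lie in a single $S^{tr}$-stratum and are thus invertible in $L_{S^{tr}}$); I need that every such zig-zag is equivalent to a single edge $F(\gamma)$, and that two edges $F(\gamma), F(\gamma')$ are identified iff $\gamma \sim_p \gamma'$ in $\Ent_S(Y)$. The first point follows from the poset structure of $\Path_A$ inside $K(\Path_A)$ used in Proposition~\ref{prop: tree is block} — within a simplex $[\mathbf p]$ the relevant faces are all comparable, so reversed edges can be canceled — and the second follows from the $2$-cells of $N(\Ent_S(Y))$, which encode exactly the composition relations in $\Ent_S(Y)$. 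Assembling these, $F$ is an equivalence; I would also remark that this is consistent with Theorem~\ref{thm: block homotopy}, since both $B(\Ent_S(Y))$ and $B(\Ent_{S^{tr}}(B(\Ent_S(Y))))$ are then homotopic to $Y$.
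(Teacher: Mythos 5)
Your route is genuinely different from the paper's, and mostly the right idea. The paper's proof is short and geometric: essential surjectivity is the tautological bijection on base points, faithfulness comes from choosing the homotopy of Theorem~\ref{thm: block homotopy} so that it carries $\gamma$ to $F(\gamma)$, and fullness follows because $S^{tr}$ is block (Proposition~\ref{prop: tree is block}), so any $S^{tr}$-entrance path can be pushed onto an edge. You instead reach the same conclusion through the categorical localization $\Ent_{S^{tr}}(B\Ent_S(Y)) \cong \Ent_{S_{CW}}(B\Ent_S(Y)) / L_{S^{tr}}$ of Proposition~\ref{prop: CW to block} and the combinatorics of the nerve. Both proofs hinge on Proposition~\ref{prop: tree is block}; the paper's buys brevity by delegating to Theorem~\ref{thm: block homotopy}, while yours makes the zig-zag reduction explicit and more self-contained.

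That said, there are fixable but real slips in the write-up. The claimed factorization of $F$ through $\Ent_{S_{CW}}(B\Ent_S(Y))$ does not type-check: $F(\gamma)$ is a $1$-cell, hence an \emph{object} of $\Ent_{S_{CW}}$, whereas in $\Ent_{S_{CW}}$ (a face poset for the cell stratification) there are no non-identity morphisms between $0$-cells $e_{y_i}$ and $e_{y_j}$. The functor must be defined directly into the localization, sending $\gamma$ to the zig-zag $e_{t(\gamma)} \xleftarrow{\ \sim\ } F(\gamma) \to e_{h(\gamma)}$, the left arrow being invertible because the $1$-cell $F(\gamma)$ lies in $S^{tr}_{t(\gamma)}$ together with $e_{t(\gamma)}$ — the \emph{tail} stratum, not $S^{tr}_{h(\gamma)}$ as you wrote. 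For the same reason a non-degenerate $n$-cell $[\gamma_1|\cdots|\gamma_n]$ lies in the tree stratum of its \emph{initial} vertex $e_{t(\gamma_1)}$, so after localization it becomes isomorphic to the initial vertex, not the terminal one. After these corrections, the ``bookkeeping'' you rightly flag — that the $2$-simplices of the nerve impose exactly the composition relations of $\Ent_S(Y)$, so every zig-zag between $0$-cells reduces to a single morphism and two edges are identified iff the paths are path-homotopic — is precisely where all the work lives and still needs to be carried out; once it is, the argument closes.
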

\begin{proof}
Notice that the tree stratification of $B(\Ent_S(Y)))$ has a canonical set of base points $e_{y_i}$ and the functor is, by definition, a bijection on base-points, hence  essentially surjective by definition.

 We may choose $\gamma$ so that the homotopy in Theorem \ref{thm: block homotopy} takes $\gamma$ to $F(\gamma)$.  This implies $F$ is faithful.  
 
 On the other hand since $S^{tr}$ is a block stratification by Proposition~\ref{prop: tree is block}, any entrance map in $\Ent_{S^{tr}}(B(\Ent_S(Y)))$ can be moved homotopically to an edge.  This implies that $F$ is full.
\end{proof} 

\begin{corollary}\label{cor: tree strata}
We have an equivalence of categories given by the composition
\begin{equation}
\begin{tikzcd}
Fun(\Ent_S(Y), \mathsf k\Mod)^{\op{op}} \ar[rrr, "Lemma~\ref{lem: entrance to tree entrance}"]  & & & Fun(\Ent_{S^{tr}}(B(\Ent_S(Y))), \mathsf k\Mod)^{\op{op}}  \\
Sh_S(Y) \ar[rrr, dashed] \ar[u, "Theorem~\ref{thm: main equivalence}"]& & &  Sh_{S^{tr}}B(\Ent_S(Y)) \ar[u, "Theorem~\ref{thm: main equivalence}"]
\end{tikzcd}
\end{equation}
 \end{corollary}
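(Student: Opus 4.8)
The plan is to simply assemble the square from the two equivalences already in hand, checking that it commutes. First I would observe that the left vertical arrow is the equivalence $Sh_S(Y) \cong Fun(\Ent_S(Y),\mathsf k\Mod)^{\op{op}}$ from Theorem~\ref{thm: main equivalence}, applied to the stratification $S$ of $Y$. For the right vertical arrow, I note that $S^{tr}$ is a block stratification of the regular CW complex $B(\Ent_S(Y))$ by Proposition~\ref{prop: tree is block}, hence in particular simple, so Theorem~\ref{thm: main equivalence} applies again and gives $Sh_{S^{tr}}(B(\Ent_S(Y))) \cong Fun(\Ent_{S^{tr}}(B(\Ent_S(Y))),\mathsf k\Mod)^{\op{op}}$. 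The top horizontal arrow is the equivalence of functor categories induced by the equivalence of categories $F: \Ent_S(Y) \to \Ent_{S^{tr}}(B(\Ent_S(Y)))$ from Lemma~\ref{lem: entrance to tree entrance} (precomposition with $F^{-1}$, then taking opposite categories, is an equivalence since $F$ is).

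The dashed bottom arrow is then \emph{defined} to be the composition that makes the square commute: namely, going up via Theorem~\ref{thm: main equivalence}, across via Lemma~\ref{lem: entrance to tree entrance}, and back down via the inverse of the right instance of Theorem~\ref{thm: main equivalence}. Since each of the three arrows used is an equivalence of categories, their composite is an equivalence, and so the dashed arrow is an equivalence of categories. This is literally all that the corollary asserts, so in fact there is essentially nothing to prove beyond citing the three ingredients.

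The one point that deserves a sentence of care — and the only place where a genuine argument could be hiding — is making sure the hypotheses of Theorem~\ref{thm: main equivalence} are met on the target side: we need $B(\Ent_S(Y))$ to be a space of the type to which the theorem applies (locally compact, Hausdorff, etc.) and $S^{tr}$ to be a simple stratification of it. Both follow from Proposition~\ref{prop: tree is block}: it identifies the entrance spaces $(\widetilde{X_A})_{\Ent}(\widetilde v)$ with order complexes $K(\Path_{A,\widetilde v})$ of bounded-below posets, which are contractible with contractible complements, giving simplicity; and $B(\Ent_S(Y))$ is a regular semi-simplicial complex, hence a regular CW complex of the required topological flavor. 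I do not anticipate any real obstacle here; the corollary is a formal consequence of results already established, and the proof is just the diagram-chase recording that fact.

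\begin{proof}
The left vertical arrow is the equivalence of Theorem~\ref{thm: main equivalence} for the simple stratification $S$ of $Y$. By Proposition~\ref{prop: tree is block}, the tree stratification $S^{tr}$ is a block stratification of the regular CW complex $B(\Ent_S(Y))$, hence in particular a simple stratification of a space of the required type, so Theorem~\ref{thm: main equivalence} applies again and furnishes the right vertical arrow $Sh_{S^{tr}}B(\Ent_S(Y)) \cong Fun(\Ent_{S^{tr}}(B(\Ent_S(Y))),\mathsf k\Mod)^{\op{op}}$. By Lemma~\ref{lem: entrance to tree entrance} the functor $F: \Ent_S(Y) \to \Ent_{S^{tr}}(B(\Ent_S(Y)))$ is an equivalence, so precomposition with a quasi-inverse of $F$ (followed by passing to opposite categories) induces the top horizontal equivalence. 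Defining the dashed arrow to be the composite of the left vertical arrow, the top horizontal arrow, and the inverse of the right vertical arrow makes the square commute, and since all three constituents are equivalences of categories, so is the dashed arrow.
\end{proof}
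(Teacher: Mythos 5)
Your proof is correct and matches what the paper intends: the corollary is stated without a separate proof precisely because it is the formal assembly of Theorem~\ref{thm: main equivalence} (applied twice, using Proposition~\ref{prop: tree is block} to verify that $S^{tr}$ is simple on the target side) and Lemma~\ref{lem: entrance to tree entrance}, with the dashed arrow defined so as to make the square commute. Your added caution about checking the hypotheses of Theorem~\ref{thm: main equivalence} for $B(\Ent_S(Y))$ is exactly the right point to flag, and Proposition~\ref{prop: tree is block} indeed supplies it.
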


Note that $\mathcal{T}_w$ is the entrance sheaf at $w$ on $B\Ent_S(Y)$ for the tree stratification. We define $\mathcal{Q}_w$ as the exit sheaf at $w$ on $B\Ent_S(Y)$ for the tree stratification. Corollary~\ref{cor: tree strata} says that the equivalence in Theorem~\ref{thm: main equivalence} and the homotopy in Theorem~\ref{thm: block homotopy} connect objects in the following table.

\begin{table}[H]
\begin{tabular}{ c | c | c | c }
\hline
  & projectives & injectives & simples \\
  \hline
 $A\Mod$ & $P_w$ & $I_w$& $\mathsf{k}_w$ \\  
 $Sh_S(Y)$ & $\mathcal{P}_w$ & $\mathcal{I}_w$ & ${i_w}_!\mathsf k_{S_w}$ \\
 $Sh_{S^{tr}}(B\Ent_S(Y))$ & $\mathcal{Q}_w$ & $\mathcal{T}_w$ & ${i_w}_!\mathsf{k}_{S^{tr}_w}$ \\ 
 \hline
\end{tabular} 
\hspace{\linewidth}
\caption{Summary of the results of \S 4}\label{tab: section4 summary}
\end{table}

\begin{remark} \label{rem: Serre functor}
Notice that in Table~\ref{tab: section4 summary}, as $A$ is a finite dimensional algebra, the Serre functor for $D(A)$ is $\Hom_A(-,A)^*$.  This takes $P_w$ to $I_w$.  Hence, the Serre functor for $DSh_S(Y)$ takes $\mathcal P_w$ to $\mathcal I_w$ and similarly  the Serre functor for $DSh_{S^{tr}}(B\End_S(Y))$ takes $\mathcal Q_w$ to $\mathcal T_w$.
\end{remark}

\begin{remark}
Given a block stratification, we know that $Y$ is homotopic to $B\Ent_S(Y)$ by Theorem~\ref{thm: block homotopy}.  However, the homotopy need not preserve the strata (although it preserves the entrance spaces), see Example~\ref{Ex: P2 section 5}. On the other hand, there is a correspondence (in the proof of Quillen's Theorem A) which should induce the equivalence in Corollary~\ref{cor: tree strata} by Fourier-Mukai transform.
\end{remark}

\section{Toric HPAs} \label{sec: toric}

In this section, we discuss HPAs that arise as endomorphism algebras of line bundles on toric varieties. In particular, we prove that the stratification on the torus considered by Bondal \cite{Bon06} is a block stratification  (Corollary \ref{cor: BR is block}).

This result has a few implications. First, as an immediate consequence of Theorem \ref{thm: main equivalence}, under a properness assumption, the entrance sheaves always form a full strong exceptional collection of the bounded derived category of constructible sheaves with respect to the Bondal stratification.  Second, in the special case when Bondal's collection of line bundles is a full strong exceptional collection, the classifying space of the endomorphism HPA is homotpic to $\mathbb T$ (Theorem \ref{thm: torus}). This suggests that important topological information about the mirror can be obtained directly from a full strong exceptional collection of line bundles. Conversely, the HPA $A$ of the Bondal stratification as the diagonal bimodule over itself has a simplicial resolution whose underlying space is $X_A$, which we will describe in Corollary \ref{cor: cell res}. This implies there is a resolution of the structure sheaf of the diagonal of a toric variety with terms in the Bondal collection and with length at most the maximal path length of the HPA. We refer to \cite{FH23} for a direct consequence built on our results here. In comparison to our approach, we also refer to \cite{HHL23} for a direct construction of a cellular resolution of the diagonal supported on a cellular refinement of the Bondal stratification with length $=\dim \mathbb T$.

Another advantage of our result is that the Bondal stratification only depends on the rays and not the whole fan. Hence, our construction provides a constructible sheaf category which is invariant under variation of GIT but still ``close enough'' to the FLTZ mirror of the toric variety from any chamber. It would be interesting to investigate the exact role this category plays in connecting (mirrors of) toric varieties under variation of GIT.

\subsection{HPAs from line bundles}

Let $G \subseteq \op{Gl}_{n+k}$ be an abelian group acting diagonally on a vector space $V = \mathbb A^{n+k}$.  We denote by $\mathfrak X := [V / G]$ the associated Artin stack.

Applying the exact functor $\widehat{-} = \Hom(-, \mathbb G_m)$ we obtain an exact sequence
 \begin{equation}
     0\rightarrow M \rightarrow \mathbb{Z}^{n+k} \xrightarrow{\mu} \widehat{G} \rightarrow 0.
 \end{equation}
 where $M$ is defined to be the kernel of $\mu$.
 
 \begin{definition}
We say $\mathfrak X$ is \newterm{cohomologically proper} if $\mu(\mathbb R^{n+k}_{\geq 0})$ is a strongly convex cone in $\widehat{G}_{\mathbb R}$ and the fixed locus of $G$ is zero. 
 \end{definition}

\begin{lemma}
The (underived) endomorphism algebra of a collection of line bundles on a cohomologically proper toric stack is an HPA.
\end{lemma}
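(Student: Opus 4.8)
The plan is to show that the endomorphism algebra of a collection of line bundles on a cohomologically proper toric stack $\mathfrak X = [V/G]$ can be presented as $\mathsf kQ/I_S$ where the quiver $Q$ and ideal $I_S$ satisfy the two cancellation conditions of Definition~\ref{defn: HPA}. First I would set up the combinatorial model: write $\operatorname{Pic}(\mathfrak X) = \widehat G$, so that a line bundle on $\mathfrak X$ is a character $\chi \in \widehat G$ and $\operatorname{Hom}(\mathcal L_\chi, \mathcal L_{\chi'})$ is computed by the graded piece of the Cox ring in degree $\chi' - \chi$, i.e.\ by the lattice points of $\mu^{-1}(\chi'-\chi) \cap \mathbb R^{n+k}_{\geq 0}$. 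Fix a finite collection $\mathcal L_{\chi_1},\dots,\mathcal L_{\chi_r}$ of line bundles; take $Q_0 = \{1,\dots,r\}$, take $Q_1$ to be the irreducible (non-factorable) degree-raising monomials giving homomorphisms between the chosen bundles, and take $S$ to be, for each pair $(i,j)$, the set of all monomials of a fixed multidegree realizing a hom from $\chi_i$ to $\chi_j$, grouped by that multidegree (the classes $S^\alpha_{i,j}$). Then a path in $Q$ is a word of monomials whose product lands in the right graded piece, and $I_S$ is exactly the relation identifying two monomials iff they are equal as elements of the Cox ring — i.e.\ $p - p' \in I_S$ iff $p$ and $p'$ are the same monomial in $\mathsf k[x_1,\dots,x_{n+k}]$. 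So the path poset $\Path_A$ is literally a set of monomials, and multiplication is multiplication of monomials.

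With this dictionary in hand, the two HPA axioms become the statement that polynomial multiplication by a fixed monomial is injective (cancellative) on monomials. Concretely: if $r,p,p'$ are monomials and $rp = rp'$ as monomials, then $p = p'$, which is exactly the first axiom ($rp - rp' \in I_S \Rightarrow p - p' \in I_S$), and symmetrically for right multiplication. Two subsidiary points need checking. First, I must verify $A = \mathsf kQ/I_S$ really is the endomorphism algebra and not a proper quotient or a strict subalgebra: this is where cohomological properness enters — strong convexity of $\mu(\mathbb R^{n+k}_{\geq 0})$ guarantees that each graded piece of the Cox ring is a finite-rank free $\mathsf k$-module (so $A$ is finitely generated), that there are only finitely many irreducible arrows, and, crucially via strong convexity, that there are no ``loops'' — a nonconstant monomial cannot raise and then return the degree, so $Q$ is acyclic and composition of homs is faithfully modeled by monomial multiplication with no unexpected cancellations to zero. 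Second, I should confirm that the disjointness requirement on the $S^\alpha_{i,j}$ holds and that $I_S$ as generated by the ``same multidegree'' relations actually recovers all relations among monomial compositions; this is automatic because two monomials are equal iff they have the same exponent vector, hence the same multidegree, so every relation is already visible within a single $S^\alpha$.

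The main obstacle I anticipate is the bookkeeping in the previous paragraph: carefully choosing $Q_1$ (the irreducible arrows) so that every homomorphism between chosen line bundles is a product of arrows — equivalently, that the monomials realizing each hom are generated under multiplication by the irreducible ones — and then checking that the resulting presentation of $A$ is neither over- nor under-complete. Strong convexity of the cone $\mu(\mathbb R^{n+k}_{\geq 0})$ is the lever that makes all of this work: it is what forces finiteness and acyclicity, and it is what lets me invoke the cancellativity of monomial multiplication cleanly. Once the identification $\Path_A \simeq (\text{monomials})$ is in place, the HPA axioms are immediate, so essentially all the work is in establishing that identification.
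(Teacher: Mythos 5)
Your proposal is correct and follows essentially the same route as the paper's proof: identify vertices with the characters of the chosen line bundles, arrows with irreducible monomial sections of the Cox ring, relations with monomial equalities, and deduce the two HPA cancellation axioms from cancellativity of monomial multiplication, with cohomological properness (strong convexity of $\mu(\mathbb R^{n+k}_{\geq 0})$) guaranteeing finiteness and acyclicity. The paper's version is terser, leaving the cancellativity observation implicit; your writeup fills in exactly that step, so there is no substantive difference.
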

\begin{proof}
The vertices of the quiver can be identified with elements in the Picard lattice. Since the stack is cohomologically proper, this guarantees that the endomorphism algebra is finite.  Morphisms between line bundles are generated by monomial sections (realized as graded pieces of the Cox ring). Relations among the sections are generated by equalities of monomials $m_1...m_t = n_1...n_s$.  The corresponding ideal satisfies the conditions of Definition \ref{defn: HPA}.
\end{proof}
\begin{definition}
Let $L_1, ..., L_t$ be a collection of line bundles on a toric stack.  The HPA $A = \End(\oplus L_i)$ is called a  \newterm{toric HPA}.  If $L_1, ..., L_t$ is a full strong exceptional collection then $A$ is called a \newterm{toric FSEC HPA}.
\end{definition}
\subsection{Bondal-Ruan type HPAs}
In this section, we study a stratification on $\mathbb{T}^n$ defined by Bondal-Ruan \cite{Bon06}, cohomologically properness assumption.


Let $D_i$ for $1\leq i\leq n+k$ be the standard basis of $\mathbb{Z}^{n+k}$.  We consider the \newterm{Bondal-Ruan map} 
\begin{align}
\Phi: \mathbb{T}^n & \rightarrow \widehat{G}\\
\sum_i a_i D_i + M & \mapsto \mu(-\sum_i\floor{a_i}D_i)
\end{align}
naturally defined on the fiber $\mu_{\mathbb{R}}^{-1}(0)/M=M_{\mathbb{R}}/M=\mathbb{T}^n$, where $\floor{a_i}$ is the floor of $a_i$. Let
\begin{align}
\widetilde{\Phi}:\mu_{\mathbb{R}}^{-1}(0)& \rightarrow \widehat{G}\\
\sum_i a_i D_i & \mapsto \mu(-\sum_i\floor{a_i}D_i)
\end{align}
be the natural lift of $\Phi$ to an $M$-periodic map.

We take the associated stratification $S$ on $\mathbb{T}^n$ whose strata are given by the level sets of $S_D := \Phi^{-1}(D)$, equipped with the \newterm{Picard order}:
\begin{align}\label{Pic}
S_E \leq S_D \text{ iff } E-D \text{ is effective.}
\end{align}

For effective $D=\sum_\rho b_\rho D_\rho\in \mathbb{Z}^{n+k}$, we consider the following regions
\begin{align*}
    {\op{Poly}_D} & = \mathbb{R}^{n+k}_{\geq -D}\cap \mu^{-1}_{\mathbb{R}}(0)=\{m\in M_{\mathbb{R}}: \langle m, D_\rho \rangle\geq -b_\rho \} & \\
    \widetilde{S_D} & = (-D+[0,1)^{n+k} ) \cap \mu_{\mathbb{R}}^{-1}(0) &
\end{align*}

\begin{lemma} \label{lem: image zonotope}
There exists a canonical homeomorphism $f: \widetilde{S_D}\rightarrow S_D$. Furthermore,
\[
\im \Phi=\mu_\mathbb{R}([0,1)^{n+k})\cap \widehat G=\{\mu(D) :S_D\neq \emptyset\}.
\]
\end{lemma}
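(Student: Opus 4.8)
The plan is to split the statement into two assertions: the existence of the canonical homeomorphism $f\colon \widetilde{S_D}\to S_D$, and the description of $\im\Phi$ as the intersection $\mu_{\mathbb R}([0,1)^{n+k})\cap\widehat G$, which in turn equals $\{\mu(D): S_D\neq\emptyset\}$. For the first part, I would use the fact that $\widetilde{S_D}$ sits inside the hyperplane $\mu_{\mathbb R}^{-1}(0)$, on which the quotient map $\mu_{\mathbb R}^{-1}(0)\to M_{\mathbb R}/M=\mathbb T^n$ is the universal cover. So it suffices to show that $\widetilde{S_D}=(-D+[0,1)^{n+k})\cap\mu_{\mathbb R}^{-1}(0)$ maps bijectively (hence homeomorphically, since the map is open and continuous) onto $S_D=\Phi^{-1}(D)$. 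Chasing the definitions, a point $\sum_i a_i D_i\in\mu_{\mathbb R}^{-1}(0)$ lies in $\Phi^{-1}(D)$ exactly when $\mu(-\sum_i\floor{a_i}D_i)=\mu(D)$, i.e. $D+\sum_i\floor{a_i}D_i\in M=\ker\mu$; translating $\sum_i a_i D_i$ by this lattice vector (which lies in $\mu_{\mathbb R}^{-1}(0)$, so stays in the fiber) moves it into $-D+[0,1)^{n+k}$ uniquely, since each coordinate is determined mod $\mathbb Z$ and the half-open box is a fundamental domain for the $\mathbb Z^{n+k}$-translation action on each coordinate. This gives a well-defined inverse to the restriction of the covering projection, proving $f$ is a homeomorphism.

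For the second part I would argue the two equalities separately. The inclusion $\im\Phi\subseteq\mu_{\mathbb R}([0,1)^{n+k})\cap\widehat G$ is immediate: $\Phi(\sum a_iD_i+M)=\mu(-\sum\floor{a_i}D_i)=\mu(\sum(a_i-\floor{a_i})D_i)$ because $\sum a_iD_i\in\mu_{\mathbb R}^{-1}(0)$ forces $\mu_{\mathbb R}(\sum a_iD_i)=0$, and each fractional part $a_i-\floor{a_i}$ lies in $[0,1)$; it is of course in $\widehat G$ since $\mu$ is an integral map. Conversely, given $\mu(v)$ with $v\in[0,1)^{n+k}$, I want to realize it as $\Phi$ of some torus point, equivalently (using $\widetilde{S_D}\cong S_D$ and the defining formula) to show $S_D\neq\emptyset$ for $D=$ the effective integral vector one reads off; here the key point is that $\widetilde{S_D}=(-D+[0,1)^{n+k})\cap\mu_{\mathbb R}^{-1}(0)$ is nonempty iff the affine subspace $\mu_{\mathbb R}^{-1}(0)$ meets the translated box, and this should follow because $-D+v\in\mu_{\mathbb R}^{-1}(0)$ whenever $\mu(v)=\mu(D)$, which is exactly the condition picking out which $D$ arise. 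Finally the equality $\mu_{\mathbb R}([0,1)^{n+k})\cap\widehat G=\{\mu(D):S_D\neq\emptyset\}$ is the same computation phrased as: for $D$ effective, $S_D\neq\emptyset$ iff $\widetilde{S_D}\neq\emptyset$ iff $(-D+[0,1)^{n+k})\cap\mu_{\mathbb R}^{-1}(0)\neq\emptyset$ iff $\mu(D)\in\mu_{\mathbb R}([0,1)^{n+k})$, using that the nonemptiness of the box-intersection with the fiber translates, after adding $D$, to $\mu(D)$ being a $\mu$-image of a box point.

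The main obstacle I anticipate is keeping the bookkeeping between the three spaces straight: the fiber $\mu_{\mathbb R}^{-1}(0)\subset\mathbb R^{n+k}$, its quotient torus $\mathbb T^n$, and the codomain $\widehat G$, together with the role of the $M$-periodicity of $\widetilde\Phi$ and the half-open box as a fundamental domain. In particular, one must check that the translation moving a point into $-D+[0,1)^{n+k}$ genuinely stays inside $\mu_{\mathbb R}^{-1}(0)$ — this uses that the lattice vector involved is $D+\sum\floor{a_i}D_i\in M=\ker\mu$, so the whole argument hinges on correctly identifying when $D+\sum\floor{a_i}D_i\in M$ and using $M_{\mathbb R}=\mu_{\mathbb R}^{-1}(0)$. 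Once that identification is pinned down, both the homeomorphism and the image description fall out as essentially the same fundamental-domain bijection seen from two angles; I do not expect any genuinely hard analytic or topological input beyond the standard covering-space fact that an open continuous bijection between the relevant spaces is a homeomorphism, plus continuity of $\Phi$ away from the measure-zero locus where floors jump (which is irrelevant to the set-level image statement).
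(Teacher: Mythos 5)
Your proposal is correct and takes essentially the same approach as the paper: both identify $\widetilde{S_D}$ as a fundamental-domain slice of $\Phi^{-1}(D)$ under the covering $\mu_{\mathbb R}^{-1}(0)=M_{\mathbb R}\to\mathbb T^n$ (translating a lift by $D+\sum_i\floor{a_i}D_i\in M$ to land in the half-open box $-D+[0,1)^{n+k}$), and deduce the image description from $\widetilde{S_D}+D=\mu_{\mathbb R}^{-1}(\mu(D))\cap[0,1)^{n+k}$. Like the paper, you assert but do not actually check that the restricted covering map is open (equivalently, that the inverse is continuous), so your argument is neither more nor less complete than the paper's own on that point.
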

\begin{proof}
For any $D=\sum_i b_i D_i\in \mathbb{Z}^{n+k}$,
\begin{align*}
  \widetilde{S_D} & =\mu_{\mathbb{R}}^{-1}(0)\cap [-b_i, -b_i+1)_{i=1}^{n+k}&\\
  & = \{\sum_i a_i D_i: -b_i\leq a_i<-b_i+1, \mu_{\mathbb{R}}(\sum_i a_i D_i)=0\}\subset M_{\mathbb{R}} \\
  & =\{\sum_i a_i D_i + M:\Phi(\sum_i a_i D_i + M)= \mu(D)\}\subset \mathbb{T}^n = M_{\mathbb R} / M \\
  & \text{ (since $\mu_{\mathbb{R}}^{-1}(0)\cap [-b_i, -b_i+1)_{i=1}^{n+k}$ is already in a fundamental domain)}\\
  & = \Phi^{-1}(D) = S_D \hspace{25mm} \text{ by definition of $S_D$}
\end{align*}

In other words, the composition of the above gives a canonical homeomorphism
\begin{align*}
f: \widetilde{S_D}  & \to S_D \\
\sum_ia_iD_i & \mapsto \sum_i a_i D_i + M
\end{align*}

In particular, this implies $\widetilde{S_D} + D = \mu_{\mathbb{R}}^{-1}(D)\cap [0,1)^{n+k}\neq \emptyset \Leftrightarrow S_D = \Phi^{-1}(D)\neq \emptyset$.
\end{proof}

 

\begin{proposition}\label{prop: picard order}
Let $\mathbb R^n_{\Ent}(D)$ be the entrance space at a point in $\widetilde{S_D}$ (see Definition \ref{defn: ent/exit space}). We have an equality
\[
\mathbb R^n_{\Ent}(D) = \op{Poly}_D. 
\]
In particular, if we fix a base-point $x_D \in S_D$ for each stratum.  Then, there is a 1-1 correspondence between the monomial basis 
\[
m \in \Hom_{D(\mathfrak{X})}(\mathcal{O}(-D),\mathcal{O}(-E))
\]
and entrance paths starting at $x_D$ and ending at $x_E$.  Similarly, if $\gamma$ is an entrance path originating in $S_D$, then the unique lift  $\widetilde{\gamma}$ to $M_{\mathbb R}$ starting in $\widetilde{S_D}$ lies inside ${\op{Poly}_D}$. Finally, $\op{Poly}_D \backslash \op{Poly}_{E}$ is star-shaped with center $\widetilde{x_D}$ for any $\mu(D),\mu(E) \in \op{Im} \Phi$.  
\end{proposition}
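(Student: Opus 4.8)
The plan is to reduce every assertion to explicit combinatorics of the tiling of the universal cover $M_{\mathbb R} = \mu_{\mathbb R}^{-1}(0) \cong \mathbb R^n$ of $\mathbb T^n$ by the half-open boxes $\widetilde{S_E} = (-E + [0,1)^{n+k}) \cap M_{\mathbb R}$, $E \in \mathbb Z^{n+k}$, which (as in Lemma~\ref{lem: image zonotope}) is precisely the pullback $\widetilde S$ of the stratification $S$; throughout, $\widetilde{x_D}$ is the lift of $x_D$ in $\widetilde{S_D}$, and $\mathbb R^n_{\Ent}(D)$ is the entrance space of $(M_{\mathbb R},\widetilde S)$ at $\widetilde{x_D}$. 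Write $m_\rho := \langle m, D_\rho\rangle$ and $E_\rho$ for the $\rho$-th coordinate of $E$, so that $\op{Poly}_E = \{m \in M_{\mathbb R} : m_\rho \geq -E_\rho \text{ for all }\rho\}$ and $\widetilde{S_E} = \{m \in M_{\mathbb R} : m_\rho \in [-E_\rho, -E_\rho+1) \text{ for all }\rho\} \subseteq \op{Poly}_E$. Two elementary facts drive everything: \emph{(i)} $\op{Poly}_E$ is convex, being an intersection of finitely many half-spaces with the linear subspace $M_{\mathbb R}$; and \emph{(ii)} since $\overline{\widetilde{S_E}}$ is the closed box slice, $\overline{\widetilde{S_E}} \cap \widetilde{S_{E'}} \neq \emptyset$ forces $E'_\rho \in \{E_\rho, E_\rho - 1\}$ for every $\rho$, so---because the Picard order is the transitive closure of $\overline{S_i}\cap S_j \neq\emptyset \Leftrightarrow i \leq j$---a single step up the Picard order, realized by crossing a wall of the current box into an adjacent one, replaces the box index $E$ by one that is $\leq E$ componentwise and strictly smaller, decreasing by $0$ or $1$ in each coordinate.

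From \emph{(ii)}, any entrance path in $M_{\mathbb R}$ that starts in $\widetilde{S_D}$ successively visits boxes whose indices satisfy $D = E^{(0)} \geq E^{(1)} \geq \cdots$ componentwise, so every box it meets lies in $\{m : m_\rho \geq -D_\rho \text{ for all }\rho\} = \op{Poly}_D$. This proves $\mathbb R^n_{\Ent}(D) \subseteq \op{Poly}_D$ and, by the same argument (which uses only that the first box is $\widetilde{S_D}$), the assertion that the lift starting in $\widetilde{S_D}$ of any entrance path originating in $S_D$ stays inside $\op{Poly}_D$. For the reverse inclusion, I will show that for $m \in \op{Poly}_D$ the straight segment $\sigma(t) = (1-t)\widetilde{x_D} + tm$ is an entrance path: it lies in $\op{Poly}_D$ by \emph{(i)}; each $\sigma(t)_\rho$ is affine in $t$, and since $(\widetilde{x_D})_\rho \in [-D_\rho, -D_\rho+1)$ while $m_\rho \geq -D_\rho$, a coordinate that decreases along $\sigma$ stays within $[-D_\rho, -D_\rho+1)$ and crosses no integer, whereas a coordinate that increases crosses integers only upward---each such crossing being a step up the Picard order by \emph{(ii)}. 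Hence $t \mapsto (-\lfloor\sigma(t)_\rho\rfloor)_\rho$ is componentwise non-increasing, the stratum containing $\sigma(t)$ is non-decreasing in the Picard poset, the preimages $\sigma^{-1}(S_E)$ are intervals stratifying $[0,1]$, and the strata actually visited form a strictly increasing chain; so $\sigma$ is an entrance path from $\widetilde{x_D}$ to $m$, giving $\mathbb R^n_{\Ent}(D) = \op{Poly}_D$.

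For the monomial/entrance-path bijection, $\Hom_{D(\mathfrak X)}(\mathcal O(-D), \mathcal O(-E))$ has a monomial basis indexed by $a \in \mathbb Z^{n+k}_{\geq 0}$ with $\mu(a) = \mu(D-E)$, equivalently by $w := a - (D-E) \in M$ with $w \geq E-D$. On the topological side, since $M_{\mathbb R}$ is simply connected, covering-space theory identifies homotopy classes of entrance paths $x_D \to x_E$ with the lifts of $x_E$ reachable from $\widetilde{x_D}$ by an entrance path in $M_{\mathbb R}$, hence (by the first assertion) with the lifts of $x_E$ lying in $\op{Poly}_D$. Writing such a lift as $\widetilde{x_E}+v$ with $v \in M$ and using $\widetilde{x_E} \in \widetilde{S_E}$, a one-line check gives $\widetilde{x_E}+v \in \op{Poly}_D \Leftrightarrow v \geq E-D$, and the endpoint then lies in $\widetilde{S_{E-v}}$, which projects onto $S_E$ as it must. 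The assignment $w \leftrightarrow v$ (with $x^{w+D-E}$ matched to the entrance path ending at $\widetilde{x_E}+w$) is the desired bijection; it is independent of the choice of effective representatives $D,E$ and of lifts, by $M$-equivariance.

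Finally, for star-shapedness of $\op{Poly}_D \setminus \op{Poly}_E$: if it is non-empty, choose $m$ in it and a coordinate $\rho_0$ with $m_{\rho_0} < -E_{\rho_0}$; combined with $m_{\rho_0} \geq -D_{\rho_0}$ this forces $D_{\rho_0} \geq E_{\rho_0}+1$, whence $(\widetilde{x_D})_{\rho_0} < -D_{\rho_0}+1 \leq -E_{\rho_0}$, so $\widetilde{x_D}$ itself lies in $\op{Poly}_D \setminus \op{Poly}_E$; moreover the $\rho_0$-coordinate of any point on $[\widetilde{x_D}, m]$ is a convex combination of two reals $< -E_{\rho_0}$, hence again $< -E_{\rho_0}$, so the whole segment lies in $\op{Poly}_D$ (by \emph{(i)}) and misses $\op{Poly}_E$. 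Thus $\op{Poly}_D \setminus \op{Poly}_E$ is star-shaped with center $\widetilde{x_D}$, in particular contractible, as Definition~\ref{def: simple} requires. The one delicate point is the reverse inclusion in the first assertion: one must confirm that $\sigma$ meets the definition of an entrance path even when it passes through a corner where several coordinates reach integer values at once; the componentwise monotonicity of $t \mapsto (-\lfloor\sigma(t)_\rho\rfloor)_\rho$ is exactly what makes the preimages of strata honest intervals and the visited strata a strictly increasing chain, so no genericity argument is needed. Everything else is routine convexity and covering-space bookkeeping.
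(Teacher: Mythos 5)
Your proof is correct, and the first two assertions are handled essentially as in the paper: both arguments reduce the equality $\mathbb R^n_{\Ent}(D)=\op{Poly}_D$ to the observation that an entrance path makes the floor coordinates $\lfloor\sigma(t)_\rho\rfloor$ monotone (you phrase this via box adjacency in the universal-cover tiling; the paper argues by contradiction at the first time the lift exits $\op{Poly}_D$), and both use the straight-line segment $t\mapsto(1-t)\widetilde{x_D}+tm$ for the reverse inclusion. Your treatment of the monomial/entrance-path bijection is somewhat more explicit than the paper's — you actually write down the lattice-point correspondence $w\leftrightarrow v$ with $v\ge E-D$ and identify the target box $\widetilde{S_{E-v}}$ — but the underlying idea (use simple connectedness of $M_{\mathbb R}$ plus the equality of entrance spaces with $\op{Poly}_D$) is the same.

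The genuine divergence is in the star-shapedness of $\op{Poly}_D\setminus\op{Poly}_E$. The paper's argument is indirect: it supposes the segment from $\widetilde{x_D}$ to some $x$ re-enters $\op{Poly}_E$ at a point $x'$ and then invokes the entrance-space characterization already proved — entrance paths emanating from $x'$ must stay in $\op{Poly}_{E'}\subseteq\op{Poly}_E$ for the box $E'$ containing $x'$ — to obtain a contradiction. Your argument is purely coordinate-wise and independent of entrance paths: you observe that $m\notin\op{Poly}_E$ supplies a coordinate $\rho_0$ with $m_{\rho_0}<-E_{\rho_0}$, deduce the integer inequality $D_{\rho_0}\ge E_{\rho_0}+1$, conclude that $\widetilde{x_D}$ also violates the $\rho_0$-th constraint for $\op{Poly}_E$, and finish by convexity of the half-space $\{m_{\rho_0}<-E_{\rho_0}\}$. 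This is shorter, avoids any reliance on the first part of the proposition (making the statement's four claims logically independent rather than interdependent), and makes the integrality hypothesis $\mu(D),\mu(E)\in\op{Im}\Phi$ do visible work. One small caution in your write-up: where you assert that a single Picard step is realized by a wall crossing with componentwise non-increasing box index, the cleanest justification is the one you almost give — at the transition time the path sits in exactly one of the two boxes, and if it sat in the \emph{earlier} box $\widetilde{S_{E^{(i)}}}$ then the boundary computation would force $E^{(i)}\le E^{(i+1)}$, hence $E^{(i+1)}-E^{(i)}$ effective, hence $S_{E^{(i+1)}}\le S_{E^{(i)}}$, contradicting the increasing-strata requirement — rather than appealing directly to the exceptionality axiom (which is only established for this stratification \emph{a posteriori}, partly via this very proposition).
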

\begin{proof}
Consider the floor of the projection onto the $\rho^{\op{th}}$ factor $\floor{\langle \widetilde{\gamma}, D_\rho \rangle}: [0,1]\rightarrow \mathbb{Z}$ for each $\rho$. If $\widetilde{\gamma}$ is not entirely inside ${\op{Poly}_D}$, then there exists $t\in [0,1]$ and $\epsilon>0$ such that $\widetilde{\gamma}(t)\in {\op{Poly}_D}$ and $\widetilde{\gamma}|_{(t, t+\epsilon]}\cap {\op{Poly}_D}=\emptyset$.
This means there exists $\rho_0$ and $\epsilon'\leq \epsilon$, s.t.
\begin{enumerate}
    \item $ \floor{\langle \gamma(t), D_{\rho_0} \rangle }  >  \floor{\langle \gamma(t+\epsilon'), D_{\rho_0} \rangle }$
    \item $\floor{\langle \gamma|_{(t, t+\epsilon']},D_\rho \rangle}$ does not increase.
\end{enumerate} 
Hence $\widetilde{\gamma}(t)$ and $\widetilde{\gamma}|_{(t, t+\epsilon']}$ live in separate strata, $S_{D_t}$ and $S_{D_{t+\epsilon'}}$, 
but then $-D_{t+\epsilon'}+D_t$ is anti-effective and hence not effective since $\mathfrak X$ is homologically proper. This is a contradiction as $\widetilde{\gamma}$ is therefore not an entrance path. Hence
\[
\mathbb R^n_{\Ent}(D) \subseteq \op{Poly}_D. 
\]

Now suppose $m \in \Hom_{D(\mathfrak{X})}(\mathcal{O}(-D),\mathcal{O}(-E))$ is an element of the monomial basis. This is equivalent to having $m+{\op{Poly}_{E}}\subset {\op{Poly}_{D}}$.
Let $\widetilde{x_D}$ be the lift of $x_D$ in $\widetilde{S_D}$. Consider the straight line path $l_m: [0,1]\rightarrow M_{\mathbb{R}}$ from $\widetilde{x_D}$ to $m+ \widetilde{x_E}$. Since for each $\rho$, $\langle -, D_\rho \rangle|_{l_m}: [0,1]\rightarrow \mathbb{R}$ is a linear function, it is either increasing or decreasing. This means the discretization $\floor{\langle -, D_\rho \rangle|_{l_m}}: [0,1]\rightarrow \mathbb{Z}$ is also either increasing or decreasing, but since $l_m$ starts at $\widetilde{S_D}$ and never leaves ${\op{Poly}_D}$, the discretization can only increase.  This implies that $l_m$ is an entrance path.
Hence
\[
\mathbb R^n_{\Ent}(D) \supseteq \op{Poly}_D. 
\]

Finally, we check that $\op{Poly}_D \backslash \op{Poly}_{E}$ is star-shaped.  To the contrary, suppose there is a point $x \in \op{Poly}_D \backslash \op{Poly}_{E}$ such that the line $l_x$ from $x$ to $\widetilde{x_D}$ passes through $\op{Poly}_{E}$.  Choose $x' \in l_x \cap \op{Poly}_{E}$.  Then, from the above, $l_x$ is an entrance path and hence, the line from $x'$ to $x$ is also an entrance path.  However, we have already proven that all entrance paths  starting at $x'$ lie entirely in $\op{Poly}_E$, which is a contradiction since $x \notin \op{Poly}_E$.



\end{proof}

\begin{lemma} \label{lem: contractible}
For any $D\in \im \Phi$, $S_D =\bigcap_{i} ({\op{Poly}_D}\setminus {\op{Poly}_{D-D_i}})$/M, in particular all strata in $S$ are contractible.
\end{lemma}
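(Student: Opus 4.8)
The plan is to reduce the equality to the explicit description of $\widetilde{S_D}$ from Lemma~\ref{lem: image zonotope}. Write $D=\sum_i b_iD_i$, so that $\op{Poly}_D=\{m\in M_{\mathbb R}:\langle m,D_i\rangle\geq -b_i \text{ for all }i\}$ and $D-D_i=\sum_j (b_j-\delta_{ij})D_j$, whence
\[
\op{Poly}_{D-D_i}=\{m\in M_{\mathbb R}:\langle m,D_i\rangle\geq -b_i+1 \text{ and } \langle m,D_j\rangle\geq -b_j \text{ for }j\neq i\}.
\]
First I would observe that for $m\in\op{Poly}_D$ the inequalities of $\op{Poly}_{D-D_i}$ indexed by $j\neq i$ hold automatically, so $m\in\op{Poly}_{D-D_i}$ if and only if $\langle m,D_i\rangle\geq -b_i+1$; hence $\op{Poly}_D\setminus\op{Poly}_{D-D_i}=\{m\in\op{Poly}_D:\langle m,D_i\rangle< -b_i+1\}$. (The degenerate case $\op{Poly}_{D-D_i}=\emptyset$ is harmless: then no $m\in\op{Poly}_D$ satisfies $\langle m,D_i\rangle\geq -b_i+1$ either, so the identity still holds.)

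Intersecting over $i=1,\dots,n+k$ then gives
\[
\bigcap_i\big(\op{Poly}_D\setminus\op{Poly}_{D-D_i}\big)=\{m\in M_{\mathbb R}:-b_i\leq\langle m,D_i\rangle< -b_i+1 \text{ for all }i\}=(-D+[0,1)^{n+k})\cap\mu_{\mathbb R}^{-1}(0),
\]
which is exactly $\widetilde{S_D}$ in the notation of Lemma~\ref{lem: image zonotope}. Since $\widetilde{S_D}$ lies in a single fundamental domain for the $M$-action, the projection $M_{\mathbb R}\to M_{\mathbb R}/M=\mathbb T^n$ restricts to a homeomorphism of $\widetilde{S_D}$ onto its image, which by Lemma~\ref{lem: image zonotope} is $S_D$; this yields $\bigcap_i(\op{Poly}_D\setminus\op{Poly}_{D-D_i})/M=S_D$, the first assertion.

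For contractibility I would simply note that $\widetilde{S_D}=(-D+[0,1)^{n+k})\cap\mu_{\mathbb R}^{-1}(0)$ is an intersection of convex sets (a half-open box and a linear subspace), hence convex, and it is nonempty precisely when $D\in\im\Phi$, again by Lemma~\ref{lem: image zonotope}. A nonempty convex subset of a Euclidean space is star-shaped, hence contractible, so the homeomorphic image $S_D$ is contractible; as the nonempty strata of $S$ are exactly the $S_D$ with $D\in\im\Phi$, all strata are contractible.

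I do not expect a real obstacle here: the argument is a direct computation. The only points requiring a little care are the bookkeeping of which defining inequality changes when passing from $D$ to $D-D_i$ (handled above, including the empty case) and the fact—borrowed from the proof of Lemma~\ref{lem: image zonotope}—that $\widetilde{S_D}$ meets only one fundamental domain, so that quotienting by $M$ glues nothing.
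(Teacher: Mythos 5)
Your proposal is correct, and the decomposition part (showing $\bigcap_i(\op{Poly}_D\setminus\op{Poly}_{D-D_i})=\widetilde{S_D}$) is the same calculation the paper does, just written out inequality by inequality rather than stated at the level of the ambient box $-D+[0,1)^{n+k}=\bigcap_i(\mathbb R^{n+k}_{\geq -D}\setminus\mathbb R^{n+k}_{\geq -D+D_i})$. Where you genuinely diverge is the contractibility argument. The paper invokes the last clause of Proposition~\ref{prop: picard order} (that $\op{Poly}_D\setminus\op{Poly}_E$ is star-shaped with center $\widetilde{x_D}$) and then uses that an intersection of star-shaped sets with a common center is star-shaped. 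You instead observe that $\widetilde{S_D}=(-D+[0,1)^{n+k})\cap\mu_{\mathbb R}^{-1}(0)$ is the intersection of a half-open box with a linear subspace, hence convex, hence contractible when nonempty. Your route is more elementary and self-contained: it reads contractibility straight off the definition of $\widetilde{S_D}$ and bypasses Proposition~\ref{prop: picard order} entirely. It is also arguably safer, since the star-shapedness statement in Proposition~\ref{prop: picard order} is asserted for $\mu(D),\mu(E)\in\im\Phi$, whereas here the paper applies it with $E=D-D_i$, and $\mu(D-D_i)$ need not lie in $\im\Phi$; your convexity argument sidesteps that subtlety. What the paper's approach buys in return is that the star-shapedness with center $\widetilde{x_D}$ is a reusable geometric fact (it also underlies the entrance-path picture), so the authors presumably preferred to keep all such arguments routed through Proposition~\ref{prop: picard order}.
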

\begin{proof}
Since
\begin{align*}
   -D+[0,1)^{n+k} = \bigcap_i (\mathbb{R}^{n+k}_{\geq -D} \setminus \mathbb{R}^{n+k}_{\geq -D+D_i}), 
\end{align*}
$\widetilde{S_D}=\bigcap_{i}{\op{Poly}_D}\setminus {\op{Poly}_{D-D_i}}$ is by definition.  Hence, $\widetilde{S_D}$ is contractible because it is the intersection of star-shaped regions with center $\widetilde{x_D}$ (hence star-shaped) by Proposition~\ref{prop: picard order}.  Hence $S_D$ is contractible by Lemma~\ref{lem: image zonotope}.
\end{proof}

\begin{corollary} \label{cor: BR is block}
The stratification $S$  of $\mathbb T^n$ is a block stratification.
\end{corollary}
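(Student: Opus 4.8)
The plan is to verify the requirements packaged in Definition~\ref{def: block}: that $\mathbb T^n$ admits a regular CW structure for which the $S_D$ are unions of cell interiors, and that $S$ is a simple stratification (Definition~\ref{def: simple}).

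For the CW structure I would use the $M$-periodic, locally finite hyperplane arrangement $\mathcal H := \{\, \langle \,\cdot\,, D_\rho\rangle = c \ :\ 1 \le \rho \le n+k,\ c \in \mathbb Z \,\}$ in $M_{\mathbb R} = \mu^{-1}_{\mathbb R}(0)$. Local finiteness holds because $\langle\,\cdot\,,D_\rho\rangle$ takes only finitely many integer values on a bounded set, and every face of $\mathcal H$ is bounded, being contained in $M_{\mathbb R}$ intersected with a translate of the unit cube, exactly as in the description of $\widetilde{S_D}$ in Lemma~\ref{lem: image zonotope}. By that same description, $\widetilde{S_D} = \{\, m \in M_{\mathbb R} : \lfloor \langle m, D_\rho\rangle\rfloor = -b_\rho \ \forall \rho \,\}$ is a union of relatively open faces of $\mathcal H$ (the open chamber of $D$ together with certain of its lower faces); hence so is any union of strata, together with its closure. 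Taking an iterated barycentric subdivision of the face poset of $\mathcal H$ produces an $M$-invariant regular --- indeed simplicial --- CW structure on $M_{\mathbb R}$ in which every face of $\mathcal H$, and therefore every $\widetilde{S_D}$, is a union of open cells; quotienting by the free cocompact lattice $M$ yields a finite regular CW structure on $\mathbb T^n$ for which each $S_D$ is a union of open cells.

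It remains to check that $S$ is simple. Every stratum $S_D$ is contractible by Lemma~\ref{lem: contractible}. Passing to the universal cover $\pi : M_{\mathbb R} \to \mathbb T^n$, Proposition~\ref{prop: picard order} identifies the entrance space at $\widetilde{x_D}$ with the convex polyhedron $\op{Poly}_D = \{\, m : \langle m, D_\rho\rangle \ge -b_\rho \,\}$, which is contractible, and it shows that a difference of entrance spaces $\op{Poly}_D \setminus \op{Poly}_E$ is star-shaped about $\widetilde{x_D}$, hence contractible, whenever non-empty --- exactly the clauses of Definition~\ref{def: simple}. That the stratification is moreover exceptional follows because $\overline{S_D} \cap S_E \neq \emptyset$ forces $b^D_\rho - b^E_\rho \in \{0,1\}$ for all $\rho$, hence $D - E$ effective and $S_D \le S_E$ in the Picard order \eqref{Pic}, while conversely a covering relation from $D$ to $D - D_\rho$ in \eqref{Pic} is witnessed by a common boundary facet of $\widetilde{S_D}$ and $\widetilde{S_{D - D_\rho}}$; taking transitive closure recovers \eqref{Pic}. (Equivalently, the adjacency order is generated by the degree-one monomial maps $\mathcal O(-D) \to \mathcal O(-D-D_\rho)$ realized as entrance paths in Proposition~\ref{prop: picard order}.)

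The step I expect to cause the most trouble is the passage to the quotient by $M$: a periodic polyhedral decomposition need not descend to a regular CW structure on $M_{\mathbb R}/M$, since a closed cell can fail to embed once it wraps around the torus (this already happens in Example~\ref{P2}). The remedy I would invoke is the classical fact that a simplicial complex carrying a free, properly discontinuous, cocompact simplicial action descends --- after at most two barycentric subdivisions, which remove all inversions --- to a regular simplicial complex on the quotient; since subdivision keeps each $\widetilde{S_D}$ a union of open cells, nothing is lost. The only other point needing genuine (if routine) care is verifying that the boundary facet witnessing each Picard covering relation is non-empty for the relevant $D, D - D_\rho$, i.e.\ that the adjacency order is not strictly coarser than \eqref{Pic}; this is the short combinatorial argument with coordinate floors indicated above.
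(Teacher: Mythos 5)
Your argument is correct and rests on the same two ingredients the paper cites — Lemma~\ref{lem: contractible} for contractibility of strata and Proposition~\ref{prop: picard order} for the entrance spaces and star-shaped differences — but you fill in several steps that the paper leaves entirely implicit. The paper's proof of this corollary is a one-line citation of those two results, which addresses only the ``simple'' clauses of Definition~\ref{def: simple}; it does not exhibit a regular CW structure on $\mathbb T^n$, does not explain why the $S_D$ are unions of cell interiors, and does not verify the ``exceptional'' condition underlying Definition~\ref{def: simple}. You supply all three: the $M$-periodic hyperplane arrangement $\mathcal H = \{\langle\cdot,D_\rho\rangle = c\}$ cut down to $M_{\mathbb R}=\mu_{\mathbb R}^{-1}(0)$ is exactly the right polyhedral decomposition, since $\widetilde{S_D}$ is the union of the open top chamber with its ``lower'' faces; barycentric subdivision followed by the quotient by the free lattice action gives the regular CW structure; and the floor-coordinate argument that $\overline{S_D}\cap S_E\neq\emptyset$ forces $b^D_\rho - b^E_\rho\in\{0,1\}$, together with the degree-one entrance paths from Proposition~\ref{prop: picard order} witnessing the covering relations, yields the exceptional condition. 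Your cautionary note about the quotient is also well placed — as Example~\ref{P2} shows, a single 2-cell can wrap around the torus, so the subdivision step is genuinely needed to get regularity downstairs — and the standard fact you invoke (a free cocompact simplicial action becomes admissible after at most two barycentric subdivisions, and subdivision keeps each $\widetilde{S_D}$ a union of open cells) settles it. In short: same proof strategy, but a substantially more careful and complete execution than what appears in the paper.
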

\begin{proof}
This follows from Proposition~\ref{prop: picard order} and Lemma~\ref{lem: contractible}.
\end{proof}

Combining Corollary \ref{cor: BR is block} with our results about block stratifications in Section 4, we obtain a full strong exceptional collection of exit sheaves for the constructible category with respect to the Bondal's stratification.

\begin{proposition} \label{cor: BR is fsec}
    The $S$-entrance sheaves on $\mathbb T^n$ form a full strong exceptional collection of $DSh_S(\mathbb T)$.
\end{proposition}
\begin{proof}
    This is immediate from Corollary \ref{cor: BR is block} and Theorem \ref{thm: main equivalence}.
\end{proof}

\begin{remark}
    In fact, the cohomological properness assumption can be removed, and one can prove using a similar argument involving exit sheaves on $\mathbb R^n$, that the direct sum of $S$-exit sheaves on $\mathbb T$ form a tilting object in the wrapped constructible sheaf category.  The cohomological properness is only used to ensure we do not have cycles in our HPAs since that is the context we discussed in this paper (as the theory is a bit simpler).
\end{remark}

\begin{definition}
The \newterm{Bondal-Ruan HPA} is defined as the endomorphism algebra
\[
A_\Phi := \op{End}(\bigoplus_{\mu(D) \in \im \Phi}\mathcal O_{\mathfrak X}(-D))=R\op{End}(\bigoplus_{\mu(D) \in \im \Phi}\mathcal O_{\mathfrak X}(-D)).
\]
\end{definition}

\begin{corollary} \label{cor: Bondal entrance}
There is an equivalence of categories
$\mathcal C_{A_\Phi} \cong \Ent_S(\mathbb{T}^n)$.
\end{corollary}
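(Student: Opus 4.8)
The plan is to build an isomorphism of categories $\mathcal C_{A_\Phi}\cong\Ent_S(\mathbb T^n)$ directly out of the correspondence between monomial sections and entrance paths recorded in Proposition~\ref{prop: picard order}. First, by Corollary~\ref{cor: BR is block} the Bondal--Ruan stratification $S$ of $\mathbb T^n$ is a block, hence simple, stratification, so that (by construction, cf.\ Proposition~\ref{QCC}) $\Ent_S(\mathbb T^n)$ is the category $\mathcal C_A$ of the HPA $A$ whose vertices are the base points $x_D$ (one for each nonempty stratum $S_D$) and whose arrows are homotopy classes of indecomposable entrance paths. Since $A_\Phi$ is an HPA — being the endomorphism algebra of a collection of line bundles on a cohomologically proper toric stack — the category $\mathcal C_{A_\Phi}$ is likewise defined, with objects the summands $\mathcal O_{\mathfrak X}(-D)$, $\mu(D)\in\im\Phi$, and $\Hom_{\mathcal C_{A_\Phi}}(\mathcal O(-D),\mathcal O(-E))=\Hom_{D(\mathfrak X)}(\mathcal O(-D),\mathcal O(-E))$. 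So it remains to match $\mathcal C_{A_\Phi}$ with $\mathcal C_A$.

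On objects, Lemma~\ref{lem: image zonotope} gives $\im\Phi=\{\mu(D):S_D\neq\emptyset\}$, so the summands of $A_\Phi$ are indexed by the nonempty strata, matching the object set of $\mathcal C_A$. On morphisms, for each pair $D,E$ with $\mu(D),\mu(E)\in\im\Phi$, Proposition~\ref{prop: picard order} furnishes a bijection between the monomial basis of $\Hom_{D(\mathfrak X)}(\mathcal O(-D),\mathcal O(-E))$ — equivalently the monomials $m$ with $m+\op{Poly}_E\subseteq\op{Poly}_D$ — and the homotopy classes of entrance paths $x_D\to x_E$, sending $m$ to the class of the straight-line path $\ell_m$ constructed in that proof. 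I would then check that this assignment is functorial: it sends the trivial monomial $e_D$ to the constant path (the identity of $x_D$), and given $m\colon\mathcal O(-D)\to\mathcal O(-E)$ and $m'\colon\mathcal O(-E)\to\mathcal O(-F)$ with composite the monomial $m''$, the concatenation of $\ell_m$ with $\ell_{m'}$ lifts, starting at $\widetilde{x_D}$, to $\widetilde{\ell_m}$ followed by the translate $m+\widetilde{\ell_{m'}}$, whose endpoint is $m''+\widetilde{x_F}$. Because $M_{\mathbb R}\to\mathbb T^n$ is the universal cover and $\op{Poly}_D$ is contractible (Proposition~\ref{prop: picard order}), an entrance path issuing from $x_D$ is determined up to homotopy by the endpoint of its lift at $\widetilde{x_D}$; as that endpoint agrees with the endpoint of the lift of $\ell_{m''}$, the concatenation represents the morphism assigned to $m''$. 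Hence the assignment is a functor, and being a bijection on objects and on every Hom-set it is an isomorphism $\mathcal C_{A_\Phi}\cong\mathcal C_A=\Ent_S(\mathbb T^n)$.

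The step demanding care is this multiplicativity computation: one needs that the composite of two monomial basis elements among the chosen line bundles is again a monomial basis element of the appropriate Hom-space (which is built into the Cox-ring description of maps between the $\mathcal O(-D)$), and that concatenation of the straight-line paths $\ell_m$ agrees, up to homotopy, with multiplication of monomials — this is the lifting argument above, and it is where the contractibility of the $\op{Poly}_D$ and the star-shapedness of $\op{Poly}_D\setminus\op{Poly}_E$ from Proposition~\ref{prop: picard order} are actually used. I expect this, rather than the comparison of object sets or of Hom-sets (which are immediate from Lemma~\ref{lem: image zonotope} and Proposition~\ref{prop: picard order}), to be the only genuine point in the argument.
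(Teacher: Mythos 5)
Your proof is correct and follows the same approach as the paper: both build the equivalence from the bijection between monomial sections and entrance paths established in Proposition~\ref{prop: picard order}, sending each vertex $D$ to the base point $x_D$ and each monomial $m$ to the straight-line entrance path $\ell_m$. You spell out the functoriality check via lifting to $M_{\mathbb R}$, which the paper leaves implicit, but the core idea is identical.
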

\begin{proof}
The 1-1 correspondence from Proposition~\ref{prop: picard order} induces an equivalence
\[
\mathcal C_{A_\Phi} \rightarrow \Ent_S(\mathbb{T}^n)
\]
which takes a vertex $D \in (A_\Phi)_0$ to the base point $x_D$ and a path in $m \in A_\Phi$ to the corresponding entrance path $l_m$.  
\end{proof}

\begin{theorem}\label{thm: torus}
Let $A_\Phi$ be the Bondal-Ruan HPA.   Then $X_{A_{\Phi}}$ is homotopic to $\mathbb{T}^n=M_\mathbb{R}/M$.
\end{theorem}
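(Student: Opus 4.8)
The plan is to assemble the result from machinery already built in the excerpt rather than to argue directly about the simplicial complex $X_{A_\Phi}$. First I would invoke Corollary~\ref{cor: BR is block}, which tells us that the Bondal-Ruan stratification $S$ of $\mathbb T^n$ is a block stratification. Since block stratifications are in particular simple, Theorem~\ref{thm: block homotopy} applies and yields a homotopy equivalence
\[
\mathbb T^n \simeq B(\Ent_S(\mathbb T^n)).
\]
So the task reduces to identifying $B(\Ent_S(\mathbb T^n))$ with $X_{A_\Phi}$ up to homotopy.

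Next I would use Corollary~\ref{cor: Bondal entrance}, which gives an equivalence of categories $\mathcal C_{A_\Phi} \cong \Ent_S(\mathbb T^n)$. Taking classifying spaces and using that equivalent categories have homotopy equivalent classifying spaces (e.g. a special case of Lemma~\ref{lem: equivalent implies homotopic}, or simply that the nerve functor sends equivalences to homotopy equivalences), we get
\[
B(\Ent_S(\mathbb T^n)) \simeq B(\mathcal C_{A_\Phi}) = X_{A_\Phi},
\]
where the last equality is the definition of the geometric realization of an HPA (Definition~\ref{def: geometric realization}). Chaining the two homotopy equivalences gives $X_{A_\Phi} \simeq \mathbb T^n = M_{\mathbb R}/M$, which is the claim.

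The only real content to verify is that the hypotheses of Theorem~\ref{thm: block homotopy} are genuinely met — namely that $S$ is a block stratification of a \emph{regular CW complex}. The block property and contractibility of strata come from Corollary~\ref{cor: BR is block} (via Proposition~\ref{prop: picard order} and Lemma~\ref{lem: contractible}), and the simplicity condition — contractibility of the entrance spaces $\mathbb R^n_{\Ent}(D) = \op{Poly}_D$ and of the differences $\op{Poly}_D \setminus \op{Poly}_E$ — is exactly the star-shapedness established in Proposition~\ref{prop: picard order}. The one point needing a remark is the choice of regular CW structure on $\mathbb T^n$ refining $S$: one should note that $\mathbb T^n$ admits such a CW structure whose open cells refine the strata $S_D$ (for instance the standard cubical decomposition induced by the floor functions $\lfloor \langle -, D_\rho\rangle\rfloor$, suitably subdivided to be regular), so that $S$ is a block stratification in the precise sense of Definition~\ref{def: block}. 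I expect this bookkeeping — confirming the CW refinement exists and is regular — to be the main (and essentially the only) obstacle; everything else is a formal consequence of the cited results.
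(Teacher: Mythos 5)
Your proof is correct and takes essentially the same route as the paper: invoke Corollary~\ref{cor: BR is block} to meet the hypothesis of Theorem~\ref{thm: block homotopy}, then chain $X_{A_\Phi} = B(\mathcal C_{A_\Phi}) \simeq B(\Ent_S(\mathbb T^n)) \simeq \mathbb T^n$ using Corollary~\ref{cor: Bondal entrance}. Your closing remark about needing a regular CW structure on $\mathbb T^n$ refining $S$ is a fair point that the paper glosses over, but otherwise the arguments are identical.
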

\begin{proof}
The condition of Theorem~\ref{thm: block homotopy} is satisfied by Corollary~\ref{cor: BR is block}.  Hence,
\begin{align*}
X_{A_{\Phi}} & = B(\mathcal C_{A_{\Phi}}) & \text{ by definition} \\
 & \simeq B(\Ent_S(\mathbb{T}^n)) & \text{ by Corollary~\ref{cor: Bondal entrance}} \\
 & \simeq \mathbb{T}^n & \text{ by Theorem~\ref{thm: block homotopy}}.
\end{align*}




\end{proof}


 Consider a toric DM stack $\mathcal{X}$ coming from the GIT construction for the action of $G$ on $V$.  That is, consider any character $\theta \in \widehat{G}$, and define 
\[
\mathcal{X} := [(\mathbb A^{n+k})^{ss}_{\theta} / G].
\]
This is an open substack of $\mathfrak X$.  Furthermore, 
for generic $\theta$, this is a DM stack.

\begin{definition} \label{def: BR type}
If $\im \Phi$ forms a full strong exceptional collection of line bundles on $\mathcal{X}$, we say $\mathcal{X}$ is of \newterm{Bondal-Ruan type}.
\end{definition}

When $\mathcal X$ is of Bondal-Ruan type then, by assumption,
\[
A_\Phi = \op{End}(\bigoplus_{D \in \im \Phi}\mathcal O_{\mathfrak X}(-D)) = R\op{End}(\bigoplus_{D \in \im \Phi}\mathcal O_{\mathcal X}(-D))
\]
and
\[
D(\mathcal X) = D(A_\Phi).
\]

\begin{theorem}[Bondal-Ruan Homological Mirror Symmetry] \label{thm: BR HMS}
Let $\mathcal X$ be a toric DM stack of Bondal-Ruan type.  Then there are equivalences of categories 
\begin{align*}
D(\mathcal X) & = D(A_\Phi) & \text{ by the assumption that $\mathcal X$ is of Bondal-Ruan type} \\
& = DSh_{S}(\mathbb T^n) & \text{ by Corollary~\ref{cor: Bondal entrance} and Corollary~\ref{cor: tree strata}}
\end{align*}
\end{theorem}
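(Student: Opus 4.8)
The plan is to assemble the final statement entirely from results already established in the excerpt, treating it as a formal stringing-together of the equivalence chain rather than as a new argument. First I would recall that when $\mathcal X$ is of Bondal-Ruan type, Definition~\ref{def: BR type} guarantees that $\im\Phi$ is a full strong exceptional collection of line bundles on $\mathcal X$, so that $R\op{End}(\bigoplus_{D\in\im\Phi}\mathcal O_{\mathcal X}(-D))$ has no higher $\Ext$ groups and generates $D(\mathcal X)$. Thus $D(\mathcal X)\cong D(A_\Phi)$ by the standard tilting equivalence, and simultaneously the underived endomorphism algebra on $\mathfrak X$ agrees with the one on $\mathcal X$ via the open embedding $\mathcal X\hookrightarrow\mathfrak X$, which identifies the relevant graded pieces of the Cox ring. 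This is exactly the displayed computation already written just before the theorem statement.

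Next I would invoke Corollary~\ref{cor: Bondal entrance}, which says $\mathcal C_{A_\Phi}\cong\Ent_S(\mathbb T^n)$, to rewrite $A_\Phi\Mod$ in topological terms. Combined with Theorem~\ref{thm: main equivalence} applied to the simple (indeed block, by Corollary~\ref{cor: BR is block}) stratification $S$ of $\mathbb T^n$, this gives $D(A_\Phi)\cong DSh_S(\mathbb T^n)$ after passing to bounded derived categories; strictly speaking Theorem~\ref{thm: main equivalence} is stated for abelian categories, so I would note that an equivalence of abelian categories induces an equivalence of their bounded derived categories, and that the module category appearing there is $A^{\op{op}}\Mod$ versus $A\Mod$, a discrepancy absorbed by passing to opposite algebras or equivalently by the Serre-duality identification $P_w\leftrightarrow I_w$ recorded in the remark after Table~\ref{tab: section4 summary}. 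Then Corollary~\ref{cor: tree strata} supplies the further equivalence $DSh_S(\mathbb T^n)\cong DSh_{S^{tr}}(X_{A_\Phi})$, since $X_{A_\Phi}=B\mathcal C_{A_\Phi}=B\Ent_S(\mathbb T^n)$ by Definition~\ref{def: geometric realization} and Corollary~\ref{cor: Bondal entrance}.

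For the ``furthermore'' clause I would cite Theorem~\ref{thm: torus}, which already states that $X_{A_\Phi}$ is homotopic to $\mathbb T^n$; its proof is the one given in the excerpt using Theorem~\ref{thm: block homotopy} and Corollary~\ref{cor: BR is block}, so nothing new is needed. The Koszulity and minimal-resolution assertion under the directability hypothesis should be deferred to the shellability/Koszulity machinery advertised in the introduction (Proposition~\ref{prop: shellable-koszul} and the internal acyclic matching construction in \S\ref{sec: Morse}): I would argue that directability of $A_\Phi$ implies $X_{A_\Phi}$ is shellable in the required sense, hence $A_\Phi$ is Koszul and $\overline{X_{A_\Phi}}$ is a minimal projective cellular resolution of the diagonal bimodule; transporting this resolution across the Fourier–Mukai equivalence $D(A_\Phi)\cong D(\mathcal X)$ turns it into a resolution of $\Delta_{\mathcal X}$ by (external tensor products of) line bundles, since the indecomposable projectives $P_w$ correspond to the $\mathcal O_{\mathcal X}(-D)$.

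The main obstacle is not any single estimate but the careful bookkeeping of variances: matching left versus right modules, $A$ versus $A^{\op{op}}$, and the direction conventions for entrance versus exit paths, so that the composite equivalence $D(\mathcal X)\to DSh_{S^{tr}}(X_{A_\Phi})\to DSh_S(\mathbb T^n)$ is coherent and actually sends $\mathcal O_{\mathcal X}(-D)$ to the expected entrance/exit sheaf. I would handle this by fixing once and for all the dictionary in Table~\ref{tab: section4 summary} (projectives $P_w\leftrightarrow\mathcal P_w\leftrightarrow\mathcal Q_w$, injectives $I_w\leftrightarrow\mathcal I_w\leftrightarrow\mathcal T_w$) and checking the identification on this generating set, after which the equivalence of triangulated categories follows formally.
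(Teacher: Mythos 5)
Your proposal follows essentially the same route as the paper, which packages the entire proof into the two cited steps: $D(\mathcal X)\cong D(A_\Phi)$ from the full strong exceptional collection hypothesis, and $D(A_\Phi)\cong DSh_S(\mathbb T^n)$ by identifying $\mathcal C_{A_\Phi}\cong\Ent_S(\mathbb T^n)$ (Corollary~\ref{cor: Bondal entrance}) and then applying the constructible-sheaves equivalence of Theorem~\ref{thm: main equivalence} / Corollary~\ref{cor: tree strata}. One small caution: the $A$-versus-$A^{\op{op}}$ bookkeeping is handled by passing to opposite algebras/categories (as you first say), not by the Serre-duality identification $P_w\leftrightarrow I_w$, which is an autoequivalence of $D(A)$ taking projectives to injectives rather than an equivalence $D(A)\cong D(A^{\op{op}})$; also, the ``furthermore'' material you discuss belongs to the introduction's version of the theorem, not to the statement as excerpted here.
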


\begin{remark}
Theorem \ref{thm: BR HMS} realizes Bondal's original idea \cite{Bon06}.  In particular, it says the toric varieties of Bondal-Ruan type have a full strong exceptional collection of line bundles coming from $\im \Phi$.  Second, it says that homological mirror symmetry for these varieties can be realized through this exceptional collection as an equivalence between the derived category of the toric variety and the derived category of $S_\Phi$-constructible sheaves. 
\end{remark}

\begin{remark}
The celebrated Coherent-Constructible Correspondence (CCC) proved in \cite{FLTZ11, FLTZ12, FLTZ14, Kuw20} is a microlocal generalization of Bondal's approach.
To compare, first there is a Lagrangian skeleton associated to Bondal's stratification defined as the union of singular supports of the $S$-exit sheaves 
$$\Lambda_S:=\bigcup_{\alpha\in \im \Phi} ss(\mathcal P_\alpha).$$  
For a toric DM stack $\mathcal X_\Sigma$, there is also its FLTZ mirror skeleton $\Lambda_\mathcal X\subset T^*\mathbb T$ and the category of constructible complexes whose singular support is in $\Lambda_\mathcal X$, denoted by $Sh^c(\Lambda_{\mathcal X})$.  The CCC says that the derived category $D(\mathcal X)$ is equivalent to $Sh^c(\Lambda_{\mathcal X})$.

When $\mathcal X$ is of Bondal-Ruan type, our result is simply a special case of the CCC, and gives a full strong exceptional collection for $Sh^c(\Lambda_{\mathcal X})$. In this case, we have
$$\Lambda_S=\Lambda_\mathcal X,\text{ and } DSh_{S}(\mathbb T^n)=Sh^c(\Lambda_S)=Sh^c(\Lambda_{\mathcal X}).$$
More generally, for any proper toric simplicial DM stack, there is always a containment $\Lambda_\mathcal X\subset \Lambda_S$. 
  This implies that $Sh^c(\Lambda_{\mathcal X})$ is always a full subcategory of $D_S(\mathbb T^n)$.
 
 Furthermore, since $S$ only depends on the rays in the fan of $\mathcal X$, our category $D_S(\mathbb T^n)=Sh^c(\Lambda_S)$  contains $Sh^c(\Lambda_\mathcal X)$ for all $\mathcal X$ with the same rays.
Hence, the Bondal stratification can be used as an ambient category to compare different FLTZ mirrors $Sh^c(\Lambda_\mathcal X)$ where $\mathcal X$ varies under birational morphisms.

\end{remark}

We finish this section by discussing the class of toric varieties of Bondal-Ruan type originally given in \cite{Bon06}. 




\begin{lemma} \label{lem: BR nef}
Assume that $C \cap \mu(D_\rho) \geq -1$ for all torus-invariant complete irreducible curves $C \subseteq X_\Sigma$ and all $\rho \in \Sigma(1)$, and $C \cap D_\rho= -1$ for no more than one $\rho$. Then every $\mu(D) \in \im \Phi$ is nef.
\end{lemma}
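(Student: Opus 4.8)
The plan is to unwind the definition of $\im\Phi$ via Lemma~\ref{lem: image zonotope}, which identifies $\mu(D)\in\im\Phi$ with the condition $S_D=\Phi^{-1}(D)\neq\emptyset$, equivalently that the half-open box $(-D+[0,1)^{n+k})$ meets $\mu_{\mathbb R}^{-1}(0)$. Concretely, $\mu(D)\in\im\Phi$ precisely when $D$ is linearly equivalent (modulo $M$) to an effective divisor $D'=\sum_\rho b_\rho D_\rho$ with each $b_\rho\in[0,1)$, i.e. $b_\rho\in\{0\}$ after rounding --- more usefully, $D$ has a representative all of whose coefficients lie in the fundamental box. Since nef-ness only depends on the linear equivalence class, I would replace $D$ by such a representative from the outset and reduce to checking $C\cap\mu(D)\geq 0$ for every torus-invariant complete irreducible curve $C$.

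The key computation is then the intersection number $C\cap\mu(D)=C\cap\mu\bigl(\sum_\rho b_\rho D_\rho\bigr)=\sum_\rho b_\rho\,(C\cap D_\rho)$, using linearity of the intersection pairing. Split the sum according to the sign of $C\cap D_\rho$. By the hypothesis of Lemma~\ref{lem: BR nef}, $C\cap D_\rho\geq -1$ always, and $C\cap D_\rho=-1$ for at most one index $\rho$, say $\rho_0$ (if it occurs at all). For all $\rho$ with $C\cap D_\rho\geq 0$ the contribution $b_\rho(C\cap D_\rho)$ is $\geq 0$ since $b_\rho\geq 0$. For the single possibly-negative term we get $b_{\rho_0}(C\cap D_{\rho_0})=-b_{\rho_0}>-1$ because $b_{\rho_0}<1$. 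Hence $C\cap\mu(D)>-1$, and since intersection numbers with a Cartier (or $\mathbb Q$-Cartier, after clearing denominators on the stack) divisor against an integral curve are integers --- here I would note $\mu(D)$ is an honest line bundle class so $C\cap\mu(D)\in\mathbb Z$ --- we conclude $C\cap\mu(D)\geq 0$. As $C$ was an arbitrary torus-invariant complete irreducible curve, $\mu(D)$ is nef by the toric Kleiman criterion (nef $\Leftrightarrow$ nonnegative on all torus-invariant complete curves).

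The main obstacle I anticipate is bookkeeping the passage between $\im\Phi\subseteq\widehat G$ and divisor classes on $X_\Sigma$: one must be careful that the chosen box representative $\sum b_\rho D_\rho$ genuinely has all $b_\rho\in[0,1)$ and that $\mu$ of it is the line bundle class one wants to call nef, not merely a numerical class. A secondary subtlety is the integrality step --- on a DM stack $C\cap\mu(D)$ could a priori be rational, so I would either argue directly that $\mu(D)$ pulls back to an integral class on the coarse space along the curves in question, or replace "$>-1$ hence $\geq 0$" by passing to a sufficiently divisible multiple. Modulo these careful identifications, the inequality itself is the elementary one-line estimate above, so I expect the write-up to be short once the dictionary of Lemma~\ref{lem: image zonotope} is in place.
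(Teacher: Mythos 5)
Your proof is correct and follows essentially the same route as the paper: invoke Lemma~\ref{lem: image zonotope} to write $\mu(D)=\mu(\sum a_\rho D_\rho)$ with $a_\rho\in[0,1)$, expand $\langle\mu(D),C\rangle=\sum a_\rho\langle\mu(D_\rho),C\rangle>-1$ using the hypothesis that at most one $C\cap D_\rho=-1$ and the rest are $\geq 0$, then conclude $\geq 0$ by integrality. The paper states this more tersely but the argument is identical; your remark about integrality on stacks is a reasonable caution, though the lemma is applied in the smooth proper setting where $\langle\mu(D),C\rangle\in\mathbb Z$ automatically.
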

\begin{proof}
Let $\mu(D) \in \im \Phi$.  Then by Lemma~\ref{lem: image zonotope},
\[
\mu(D) = \mu(\sum a_\rho D_\rho)
\]
with $0 \leq a_\rho < 1$.  Hence,
\begin{align*}
\langle \mu(D), C \rangle & = \sum a_\rho \langle \mu(D_\rho), C \rangle 
\end{align*}
is greater than $-1$ by the assumption.  Now since $\langle \mu(D), C \rangle \in \mathbb Z$ it follows that  $\langle \mu(D), C \rangle \geq 0$, as desired.
\end{proof}

\begin{proposition}[Bondal-Ruan~\cite{Bon06}]
Let $X_\Sigma$ be a smooth proper toric variety.  Assume that $C \cap D_\rho \geq -1$ for all irreducible curves $C \subseteq X_\Sigma$ and all $\rho \in \Sigma(1)$, and $C \cap D_\rho= -1$ for no more than one $\rho$. Then $\im \Phi$ forms a full strong exceptional collection of line bundles.
\end{proposition}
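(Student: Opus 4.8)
The plan is to verify the three defining properties of a full strong exceptional collection for the line bundles $\mathcal O_{X_\Sigma}(-D)$, $\mu(D)\in\im\Phi$: that each is exceptional, that there are no higher $\op{Ext}$'s among them, and that together they generate $D^b(X_\Sigma)$. Our curve hypothesis restricts, on torus-invariant complete curves, to exactly the hypothesis of Lemma~\ref{lem: BR nef}, so from the start I may assume every $\mu(D)\in\im\Phi$ is nef, hence globally generated. Exceptionality and the ordering are then formal: $X_\Sigma$ being proper toric gives $H^{\bullet}(X_\Sigma,\mathcal O_{X_\Sigma})=\mathsf k$, hence $\op{Ext}^{\bullet}(\mathcal O(-D),\mathcal O(-D))=\mathsf k$; and $\Hom(\mathcal O(-D),\mathcal O(-E))=H^0(X_\Sigma,\mathcal O(\mu(D)-\mu(E)))$ vanishes unless $\mu(D)-\mu(E)$ is effective, i.e.\ $\mu(E)\le\mu(D)$ in the Picard order~\eqref{Pic}, where it carries the monomial basis of Proposition~\ref{prop: picard order}; so a linear extension of the opposite Picard order arranges $\im\Phi$ into an exceptional collection once strongness is known.

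Next I would establish strongness, i.e.\ $H^i(X_\Sigma,\mathcal O(\mu(D)-\mu(E)))=0$ for $i>0$ and all $\mu(D),\mu(E)\in\im\Phi$, by reducing to toric Kawamata--Viehweg vanishing. The key step is that $L:=\mu(D)-\mu(E)-K_{X_\Sigma}$ is nef, which I would test on each torus-invariant complete curve $C$. Writing $\mu(E)=\mu_{\mathbb R}(\sum_\rho b_\rho D_\rho)$ with all $b_\rho\in[0,1)$ via Lemma~\ref{lem: image zonotope}, and using both that $C\cdot D_\rho\ge-1$ with equality for at most one $\rho$ (hypothesis) and that on a smooth complete fan a torus-invariant curve meets at least one ray positively (the wall relation), one gets $C\cdot\mu(E)=\sum_\rho b_\rho\,(C\cdot D_\rho)<C\cdot(-K_{X_\Sigma})+1$, hence $C\cdot\mu(E)\le C\cdot(-K_{X_\Sigma})$ by integrality; together with $C\cdot\mu(D)\ge0$ this yields $C\cdot L\ge0$. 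Then $H^i(X_\Sigma,\mathcal O(\mu(D)-\mu(E)))=H^i(X_\Sigma,\mathcal O(K_{X_\Sigma}+L))=0$ for $i>0$ by toric Kawamata--Viehweg vanishing for the nef divisor $L$.

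For fullness I would first record the numerical consistency check $|\im\Phi|=\op{rank}K_0(X_\Sigma)$, both equal to the number of maximal cones of $\Sigma$: by Lemma~\ref{lem: image zonotope} the cardinality of $\im\Phi$ is the number of $\widehat G$-lattice points in the half-open zonotope $\mu_{\mathbb R}([0,1)^{n+k})$, and smoothness of $\Sigma$ identifies this count with the number of maximal cones. As this alone does not exclude a numerically trivial orthogonal complement, for the actual generation statement I would follow Bondal--Ruan \cite{Bon06}: the monomial maps among the $\mathcal O(-D)$ assemble, through the Cox-ring presentation, into a Beilinson-type Koszul resolution of $\mathcal O_{\Delta}$ on $X_\Sigma\times X_\Sigma$ whose terms are sums of $p_1^{*}\mathcal O(-D)\otimes p_2^{*}\mathcal O(D')$ with $\mu(D),\mu(D')\in\im\Phi$; convolution with this resolution of the diagonal places every object of $D^b(X_\Sigma)$ in $\langle\{\mathcal O(-D):\mu(D)\in\im\Phi\}\rangle$. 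This is exactly the resolution of the diagonal produced by the cellular resolution of the diagonal bimodule of the Bondal--Ruan HPA $A_\Phi$ together with the identification $X_{A_\Phi}\simeq\mathbb T^n$ of Theorem~\ref{thm: torus}.

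The hard part will be fullness, specifically the construction and exactness of this Koszul/cellular resolution of the diagonal: strong exceptionality is short once nef-ness is available, but exhibiting the diagonal resolution and checking it is one is the substantive content, and is the heart both of Bondal--Ruan's original argument and of the cellular-resolution machinery in this paper. A smaller point needing care is the curve estimate above, which genuinely uses both that the zonotope is half-open (coefficients strictly below $1$) and that at most one ray has $C\cdot D_\rho=-1$; in particular one must invoke that on a smooth complete fan every torus-invariant curve meets some ray positively, which is where smoothness enters.
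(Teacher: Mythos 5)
Your argument for \emph{strongness} is a genuinely different route from the paper's. The paper applies the Thomsen/Ohkawa--Uehara identity $(F_l)_*\mathcal O_{\mathcal X}\cong\bigoplus_{\mu(D)\in\im\Phi}\mathcal O(-D)$ for $l$ sufficiently divisible, uses $(F_l^*,(F_l)_*)$-adjunction to collapse the whole block $\op{Ext}^i(\bigoplus\mathcal O(-D),\bigoplus\mathcal O(-D))$ into $\bigoplus H^i(\mathcal O(lD))$, and then invokes Demazure vanishing for the nef divisors $lD$. You instead treat each $H^i(\mathcal O(\mu(D)-\mu(E)))$ directly via toric Kawamata--Viehweg for $L=\mu(D)-\mu(E)-K_{X_\Sigma}$. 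Both routes bottom out in Lemma~\ref{lem: BR nef}; yours is more direct on any single pair, while the Frobenius route is cleaner because it never needs to estimate a \emph{difference} of two nef divisors.

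There is, however, a gap in your KV step as stated. Toric Kawamata--Viehweg requires $L$ to be nef \emph{and big}, not merely nef: the baseline counterexample is $L=0$ and $H^n(X,K_X)\cong\mathsf k$, and more to the point one can take $X=\mathbb P^1\times\mathbb P^1$ with $L=(0,a)$, $a\geq 2$, which is nef but not big and has $H^1(X,K_X+L)\neq 0$. You only establish nef-ness of $L$ via the curve estimate. The gap is fixable, and worth noting explicitly: write $-K_{X_\Sigma}-\mu(E)=\mu_{\mathbb R}(\vec 1-\vec b)$ with $\vec b\in[0,1)^{n+k}$ via Lemma~\ref{lem: image zonotope}, so that $\vec 1-\vec b\in(0,1]^{n+k}$. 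This is a torus-invariant divisor with strictly positive coefficient on \emph{every} ray, so its section polytope contains a neighborhood of the origin and is full-dimensional, i.e.\ $-K_{X_\Sigma}-\mu(E)$ is big; your estimate also shows it is nef. Then $L=\mu(D)+(-K_{X_\Sigma}-\mu(E))$ is the sum of a nef divisor and a nef-and-big divisor, hence nef and big, and KV applies.

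On fullness, both you and the paper are proving-by-reference (the paper cites \cite{BDM} and \cite{Ueh14}), so that is comparable. But be aware your proposed realization of the Bondal--Ruan diagonal resolution through the cellular bimodule resolution of $A_\Phi$ and $X_{A_\Phi}\simeq\mathbb T^n$ is circular as written: to transport a bimodule resolution of $A_\Phi$ to a resolution of $\mathcal O_{\Delta}$ on $X_\Sigma\times X_\Sigma$, you need to already know that $\bigoplus\mathcal O(-D)$ is a tilting bundle, which is precisely fullness plus strongness. A non-circular version would construct the Koszul-type complex directly from the Cox ring and prove its exactness on $X_\Sigma\times X_\Sigma$, independently of the HPA machinery; if you want this to be self-contained you would have to actually carry that out, as neither the sketch nor the numerical count $|\im\Phi|=\operatorname{rank}K_0(X_\Sigma)$ substitutes for it.
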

\begin{proof}
By \cite{BDM} (see also \cite{Ueh14}), $\im \Phi$ generates $D(X_\Sigma)$.   It remains to show that 
\[
\op{Ext}^i(\bigoplus_{\mu(D) \in \op{Im}(\Phi)} \mathcal O_{\mathcal X}(-D), \bigoplus_{\mu(D) \in \op{Im}(\Phi)} \mathcal O_{\mathcal X}(-D)) = 0
\]
for all $i > 0$.

For this, we follow the proof in \cite{Bon06}.  For $l$ sufficiently divisible we have
\begin{align*}
\op{Ext}^i(\bigoplus \mathcal O_{\mathcal X}(-D), \bigoplus \mathcal O_{\mathcal X}(-D)) & = 
\op{Ext}^i((F_l)_*\mathcal O_{\mathcal X}, (F_l)_*\mathcal O_{\mathcal X}) & \text{ by \cite{OU13}, Theorem 4.5} \\
& = \op{Ext}^i((F_l)^*(F_l)_*\mathcal O_{\mathfrak X}, \mathcal O_{\mathfrak X}) & \text{ by adjunction} \\
& = \bigoplus \op{Ext}^i((F_l)^*\mathcal O_{\mathfrak X}(-D), \mathcal O_{\mathfrak X}) & \text{ by \cite{OU13}, Theorem 4.5} \\
&= \op{H}^i(F_l^*\mathcal O_{\mathcal X}(D)) \\
& = \op{H}^i(\mathcal O_{\mathcal X}(lD)).
\end{align*}


By Demazure vanishing, it enough to know that $\mu(D)$ is nef for all $\mu(D) \in \im \Phi$.  This is precisely Lemma~\ref{lem: BR nef}.



\end{proof}

\begin{remark}
As pointed out in \cite{Bon06}, all 18 smooth toric Fano threefolds except for (II)(d) and (III)(k) in \cite{WW82} are of Bondal-Ruan type.

\end{remark}






\begin{example} \label{Ex: P2 section 5}
We go back to Example \ref{P2} where the poset is $I = \{-2,-1,0\}$. Below are the tree strata $S^{tr} $of $X_A$ (with $S_2^{tr}<S_1^{tr}<S_0^{tr}$).
\[
    \includegraphics[scale=1.0]{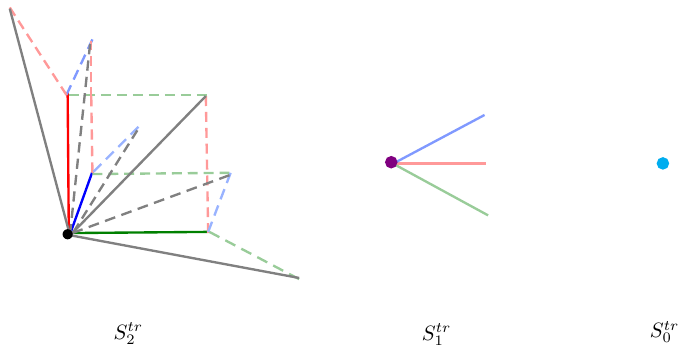}
\]
We compare $S^{tr}$ with the stratification $S$ of the torus $\mathbb{T}^2$ (with $S_2<S_1<S_0$, see Eq. \ref{Pic}):
\[
    \includegraphics[scale=1.0]{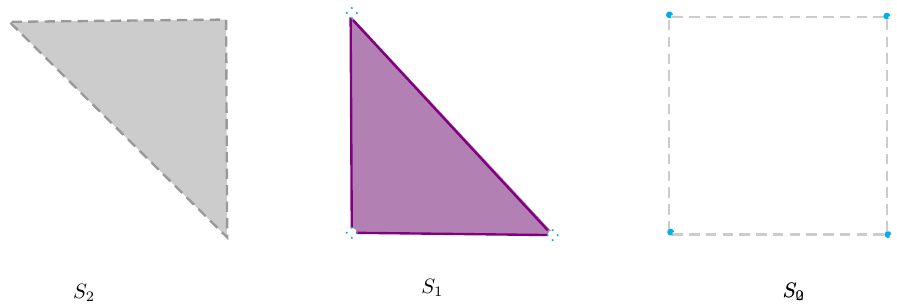}
\]
Specifically, gray, purple, and cyan are the three strata in $S$, corresponding to the black, purple and cyan vertices in the quiver. The gray stratum $S_2$ is the minimal ($\Rightarrow$ open) stratum. The cyan stratum $S_0$ is the maximal ($\Rightarrow$ closed) stratum. The purple stratum $S_1$ is a closed triangle minus 3 points and is locally closed. The $S$-entrance and $S$-exit sheaves are as follows:

\[
    \includegraphics[scale=1.0]{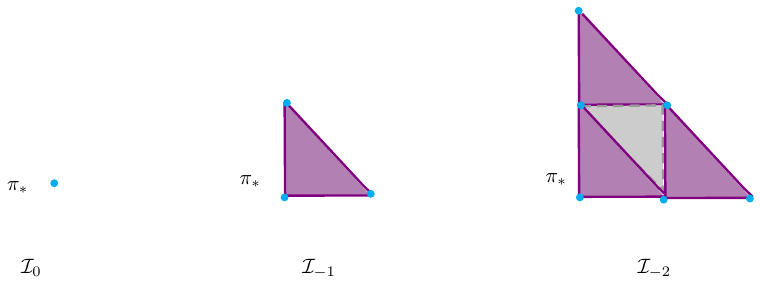}
\]

\[
    \includegraphics[scale=1.0]{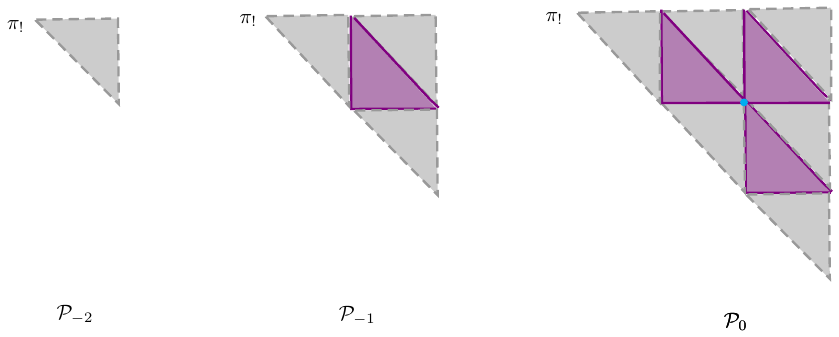}
\]

Although the homotopy $X_A\simeq \mathbb{T}^2$ does not preserve the strata, it preserves the upward closures of the strata and classes of entrance paths. By Corollary \ref{cor: tree strata}, the sheaf categories are equivalent.  The following table summarizes notable equivalent objects in this example.

\begin{table}[H]
\begin{tabular}{ c | c | c | c }
\hline
 $w$ & $D(\mathbb P^2)$ & $Sh_S\mathbb T^2$ & $A\Mod$ \\
  \hline
 $-2$ & $\mathcal O(-2)$ & $\mathcal I_{-2} = \pi_*\mathsf k_{\op{Poly}_{2}}$& $I_{-2}$ \\ 
  $-1$ & $\mathcal O(-1)$ & $\mathcal I_{-1} = \pi_*\mathsf k_{\op{Poly}_{1}}$& $I_{-1}$ \\ 
   $0$ & $\mathcal O$ & $\mathcal I_{0} = \pi_* \mathsf k_{pt}$& $I_{0}$ \\ 
    $-2$ & $\mathcal O(1)[-2]$ & $\mathcal P_{-2}$& $P_{-2}$ \\ 
  $-1$ & $\mathcal O(2)[-2]$ & $\mathcal P_{-1}$& $P_{-1}$ \\ 
   $0$ & $\mathcal O(3)[-2]$ & $\mathcal P_{0}$& $P_{0}$ \\ 
       $-2$ & $\mathcal O(1)[-2]$ & $\mathsf k_{S_2}$ & $S_{-2}$ \\ 
  $-1$ & $\Omega_{\mathbb P^2}(2)[-1]$ & $\mathsf k_{S_1}$& $S_{-1}$ \\ 
   $0$ & $\mathcal O$ & $\mathsf k_{S_0}$ & $S_{0}$ \\ 
   
\hline
\end{tabular} 
\hspace{\linewidth}
\caption{Objects under mirror symmetry for $\mathbb P^2$}
\end{table}

\end{example}

\section{Cellular resolution}
In this section, we study projective cellular resolutions of the diagonal bimodule of an HPA. We observe that $X_A$ itself always gives a projective cellular resolution.  Furthermore, we provide a sequence of simplicial collapses of $X_A$ which, at least in a large class of examples, provide a projective cellular resolution which is equal to the minimal resolution.  As an application, we show, under a mild directability assumption, that the Bondal-Ruan HPA is Koszul and admits a minimal cellular resolution.

\subsection{Cellular bimodule resolution}
For a finite CW complex $X$ satisfying the axiom of the frontier, let $\Cell(X)^{op}$ be the set of cells, with partial ordering
\begin{equation}
    \sigma \leq \tau \text{ iff } \tau \subset \overline{\sigma}.
\end{equation}
Let $\mathsf{Cell}(X)^{op}$ be the category of cells on $X$ associated to the poset $\Cell(X)^{op}$, whose objects are cells and morphisms be such that
\begin{equation}
\begin{split}
        & \exists! \text{ morphism }\sigma \rightarrow \tau \text{ if } \sigma \leq \tau \text{ in }\Cell(X)^{op}\\
        & \text{no morphism otherwise}
\end{split}
\end{equation}
Let $R$ be a ring and $R\Mod$ be the category of left $R$-modules. We first define a cellular resolution, using the cosheaf language in \cite{Cur14} for convenience.

\begin{definition}
A \newterm{cellular cosheaf} of $R$-modules on $X$ is a cosheaf of $R$-modules on $\mathsf{Cell}(X)$ equipped with the Alexandrov topology. 
\end{definition}

\begin{theorem}[\cite{Cur14}, Theorem 4.2.10]
For any poset $P$, and category $D$ that is both complete and co-complete, the left Kan extension
$\mathsf{Lan}_{i_X}(-): \mathsf{Fun}(P^{op}, D)\rightarrow \mathsf{CoSh}(P, D)$ is an isomorphism.
\end{theorem}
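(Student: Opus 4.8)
The plan is to exhibit $\mathsf{Lan}_{i_X}(-)$ as one half of an adjoint equivalence whose other half is ``restrict a cosheaf to its costalks,'' and to recognize the cosheaf gluing axiom as precisely the statement that a cosheaf on an Alexandrov space is the colimit of its costalks over the canonical basis of minimal opens.

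First I would set up the ambient adjunction. Write $\mathcal O(P)$ for the poset of open (upward closed) subsets of $P$ and recall that $i_X\colon P^{\op{op}}\to\mathcal O(P)$ sends $p$ to the minimal open set $U_p:=\{q:q\geq p\}$ containing it. Since $D$ is cocomplete and $P$ is small, the pointwise left Kan extension $\mathsf{Lan}_{i_X}\colon\mathsf{Fun}(P^{\op{op}},D)\to\mathsf{Fun}(\mathcal O(P),D)$ exists, is left adjoint to the restriction functor $i_X^\ast$, and is computed by the colimit formula $(\mathsf{Lan}_{i_X}F)(U)=\op{colim}_{p\in U}F(p)$, the colimit being taken over the full subcategory of $P^{\op{op}}$ on the objects of $U$ (which is exactly the comma category $i_X/U$, because $U$ is upward closed). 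I would then note that $i_X$ is fully faithful: $\Hom_{\mathcal O(P)}(U_p,U_q)$ is a point exactly when $U_p\subseteq U_q$, i.e.\ exactly when $q\leq p$, which is exactly $\Hom_{P^{\op{op}}}(p,q)$. Hence each comma category $i_X/i_X(p)$ has a terminal object $(p,\op{id})$, so the colimit formula gives $i_X^\ast\mathsf{Lan}_{i_X}F=F$; in particular $\mathsf{Lan}_{i_X}$ is fully faithful with invertible unit.

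Next I would prove the two facts that pin down the image. (i) $\mathsf{Lan}_{i_X}F$ is always a cosheaf. The sets $U_p$ form a basis and every open cover of an open $U$ is refined by the canonical cover $\{U_p\}_{p\in U}$, so it is enough to check the cosheaf coequalizer condition for this cover; unwinding the colimit formula and using $U=\bigcup_{p\in U}U_p$, this is a ``compute the colimit in stages'' identity, exhibiting $\op{colim}_{p\in U}F(p)$ as the coequalizer of the two evident maps $\coprod_{p,q}(\mathsf{Lan}_{i_X}F)(U_p\cap U_q)\to\coprod_p(\mathsf{Lan}_{i_X}F)(U_p)$. (ii) For a cosheaf $G$, the counit $\mathsf{Lan}_{i_X}i_X^\ast G\to G$ is an isomorphism: its component at $U$ is the canonical map $\op{colim}_{p\in U}G(U_p)\to G(U)$, and applying the cosheaf axiom of $G$ to the very same cover $\{U_p\}_{p\in U}$ identifies it as an iso. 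Thus $\mathsf{Lan}_{i_X}$ factors through $\mathsf{CoSh}(P,D)$ and is essentially surjective onto it; combined with the full faithfulness from the previous paragraph this gives the asserted equivalence, which upgrades to an isomorphism of categories once the colimits in the Kan extension are chosen so that both round trips are literally the identity.

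I expect the only genuine work to be parts (i) and (ii) above: verifying that the Kan-extension colimit formula satisfies the cosheaf gluing axiom, and dually that a cosheaf's sections over $U$ are recovered as the colimit of its costalks over $U$. This is where the combinatorics special to the Alexandrov topology — the existence of a minimal open neighbourhood of each point, so that every cover admits the canonical refinement $\{U_p\}$ — is actually used, and it is also the step most vulnerable to indexing and variance errors; everything else is formal manipulation of an adjoint pair along a fully faithful functor.
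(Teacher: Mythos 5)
The paper does not prove this theorem; it is cited verbatim from Curry's thesis \cite{Cur14}, so there is no in-paper argument to compare against, and I can only evaluate your proposal on its own terms. Your overall strategy---treat $\mathsf{Lan}_{i_X}$ as the left adjoint to restriction along a fully faithful $i_X$, observe the unit is invertible because each comma category $i_X\downarrow i_X(p)$ has a terminal object, and then argue that the essential image is exactly the cosheaves---is the standard route and is correct in outline.

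The place you are thinner than you should be is exactly where you flag the ``genuine work.'' In both (i) and (ii) you need to pass between the colimit indexed by the poset $U$ regarded as a full subcategory of $P^{\op{op}}$ (this is what the pointwise Kan-extension formula produces, and what ``colimit of costalks'' means) and the Čech-type colimit that the cosheaf axiom actually supplies, namely the coequalizer of
\[
\coprod_{p,q\in U}G(U_p\cap U_q)\ \rightrightarrows\ \coprod_{p\in U}G(U_p)\ \longrightarrow\ G(U).
\]
Since $U_p\cap U_q$ is in general not a minimal open, ``applying the cosheaf axiom to the cover $\{U_p\}_{p\in U}$'' does not literally return $\op{colim}_{p\in U}G(U_p)$; a priori it returns the coequalizer above. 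One genuinely has to argue that the two colimits agree. The useful observation is that for a comparable pair $q\leq p$ one has $U_p\cap U_q=U_p$, so the two parallel arrows restricted to that summand are $\op{id}_{G(U_p)}$ and the structure map $G(U_p)\to G(U_q)$; these already impose all the relations of the poset-colimit coequalizer, and one then checks the relations coming from incomparable pairs are generated by them. That is a short but real lemma (or a cofinality argument), and as written your ``compute the colimit in stages'' and ``applying the cosheaf axiom\ldots identifies it as an iso'' treat it as automatic when it is the crux of the proof. You correctly identify this as the load-bearing step; you just don't carry the load.

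A smaller caveat: your closing remark that the equivalence ``upgrades to an isomorphism of categories once the colimits are chosen'' is not free. A left Kan extension is only determined up to canonical natural isomorphism, and making both round trips the identity on the nose requires constructing the inverse functor by hand rather than invoking universal properties. Either read Curry's ``isomorphism'' as ``equivalence,'' or replace the Kan-extension formalism at the end with an explicit object-level construction (define $\mathsf{Lan}_{i_X}F$ on each $U$ by a \emph{specified} colimit and define the inverse as restriction $G\mapsto G\circ i_X$); as it stands, this sentence asserts more than your argument delivers.
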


The theorem says that a cellular cosheaf of $R$-modules on $X$ is simply determined by a functor 
\begin{equation}
    F: \mathsf{Cell}(X)^{op}\rightarrow R\Mod,
\end{equation}
which assigns each $k$-cell $\eta_k$ an $R$-module $F(\eta_k)$, and a module map $F_{\eta_{k+1}}^{\eta_k}:=F(\eta_{k+1})\rightarrow F(\eta_k)$ if $\eta_k>\eta_{k+1}$.

For each facet relation $\sigma>\tau$, write $[\tau:\sigma]$ for its degree. There is a chain complex associated to a cellular cosheaf $F$, namely
\begin{equation}
\begin{split}
        \mathsf{C}_\bullet(F)& :=\dots \bigoplus_{\eta_{k+1}\in \Cell_{k+1}(X)} F(\eta_{k+1})\xrightarrow{\delta_{k+1}} \bigoplus_{\eta_k\in \Cell_k(X)} F(\eta_k)\rightarrow \dots \bigoplus_{\eta_0\in \Cell_0(X)} F(\eta_0), \text{ where }\\
        \delta_{k+1}&:=[\eta_{k+1}:\eta_k]F_{\eta_{k+1}}^{\eta_k}.
\end{split}
\end{equation}

\begin{example}
The chain complex associated to the cellular cosheaf 
\begin{align*}
    \mathsf{CW}: \mathsf{Cell}^{op}(X) & \rightarrow \mathsf{k}\Mod\\
    \sigma & \mapsto \mathsf k\\
    \sigma \geq \tau & \mapsto \op{Id}: \mathsf k \rightarrow \mathsf k
\end{align*}
is the CW-chain complex computing the CW-homology of $X$ with coefficients in $\mathsf k$.
\end{example}

\begin{definition} \label{def: cellular res}
Let $M$ be an $R$-module. A \newterm{cellular resolution} of $M$ supported on $X$ is a cellular cosheaf $F: \mathsf{Cell}(X)^{op}\rightarrow R\Mod$ such that $\mathsf{C}_\bullet(F)$ is a resolution of $M$. The cellular resolution is \newterm{projective} if $\mathsf{C}_\bullet(F)$ is a projective resolution of $M$. 
\end{definition}

\begin{remark}
When $X$ is regular, the degree is $0$ or $\pm 1$; an incidence function $\epsilon: \Cell(X)^{op}\times \Cell(X)^{op} \rightarrow \{-1,0,1\}$ can be chosen such that the value of the incidence function equals the degree. When $X$ is a regular semi-simplicial complex, one can pick a canonical incidence function, and a cellular cosheaf of $R$-modules in this case is simply a semi-simplicial object in $R\Mod$.
\end{remark}

We now describe a cellular resolution of the diagonal bimodule $A$ supported on $X_A$. For a vertex $v\in Q_0$, consider the projective modules 
\[
P_v := Ae_v \text{ and } P_v^{op} :=e_vA.
\]
We view $P_v$ as a left $A$-module, and $P_v^{op}$ as a right $A$-module. Consider the $\mathsf{k}$-algebra 
\begin{align*}
A_0 := \bigoplus_{v\in Q_0} \mathsf k & \rightarrow A \\
1_v & \mapsto e_v.
\end{align*}
We also consider, for each $k$-cell $\eta_k$ in $X_A$, the projective $(A,A)$-bimodule
\begin{equation}
    P_{\eta_k}:= P_{t(\eta_k)}\boxtimes P_{h(\eta_k)}^{op}=P_{t(\eta_k)}\otimes_{A_0} \mathsf{k}_{\eta_k} \otimes_{A_0} P_{h(\eta_k)}^{op} \in A\otimes_{\mathsf k} A^{op}\textnormal{-mod},
\end{equation}
where the left action of $A$ is given by concatenation with paths ending at $t(\eta_k)$, and the right action of $A$ is given by concatenation with paths starting at $h(\eta_k)$.

Recall that each $k$-simplex $\eta_k$ can be represented by a sequence of paths $[p_0< \dots < p_k]$ in $K(\Path)$ up to the equivalence relation $\sim$. We say $[p_0< \dots < p_k]$ is a \newterm{canonical representative} of $\eta_k$ if its image under the quotient $/\sim$ is the cell $\eta_k$ and $p_0=e_{t(\eta_k)}$. By construction, every cell in $X_A$ has a unique canonical representative. We denote the canonical representative of $\eta_k$ by $[\eta_k]$.

We first show that the projectives
\begin{equation}
    \mathsf{C}_k:=\bigoplus_{\eta_k \in \Cell_k(X_A)} P_{t(\eta_k)}\boxtimes P_{h(\eta_k)}^{op}
\end{equation}
form a semi-simplicial object in $A\otimes_{\mathsf k} A^{op}$-mod i.e.\ $\mathsf{C}: \mathsf{Cell}(X_A)\rightarrow A\otimes_{\mathsf k} A^{op}\mathsf{-mod}$ forms a cellular cosheaf. For each cell $\eta_k$, we define the semi-simplicial maps $\partial_i$ on the canonical representative $[\eta_k]=[e_{t(\eta_k)}< p_1 < \dots < p_k]$ as follows.
\begin{equation}
\begin{split}
    \partial_i & :=\sum_{\eta_k\in \Cell_k(X_A)}\partial_{i,\eta_k}: \mathsf{C}_k \rightarrow \mathsf{C}_{k-1} \\
    \partial_{i,\eta_k} (1 \otimes [\eta_k] \otimes 1) & :=\begin{cases}
    p_1 \otimes [e_{h(p_1)}< \dots < p_k] \otimes 1 & i=0\\
    1\otimes [e_{t(\eta_k)}< \dots \hat{p_i} \dots < p_k] \otimes 1 & 0<i<k\\
    1\otimes [e_{t(\eta_k)}< \dots < p_{k-1}] \otimes p_k/p_{k-1} & i=k
    \end{cases},
\end{split}
\end{equation}
\begin{lemma}
The data $(\mathsf{C}_k, \partial_i)$ forms a semi-simplicial $(A,A)$-bimodule.  In particular, there is an associated chain complex
$\mathsf{C}_\bullet$ whose $k^{\text{th}}$ component is $\mathsf{C}_k$ and with differential
\[
d_k:=\sum_{i=0}^k(-1)^i\partial_i
\]
\end{lemma}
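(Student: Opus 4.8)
The plan is to verify the two semi-simplicial identities
\[
\partial_i \partial_j = \partial_{j-1} \partial_i \quad \text{for } i < j,
\]
which, by the standard Eilenberg-Zilber-style computation, imply that $d_{k-1} \circ d_k = 0$, so that $(\mathsf C_\bullet, d)$ is indeed a chain complex of $(A,A)$-bimodules. Since each $\mathsf C_k$ is manifestly a direct sum of projective bimodules $P_{t(\eta_k)} \boxtimes P_{h(\eta_k)}^{op}$, and each $\partial_i$ is checked to be a bimodule map by inspection of the three cases in its definition, the only real content is the combinatorial identity among the $\partial_i$.

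First I would reduce to checking the identity on a single canonical generator $1 \otimes [\eta_k] \otimes 1$ with $[\eta_k] = [e_{t(\eta_k)} < p_1 < \dots < p_k]$, since the $\partial_i$ are bimodule maps and the generators of this form span $\mathsf C_k$ over $A \otimes_{\mathsf k} A^{op}$. Then I would split the verification of $\partial_i \partial_j = \partial_{j-1}\partial_i$ (for $i<j$) into the handful of cases according to whether each of $i, j$ equals $0$, equals the top index, or is strictly interior; the ``middle'' cases $0 < i < j < k$ are exactly the classical simplicial identity for deleting two entries of an ordered chain and are purely formal. The cases involving $i = 0$ require remembering that $\partial_0$ strips off $p_1$ and multiplies it into the left factor, together with the reindexing $p_\ell \mapsto p_\ell$ on the tail $[e_{h(p_1)} < p_2 < \dots]$; one checks that applying $\partial_0$ then $\partial_{j-1}$ agrees with applying $\partial_j$ then $\partial_0$ because deleting an interior $p_j$ commutes with removing the leading $p_1$. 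Symmetrically, the cases involving the top index $j = k$ use that $\partial_k$ replaces the last entry $p_k$ by $p_{k-1}$ and multiplies $p_k/p_{k-1}$ into the right factor; here one needs that this ``last-vertex'' operation is well defined on canonical representatives, which follows from the HPA axioms in Definition~\ref{defn: HPA} (the cancellation property ensures $p_k/p_{k-1}$ and the truncated chain $[e_{t(\eta_k)} < \dots < p_{k-1}]$ are unambiguous in $\Path_A$).

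The one subtlety — and the main thing to get right — is the compatibility of the boundary maps with the equivalence relation $\sim$ defining $X_A$: a priori $\partial_{i,\eta_k}$ is written in terms of a canonical representative, so one must confirm that the face $[e_{t(\eta_k)} < \dots \widehat{p_i} \dots < p_k]$ (or its $i=0$, $i=k$ analogues) is again canonical, i.e.\ that passing to faces of canonical chains yields canonical chains, possibly after re-normalizing so the first entry is a trivial path. For $0 < i < k$ this is immediate. For $i = 0$ the new chain $[e_{h(p_1)} < p_2 < \dots < p_k]$ must be renormalized by dividing through by $p_1$, i.e.\ it is the canonical representative of the cell $[e_{h(p_1)} < p_2/p_1 < \dots < p_k/p_1]$, and the leftover $p_1$ is precisely what is absorbed into the left $A$-factor; here one uses that $\sim$ respects this division (the defining condition $\frac{p_i}{p_0} - \frac{q_i}{q_0} \in I_S$) so the map descends. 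For $i = k$ the truncation is already canonical and the quotient $p_k/p_{k-1}$ is well defined by the HPA cancellation axiom. Once these well-definedness points are settled, the semi-simplicial identities reduce to the bookkeeping described above, and the lemma follows; I expect the well-definedness of $\partial_0$ on $\sim$-classes, rather than any of the simplicial identities, to be the genuine obstacle.
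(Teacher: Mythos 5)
Your plan — verifying the semi-simplicial identities $\partial_i\partial_j=\partial_{j-1}\partial_i$ for $i<j$ by casework on whether $i$ and $j$ are $0$, interior, or $k$, then invoking the standard fact that these identities force $d^2=0$ — is exactly the paper's proof. Your attention to well-definedness of the face maps on $\sim$-classes (renormalizing by $p_1$ for $\partial_0$ and using the HPA cancellation axiom of Definition~\ref{defn: HPA} to make $p_k/p_{k-1}$ unambiguous for $\partial_k$) is a worthwhile point that the paper leaves implicit.
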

\begin{proof}
One only needs to check the semi-simplicial identities
$\partial_i\partial_j=\partial_{j-1}\partial_i$ for all $i<j$. For a canonical representative $[e_{t(p_1)}< \dots < p_k]$, we compute this in 5 cases:
\begin{enumerate}
    \item For $0< i<j< k$: 
\begin{align*}
\partial_i \partial_j(1\otimes [e_{t(p_1)}< \dots < p_k]\otimes 1) & =\partial_{j-1} \partial_i (1\otimes [e_{t(p_1)}< \dots < p_k]\otimes 1) \\
& =1\otimes [e_{t(p_1)}< \dots \hat{p_i} \dots \hat{p_j} \dots < p_k ]\otimes 1
\end{align*}
\item For $i=1,j=1$: 
\begin{align*}
\partial_0 \partial_1(1\otimes [e_{t(p_1)}< \dots < p_k ]\otimes 1) & = p_2\otimes [e_{h(p_2)}< p_3/p_2\dots < p_k/p_2]\otimes 1 \\
& = \partial_0 \partial_0(1\otimes [e_{t(p_1)}< \dots < p_k ]\otimes 1)
\end{align*}
\item For $1<j< k$: 
\begin{align*}
\partial_0 \partial_j(1\otimes [e_{t(p_1)}< \dots < p_k ]\otimes 1) & = p_1\otimes [e_{h(p_1)}< \dots \hat{p_j}/p_1\dots < p_k/p_1]\otimes 1 
\\
& = \partial_{j-1} \partial_0(1\otimes [e_{t(p_1)}< \dots < p_k ]\otimes 1)
\end{align*}
\item For $0<i< k-1$: 
\begin{align*}
\partial_i \partial_k(1\otimes [e_{t(p_1)}< \dots < p_k ]\otimes 1) & = 1 \otimes [e_{t(p_1)}< \dots \hat{p_i}\dots < p_{k-1}]\otimes p_k/p_{k-1} \\
& = \partial_{k-1} \partial_i(1\otimes  [e_{t(p_1)}< \dots < p_k ]\otimes 1) 
\end{align*}
\item For $i=k-1,j=k$: 
\begin{align*}
\partial_{k-1} \partial_k(1\otimes  [e_{t(p_1)}< \dots < p_k ]\otimes 1) & = 1 \otimes  [e_{t(p_1)}< \dots < p_{k-2} ]\otimes p_k/p_{k-2} \\
& = \partial_{k-1} \partial_{k-1}(1\otimes [e_{t(p_1)}< \dots < p_k ]\otimes 1).
\end{align*}
\end{enumerate}


\end{proof}

In particular, we have $\mathsf{C}_0= \bigoplus_{v \in Q_0} P_v \boxtimes P_v^{op}$. Let \[
\widetilde{\mathsf{C}}_\bullet= \cdots \mathsf{C}_1\rightarrow \mathsf{C}_0 \xrightarrow{m} A\rightarrow 0
\]
be the augmented chain complex where $m$ is the multiplication map. Consider the collection of right $A$-module morphisms
\begin{equation}
    \begin{split}
        h_k & : \mathsf{C}_k \rightarrow \mathsf{C}_{k+1}\\
        h_k (a\otimes [e_v < p_1 < \dots < p_k] \otimes b) & :=  \begin{cases} 1\otimes [e_{t(a)}< a < ap_1 \dots < ap_k]\otimes b & v=h(a)\\
        0 & \textnormal{else}
        \end{cases}\\ 
        & \textnormal{ for every path } a, \textnormal{ extended $\mathsf{k}$-linearly on the left} \\
        h_{-1} & : A \rightarrow \mathsf{C}_0\\
        h_{-1}(b) & :=1\otimes [e_{t(b)}]\otimes b, \textnormal{ extended $\mathsf{k}$-linearly on the left}
    \end{split}
\end{equation}

\begin{corollary} \label{cor: cell res}
The collection of maps $\{h_k\}$ form a contracting homotopy of $\widetilde{\mathsf{C}}_\bullet$.  In particular, $\mathsf{C}_\bullet$ is a projective cellular resolution of $A$ as as a $(A,A)$-bimodule.
\end{corollary}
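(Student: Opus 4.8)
The plan is to prove that the augmented complex
\[
\widetilde{\mathsf{C}}_\bullet = \cdots \to \mathsf{C}_1 \xrightarrow{d_1} \mathsf{C}_0 \xrightarrow{m} A \to 0
\]
is a resolution of $A$ by projective $(A,A)$-bimodules. Granting this, Corollary~\ref{cor: cell res} follows at once: the preceding lemma already exhibits $(\mathsf{C}_k,\partial_i)$ as a semi-simplicial bimodule, i.e.\ a cellular cosheaf $\mathsf{C}\colon \mathsf{Cell}(X_A)\to A\otimes_{\mathsf k}A^{\op{op}}\Mod$ whose associated chain complex is $\mathsf{C}_\bullet$, so an exact augmentation $\mathsf{C}_\bullet\to A$ is by definition a projective cellular resolution of the diagonal bimodule in the sense of Definition~\ref{def: cellular res}. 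Projectivity of the terms is immediate: since $\sum_v e_v = 1$, we have $A\otimes_{\mathsf k}A = \bigoplus_{v,w} Ae_v\otimes_{\mathsf k}e_wA$, so each summand $P_{t(\eta_k)}\boxtimes P_{h(\eta_k)}^{\op{op}} = Ae_{t(\eta_k)}\otimes_{\mathsf k}e_{h(\eta_k)}A$ of $\mathsf{C}_k$ is a direct summand of the free bimodule $A\otimes_{\mathsf k}A$. Thus the only real content is the \emph{exactness} of $\widetilde{\mathsf{C}}_\bullet$, which I would obtain by verifying directly that the maps $\{h_k\}_{k\ge -1}$ form a contracting homotopy, i.e.\ that
\[
m\circ h_{-1} = \op{id}_A,\qquad d_1\circ h_0 + h_{-1}\circ m = \op{id}_{\mathsf{C}_0},\qquad d_{k+1}\circ h_k + h_{k-1}\circ d_k = \op{id}_{\mathsf{C}_k}\ \ (k\ge 1).
\]

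The first identity is trivial, since $m(1\otimes[e_{t(b)}]\otimes b)=b$. For the other two, note that every $h_k$ is right $A$-linear while every $d_k$ and $m$ is a bimodule map, so all composites above are right $A$-linear; it therefore suffices to test the identities on the left generators $g = a\otimes[e_v<p_1<\cdots<p_k]\otimes 1$, where $\eta_k=[e_v<\cdots<p_k]$ is a $k$-cell and $a$ runs over a $\mathsf k$-basis of paths with $h(a)=v=t(\eta_k)$ (including the trivial path $a=e_v$, on which $h_k$ is understood to vanish). The degree-zero identity $d_1 h_0 + h_{-1}m = \op{id}_{\mathsf{C}_0}$ is a short direct check: on $a\otimes[e_v]\otimes 1$ one gets $h_0(g)=1\otimes[e_{t(a)}<a]\otimes 1$, $d_1=\partial_0-\partial_1$ sends this to $a\otimes[e_v]\otimes 1 - 1\otimes[e_{t(a)}]\otimes a$, and $h_{-1}m(g)=1\otimes[e_{t(a)}]\otimes a$, so the sum is $g$.

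The main step is the computation for $k\ge 1$. By definition $h_k(g)=1\otimes\xi\otimes 1$ with $\xi=[e_{t(a)}<a<ap_1<\cdots<ap_k]$, a $(k+1)$-cell whose successive vertices are $q_0=e_{t(a)}$, $q_1=a$, $q_{j+1}=ap_j$. Expand $d_{k+1}(h_k(g))=\sum_{i=0}^{k+1}(-1)^i\partial_i(1\otimes\xi\otimes 1)$. The $i=0$ clause of the definition of $\partial_i$ gives $\partial_0(1\otimes\xi\otimes 1)=q_1\otimes[e_{h(q_1)}<q_2/q_1<\cdots<q_{k+1}/q_1]\otimes 1 = a\otimes[e_v<p_1<\cdots<p_k]\otimes 1 = g$. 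For $1\le i\le k+1$, separating the three clauses of $\partial_i$ ($i=1$ and the middle clause delete the vertex $q_i=ap_{i-1}$; the $i=k+1$ clause extracts the tail factor $q_{k+1}/q_k=p_k/p_{k-1}$), one checks that
\[
\partial_i\bigl(1\otimes\xi\otimes 1\bigr) = h_{k-1}\bigl(\partial_{i-1}(g)\bigr),
\]
the point being that deleting $q_i$ from $\xi$ produces precisely the cell gotten by prepending $a$ via $h_{k-1}$ to $\partial_{i-1}(\eta_k)$, and likewise for the two extreme faces; here one must keep the canonical-representative normalization $p_0=e_\bullet$ straight under the rescalings $p_j\mapsto p_j/p_1$. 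Re-indexing $j=i-1$ yields $d_{k+1}h_k(g) = g - h_{k-1}(d_k(g))$, which is the desired homotopy identity on $g$, and hence on all of $\mathsf{C}_k$.

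I expect the main labor — there is no conceptual obstruction here — to be exactly this face-by-face bookkeeping: matching the boundary faces of the prepended simplex $\xi$ with $h_{k-1}$ applied to the faces of $g$, keeping track of canonical representatives under the division operations, and handling the low-degree and trivial-path edge cases. Once the contracting homotopy is in hand, $\widetilde{\mathsf{C}}_\bullet$ is exact, so $\mathsf{C}_\bullet\to A$ is a projective resolution of bimodules, and being the chain complex of the cellular cosheaf $\mathsf{C}$ on $X_A$ it is by definition a projective cellular resolution of the diagonal bimodule. (It is of course far from minimal in general, since $X_A$ carries many cells; minimality is addressed later via discrete Morse theory.)
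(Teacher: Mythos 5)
Your proposal is correct and takes essentially the same route as the paper: you verify directly that the $h_k$ form a contracting homotopy, reducing by right $A$-linearity to left generators $a\otimes[\eta_k]\otimes 1$, and then compare faces; the paper's proof is exactly this face-by-face bookkeeping, written out as two long sums rather than as the uniform identity $\partial_0 h_k = \op{id}$, $\partial_i h_k = h_{k-1}\partial_{i-1}$ (i.e.\ $h_k$ is an ``extra degeneracy''), plus the low-degree cases $k=0,-1$. The only small caveat, which you flag, is the trivial-path edge case $a=e_v$: there $h_k(g)=0$ since $[e_v<e_v<\cdots]$ is degenerate, so the clean face identity $\partial_0 h_k(g)=g$ fails and one must instead check $h_{k-1}d_k(g)=g$ directly (the first face $\partial_0(g)=p_1\otimes[\cdots]\otimes 1$ survives under $h_{k-1}$ and gives $g$, the remaining faces give degenerate simplices and vanish) — the paper's formulas handle this silently for the same reason.
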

\begin{proof}
For convenience, set $p_0:=a$. We show $d_{k+1}h_k+h_{k-1}d_k=\textnormal{Id}$ by computation. 
\begin{enumerate}
    \item For $k\geq 1$, we compute  $d_{k+1}h_k (a\otimes [e_{h(a)}<p_1\dots <p_k]\otimes b) $
\small
\begin{align*}
 = & d_{k+1} (1\otimes [e_{t(a)}< a < ap_1 \dots < ap_k]\otimes b) \\
 = &  a \otimes [e_{h(a)}<p_1\dots <p_k] \otimes b  + \sum_{i=0}^{k-1}(-1)^{i+1} 1\otimes [e_{t(a)}< \dots \hat{ap_i} \dots < ap_k] \otimes b  \\
& + (-1)^{k+1} 1\otimes [e_{h(a)}<\dots <ap_{k-1}]\otimes p_kb/p_{k-1}.
\end{align*}
\normalsize
\item and from the other direction we compute $h_{k-1}d_k (a\otimes [e_{h(a)}<p_1\dots <p_k]\otimes b) $
\small
\begin{align*}
  = &  h_{n-1} (ap_1\otimes [e_{h(p_1)}<\dots p_k/p_1]\otimes b  + \sum_{i=1}^{k-1} (-1)^i a\otimes [e_{h(a)}<\dots \hat{p_i}< p_k]\otimes b \\
& + (-1)^k a \otimes [e_{h(a)}< \dots <p_{k-1}]\otimes p_kb/p_{k-1})\\
= &  1\otimes [e_{t(ap_1)}<ap_1< \dots <ap_k]\otimes b  + \sum_{i=1}^{k-1} (-1)^i 1 \otimes [e_{t(a)}<a<\dots \hat{ap_i}\dots<ap_k]\otimes b\\
& + (-1)^k 1\otimes [e_{t(a)}<a<\dots <p_{k-1}]\otimes p_kb/p_{k-1}\\
 = &  \sum_{i=0}^{k-1} (-1)^i 1 \otimes [e_{t(a)}<\dots \hat{ap_i}\dots<ap_k]\otimes b +(-1)^k 1\otimes [e_{t(a)}<a<\dots <p_{k-1}]\otimes p_kb/p_{k-1}\\
 = &  -d_{k+1}h_k (a\otimes [e_{h(a)}<p_1\dots <p_k]\otimes b) + \textnormal{Id}(a \otimes [e_{h(a)}<p_1\dots <p_k] \otimes b).
\end{align*}
\normalsize
\item Similarly for $k=0$ we compute,
\small
\begin{align*}
    d_1h_0(a\otimes [e_{h(a)}]\otimes b)  = & d_1(1\otimes [e_{t(a)}<a]\otimes b) \\ = & a\otimes [e_{h(a)}]\otimes b - 1\otimes[e_{t(a)}]\otimes ab
    \end{align*}
  \normalsize
\item and
\small  
\begin{align*}
    h_{-1}m(a\otimes [e_{h(a)}]\otimes b)  = & h_{-1}(ab) \\
    = & 1\otimes[e_{t(ab)}]\otimes  ab  \\ = & 1\otimes[e_{t(a)}]\otimes ab.
\end{align*}
  \normalsize
\item Finally
\small  
\begin{align*}
    mh_{-1}(b)  = & m(1 \otimes [e_{t(b)}] \otimes b) \\
    = & b.
\end{align*}
\end{enumerate}
\normalsize
\end{proof}


\begin{corollary}
Let $A$ be an HPA and $Y$ be the underlying CW complex in a projective cellular resolution of $A$ as a bimodule.  Then, there are isomorphisms of CW homology groups 
\[
H_i(Y, \mathsf k) \cong H_i(X_A, \mathsf k)
\]
for all $i$.
\end{corollary}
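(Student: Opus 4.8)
The plan is to compare the two bimodule resolutions by applying a single right‑exact functor and invoking resolution‑independence of derived functors, then to identify the resulting complexes with cellular chain complexes. Write $A^{\mathrm e}:=A\otimes_{\mathsf k}A^{\op{op}}$, so an $(A,A)$‑bimodule is the same as a left $A^{\mathrm e}$‑module. Let $\mathsf{C}_\bullet$ denote the projective cellular resolution of the diagonal bimodule supported on $X_A$ produced in Corollary~\ref{cor: cell res}, and let $\mathsf{C}_\bullet(F)$ be the given projective cellular resolution supported on $Y$. Both are bounded complexes of projective $A^{\mathrm e}$‑modules resolving $A$ (boundedness because $Q$ is finite and acyclic, so $X_A$ and $Y$ are finite‑dimensional).

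Next I would introduce the ``constant'' bimodule $\underline{\mathsf k}$: as a $\mathsf k$‑module $\underline{\mathsf k}=\bigoplus_{v\in Q_0}\mathsf k$, with $e_v$ acting as projection onto the $v$‑th summand and every arrow $\alpha\colon v\to w$ acting as the identity from the $v$‑th to the $w$‑th summand, both on the left and on the right. This is well defined because $I_S$ is generated by differences $p-p'$ of parallel paths, and such a difference acts as $0$ on $\underline{\mathsf k}$; it is exactly the constant functor on $\mathcal C_A$ regarded as a bimodule. Define the right‑exact functor
\[
\Psi\colon A^{\mathrm e}\Mod\longrightarrow \mathsf k\Mod,\qquad \Psi(M):=\underline{\mathsf k}\otimes_A M\otimes_A\underline{\mathsf k}\;\cong\;(\underline{\mathsf k}\otimes_{\mathsf k}\underline{\mathsf k})\otimes_{A^{\mathrm e}}M.
\]
Since $\mathsf{C}_\bullet$ and $\mathsf{C}_\bullet(F)$ are both projective resolutions of $A$ over $A^{\mathrm e}$, the complexes $\Psi(\mathsf{C}_\bullet)$ and $\Psi(\mathsf{C}_\bullet(F))$ both compute $\op{Tor}^{A^{\mathrm e}}_\bullet(\underline{\mathsf k}\otimes_{\mathsf k}\underline{\mathsf k},\,A)$, hence have canonically isomorphic homology. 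This is the substantive input.

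It then remains to identify $\Psi$ of each cellular complex with a CW chain complex. For a cell $\eta$, $\Psi\bigl(P_{t(\eta)}\boxtimes P_{h(\eta)}^{\op{op}}\bigr)=\bigl(\underline{\mathsf k}\otimes_A Ae_{t(\eta)}\bigr)\otimes_{\mathsf k}\bigl(e_{h(\eta)}A\otimes_A\underline{\mathsf k}\bigr)=\underline{\mathsf k}e_{t(\eta)}\otimes_{\mathsf k}e_{h(\eta)}\underline{\mathsf k}\cong\mathsf k$, so $\Psi(\mathsf{C}_k)=\bigoplus_{\eta_k}\mathsf k=C^{\mathrm{CW}}_k(X_A;\mathsf k)$, one copy of $\mathsf k$ per cell. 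For the differential, the face maps $\partial_i$ of $\mathsf{C}_\bullet$ are built from left multiplication by a path ($i=0$), omission of a vertex ($0<i<k$), and right multiplication by a path ($i=k$); since every path acts as the identity on $\underline{\mathsf k}$, applying $\Psi$ turns each $\partial_i$ into the combinatorial $i$‑th face map $\mathsf k\xrightarrow{\ \mathrm{id}\ }\mathsf k$ of the $\Delta$‑complex $X_A$. Thus $\Psi(d_k)=\sum_{i=0}^k(-1)^i\partial_i$ is the simplicial boundary of $X_A$ and $\Psi(\mathsf{C}_\bullet)=C^{\mathrm{CW}}_\bullet(X_A;\mathsf k)$. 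The identical reasoning applied to $\mathsf{C}_\bullet(F)$ gives $\Psi(\mathsf{C}_\bullet(F))=C^{\mathrm{CW}}_\bullet(Y;\mathsf k)$, and combining with the previous paragraph yields $H_i(X_A;\mathsf k)=H_i\bigl(\Psi(\mathsf{C}_\bullet)\bigr)\cong H_i\bigl(\Psi(\mathsf{C}_\bullet(F))\bigr)=H_i(Y;\mathsf k)$ for all $i$.

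The one delicate step is the last identification $\Psi(\mathsf{C}_\bullet(F))=C^{\mathrm{CW}}_\bullet(Y;\mathsf k)$: it uses that the cellular cosheaf $F$ underlying the resolution assigns a single indecomposable projective bimodule $P_{t(\eta)}\boxtimes P_{h(\eta)}^{\op{op}}$ to each cell $\eta$ and has path‑multiplication structure maps weighted by the CW incidence numbers $[\eta:\tau]\in\{0,\pm1\}$ — which is precisely how a cellular resolution supported on $Y$ is assembled (as in Corollary~\ref{cor: cell res} and the Bayer--Sturmfels framework recalled in \S5.1), so that ``setting every path label to $1$'', i.e.\ applying $\Psi$, recovers the CW chain complex. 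I expect this bookkeeping to be the only real friction; everything else is the resolution‑independence of $\op{Tor}^{A^{\mathrm e}}(\underline{\mathsf k}\otimes_{\mathsf k}\underline{\mathsf k},A)$. As a sanity check one can also note $H_\bullet(X_A;\mathsf k)=H_\bullet(B\mathcal C_A;\mathsf k)=\op{Tor}^{\mathsf k\mathcal C_A}_\bullet(\underline{\mathsf k},\underline{\mathsf k})=\op{Tor}^{A}_\bullet(\underline{\mathsf k},\underline{\mathsf k})$, which agrees with $\op{Tor}^{A^{\mathrm e}}_\bullet(\underline{\mathsf k}\otimes_{\mathsf k}\underline{\mathsf k},A)$ by the standard change‑of‑rings isomorphism.
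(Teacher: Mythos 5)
Your argument is essentially identical to the paper's: your constant bimodule $\underline{\mathsf k}$ is exactly the paper's trivial representation $F$, your functor $\Psi$ is the paper's $(F\otimes_{\mathsf k}F^{\op{op}})\otimes_{A\otimes_{\mathsf k}A^{\op{op}}}(-)$, and both proofs invoke resolution-independence of the derived tensor product and then identify $\Psi$ of each cellular complex with the corresponding CW chain complex. You simply supply more detail than the paper does (in particular, the explicit verification that $\Psi$ sends each face map to the combinatorial face map, and the flag that the definition of ``cellular resolution'' must be read as assigning one indecomposable projective $P_{t(\eta)}\boxtimes P_{h(\eta)}^{\op{op}}$ per cell), which is a welcome elaboration but not a different route.
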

\begin{proof}
 Let $P_\bullet \cong A$ be the projective resolution coming from $Y$ and
\begin{align*}
F: \mathcal C_A & \to mod-\mathsf k \\
 v & \mapsto \mathsf k \\
 a & \mapsto \text{id}
\end{align*}
be the trivial representation.  

We have
\begin{align*}
(F \otimes_{\mathsf k} F^{\op{op}}) \otimes_{A \otimes_{\mathsf k} A^{\op{op}}} \mathsf{C}_\bullet & = (F \otimes_{\mathsf k} F^{\op{op}}) \otimes^{L}_{A \otimes_{\mathsf k} A^{\op{op}}} A & \text{ by Corollary~\ref{cor: cell res}} \\
& = (F \otimes_{\mathsf k} F^{\op{op}}) \otimes_{A \otimes_{\mathsf k} A^{\op{op}}} P_\bullet & \text{ by assumption}
\end{align*}
but $(F \otimes_{\mathsf k} F^{\op{op}}) \otimes_{A \otimes_{\mathsf k} A^{\op{op}}} \mathsf{C}_\bullet$ is exactly the CW homology of $X_A$ and $(F \otimes_{\mathsf k} F^{\op{op}}) \otimes_{A \otimes_{\mathsf k} A^{\op{op}}} P_\bullet$ is exactly the CW homology of $Y$.
\end{proof}

We make the following conjecture

\begin{conj}
If $Y$ is the underlying CW complex in a projective cellular resolution of $A$ as an $A \otimes_{\mathsf k} A^{\op{op}}-$module, then $Y$ is homotopic to $X_{A}$.
\end{conj}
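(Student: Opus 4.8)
The plan is to connect an arbitrary projective cellular resolution $Y$ to $X_A$ by a zig-zag of simplicial collapses, using the discrete Morse theory of \S\ref{sec: Morse}, and then appeal to the uniqueness of the minimal bimodule resolution. Two structural facts drive the argument. First, by Corollary~\ref{cor: cell res}, $X_A$ itself carries a canonical projective cellular resolution $\mathsf C_\bullet$ of the diagonal bimodule. Second, for any projective cellular resolution $Y$ with cellular cosheaf $F$, the complex $\mathsf C_\bullet(F)$ is a projective resolution of $A$ over $A\otimes_{\mathsf k} A^{\op{op}}$, hence chain homotopy equivalent to $\mathsf C_\bullet$; consequently $\mathsf C_\bullet(F)\cong \mathsf C^{\min}_\bullet\oplus T_\bullet$, where $\mathsf C^{\min}_\bullet$ is \emph{the} minimal resolution of the diagonal bimodule (unique by \cite{Ei56, BK99}) and $T_\bullet$ is a direct sum of elementary contractible complexes $\cdots 0\to P\xrightarrow{\sim} P\to 0\cdots$.

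I would then try to realize this algebraic splitting geometrically. The goal is to produce, for any $Y$, an internal acyclic matching (Definition~\ref{def: internal}) on $\mathsf{Cell}(Y)^{\op{op}}$ whose critical cells index a basis of $\mathsf C^{\min}_\bullet$ and whose matched pairs are free face pairs; a single such pair is an elementary collapse, hence a simple homotopy equivalence, so iterating yields $Y\simeq \overline Y$ with $\overline Y$ a minimal cellular resolution. Applying the same reduction to $X_A$ gives $X_A\simeq \overline{X_A}$ with $\overline{X_A}$ minimal. It then remains to see that a minimal cellular resolution is rigid: its $k$-cells must be in bijection with a fixed basis of the $k$-th Betti space, and — after passing to barycentric subdivisions via Proposition~\ref{prop: Nanda} so that all complexes in sight are regular — the face poset, and hence the homotopy type, is recovered from the incidence data of $\mathsf C^{\min}_\bullet$. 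Chaining the equivalences, $Y\simeq\overline Y\cong\overline{X_A}\simeq X_A$.

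The hard part is the middle step. There is no formal reason that the algebraic non-minimality of $\mathsf C_\bullet(F)$ is arranged so that the redundant free summands can be cancelled along genuine free face pairs (an internal \emph{acyclic} matching) rather than along arbitrary acyclic zig-zags of module maps — realizing contractible summands by actual collapses of the cell structure is precisely the content left open by the conjecture, and the paper only establishes such a matching under the shellability hypothesis of Proposition~\ref{prop: shellable-koszul}. A secondary subtlety is the rigidity claim: a priori non-homeomorphic CW complexes could carry isomorphic minimal chain complexes, so one must feed in extra combinatorics of $\Path_A$ to pin down the attaching maps. If the matching step resists, a fallback is to abandon minimality and instead attempt to realize the chain homotopy equivalence $\mathsf C_\bullet(F)\simeq\mathsf C_\bullet$ by a cellular map $Y\to X_A$, proving it is a homotopy equivalence via Theorem~\ref{thm: Quillen} applied to the induced functor of cell categories; but a map of module complexes need not respect cell decompositions, so this route would itself require new input.
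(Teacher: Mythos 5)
This statement is labeled \textbf{Conjecture} in the paper: the authors explicitly leave it open (the subsequent remark notes only that the examples they produce via discrete Morse theory satisfy it). There is therefore no proof to compare against, and your proposal is not a proof either — you say so yourself in the final paragraph, which is the right assessment. What you have written is a reasonable reduction of the conjecture to two sub-problems, but both remain genuinely open and the second is likely to fail as stated.

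The first gap, which you identify, is that a direct-sum splitting $\mathsf C_\bullet(F)\cong \mathsf C^{\min}_\bullet\oplus T_\bullet$ at the level of chain complexes of bimodules does not produce an acyclic matching by free face pairs on the CW complex $Y$. Worse, the notion of \emph{internal} matching in Definition~\ref{def: internal} is formulated in terms of the head/tail vertex structure of cells of $X_A$; an arbitrary projective cellular resolution $Y$ need not carry any such quiver-compatible cell indexing, so you would first need to prove that the cosheaf $F$ on $Y$ forces a compatible labeling of cells by source and target idempotents before the matching machinery of \S\ref{sec: Morse} can even be set up on $Y$. The paper only establishes the existence of such matchings for $X_A$ itself, under the shellability hypothesis of Proposition~\ref{prop: shellable-koszul}.

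The second gap, the rigidity of minimal cellular resolutions, is more serious and probably not a gap but an outright obstruction to your route. Over a field, a minimal projective resolution is determined up to (non-canonical) isomorphism of \emph{chain complexes}, but a chain isomorphism does not remember attaching maps: two CW complexes can have isomorphic cellular chain complexes and different homotopy types (e.g.\ spaces distinguished by Steenrod operations or by $\pi_1$). Nothing in $\mathsf C^{\min}_\bullet$, viewed purely as a complex of projective bimodules, pins down the topology. Your remark that one must ``feed in extra combinatorics of $\Path_A$'' is exactly where the argument would have to do all the work, and no mechanism for doing so is offered. Separately, over a general base ring $\mathsf k$ a minimal resolution need not exist at all (the paper's remark after Lemma~\ref{lem: Tor is reduced homology} points out torsion in the order-complex homology obstructs it), so the entire scaffolding via $\mathsf C^{\min}_\bullet$ collapses outside the field case. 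Your fallback — realizing the chain homotopy equivalence by a cellular map and invoking Theorem~\ref{thm: Quillen} — has the same difficulty: there is no cell category on $Y$ a priori compatible with $\mathcal C_A$, so there is no functor to which Quillen's Theorem A applies.
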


\begin{remark}
In the next section, we produce smaller (and sometimes minimal) projective cellular resolutions using discrete Morse theory.  In particular, all examples produced in the next section satisfy the above conjecture.
\end{remark}





\subsection{Morse Matching} \label{sec: Morse}
Our next goal is to reduce $\mathsf{C}_\bullet$ to a cellular resolution supported on a CW complex with fewer cells. The resulting complex is an analog of the Morse complex.  Throughout this section, we use $C_\bullet$ to denote CW chain complexes of $\mathsf{k}$-modules, and use $\mathsf{C}_\bullet$ to denote chain complexes of $(A,A)$-bimodules.  

We first recall the main theorem in discrete Morse theory. Fix a regular CW complex $X$.
\begin{definition}
A \newterm{matching} $M$ is a collection of subsets $M_k^{top}\sqcup M_k^{bottom} \subset \Cell_k(X)$, together with a bijection $m_k: M_k^{top}\rightarrow M_{k-1}^{bottom}$ for every $k$ which takes a $k$-cell to one of its facets.
\end{definition}
Write $M^{top}:=\coprod_k M^{top}_k$ and $M^{bottom}:=\coprod_k M^{bottom}_k$. A matching gives a bijection $m:M^{top}\rightarrow M^{bottom}$. We think of a matching as $M=\textnormal{Graph}(m)\subset M^{top}\times M^{bottom}$, where an element in $M$ is a pair of matched cells in consecutive dimensions. A cell is called \newterm{critical} if it is unmatched. 

Consider the Hasse diagram $\Gamma_X$ of the cell poset of $X$. A matching $M$ is simply a graph-theoretic matching on $\Gamma_X$. Now orient each matched edge towards the vertex whose corresponding cell has smaller dimension, and each unmatched edge towards the vertex whose corresponding cell has larger dimension. Denote the Hasse diagram with this orientation $\Gamma_X^M$.

\begin{definition}
A matching $M$ is \newterm{acyclic} if $\Gamma_X^M$ has no oriented cycles.
\end{definition}

\begin{theorem}[Fundamental Theorem of Discrete Morse Theory, \cite{For98}]\label{forman}
An acyclic matching of cells in $X$ gives a homotopy from $X$ to a CW complex $X^{crit}$ consisting of critical cells. 
\end{theorem}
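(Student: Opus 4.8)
The plan is to follow Forman's original strategy, adapted to the regular CW setting: one removes matched pairs of cells one at a time, showing that each removal does not change the homotopy type, until only the critical cells remain. First I would set up the basic local move: if $\sigma^{(k)}$ is a $k$-cell and $\tau^{(k-1)}$ one of its facets, and $\tau$ is a \emph{free face} (meaning $\tau$ is a facet of $\sigma$ and of no other cell), then the pair $(\tau,\sigma)$ can be \emph{collapsed}, i.e.\ $X$ deformation retracts onto $X \setminus (\mathrm{int}(\sigma) \cup \mathrm{int}(\tau))$. This is the elementary collapse; it is standard for regular CW complexes since $\overline{\sigma}$ is a closed ball and $\overline{\tau}$ a face of it, so the pair $(\overline{\sigma},\overline{\tau})$ is a deformation retract of $\overline{\sigma}$ onto the closure of the complementary part of $\partial\sigma$. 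The whole theorem is then the statement that the matched pairs can be collapsed in \emph{some} order so that, at the moment each pair is collapsed, the lower cell has become a free face.

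Next I would produce that order. Acyclicity of $M$ means the oriented Hasse diagram $\Gamma_X^M$ has no directed cycles, so the relation generated by its directed edges is a partial order on the cells; extend it to a total order. I claim one can repeatedly find a matched pair $(\tau^{(k-1)},\sigma^{(k)})$ such that $\sigma$ is currently maximal (no cell currently present has $\sigma$ in its boundary, i.e.\ no unmatched up-edge out of $\sigma$ survives) and $\tau$ is a facet only of $\sigma$ among currently present cells. The key combinatorial input is Forman's lemma: in an acyclic matching, for any matched pair $(\tau,\sigma)$ the only \emph{alternating path} $\tau, \sigma, \tau', \sigma', \ldots$ (down a matched edge reversed, up an unmatched edge) from $\tau$ that returns near $\sigma$ is forced to be trivial — equivalently, one can find a linear extension of the acyclicity partial order in which matched pairs are consecutive and the up-set of $\sigma$ consists only of already-removed cells. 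Concretely: induct on the number of matched pairs; pick a matched pair $(\tau,\sigma)$ that is maximal in the acyclicity order among top cells; then any other cell $\rho$ with $\tau \subset \overline{\rho}$ would, via the matched edge $\rho \to m(\rho)$ or an unmatched edge, create a directed path back up to $\sigma$, contradicting maximality; this forces $\tau$ to be a free face of $\sigma$.

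Then the induction runs: collapse $(\tau,\sigma)$ using the elementary-collapse deformation retraction above; the remaining CW complex $X'$ has one fewer matched pair, the restricted matching $M'$ on $X'$ is still acyclic (removing two vertices from $\Gamma_X^M$ cannot create a cycle), and $X' \simeq X$. By induction $X' \simeq X'^{crit} = X^{crit}$, the CW complex on the critical (unmatched) cells, which are untouched throughout. Chaining the homotopy equivalences gives $X \simeq X^{crit}$. The main obstacle is the combinatorial step of extracting, from mere absence of directed cycles in $\Gamma_X^M$, a removal order in which each matched lower cell is a genuine free face at the moment of collapse — this is exactly where acyclicity is essential and where Forman's alternating-path analysis does the work; everything else is the routine verification that elementary collapses are deformation retractions and that acyclicity is inherited by the restriction.
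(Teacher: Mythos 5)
The paper does not supply its own proof of this theorem; it is stated as a citation of Forman's foundational result. Reviewing your proposal on its own merits, there is a genuine gap: the combinatorial claim that acyclicity guarantees, at each stage, a matched pair $(\tau,\sigma)$ with $\tau$ a free face is false, and it can fail already at the first step. Take $X=\partial\Delta^2$ with vertices $a,b,c$ and edges $ab,bc,ca$, and match $a\leftrightarrow ab$, $b\leftrightarrow bc$, leaving $c$ and $ca$ critical. Orienting $\Gamma_X^M$ as in the paper, the matched edges point $ab\to a$ and $bc\to b$, while all unmatched edges point upward; the edge $ca$ is a sink, so there are no directed cycles and the matching is acyclic. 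Yet $a$ is a facet of both $ab$ and $ca$, and $b$ is a facet of both $ab$ and $bc$, so no matched lower cell is a free face and your collapse sequence cannot even begin. Your justification in the "maximality" step assumes that any other cell $\rho$ with $\tau\subset\overline\rho$ feeds a directed path back up to $\sigma$, but when $\rho$ is critical (here $\rho=ca$) it emits no matched downward edge, so no such path exists and the contradiction you need never materializes.

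The deeper misconception is the phrase "which are untouched throughout." Forman's theorem does not assert that $X$ collapses onto a subcomplex with the same critical cells and attaching maps; it asserts that $X$ is homotopy equivalent to \emph{some} CW complex $X^{crit}$ with one cell per critical cell, whose attaching maps are generally new, determined by flowing the original boundary along the discrete gradient. In the example above, $X^{crit}$ has a single $0$-cell $c$ and a single $1$-cell, and to recover $S^1$ that $1$-cell must have \emph{both} endpoints attached to $c$ — not the attaching map of $ca$ in $X$. Forman's actual argument builds up level subcomplexes $M(a)\subset M(b)$: passing a matched pair is a collapse in reverse (homotopy type unchanged), passing a critical $k$-cell attaches a $k$-cell up to homotopy of its attaching map. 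The equivalent top-down version (as in Nanda's treatment) deletes a topmost matched pair and simultaneously deforms the attaching maps of any other cell through the deleted pair; it is not an elementary free-face collapse. Your proof needs to be restructured along one of these lines, and the claim that critical cells retain their original attaching maps must be dropped.
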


\begin{remark}
Writing $C_\bullet(X)$ for the CW-homology of a CW complex, the above implies there is a short exact sequence of chain complexes of $\mathsf{k}$-modules
\begin{equation}
    0\rightarrow C_\bullet^M(X) \rightarrow C_\bullet(X) \rightarrow C_\bullet (X^{crit})
    \rightarrow 0,
\end{equation}
where $C_\bullet^M(X)$ is defined as the kernel and $M$ is the matching which defines $X^{crit}$. Theorem \ref{forman} says that $M$ is acyclic iff $C_\bullet^M(X)$ is acyclic.
\end{remark}

Ideally, one would like the critical cells in the minimal resolution to describe vertices and arrows in the quiver, the relations and higher syzygies. Motivated by such consideration, we do not consider all matchings on $X_A$ but only those \newterm{internal} to the path algebra:

\begin{definition} \label{def: internal}
    A matching $M$ on $X_A$ is \newterm{internal to $A$} if it satisfies
\begin{enumerate}
    \item Cells corresponding to $Q_0$ and $Q_1$ are unmatched;
    \item For each $(\eta_k,\eta_{k-1})\in M$, $h(\eta_k)=h(\eta_{k-1})$ and $t(\eta_k)=t(\eta_{k-1})$.
\end{enumerate}
\end{definition}

An internal matching can be written as $M=\coprod_{v_1,v_2}M_{v_1,v_2}$, where
\begin{equation}
    M_{v_1,v_2}:=\{(\eta_k, \eta_{k-1}): h(\eta_k)=h(\eta_{k-1})=v_1, t(\eta_k)=t(\eta_{k-1})=v_2 \}
\end{equation}
For each $(v_1,v_2)$, take $I_{max}(v_1,v_2)$ to be the set of saturated cells from $v_1$ to $v_2$, and define the subcomplex 
\begin{equation}
    X_{v_1,v_2}:=\bigcup_{\eta\in I_{max}(v_1,v_2)}\bigcup_{\sigma\leq \eta}\sigma\subset X_A.
\end{equation}

\begin{definition}
The \newterm{matching complex} associated to an internal matching $M$ is defined to be the complex $\mathsf{C}_\bullet^M := \bigoplus_{v_1,v_2} \mathsf{C}_\bullet^{M_{v_1,v_2}}$, where
\begin{equation}
    \mathsf{C}_\bullet^{M_{v_1,v_2}} := C_\bullet^{M_{v_1,v_2}}(X_{v_1,v_2}) \otimes (P_{v_1}\boxtimes P_{v_2}^{op}).
\end{equation}

\end{definition}

Algebraically, a matching peels off pairs of terms from the resolution, so that the quotient complex is quasi-isomorphic.

\begin{proposition}\label{prop: acyclic}
Let $M$ be an internal matching. The following are equivalent
\begin{enumerate}
    \item $M$ is acyclic.
    \item $\mathsf{C}^M_\bullet$ is acyclic.
    \item $\mathsf{C}^M_\bullet \simeq \bigoplus_{v_1,v_2} \bigoplus_{(\eta_k,\eta_{k-1})\in M_{v_1,v_2}} [\mathsf k_{\eta_k}\xrightarrow{\textnormal{Id}} \mathsf k_{\eta_{k-1}}]\otimes (P_{v_1}\boxtimes P_{v_2}^{op})= \bigoplus_{(\eta_k, \eta_{k-1})\in M } [\mathsf k_{\eta_k}\xrightarrow{\textnormal{Id}} \mathsf k_{\eta_{k-1}}]\otimes P_{\eta_k}$.
\end{enumerate}
\end{proposition}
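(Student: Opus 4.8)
The plan is to reduce the whole statement to the individual subcomplexes $X_{v_1,v_2}$ and then run the cycle $(1)\Rightarrow(3)\Rightarrow(2)\Rightarrow(1)$. By the very definition of the matching complex, $\mathsf C_\bullet^M=\bigoplus_{v_1,v_2}C_\bullet^{M_{v_1,v_2}}(X_{v_1,v_2})\otimes(P_{v_1}\boxtimes P_{v_2}^{op})$, and each $P_{v_1}\boxtimes P_{v_2}^{op}$ is a nonzero \emph{free} $\mathsf k$-module (the quiver is finite, so every $P_v$ is free of finite rank with a basis of paths). Hence $-\otimes_{\mathsf k}(P_{v_1}\boxtimes P_{v_2}^{op})$ is exact and detects acyclicity, and a direct sum of complexes is acyclic iff each summand is. Therefore $\mathsf C_\bullet^M$ is acyclic $\iff$ every $C_\bullet^{M_{v_1,v_2}}(X_{v_1,v_2})$ is acyclic $\iff$ (by the Remark following Theorem~\ref{forman}, applied to the finite regular CW complex $X_{v_1,v_2}$ with the matching $M_{v_1,v_2}$) every $M_{v_1,v_2}$ is an acyclic matching. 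So everything hinges on a localization lemma: $M$ is acyclic on $X_A$ if and only if $M_{v_1,v_2}$ is acyclic on $X_{v_1,v_2}$ for all $(v_1,v_2)$.

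For the localization lemma, recall that a directed cycle in $\Gamma_X^M$ strictly alternates between matched (down) and unmatched (up) edges inside two consecutive dimensions $\{k-1,k\}$ (standard, via the cited discrete Morse theory). Internality forces every such edge to preserve $t$ and $h$: a matched pair $(\eta_k,\eta_{k-1})\in M$ has $\eta_{k-1}=\partial_i\eta_k$, and since $t,h$ are preserved by Definition~\ref{def: internal}, $i\notin\{0,k\}$, i.e.\ $\eta_{k-1}$ is an interior face. For the unmatched up-edges, fix $\op{ht}\colon Q_0\to\mathbb Z_{\geq 0}$ strictly increasing along nontrivial paths (available since $Q$ is finite acyclic) and set $T(\eta)=\op{ht}(t(\eta))$, $H(\eta)=\op{ht}(h(\eta))$. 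Matched edges leave $T,H$ unchanged; an up-edge $\partial_i\eta\nearrow\eta$ leaves $T,H$ unchanged for $0<i<k$, strictly decreases $T$ for $i=0$, and strictly increases $H$ for $i=k$, because the path factored off by $\partial_0$, respectively $\partial_k$, is nontrivial. Since a cycle must return $T$ and $H$ to their starting values, it contains no $i=0$ and no $i=k$ up-edges; thus every cell of the cycle has the same $(t,h)=(v_2,v_1)$, hence lies in $X_{v_1,v_2}$, its matched (down) edges are precisely pairs of $M_{v_1,v_2}$ (a matched pair with that common $(t,h)$ is sorted into $M_{v_1,v_2}$ by definition), and its $M$-unmatched (up) edges are interior faces between cells with $(t,h)=(v_2,v_1)$, hence remain unmatched in $\Gamma_{X_{v_1,v_2}}^{M_{v_1,v_2}}$. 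So the cycle is literally a cycle of $\Gamma_{X_{v_1,v_2}}^{M_{v_1,v_2}}$. Conversely, in any cycle of $\Gamma_{X_{v_1,v_2}}^{M_{v_1,v_2}}$ every cell is the top or bottom of an $M_{v_1,v_2}$-pair, hence has $(t,h)=(v_2,v_1)$, and each edge keeps its orientation in $\Gamma_{X_A}^M$ (an $M_{v_1,v_2}$-unmatched such edge cannot be matched in $M$, else it would lie in $M_{v_1,v_2}$), producing a cycle of $\Gamma_{X_A}^M$. This proves the localization lemma, hence $(2)\Leftrightarrow(1)$: for $(2)\Rightarrow(1)$ take contrapositives — a cycle of $M$ localizes to a cycle of some $M_{v_1,v_2}$, forcing $C_\bullet^{M_{v_1,v_2}}(X_{v_1,v_2})$, and thus $\mathsf C_\bullet^M$, to fail acyclicity.

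It remains to upgrade acyclicity to the explicit splitting in $(3)$. The implication $(3)\Rightarrow(2)$ is immediate, since each $[\mathsf k_{\eta_k}\xrightarrow{\op{Id}}\mathsf k_{\eta_{k-1}}]\otimes P_{\eta_k}$ is contractible. For $(1)\Rightarrow(3)$, the localization lemma makes every $M_{v_1,v_2}$ acyclic, so the standard structure of the Morse reduction (Gaussian elimination along the matched pairs, whose incidences are $\pm1$ as $X_A$ is a regular semi-simplicial complex) exhibits $C_\bullet^{M_{v_1,v_2}}(X_{v_1,v_2})$ — the kernel of the deformation retraction $C_\bullet(X_{v_1,v_2})\to C_\bullet(X_{v_1,v_2}^{crit})$ — as $\bigoplus_{(\eta_k,\eta_{k-1})\in M_{v_1,v_2}}[\mathsf k_{\eta_k}\xrightarrow{\pm1}\mathsf k_{\eta_{k-1}}]\cong\bigoplus[\mathsf k_{\eta_k}\xrightarrow{\op{Id}}\mathsf k_{\eta_{k-1}}]$. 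Tensoring with $P_{v_1}\boxtimes P_{v_2}^{op}=P_{\eta_k}$ (for $(\eta_k,\eta_{k-1})\in M_{v_1,v_2}$ one has $(t,h)(\eta_k)=(v_2,v_1)$) and reassembling the sum over $(v_1,v_2)$, using $M=\coprod_{v_1,v_2}M_{v_1,v_2}$, yields exactly $(3)$. If ``$\simeq$'' is read only up to homotopy this is even softer: an acyclic bounded complex of finitely generated free $\mathsf k$-modules is null-homotopic, as is each elementary summand.

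The step I expect to be the main obstacle is the localization lemma, and inside it the claim that a closed gradient path never uses a boundary face $\partial_0$ or $\partial_k$: this is exactly where the height function and the bookkeeping with those two boundary maps (which reroute the base point, respectively truncate the terminal vertex) do the real work. Everything else is formal homological algebra together with the cited discrete Morse theory.
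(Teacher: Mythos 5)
Your proof is correct and follows essentially the same approach as the paper: reduce to the subcomplexes $X_{v_1,v_2}$ via the direct-sum decomposition, prove a localization lemma showing that cycles of an internal matching stay within a single $(v_1,v_2)$-block, and then apply the fundamental theorem of discrete Morse theory together with the splitting of acyclic complexes of projectives. Your height-function argument ($T=\op{ht}\circ t$, $H=\op{ht}\circ h$ monotone along any cycle) is a welcome sharpening of the paper's terser remark that ``once the head or tail is cut off \dots it can never be restored,'' while your $(2)\Rightarrow(3)$ step via Gaussian elimination is a slightly less explicit version of the paper's inductive snake-lemma splitting.
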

\begin{proof}
(1) $\Leftrightarrow$ (2): Notice that any oriented cycle $\gamma$ in $\Gamma_{X_A}^{M}$ is a composition of alternating upward and downward arrows, therefore one can write $\gamma=a_1^da_1^u\dots a_n^da_n^u$, where $a_i^u$ and $a_i^d$ are upward and downward arrows in $\Gamma_{X_A}^M$ such that $h(a_n^u)=t(a_1^d)$. Furthermore, since $M$ is internal, once the head or tail is cut off by a downward arrow, it can never be restored by the subsequent upward arrows. Therefore, each $a_i^d$ in $\gamma$ has to fix the head and the tail of the cell, which means a cycle can only occur in $\Gamma_{X_A}^{M_{v_1,v_2}}$ for some $v_1$ and $v_2$. 

The fundamental theorem of discrete Morse theory demonstrates $C_\bullet^{M_{v_1,v_2}}(X_{v_1,v_2})$ is acyclic iff $M_{v_1, v_2}$ is acyclic. Now viewing $P_{v_1}\boxtimes P_{v_2}^{op}$ as a free $\mathsf k$-module we have,
\begin{equation}
     H_*(\mathsf C_\bullet^{M_{v_1,v_2}}) \simeq  H_*(C_\bullet^{M_{v_1,v_2}}(X_{v_1,v_2})) \otimes_{\mathsf k} (P_{v_1}\boxtimes P_{v_2}^{op}).
\end{equation}
Therefore, $\mathsf{C}_\bullet^{M_{v_1,v_2}}$ is acyclic iff $M_{v_1,v_2}$ is acyclic. Now by definition of $\mathsf{C}_\bullet^M$ as a direct sum, hence $\mathsf{C}_\bullet^M$ is acyclic iff $C_\bullet^{M_{v_1,v_2}}$ is acyclic for all $v_1$ and $v_2$.

(3) $\Rightarrow$ (2):  This is immediate since each direct summand $[\mathsf k_{\eta_k}\xrightarrow{\textnormal{Id}} \mathsf k_{\eta_{k-1}}]$ is acyclic.

(2) $\Rightarrow$ (3): Suppose $\mathsf{C}_\bullet^{M_{v_1,v_2}}$ is acyclic for all $v_1$ and $v_2$.
The homotopy equivalence between
$X_{v_1,v_2}$ and $X_{v_1,v_2}^{crit}$ is a sequence of simplicial collapses where each step collapses a matched cell to the union of its unmatched boundary.
 Let $X_{v_1,v_2}(n)$ be the remaining CW complex after the $(n-1)$-th step and let $M_{v_1,v_2}(n)$ be the remaining matching on $X_{v_1,v_2}(n)$. 
 
 The snake lemma gives the following diagram.
\begin{equation}
\begin{CD}
0 @>>>   [\mathsf k_{\eta_k}\xrightarrow{\textnormal{Id}} \mathsf k_{\eta_{k-1}}]  @>>> C_\bullet^{M_{v_1,v_2}(n)}  @>>> C_\bullet^{M_{v_1,v_2}(n+1)}   @>>> 0\\
@. @| @VVV @VVV @.\\
0 @>>> [\mathsf k_{\eta_k}\xrightarrow{\textnormal{Id}} \mathsf k_{\eta_{k-1}}] @>>> C_\bullet(X_{v_1,v_2}(n)) @>>> C_\bullet(X_{v_1,v_2}(n+1)) @>>> 0\\
@. @. @VVV @VVV @.\\
 @. 0 @>>> C_\bullet(X_{v_1,v_2}^{crit}) @= C_\bullet(X_{v_1,v_2}^{crit}) @>>> 0.
\end{CD}
\end{equation}
Since the top line is a short exact sequence of acyclic complexes with projective components, the top exact sequence of complexes splits giving as isomorphism
\begin{equation}
    C_\bullet^{M_{v_1,v_2}(n)} \cong  C_\bullet^{M_{v_1,v_2}(n+1)} \oplus [\mathsf k_{\eta_k}\xrightarrow{\textnormal{Id}} \mathsf k_{\eta_{k-1}}].
\end{equation}
By induction we obtain 
\begin{equation}
    C_\bullet^{M_{v_1,v_2}}\simeq \bigoplus_{(\eta_k,\eta_{k-1})\in M_{v_1,v_2}} [\mathsf k_{\eta_k}\xrightarrow{\textnormal{Id}} \mathsf k_{\eta_{k-1}}].
\end{equation}
Tensoring both sides by $P_{v_1}\boxtimes P_{v_2}^{op}$ gives the desired isomorphism.
\end{proof}


We now describe the \newterm{Morse complex of projective bimodules} for an acyclic internal matching $M$. A discrete gradient path $\gamma=a_0^da_1^ua_1^d\dots a_n^ua_n^d$ from $\eta_k$ to $\eta_{k-1}$ is a composition of downward and upward arrows in the Hasse diagram such that $t(a_0^d)=\eta_k$ and $h(a_n^d)=\eta_{k-1}$. Let $\Gamma(\eta_k, \eta_{k-1})$ be the set of gradient paths from $\eta_k$ to $\eta_{k-1}$. We will also use
\begin{equation}
\begin{split}
    Z_+(\eta_k, \eta'_k):=&\{w=a_1^ua_1^d\dots a_n^ua_n^u: t(w)=\eta_k,\ h(w)=\eta'_k,\ n\in \mathbb{Z}_+\}  \\
    Z_-(\eta_k, \eta_k'):=&\{v=a_1^da_1^u\dots a_n^da_n^u: t(v)=\eta_k,\ h(v)=\eta'_k,\ n\in \mathbb{Z}_+\}  
\end{split}
\end{equation}
We set the Morse complex of bimodules to be 
\begin{equation}
    \mathfrak{C}_\bullet:= 0\rightarrow \cdots \rightarrow \mathfrak{C}_k\xrightarrow{\delta_k} \mathfrak{C}_{k-1} \rightarrow \cdots \rightarrow 0, 
\end{equation}
where $\mathfrak{C}_k=\bigoplus_{\eta_k \in I_k \cap M^{crit}} P_{\eta_k}$, and 
\begin{equation}
    \delta_k(1\otimes [\eta_k]\otimes 1)= \sum_{\eta_{k-1}\in I_{k-1} \cap M^c} \sum_{\gamma\in \Gamma(\eta_k, \eta_{k-1})} \epsilon(\gamma) l(\gamma)\otimes [\eta_{k-1}] \otimes r(\gamma).
\end{equation}
The multiplicity $\epsilon(\gamma)=\pm 1$ is the same as the one picked to define the topological discrete Morse complex. The left and right coefficients are given by
\begin{equation}
    \begin{split}
    l(a_0^da_1^ua_1^d\dots a_n^ua_n^d) & =\prod_{h(a_i^d)\textnormal{ is an initial facet of } t(a_i^d) } [t(a_i^d)/h(a_i^d)]\\
    r(a_0^da_1^ua_1^d\dots a_n^ua_n^d) & =\prod_{h(a_i^d)\textnormal{ is a terminal facet of } t(a_i^d) } [t(a_i^d)/h(a_i^d)]
    \end{split}
\end{equation}
These coefficients keep track of the initial or terminal path being factored out by a non-internal facet relation and make $\delta_k$ a bimodule map.
\begin{lemma}
    $\mathfrak{C}_\bullet$ is a chain complex.
\end{lemma}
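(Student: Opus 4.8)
The goal is to show $\delta_{k-1}\circ\delta_k=0$, and since every $\delta_k$ is a map of $(A,A)$-bimodules it is enough to verify this on the generators $1\otimes[\eta_k]\otimes 1$ with $\eta_k$ a critical $k$-cell. First I would expand $\delta_{k-1}\delta_k(1\otimes[\eta_k]\otimes 1)$, using that $\delta_{k-1}$ is a bimodule map, into a double sum over pairs $(\gamma,\gamma')$ of gradient paths, $\gamma$ from $\eta_k$ to a critical $(k-1)$-cell $\eta_{k-1}$ and $\gamma'$ from $\eta_{k-1}$ to a critical $(k-2)$-cell $\eta_{k-2}$, each pair contributing $\epsilon(\gamma)\epsilon(\gamma')\,l(\gamma)l(\gamma')\otimes[\eta_{k-2}]\otimes r(\gamma')r(\gamma)$. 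The key structural observation I would record next is that both the sign $\epsilon(\gamma)\epsilon(\gamma')$ and the bimodule decoration are \emph{multiplicative along the arrows} of the concatenated zigzag $\gamma\star\gamma'$ in $\Gamma_{X_A}^M$: the sign is the product of the incidence numbers of the downward steps, and the decoration is an ordered product over the downward steps, of the initial path factored out on the left, of the terminal path factored out on the right, and of the unit $1$ at every upward step and at every downward step whose facet is neither initial nor terminal.

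With this in hand, the plan is to refine the sign cancellation that proves the Fundamental Theorem of Discrete Morse Theory (Theorem~\ref{forman}). That theorem's proof supplies, for each pair $(\eta_k,\eta_{k-2})$, a fixed-point-free sign-reversing involution $\iota$ on the set of concatenated gradient paths from $\eta_k$ to $\eta_{k-2}$ through a critical $(k-1)$-cell, acting by a local move of one of two kinds: inserting or removing a ``trivial $V$-segment'' (a matched downward step followed by the matched upward step above it — this is where acyclicity of $M$ is needed to make the move well defined and terminating), or transposing, at the junction where $\gamma$ meets $\gamma'$, the two-step descent through the critical cell to another two-step descent. I would then show $\iota$ preserves the decoration, so that the existing cancellation of signs upgrades to cancellation of the decorated summands. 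For a move of the first kind, the hypothesis that $M$ is internal (Definition~\ref{def: internal}(2)) forces a matched facet relation to preserve head and tail, so the facet is neither initial nor terminal and contributes the unit $1$ to the decoration; hence such a move alters only the sign. For a move of the second kind, the required identity between the decorated contributions of the two competing two-step descents is exactly a fragment of the relation $d^2=0$ for the cellular bimodule complex $\mathsf{C}_\bullet$ of Corollary~\ref{cor: cell res}: the maps $\partial_i$ there \emph{are} the decorated facet maps appearing at a downward step, and $d^2=0$ is precisely the vanishing of the alternating sum of composite decorated facet maps through all intermediate cells. Combining the two cases, every term of the double sum is killed by $\iota$ with its decoration intact, so $\delta_{k-1}\delta_k(1\otimes[\eta_k]\otimes 1)=0$.

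The main obstacle I anticipate is exactly this last piece of bookkeeping: arranging the ``multiplicative along arrows'' decomposition of $\epsilon(\gamma)\,l(\gamma)\otimes[\,\cdot\,]\otimes r(\gamma)$ cleanly enough that Forman's involution manifestly respects it, and isolating precisely the sub-identities of $d^2=0$ in $\mathsf{C}_\bullet$ that the second type of move invokes. As a safer, more conceptual alternative I would keep in reserve an algebraic discrete Morse theory argument run directly on $\mathsf{C}_\bullet$: build the decorated discrete flow $\varphi\colon\mathsf{C}_\bullet\to\mathsf{C}_\bullet$, check it is a chain map using $d^2=0$ for $\mathsf{C}_\bullet$ together with the internal hypothesis, observe that $\varphi^{N}$ stabilizes to an idempotent chain map $\varphi^\infty$ because $M$ is acyclic (Proposition~\ref{prop: acyclic}), and identify $\op{Im}\varphi^\infty$, with the differential induced from $d$, with $(\mathfrak C_\bullet,\delta_\bullet)$ via the gradient-path formula; being a subcomplex of $\mathsf{C}_\bullet$, its differential automatically squares to zero. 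This route deduces the lemma from results already established with no ad hoc involution.
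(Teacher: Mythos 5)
Your proposal shares the same essential strategy as the paper's proof -- a decoration-preserving cancellation on the double sum over concatenated gradient paths -- and you correctly isolate the two key inputs: internality of the matching ensures the left/right decorations survive the pairing, and a diamond/sign relation drives the cancellation. However, the paper's actual pairing is leaner than your primary scheme. It does \emph{not} invoke an ``insert or remove a trivial $V$-segment'' move at all. Instead it fixes the prefix $u\in Z_+(\eta_k,\rho_k)$, the suffix $v$, the $k$-cell $\rho_k$ at the last descent of $\gamma_k$, and the $(k-2)$-cell $\rho_{k-2}$ hit by the first descent of $\gamma_{k-1}$, and pairs the two-step descent $\rho_k\to\rho_{k-1}\to\rho_{k-2}$ at the junction with the unique other codimension-two route $\rho_k\to\rho'_{k-1}\to\rho_{k-2}$ through the same diamond in the simplex $\rho_k$. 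The decoration argument is also more local than your ``multiplicative along arrows'' reduction: the paper simply notes that $l(a_k^d a_{k-1}^d)$ and $r(a_k^d a_{k-1}^d)$ depend only on the endpoints $(\rho_k,\rho_{k-2})$ of the double descent, so transposing the intermediate facet leaves them unchanged, while the two products of incidence signs cancel since the corresponding terms in the topological Morse complex do. Your reserve plan -- deducing the lemma from Corollary~\ref{cor: cell res} and Proposition~\ref{prop: acyclic} by building the decorated discrete flow and passing to its stabilization $\varphi^\infty$ -- is a genuinely different route that the paper does not take for this lemma (though it is close in spirit to how the paper handles Lemma~\ref{lem: Morse complex is resolution} afterwards); it buys robustness against exactly the bookkeeping you flagged, at the cost of routing through the full algebraic discrete Morse apparatus for a statement the paper prefers to verify by a one-move combinatorial pairing.
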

\begin{proof}
By definition,
\begin{equation}
    \delta_{k-1}\delta_k ([\eta_k])=\sum_{[\eta_{k-2}]}\sum_{\gamma_k\gamma_{k-1}} \epsilon(\gamma_k)\epsilon(\gamma_{k-1})l(\gamma_{k-1})l(\gamma_k)\otimes [\eta_{k-2}]\otimes r(\gamma_k)r(\gamma_{k-1}),
\end{equation}
where $\gamma_k\gamma_{k-1}$ is the concatenation of the two paths through $h(\gamma_k)=t(\gamma_{k-1})$. This means one can write $\gamma_k\gamma_{k-1}=\gamma_k'a_k^da_{k-1}^d\gamma_{k-1}'$, with $h(a_k^d)=h(\gamma_k)=t(\gamma_{k-1})=t(a_{k-1}^d)$. 

For a fixed codimension two face $\rho_{k-2}\subset\rho_k$ in a $k$-simplex $\rho_k$, there are precisely two facets of $\rho_k$ containing $\rho_{k-2}$. In other words, $\rho_{k-2}$ and $\rho_k$ are connected by precisely two configurations of consecutive downward arrows, $\rho_k \xrightarrow{a_k^d}\rho_{k-1} \xrightarrow {a_{k-1}^d} \rho_{k-2}$ and $\rho_k \xrightarrow{b_k^d}\rho'_{k-1} \xrightarrow {b_{k-1}^d} \rho_{k-2}$. We also have $l(a_k^da_{k-1}^d)=l(b_k^db_{k-1}^d)$ and $r(a_k^da_{k-1}^d)=r(b_k^db_{k-1}^d)$, since the left and right coefficients only depend on the endpoints. For $u\in Z_+(\eta_k, \rho_k)$, $v\in Z_-(\rho_{k-2},\eta_k)$, this gives
\begin{equation}
    \begin{split}
        \gamma_k^a:=ua_k^d, &\  \gamma_{k-1}^a:= a_{k-1}^dv\\
        \gamma_k^b:=ub_k^d, &\  \gamma_{k-1}^b:= b_{k-1}^dv
    \end{split}
\end{equation}
Hence, we can rewrite
\begin{equation}
\begin{split}
        & \delta_{k-1}\delta_k ([\eta_k]) =\\
        & \sum_{\rho_k, \rho_{k-2}, [\eta_{k-2}]} \sum_{u \in Z_+(\eta_k, \rho_k), v\in Z_- (\rho_{k-2}, \eta_{k-2})} \epsilon(\gamma^a_k)\epsilon(\gamma^a_{k-1})l(\gamma^a_{k-1})l(\gamma^a_k)\otimes [\eta_{k-2}]\otimes r(\gamma^a_k)r(\gamma^a_{k-1}) \\
        & + \epsilon(\gamma^b_k)\epsilon(\gamma^b_{k-1})l(\gamma^b_{k-1})l(\gamma^b_k)\otimes [\eta_{k-2}]\otimes r(\gamma^b_k)r(\gamma^b_{k-1}).
\end{split}
\end{equation}
The two summands have the same left and right coefficients, and $\epsilon(\gamma^a_k)\epsilon(\gamma^a_{k-1})+ \epsilon(\gamma^b_k)\epsilon(\gamma^b_{k-1})=0$ because the corresponding two summands in the topological Morse complex have the same multiplicity, whose sum has to vanish. 
\end{proof}

\begin{lemma} \label{lem: Morse complex is resolution} 
For any acyclic matching, the corresponding Morse complex $\mathfrak{C}_\bullet$ is quasi-isomorphic to $\mathsf{C}_\bullet$.  That is  the Morse complex gives a projective cellular resolution of $A$.
\end{lemma}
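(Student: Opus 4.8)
The plan is to deduce the quasi-isomorphism $\mathfrak{C}_\bullet \simeq \mathsf{C}_\bullet$ from the topological collapses of $X_A$ along $M$, lifted to complexes of projective $(A,A)$-bimodules, paralleling the argument for Proposition~\ref{prop: acyclic}. Since $M$ is acyclic, the homotopy equivalence of Theorem~\ref{forman} is realized by a finite sequence of elementary collapses, each removing a matched pair $(\eta_k,\eta_{k-1})$ (recall internality forces the facet relation $\eta_{k-1}<\eta_k$ to be one of the ``interior'' face maps $\partial_j$, $0<j<k$, since $\partial_0$ and $\partial_k$ change the tail or the head). Write $\mathsf{C}_\bullet(n)$ for the complex of projective bimodules obtained after the first $n$ collapses, so $\mathsf{C}_\bullet(0)=\mathsf{C}_\bullet$ and $\mathsf{C}_\bullet(N)$ is the complex supported on the critical cells.

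First I would run the inductive step. Collapsing $(\eta_k,\eta_{k-1})$ at stage $n$ gives a short exact sequence of complexes of projective bimodules
\[
0 \longrightarrow [\mathsf k_{\eta_k}\xrightarrow{\textnormal{Id}}\mathsf k_{\eta_{k-1}}]\otimes P_{\eta_k} \longrightarrow \mathsf{C}_\bullet(n) \longrightarrow \mathsf{C}_\bullet(n+1) \longrightarrow 0,
\]
obtained by tensoring the topological sequence in the proof of Proposition~\ref{prop: acyclic} (the snake-lemma diagram there) with $P_{\eta_k}$; note $P_{\eta_k}=P_{\eta_{k-1}}$ as bimodules, by internality. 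The subcomplex is contractible with projective components, so the sequence splits and $\mathsf{C}_\bullet(n)\cong\mathsf{C}_\bullet(n+1)\oplus\big([\mathsf k_{\eta_k}\to\mathsf k_{\eta_{k-1}}]\otimes P_{\eta_k}\big)$; in particular each quotient map is a quasi-isomorphism, and composing them yields $\mathsf{C}_\bullet\simeq\mathsf{C}_\bullet(N)$. (Alternatively one may invoke algebraic discrete Morse theory as in \cite{JW09}.)

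Next I would identify $\mathsf{C}_\bullet(N)$ with $\mathfrak{C}_\bullet$. By construction the underlying graded bimodule is $\bigoplus_{\eta_k\in M^{crit}}P_{\eta_k}$, so the point is the differential. Standard discrete Morse bookkeeping expresses the component of the induced differential from a critical $\eta_k$ to a critical $\eta_{k-1}$ as a sum over gradient paths $\gamma\in\Gamma(\eta_k,\eta_{k-1})$ with the very signs $\epsilon(\gamma)$ of the topological Morse complex; the only new ingredient is the bimodule coefficient accrued along $\gamma$. Each downward arrow of $\gamma$, say of the form $t(a_i^d)\to h(a_i^d)$, corresponds to a face map $\partial_j$ of $\mathsf{C}_\bullet$, which by the explicit formulas for $\partial_j$ multiplies by the left factor $[t(a_i^d)/h(a_i^d)]$ exactly when $h(a_i^d)$ is an initial facet of $t(a_i^d)$ (the $j=0$ case), by the right factor exactly when it is a terminal facet (the $j=k$ case), and by $1$ otherwise; multiplying along $\gamma$ reproduces precisely $l(\gamma)$ and $r(\gamma)$, and hence $\mathsf{C}_\bullet(N)=\mathfrak{C}_\bullet$. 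Since, as in the proof that $\mathfrak{C}_\bullet$ is a complex, $l(\gamma)$ and $r(\gamma)$ depend only on $\eta_k$ and $\eta_{k-1}$, the differential of $\mathfrak{C}_\bullet$ is that of the chain complex $\mathsf{C}_\bullet(F)$ of a genuine cellular cosheaf $F$ on $\mathsf{Cell}(X_A^{crit})^{op}$ with $F(\eta_k)=P_{\eta_k}$.

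Chaining everything, $\mathfrak{C}_\bullet\simeq\mathsf{C}_\bullet$, which resolves $A$ by Corollary~\ref{cor: cell res}; and since $\mathfrak{C}_\bullet$ is built from the projective bimodules $P_{\eta_k}$ and arises from a cellular cosheaf supported on $X_A^{crit}$, it is a projective cellular resolution of $A$ in the sense of Definition~\ref{def: cellular res}. I expect the main obstacle to be the differential identification in the third step: carefully tracking how the left and right path-coefficients accumulate through the iterated splittings and confirming that they assemble into the closed forms $l(\gamma)$ and $r(\gamma)$, since the signs $\epsilon(\gamma)$ are inherited for free from Theorem~\ref{forman} and the genuinely new work is the bimodule bookkeeping.
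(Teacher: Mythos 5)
Your plan is correct and is essentially the paper's argument: both rely on Proposition~\ref{prop: acyclic} for the acyclicity of the kernel and on discrete Morse theory for the quasi-isomorphism, with the bimodule coefficients $l(\gamma)$, $r(\gamma)$ read off from the face maps $\partial_0$, $\partial_k$ of $\mathsf C_\bullet$. The only difference is packaging: you build the short exact sequence step-by-step through elementary collapses and iterated splittings (mirroring the $(2)\Rightarrow(3)$ step of the proof of Proposition~\ref{prop: acyclic}), whereas the paper writes a single short exact sequence $0\to \mathsf C_\bullet^M \xrightarrow{f} \mathsf C_\bullet \xrightarrow{g} \mathfrak C_\bullet \to 0$ at once, citing Forman's explicit chain maps $f$ and $g = 1 + dm + md + (dm)^2 + (md)^2 + \cdots$ from \cite{For98}, \S 6, and then concludes directly from the acyclicity of $\mathsf C_\bullet^M$; your version spells out the bimodule-coefficient bookkeeping that the paper leaves implicit.
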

\begin{proof}
Following \cite{For98}, \S 6, we get an exact sequence
\[
\begin{tikzcd}
0 \ar[r] & \mathsf C_\bullet^M  \ar[r, "f"] & \mathsf C_\bullet \ar[r, "g"] & \mathfrak C_\bullet \ar[r] & 0 \end{tikzcd}
\]
where for $(\eta_k, \eta_{k+1}) \in M$, $f(1_{\eta_k}) = d(1_{\eta_{k+1}})$ and  $f(1_{\eta_{k+1}}) = 1_{\eta_{k+1}}$ and $g = 1 + dm + md + (dm)^2 +(md)^2 + ...$ where $m(\eta_k) = \eta_{k+1}$.
The result now follows from the fact that $C_\bullet^M$ is acyclic by Proposition~\ref{prop: acyclic}.
\end{proof}

We now describe an algorithm to produce an acyclic matching and discuss when the resulting Morse complex agrees with the minimal resolution \cite{Ei56, BK99}.

Let $[p]$ be a path in $A$. Let the open interval $(e_{t(p)},p)$ be the subset in $\Path_A$ with the same poset ordering, where an element is a nonconstant nontrivial subpath of $[p]$ in $A$. Notice that the ordering in $\Cell K((e_{t(p)},p))$ given by insertion of path classes agrees with the ordering in $\Cell(X_A)$.

\begin{lemma} \label{lem: cells to order complex}
There is a bijection of sets
\begin{align*}
\coprod_{p \neq e_v} \Cell(K(e_{t(p)}, p) & \rightarrow \Cell_{\geq 2}(X_A) \\
\alpha & \mapsto [e_{t(p)}<\alpha < p].
\end{align*}
\end{lemma}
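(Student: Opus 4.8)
The plan is to use the fact, recorded in the Remark following the explicit description $X_A = K(\Path_A)/\!\sim$, that every cell of $X_A$ has a unique \emph{canonical representative} --- namely the one whose bottom vertex is a trivial path $e_v$. Thus an $n$-cell of $X_A$ \emph{is} the same datum as a chain $e_v < p_1 < \dots < p_n$ in $\Path_A$ with $e_v = e_{t(p_1)}$: two chains of this form name the same cell precisely when they are $\sim$-equivalent, and sharing the same trivial bottom vertex this forces $p_i = q_i$ in $\Path_A$ for all $i$. So cells of $X_A$ of dimension $\geq 2$ correspond bijectively to chains $e_v < p_1 < \dots < p_k$ in $\Path_A$ with $k \geq 2$ and $e_v$ trivial.

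Next I would exhibit the map of the Lemma together with its inverse. Given $p \neq e_{t(p)}$ and a cell $\alpha$ of the order complex $K((e_{t(p)},p))$ --- that is, a nonempty chain $q_1 < \dots < q_m$ in $\Path_A$ with $e_{t(p)} < q_1$ and $q_m < p$ --- the chain $e_{t(p)} < q_1 < \dots < q_m < p$ is already in canonical form, hence names a cell $[e_{t(p)} < \alpha < p]$ of $X_A$ of dimension $m+1 \geq 2$; this is the assignment in the statement. Conversely, given a cell $\eta_k$ of $X_A$ with $k \geq 2$, write its canonical representative as $[e_v < p_1 < \dots < p_k]$. Since the $p_i$ all begin at $v$ we have $e_v = e_{t(p_k)}$; moreover $p_k \neq e_v$, and $(p_1 < \dots < p_{k-1})$ is a nonempty chain strictly between $e_v$ and $p_k$, hence a cell of $K((e_{t(p_k)}, p_k))$. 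These two assignments are mutually inverse because the top term $p$ and the ``interior'' subchain $\alpha$ of a canonical representative $[e_{t(p)} < \alpha < p]$ are intrinsic to the cell it names.

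The verification is then bookkeeping. Well-definedness of both maps is precisely the uniqueness of the canonical representative from the first step, and that the composites are identities is the observation that ``pass to the canonical representative, then delete the bottom trivial vertex and the top term'' inverts ``insert $e_{t(p)}$ at the bottom and $p$ at the top''. The single point needing a line of attention is the dimension count: a $j$-cell of $K((e_{t(p)},p))$ is a chain of $j+1$ elements, so it is sent to a $(j+2)$-cell of $X_A$, which is why the image is exactly $\Cell_{\geq 2}(X_A)$; and since order complexes have no empty simplex, no $1$-cell of $X_A$ is hit, matching the source. I do not anticipate a genuine obstacle here --- the one substantive input is the canonical-representative Remark, and the rest is unwinding definitions.
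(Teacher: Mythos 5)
Your proof is correct and takes essentially the same approach as the paper: both rest on the uniqueness of the canonical representative $[e_{t(p)} < p_1 < \dots < p_k]$ of a cell, from which surjectivity (every high-dimensional cell has such a representative) and injectivity (distinct canonical chains are never $\sim$-equivalent) follow directly. You have simply unwound the paper's two-sentence argument into explicit mutually inverse maps, which is fine.
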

\begin{proof}
This map is surjective since any cell of dimension at least 2 has a canonical representation of this form.  The map is also injective since two canonical representations are never equivalent.
\end{proof}




Now, for each $[p]$, choose a lexicographical ordering on the maximal chains of the poset $(e_{t(p)}, [p])$.  Let $M_{[p]}$ denote the corresponding matching on order complex of the poset $(e_{t(p)},[p])$ defined by Babson-Hersh \cite{BH05}. 

We extend this to an internal matching on $\overline{M_{[p]}}$ on $[e_{t(p)},[p]]$ as follows.  For each matched pair of cells $[p_1 < ... < p_n] \leftrightarrow [q_1 < ... < q_n]$ we match the pair $[e_{t(p)}< p_1 < ... < p_n < p] \leftrightarrow [e_{t(p)}< q_1 < ... < q_n < p]$.  This is well-defined by Lemma~\ref{lem: cells to order complex}.

Furthermore, we choose any unmatched 0-cell $[t]$ and match $[e_{t(p)}< p]$ with $[e_{t(p)} < t < p]$. Observe that, by definition, all matched cells in $\overline{M_{[p]}}$ contain $p$.  Hence for $[p]\neq [p']$, we have $\overline{M_{[p]}}\cap \overline{M_{[p']}}=\emptyset$. It follows that the union
\begin{align*}
    \overline{M}=\coprod_{[p]} \overline{M_{[p]}}
\end{align*}
gives a well-defined global matching on $\Cell(X_A)$.

\begin{definition}
We call the matching $\overline{M}$ the \newterm{Babson-Hersh matching}.
\end{definition}

\begin{lemma}
The Babson-Hersh matching is acyclic. 
\end{lemma}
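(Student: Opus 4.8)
The plan is to show the oriented Hasse diagram $\Gamma_{X_A}^{\overline M}$ has no oriented cycle, by first confining any hypothetical cycle to the support of a single $\overline{M_{[p]}}$ and then appealing to the acyclicity of the Babson--Hersh matching $M_{[p]}$ proved in \cite{BH05}. As in the proof of Proposition~\ref{prop: acyclic}, an oriented cycle $\gamma$ in $\Gamma_{X_A}^{\overline M}$ lies in two consecutive dimensions $k-1,k$ and strictly alternates, $\gamma = a_1^d a_1^u \cdots a_n^d a_n^u$, between matched (downward) edges $a_i^d:\eta_k^{(i)}\to \eta_{k-1}^{(i)}$ and unmatched (upward) edges $a_i^u:\eta_{k-1}^{(i)}\to \eta_k^{(i+1)}$ (indices mod $n$). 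In particular every cell of $\gamma$ is a matched cell, and each $\eta_k^{(i)}$ is a matched \emph{top} cell; by the construction of $\overline M$ this forces its canonical representative into the standard form $[e_{t(p_i)}<w_1^{(i)}<\cdots<w_{k-1}^{(i)}<p_i]$ for a nontrivial path $p_i$, while $\eta_{k-1}^{(i)}=m(\eta_k^{(i)})$ is obtained from it by deleting an interior vertex, so it again has bottom $e_{t(p_i)}$ and top path $p_i$.

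The confinement step tracks the top path. Downward edges $a_i^d$ leave the top path unchanged. For an upward edge $a_i^u$, the cell $\eta_{k-1}^{(i)}$ is a facet of $\eta_k^{(i+1)}$, hence obtained from the standard form of $\eta_k^{(i+1)}$ by deleting one vertex; a direct computation with canonical representatives shows that deleting an interior vertex keeps the top path equal to $p_{i+1}$ (so $p_i=p_{i+1}$), deleting the top vertex replaces it by the proper prefix $w_{k-1}^{(i+1)}$, and deleting the bottom vertex $e_{t(p_{i+1})}$ replaces it by $p_{i+1}/w_1^{(i+1)}$; in the last two cases the length (number of arrows) of the top path strictly decreases. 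Writing $\ell$ for this length, we get $\ell(p_i)\le \ell(p_{i+1})$ along every upward edge, so going once around $\gamma$ forces $\ell(p_1)=\cdots=\ell(p_n)$. Therefore every upward edge of $\gamma$ is an interior deletion, so every cell of $\gamma$ has standard form $[e_{t(p)}<\cdots<p]$ for one fixed $p$; that is, $\gamma$ lies entirely in the support of $\overline{M_{[p]}}$.

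Finally I would transport $\gamma$ through Lemma~\ref{lem: cells to order complex}: the poset of cells of $X_A$ of the form $[e_{t(p)}<\cdots<p]$ is, up to a shift of dimension by $2$, the face poset of the order complex $K((e_{t(p)},[p]))$ augmented by the empty face (corresponding to the $1$-cell $[e_{t(p)}<p]$), and under this identification $\overline{M_{[p]}}$ becomes $M_{[p]}$ together with the single extra pair matching the critical vertex $\{t\}$ to the empty face. Since $M_{[p]}$ is acyclic by \cite{BH05}, any oriented cycle in the augmented Hasse diagram would have to pass through $\{t\}$ or the empty face; but the empty face is a source (every incident edge points upward), and $\{t\}$, being critical for $M_{[p]}$, only acquires the new outgoing edge to the empty face, so it remains a source. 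Hence no oriented cycle exists, contradicting $\gamma$, and $\overline M$ is acyclic. I expect the main obstacle to be the bookkeeping with canonical representatives in the monotonicity step — in particular keeping straight how deleting the bottom vertex acts on the canonical form — together with phrasing the augmented order complex precisely enough that the Babson--Hersh acyclicity theorem applies verbatim.
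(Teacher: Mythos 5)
Your proof follows the same two–step structure as the paper's (confine a hypothetical cycle to a single $\overline{M_{[p]}}$, then invoke Babson--Hersh acyclicity of $M_{[p]}$), but you supply substantially more detail at exactly the two places where the paper is terse. The paper dispatches confinement with the one‑line assertion that a facet index $i=0$ or $i=k$ "is impossible since all upward arrows do not change the end points (as the matching is internal)," which is not self‑explanatory; your monotone‑in‑length argument on the top path is a clean justification, and it is the right Morse‑function–style way to see it. You also explicitly isolate the extra matched pair $\bigl([e_{t(p)}<t<p],\,[e_{t(p)}<p]\bigr)$, which the paper's reduction "this induces a cycle in $M_{[p]}$" silently ignores: that reduction is only literally valid when the cycle lives in dimensions $\geq 3$ in $X_A$, because the $1$-cell $[e_{t(p)}<p]$ corresponds to the empty face, which is not a cell of the order complex $K\bigl((e_{t(p)},p)\bigr)$.

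One small slip in the final step: under the paper's orientation (matched edges point toward the \emph{smaller} cell), the empty face $\emptyset$ is \emph{not} a source, since the matched edge $\{t\}\to\emptyset$ is incoming. Your parallel claim that $\{t\}$ remains a source in the augmented Hasse diagram is, however, correct and already suffices: the unique incoming edge of $\emptyset$ is from $\{t\}$, so any cycle through $\emptyset$ must also pass through the source $\{t\}$, which is impossible. With that one‑line correction your argument is complete.
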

\begin{proof}
Suppose to the contrary that we have a cycle
\[
[e_{t(p)} < ... < p] \xrightarrow{down} \partial_i[e_{t(p)} < ... < p]  \xrightarrow{up} m_k(\partial_i [e_{t(p)} < ... < p]) \xrightarrow{down} ... \xrightarrow{up} [e_{t(p)} < ... < p].
\]
If at any step $i=0$ or $i=k$, then this is impossible since all upward arrows do not change the end points (as the matching is internal).  If $0 < i < k$ for all steps, then this induces a cycle in $M_{[p]}$ which is acyclic, a contradiction.
\end{proof}

\begin{lemma} \label{lem: Tor is reduced homology}
For $i \geq 2$, there is an isomorphism of $\mathsf k$-modules 
\[
\emph{Tor}_i(S_v, S_w) = \bigoplus_{ t(p)= v, h(p) = w}  \widetilde{H}_{i-2}(K((e_v, p)))
\]
where $\widetilde{H}_i(K((e_v, p)))$ is the reduced homology of the order complex on the poset of paths between $e_v$ and $p$ with coefficients in $\mathsf k$.\footnote{Our convention is that the empty set is an empty simplicial complex so that reduced simplicial chains for the empty set are $\mathsf k$ in degree -1 and $\widetilde{H}_{-1}(\emptyset) = \mathsf k$.}
\end{lemma}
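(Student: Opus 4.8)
The plan is to compute $\op{Tor}$ directly from the projective cellular bimodule resolution $\mathsf{C}_\bullet$ of Corollary~\ref{cor: cell res}. The first observation is that the contracting homotopy $h_\bullet$ constructed there consists of \emph{right} $A$-module maps (each $h_k$ passes the right-hand factor through untouched), so the augmented complex $\widetilde{\mathsf{C}}_\bullet = (\cdots \to \mathsf{C}_0 \xrightarrow{m} A \to 0)$ is split exact, hence null-homotopic, as a complex of right $A$-modules. Applying the additive functor $-\otimes_A S_w$, where $S_w$ denotes the one-dimensional simple left $A$-module at $w$, preserves this, so $\mathsf{C}_\bullet\otimes_A S_w$ is a resolution of $S_w$; and since $P_{t(\eta_k)}\boxtimes P^{op}_{h(\eta_k)}\cong Ae_{t(\eta_k)}\otimes_{\mathsf k}e_{h(\eta_k)}A$ we get $\mathsf{C}_k\otimes_A S_w\cong\bigoplus_{\eta_k:\,h(\eta_k)=w}P_{t(\eta_k)}$, so it is a projective resolution. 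Consequently, for the one-dimensional simple right $A$-module $S_v$ at $v$,
\[
\op{Tor}^A_i(S_v,S_w)\;=\;H_i\big(S_v\otimes_A\mathsf{C}_\bullet\otimes_A S_w\big).
\]

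Next I would identify this complex. From the computation above, $S_v\otimes_A\big(P_{t(\eta_k)}\boxtimes P^{op}_{h(\eta_k)}\big)\otimes_A S_w\cong S_v e_{t(\eta_k)}\otimes_{\mathsf k}e_{h(\eta_k)}S_w$, which is $\mathsf k$ when $t(\eta_k)=v$ and $h(\eta_k)=w$, and $0$ otherwise. Thus $\big(S_v\otimes_A\mathsf{C}_\bullet\otimes_A S_w\big)_k$ has $\mathsf k$-basis the set of cells $\eta_k$ of $X_A$ with $t(\eta_k)=v$ and $h(\eta_k)=w$. For the differential: on the canonical representative $[e_{t(\eta_k)}<p_1<\cdots<p_k]$, the face map $\partial_0$ acts by left multiplication by the nonconstant path $p_1$ and $\partial_k$ by right multiplication by the nonconstant path $p_k/p_{k-1}$; both are annihilated after tensoring with the simple modules $S_v$ on the left and $S_w$ on the right. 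Hence the induced differential in degree $k$ is $\sum_{i=1}^{k-1}(-1)^i\overline{\partial}_i$, where $\overline{\partial}_i$ simply deletes $p_i$; in particular the differential out of degree $1$ is zero.

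Now I would split the complex according to the ``top'' path. For $k\ge 1$, a basis cell has canonical representative $[e_v<p_1<\cdots<p_k]$ with $p:=p_k$ a nonconstant path from $v$ to $w$, and the surviving face maps $\overline{\partial}_i$ ($0<i<k$) fix $p_k$, so the complex decomposes as a direct sum over all such $p$. By Lemma~\ref{lem: cells to order complex}, the cells with fixed top path $p$ are precisely the simplices $[e_v<p_1<\cdots<p_{k-1}<p]$ of $X_A$, i.e.\ the $(k-2)$-simplices of $K((e_v,p))$, with $k=1$ corresponding to the empty $(-1)$-simplex. Under this identification, after the reindexing $q_j=p_{j+1}$, the operator $\sum_{i=1}^{k-1}(-1)^i\overline{\partial}_i$ becomes $-1$ times the augmented simplicial boundary of $K((e_v,p))$. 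Therefore, in degrees $\ge 1$, the complex $S_v\otimes_A\mathsf{C}_\bullet\otimes_A S_w$ is, up to signs, the direct sum over nonconstant paths $p$ from $v$ to $w$ of the reduced chain complex of $K((e_v,p))$ placed with $\widetilde{C}_j$ in homological degree $j+2$. Since the differential out of degree $1$ vanishes, its homology in degree $i\ge 1$ is $\bigoplus_{p}\widetilde{H}_{i-2}(K((e_v,p)))$ over nonconstant $p$; and for $i\ge 2$ the constant path $p=e_v$ (possible only when $v=w$) contributes $\widetilde{H}_{i-2}(\emptyset)=0$, so it may harmlessly be included. This yields $\op{Tor}^A_i(S_v,S_w)=\bigoplus_{t(p)=v,\,h(p)=w}\widetilde{H}_{i-2}(K((e_v,p)))$ for $i\ge 2$.

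The only real work is the bookkeeping in the last paragraph: verifying precisely, with correct signs and indices, that the vanishing of $\partial_0,\partial_k$ together with the deletion maps $\partial_i$ translate into the reduced simplicial boundary operator of $K((e_v,p))$, and handling the boundary case $i=1$ (equivalently, the empty-simplex convention for reduced homology) carefully. None of this is conceptually difficult, but it must be done with care. As a sanity check, the $i=1$ case of the same argument recovers the familiar identity $\op{Tor}^A_1(S_v,S_w)=\bigoplus_{\text{arrows }v\to w}\mathsf k$.
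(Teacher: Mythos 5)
Your proposal is correct and follows essentially the same route as the paper's own proof: compute $\operatorname{Tor}$ via $S_v\otimes_A\mathsf{C}_\bullet\otimes_A S_w$, note that the face maps $\partial_0$ and $\partial_k$ vanish after tensoring with the simples, decompose the resulting complex by top path $p$, and identify each summand with the reduced chain complex of $K((e_v,p))$ shifted by $2$ via Lemma~\ref{lem: cells to order complex}. The only difference is cosmetic: you additionally justify why $\mathsf{C}_\bullet\otimes_A S_w$ remains a projective resolution of $S_w$ (via the right $A$-linearity of the contracting homotopy from Corollary~\ref{cor: cell res}), a step the paper leaves implicit.
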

\begin{proof}
Consider the projective resolution $\mathsf C_\bullet$ of $A$.

Then
\[
\text{Tor}_i(S_v, S_w) = H_i(S_v \otimes_A \mathsf C_\bullet \otimes_A S_w)
\]

Now let us consider $S_v \otimes_A \mathsf C_\bullet \otimes_A S_w$.  First notice that 
\[
S_v \otimes_A Ae_v \otimes_{\mathsf k} e_w A \otimes_A S_w = \mathsf k
\]
and is zero otherwise.  Hence 
\[
S_v \otimes_A \mathsf C_i \otimes_A S_w =
 \bigoplus_{ \eta_i \in \Cell_i(X_A), t(\eta_i)=v, h(\eta_i) = w} \mathsf k 
\]
Now, notice that the ``non-internal" differentials in the above complex vanish since tensoring with the simples kills all arrows.  
Hence
\[
S_v \otimes_A \mathsf C_\bullet \otimes_A S_w = \bigoplus_{p | t(p) = v, h(p) = w} \mathsf S^p_\bullet
\]
where $\mathsf S^p_\bullet$ is the summand consisting of all cells equivalent to cells of the form $[e_{t(p)} < ... < p]$.

By Lemma~\ref{lem: cells to order complex}, the $i$-cells in $\mathsf S^p_i$ are just the $i-2$-cells of the order complex on $(e_{t(p)}, p)$ except for $\mathsf S^p_1$ which is the cell $[e_t(p) < p]$.  Furthermore, the differential agrees, by definition, with the simplicial differential.  In summary, $\mathsf S^p_\bullet$ is nothing more than the reduced simplicial homology complex for the order complex on $(e_{t(p)}, p)$ shifted by 2.
 
\end{proof}

\begin{definition}
We say that a resolution $\mathsf P_\bullet$ of an $A$-module is \newterm{minimal} if all components of the differential lie in the ideal generated by the arrows.
\end{definition}

\begin{theorem}
 Assume that for all paths $p$ all the homologies of $K((e_{t(p)}, p))$ are free $\mathsf k$-modules and the critical cells for the Babson-Hersh matching form a basis of these homologies.  Then, the Morse complex associated to the Babson-Hersh matching is a minimal projective resolution of $A$ as an $A \otimes A^{op}$-module (which is cellular).  The converse holds when $A$ is a graded with respect to path length.
\end{theorem}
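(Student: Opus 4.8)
The plan is to deduce the statement from results already in hand, reducing everything to the vanishing of one differential. First, by Lemma~\ref{lem: Morse complex is resolution} together with the fact (proved above) that the Babson--Hersh matching is acyclic, the Morse complex $\mathfrak{C}_\bullet$ is already a projective cellular resolution of $A$ as an $A\otimes_{\mathsf k} A^{\op{op}}$-module. So the only thing left is minimality, i.e.\ whether every component of the differential $\delta_\bullet$ lies in the arrow ideal. I would phrase this homologically: tensoring a projective bimodule $P_t\boxtimes P_h^{\op{op}}$ with $S_v$ on the left and $S_w$ on the right over $A$ gives $\mathsf k$ when $(t,h)=(v,w)$ and $0$ otherwise, and this operation annihilates precisely the arrow ideal; hence $\mathfrak{C}_\bullet$ is minimal if and only if, for every pair of vertices $v,w$, the complex $D^{v,w}_\bullet:=S_v\otimes_A\mathfrak{C}_\bullet\otimes_A S_w$ has zero differential. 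Note that the components in degrees $k\leq 1$ lie in the arrow ideal automatically (the differential out of an arrow cell carries that arrow as a left or right coefficient), so the content is in degrees $k\geq 2$.

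Next I would identify $D^{v,w}_\bullet$ explicitly. Its degree-$k$ term is free of rank the number of critical $k$-cells $\eta$ with $t(\eta)=v$ and $h(\eta)=w$, and by the defining formula for $\delta_k$ the induced differential retains exactly those gradient paths $\gamma$ whose left and right coefficients $l(\gamma),r(\gamma)$ are trivial. Since a non-trivial coefficient is produced precisely by a downward arrow of type $\partial_0$ or $\partial_{\op{top}}$ — the only non-internal face maps — such a surviving gradient path uses only internal face maps and internal matching edges; because $\overline{M}=\coprod_{[p]}\overline{M_{[p]}}$ with $\overline{M_{[p]}}$ supported on cells whose canonical representative ends in $p$, the whole path is confined to a single $\overline{M_{[p]}}$. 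Via Lemma~\ref{lem: cells to order complex} this exhibits $D^{v,w}_\bullet$, shifted by $2$, as $\bigoplus_{p:\,t(p)=v,\,h(p)=w}$ of the topological discrete Morse complex $C^{\op{crit}}_\bullet(K((e_v,p)))$ of the Babson--Hersh matching $M_{[p]}$, consistent with the identification $\op{Tor}_k(S_v,S_w)=\bigoplus_p\widetilde H_{k-2}(K((e_v,p)))$ of Lemma~\ref{lem: Tor is reduced homology}.

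For the forward implication, recall that $C^{\op{crit}}_\bullet(K((e_v,p)))$ is a complex of free $\mathsf k$-modules with basis the critical cells of $M_{[p]}$ and homology $\widetilde H_\bullet(K((e_v,p)))$. Under the hypothesis these homology groups are free with the critical cells forming a basis; in particular every critical cell is a cycle, and since the critical cells span the Morse complex its differential vanishes. Hence $D^{v,w}_\bullet$ has zero differential for all $v,w$, so $\mathfrak{C}_\bullet$ is minimal. For the converse, assume $A$ is graded by path length. Then $\mathfrak{C}_\bullet$ is a complex of graded projective bimodules and minimality forces the differential to strictly raise internal degree, so again $D^{v,w}_\bullet$ has zero differential and $\op{Tor}_k(S_v,S_w)$ is free of rank the number of critical $k$-cells from $v$ to $w$; comparing with Lemma~\ref{lem: Tor is reduced homology} and using that all critical $k$-cells with top path $p$ lie in internal degree $\op{len}(p)$ — so the path-length grading separates the summands indexed by $p$ — one concludes, for each $p$, that $\widetilde H_{k-2}(K((e_v,p)))$ is free with basis the critical $(k-2)$-cells of $M_{[p]}$.

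I expect the main obstacle to be the bookkeeping in the second step: carefully checking that a gradient path with trivial left and right coefficients cannot leave a single $\overline{M_{[p]}}$ and indeed corresponds to a gradient path of $M_{[p]}$ on $K((e_v,p))$, and tracking the incidence signs $\epsilon(\gamma)$ so that the identification with the topological discrete Morse complex is literal rather than merely rank-wise. One also has to be slightly careful with the meaning of ``the critical cells form a basis of the homologies'': it is exactly the assertion that each critical cell is a cycle (so that the topological Morse differential vanishes) that drives the forward direction, and in the converse it is the path-length grading that makes this automatic one path $p$ at a time.
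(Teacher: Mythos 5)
Your proof is correct and reaches the same conclusion as the paper's, but by a genuinely different mechanism. The paper's forward direction uses the hypothesis to rewrite $\mathfrak C_i$ as $\bigoplus_{v\le w}\operatorname{Tor}_i(S_v,S_w)\otimes_{\mathsf k}P_v\otimes_{\mathsf k}P_w^{\op{op}}$, and then argues that since $\operatorname{Tor}_i(S_v,S_w)$ is both the $i$-th term and the $i$-th homology of $S_v\otimes_A\mathfrak C_\bullet\otimes_A S_w$, and is free, the differential must vanish. That is essentially a rank-counting argument. What you do instead is identify the differential of $S_v\otimes_A\mathfrak C_\bullet\otimes_A S_w$ directly: tensoring with simples kills precisely the gradient paths that pick up a nontrivial left or right coefficient (i.e.\ that use $\partial_0$ or the top face), and the surviving, purely internal gradient paths are confined to a single block $\overline{M_{[p]}}$ because internal face maps and the internal matching both preserve the top path $p$. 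Via Lemma~\ref{lem: cells to order complex} this exhibits the tensored complex, up to a degree shift by $2$, as the direct sum over $p$ of the reduced topological discrete Morse complexes of $M_{[p]}$ on $K((e_{t(p)},p))$, whose differential vanishes once each critical cell is a cycle. This buys two things over the paper's route: it isolates the purely structural identification (which holds with no hypothesis at all) from where the hypothesis is actually used (to force critical cells to be cycles), and it sidesteps the rank-equality argument, which is a bit delicate over a general commutative Noetherian $\mathsf k$ where ``same free rank'' and ``zero differential'' are not quite interchangeable without care. The trade-off is the extra bookkeeping of the gradient-path analysis, which you flag as the main obstacle; that bookkeeping is exactly the content of the paper's Lemma~\ref{lem: Tor is reduced homology} and its proof, so you are in effect re-deriving its argument at the level of chains rather than homology. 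Your converse is likewise correct, and one small remark: the path-length separation of the $p$-summands that you invoke is a belt-and-suspenders step, since the gradient-path confinement argument already gives the decomposition of $D^{v,w}_\bullet$ as a direct sum of complexes indexed by $p$; the grading hypothesis is really doing its work (as the paper notes after the theorem) in guaranteeing that a minimal resolution exists and is a summand of every projective resolution, which your write-up does not lean on but the paper's does via Eilenberg.
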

\begin{proof}
By Lemma~\ref{lem: Morse complex is resolution} the Morse complex is a resolution.  Now, 
\begin{align*}
  \mathfrak C_i & = \bigoplus_{t(p)=v, h(p)=w} \widetilde{H}_{i-2}(K((e_v, p))) \otimes_{\mathsf k} P_v  \otimes_{\mathsf k} P_w^{op} & \text{by assumption} \\
 & = 
   \bigoplus_{v \leq w} Tor_i(S_v, S_w) \otimes_{\mathsf k} P_v \otimes_{\mathsf k} P_w^{op} & \text{by Lemma~\ref{lem: Tor is reduced homology}}
\end{align*} 
Since for all $v,w$
\[
Tor_i(S_v, S_w)  = H_i(S_v \otimes_A \mathfrak C_\bullet \otimes_A S_w)
\]
is a free $\mathsf k$-module, this forces the differential on $S_v \otimes_A \mathfrak C_\bullet \otimes_A S_w$ to vanish for all $v,w$.  Hence the differential on $\mathfrak C_\bullet$ is contained in the ideal generated by the arrows i.e. $\mathfrak C_\bullet$ is minimal. 

Conversely, when $A$ is a graded ring, any minimal resolution $\mathfrak{P}_\bullet$ is a summand of the Morse complex $\mathfrak{C}_\bullet$  \cite{Ei56}  (since the Morse complex gives a projective resolution of $A$ by Lemma~\ref{lem: Morse complex is resolution}.) 
On the other hand, the $i^{\text{th}}$ component of the minimal resolution is 
\begin{align*}
  \mathfrak P_i & =  \bigoplus_{v \leq w} Tor_i(S_v, S_w) \otimes_{\mathsf k} P_v \otimes_{\mathsf k} P_w^{op} & \text{see e.g. \cite{BK99}} \\
 & = \bigoplus_{t(p)=v, h(p)=w} \widetilde{H}_{i-2}(K((e_v, p))) \otimes_{\mathsf k} P_v  \otimes_{\mathsf k} P_w^{op} & \text{by Lemma~\ref{lem: Tor is reduced homology}}
\end{align*} 
Hence, the inclusion of $\mathfrak P_i$ into $\mathfrak C_i$ is an isomorphism if and only if for all $p$ the critical $i$-cells of the Babson-Hersh matching form a basis of $\widetilde{H}_{i-2}(K((e_v, p)))$.
\end{proof}

\begin{remark}
The existence of a minimal resolution forces $Tor_i(S_v,S_w)$ to be torsion-free.  Hence by Lemma~\ref{lem: Tor is reduced homology} a minimal resolution cannot exist if there exists a path $p$ such that the reduced homology of the order complex of $(e_{t(p)}, p)$ with coefficients in $\mathsf k$ has torsion.  
\end{remark}

\begin{remark}
The assumption that $A$ is graded above was only used to verify that a minimal resolution exists and is a summand of any other resolution.  The converse above holds whenever this is the case.
\end{remark}

\begin{remark}
We are unsure as to whether or not the Babson-Hersh matching produces the minimal \emph{cellular} resolution in general.
\end{remark}

\begin{proposition}\label{prop: shellable-koszul}
Assume that for all $[p]$, the order complex on the poset $(e_{t(p)},p)$ is either shellable or equal to a finite set of points.  Then, $A$ is Koszul and the Morse complex associated to the Babson-Hersch matching is the minimal projective resolution of $A$ as an $A \otimes_{\mathsf k} A^{op}$-module.

\end{proposition}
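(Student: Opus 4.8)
The plan is to derive both assertions from the theorem immediately above, whose hypotheses are exactly that (i) every reduced homology group $\widetilde H_*(K((e_{t(p)},p)))$ is a free $\mathsf k$-module and (ii) the critical cells of the Babson--Hersh matching form a basis of these groups. Granting (i) and (ii), that theorem already delivers the second assertion, namely that the Morse complex of the Babson--Hersh matching is a minimal projective cellular resolution of $A$ as an $A\otimes_{\mathsf k}A^{op}$-module; Koszulity will then follow from the sharper observation, also obtained along the way, that $\widetilde H_j(K((e_{t(p)},p)))$ is nonzero only in the single degree $j=|p|-2$. (Here we use the path-length grading on $A$, which we take to be a genuine grading generated in degree one.)

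First I would fix a path $p$ with $t(p)=v$, $h(p)=w$ and set $\ell=|p|$. Every maximal chain of the interval $[e_v,p]$ has length $\ell$, so the order complex $K((e_v,p))$ is \emph{pure} of dimension $\ell-2$; hence the hypothesis says it is either a finite discrete set of points (which forces $\ell=2$) or a pure shellable complex of dimension $\ell-2$. In the first case $\widetilde H_0(K((e_v,p)))=\mathsf k^{n-1}$ (where $n$ is the number of points) and $\widetilde H_j=0$ for $j\neq 0$. In the second case, a pure shellable complex of dimension $\ell-2$ is homotopy equivalent to a wedge of $(\ell-2)$-spheres, so $\widetilde H_*(K((e_v,p));\mathsf k)$ is free and concentrated in degree $\ell-2$. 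In either case (i) holds, and moreover $\widetilde H_j(K((e_v,p)))\neq 0$ forces $j=\ell-2$.

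Next I would check (ii). If $K((e_v,p))$ is a set of $n$ points, the Babson--Hersh matching $M_{[p]}$ is empty, so all $n$ vertices are critical; the extension $\overline{M_{[p]}}$ uses one of the associated $2$-cells of $X_A$ to pair with $[e_v<p]$, so the $n-1$ remaining critical cells are all of the critical cells of this summand, which therefore forms a basis of $\widetilde H_0(K((e_v,p)))=\mathsf k^{n-1}$. If $K((e_v,p))$ is shellable, I would choose the lexicographic order on the maximal chains of $[e_v,p]$ so that it refines a shelling of $K((e_v,p))$; then by Babson--Hersh \cite{BH05} the Morse matching $M_{[p]}$ has exactly one critical $0$-cell together with one critical facet per homology facet of the shelling, and in a pure shellable complex of dimension $\ell-2$ these homology facets form a basis of $\widetilde H_{\ell-2}$. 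Passing to $\overline{M_{[p]}}$ and using Lemma~\ref{lem: cells to order complex}, the single critical $0$-cell is consumed in matching $[e_v<p]$, so the surviving critical cells of dimension $\geq 2$ attached to $p$ are precisely the homology facets; this is (ii).

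With (i) and (ii) established for every $p$, the theorem above gives that the Morse complex $\mathfrak C_\bullet$ of the Babson--Hersh matching is a minimal projective cellular resolution of $A$ as an $A\otimes_{\mathsf k}A^{op}$-module, which is the second claim. For Koszulity I would combine Lemma~\ref{lem: Tor is reduced homology}, which gives $\operatorname{Tor}^A_i(S_v,S_w)\cong\bigoplus_{t(p)=v,\ h(p)=w}\widetilde H_{i-2}(K((e_v,p)))$, with the degree concentration proved above: the right-hand side vanishes unless $i-2=|p|-2$, i.e.\ unless $i=|p|$, so $\operatorname{Tor}^A_i(S_v,S_w)$ sits purely in path-length degree $i$ for all $v,w$. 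Equivalently, the minimal bimodule resolution $\mathfrak C_\bullet$ is linear with respect to the path-length grading, so $A$ is Koszul. The step I expect to be the main obstacle is (ii) in the shellable case: one must match the purely combinatorial Babson--Hersh matching coming from a lex order to an actual homology basis, which forces the lex order to be induced by a shelling and requires the precise Babson--Hersh statement together with the interval decomposition of a shellable complex; the rest is bookkeeping with the auxiliary cells $e_v$ and $p$ and classical facts about pure shellable complexes.
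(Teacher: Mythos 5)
Your proof is correct but follows a genuinely different route from the paper's. The paper argues directly on the Morse complex: it first notes that any internal downward step produces a chain with a gap (hence a non-saturated, non-critical cell), so there are no internal gradient paths, hence every component of the Morse differential carries a nontrivial left or right path coefficient and lies in the radical — this gives minimality via Eilenberg's criterion without any reference to $\operatorname{Tor}$. For Koszulity it then observes that from a critical (hence saturated) cell, the only gradient paths contributing nonzero coefficients end with a non-internal drop of $p_0$ or $p_n$, land on a saturated cell from which the internal matching provides no further upward move, and pick up a single arrow as coefficient; hence the differential is linear. You instead verify the hypotheses of the preceding theorem — freeness of $\widetilde H_*(K((e_{t(p)},p)))$ and that the Babson--Hersh critical cells give a basis — using the standard fact that a pure shellable complex is a wedge of top-dimensional spheres, then read off minimality from that theorem and Koszulity from the degree-concentration of $\operatorname{Tor}^A_i(S_v,S_w)$ via Lemma~\ref{lem: Tor is reduced homology}. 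Your route is logically cleaner in that it factors through the abstract theorem, but it requires two things you correctly flag as delicate: (a) you must make the path-length grading explicit to conclude that each interval $(e_{t(p)},p)$ is pure, which the paper uses implicitly as well; and (b) you need the Babson--Hersh critical facets to give an actual homology basis rather than merely be facets, forcing a compatibility between the chosen lex order and a shelling, whereas the paper only needs the weaker fact that the critical cells (besides the one $0$-cell) are saturated chains (their cited Proposition 4.1 of \cite{BH05}) and finishes with the direct gradient-path argument. Both approaches are sound; yours trades explicit combinatorics for a homological characterization.
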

\begin{proof}
Suppose $(e_{t(p)}<p)$ is a finite set of points.   Match nothing.  Then all cells $[e_{t(p)}<q <p]$ are all saturated by assumption.

Next, suppose $(e_{t(p)}<p)$ is shellable.  Then by \cite{BH05}, Proposition 4.1 all critical cells besides the 0-cell correspond to facets (i.e. saturated chains).

Next, we claim that the minimal resolutions have linear differentials. Since any internal downward path drops an intermediate path, a downward path can not go to a saturated cell at the last step, hence there is no internal gradient paths i.e. all differentials lie in the radical (therefore, the Morse complex is the minimal resolution \cite{Ei56}, Proposition 3, \S 3).  

To see there are no higher order terms in the differentials, let $[p_0=e_{t(p)}<p_1\dots<p_n=p]$ be a critical cell. A downward path will either drop $p_0$ or $p_n$, and we get a saturated cell in either case. But since the matching has to be internal, there is no upward path originating from this cell.  Hence all gradient paths are linear.  This means $A$ is Koszul.
\end{proof}

Now we relate our minimal resolution to our results in the previous section.

\begin{definition}
    Let $A$ be an HPA of Bondal-Ruan type. We say that $A$ is \newterm{directable} if there exists a relabeling of the variables $x_1 < \cdots < x_n$ such that any contatenation of arrows $x_{i_1}\dots x_{i_k}$ is equal in $A$ to the concatenation of arrows $x_{\sigma(i_1)} \cdots x_{\sigma(i_k)}$ with $\sigma(i_1) \leq \cdots \leq \sigma(i_k)$ for some permutation $\sigma$. 
\end{definition}
\begin{remark}
The directablility condition is equivalent to the quadratic condition that there exists a relabeling of the variables such that whenever $x_i x_j$ is a path with $j < i $, $x_ix_j$ is also a path.  
\end{remark}

\begin{example}[Non-example]
    We notice that the Hirzebruch surface $\mathbb{F}_1$ is of Bondal-Ruan type. 
\[\begin{tikzcd}
	\bullet & \bullet & \bullet & \bullet
	\arrow["{x_1}", curve={height=-6pt}, from=1-1, to=1-2]
	\arrow["{x_3}", curve={height=6pt}, from=1-1, to=1-2]
	\arrow["{x_2}", from=1-2, to=1-3]
	\arrow["{x_1}", curve={height=-6pt}, from=1-3, to=1-4]
	\arrow["{x_3}", curve={height=6pt}, from=1-3, to=1-4]
	\arrow["{x_4}", curve={height=30pt}, from=1-1, to=1-3]
	\arrow["{x_4}", shift left=3, curve={height=-24pt}, from=1-2, to=1-4]
\end{tikzcd}\]
However, this HPA is not directable, since either $x_2x_3$ cannot be changed to $x_3x_2$ (if $x_3<x_2$), or $x_3x_2$ can not be changed to $x_2x_3$ (if $x_2<x_3$). As can be easily observed, this HPA is not quadratic due to the relation $x_1x_2x_3=x_3x_2x_1$.
\end{example}

\begin{corollary}\label{cor: brkoszul}
If $A$ is a directable, then $A$ is Koszul.
\end{corollary}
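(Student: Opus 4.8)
The plan is to deduce this from Proposition~\ref{prop: shellable-koszul}: it suffices to show that for every nontrivial path $p$ in a directable HPA $A$, the order complex of the poset $(e_{t(p)},p)$ is shellable, or else a finite set of points.

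First I would set up the combinatorial model. Since $A$ is of Bondal--Ruan type, every arrow is a linear monomial (the lemma preceding the definition of directability), so every path is a monomial in the variables, and two paths with the same source are equal in $A$ precisely when they are the same monomial, i.e.\ have the same exponent vector. Hence, if $p$ has exponent vector $a$, every left factor $q<p$ is determined by its exponent vector $b$, which satisfies $0\leq b\leq a$ componentwise, and $q\leq q'$ in $(e_{t(p)},p)$ exactly when $b\leq b'$. Using the quadratic form of directability (the remark following the definition) — whenever $x_ix_j$ is a path with $j<i$ then $x_jx_i$ is a path, and the two are equal — a reordering argument shows that every lattice point strictly between $\hat 0$ and $\hat 1$ in the box $\prod_i[0,a_i]$ is realized as the exponent of a left factor of $p$. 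Consequently there is a poset isomorphism
\[
(e_{t(p)},p)\;\cong\;\Bigl(\textstyle\prod_i[0,a_i]\Bigr)\setminus\{\hat 0,\hat 1\},
\]
the open interval of a finite product of chains, only the factors with $a_i\geq1$ being nontrivial.

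Next I would invoke the shellability of this poset. If at most one $a_i$ is $\geq1$, the open interval is a (possibly empty) chain, whose order complex is a single simplex — shellable — or empty, which is covered by the ``finite set of points'' alternative; the box $[0,1]^2$ likewise gives a two-point complex. In all remaining cases $\prod_i[0,a_i]$ is a nontrivial finite distributive lattice, and the order complex of the proper part of a finite distributive lattice is shellable (it is EL-shellable via the componentwise lexicographic edge labeling), so this follows from the lexicographic shelling machinery already used in Proposition~\ref{prop: shellable-koszul} (Babson--Hersh \cite{BH05}). With the hypothesis of Proposition~\ref{prop: shellable-koszul} verified for every $p$, we conclude that $A$ is Koszul (and, moreover, that the Babson--Hersh Morse complex is the minimal bimodule resolution).

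The main obstacle is the surjectivity half of the identification of $(e_{t(p)},p)$ with the open box, namely that \emph{every} $b\leq a$ occurs as a left factor: a priori the interval poset is only a subposet of the box, since directability lets one \emph{sort} a monomial word, and one must argue that the intermediate vertices needed to realize an arbitrary $b$ all lie in the quiver. This is exactly where directability is used in an essential way — for the non-directable Hirzebruch example $\mathbb F_1$ above the relevant interval posets are genuinely smaller than, and not built from, boxes of chains, and (as noted after the minimal-resolution theorem) torsion in their order-complex homology obstructs even the existence of a minimal resolution. Once $(e_{t(p)},p)$ is recognized as the proper part of a product of chains the shellability input is entirely classical, and an alternative route to Koszulity at that stage is the observation that directability furnishes a quadratic Gr\"obner basis $\{x_ix_j-x_jx_i\}$ for $A$.
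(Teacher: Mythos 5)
Your overall strategy matches the paper's: reduce to Proposition~\ref{prop: shellable-koszul} by showing the order complex of each interval $(e_{t(p)},p)$ is shellable, and obtain the shellability from Bj\"orner's lexicographic machinery. But you and the paper get to shellability by genuinely different routes. The paper's proof constructs an EL-labeling directly on whatever poset $(e_{t(p)},p)$ turns out to be: the total order on variables labels the covering relations, directability supplies the increasing saturated chain in every subinterval $[q,q']$ (the sorted word $x_{j_1}\cdots x_{j_m}$ of $q'/q$), and that chain is automatically unique and lexicographically first because every maximal chain in $[q,q']$ carries the same multiset of labels. Nothing about the \emph{shape} of the interval as a poset is needed. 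You instead want to identify $(e_{t(p)},p)$ with the full open box $\prod_i[0,a_i]\setminus\{\hat 0,\hat 1\}$ and then quote shellability of proper parts of finite distributive lattices.

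That identification is where there is a real gap, which you flag but do not close: you need that \emph{every} exponent vector $0<b<a$ is realized by some left factor of $p$, i.e.\ that the unsorted word $x_1^{b_1}\cdots x_n^{b_n}\,x_1^{a_1-b_1}\cdots x_n^{a_n-b_n}$ is a composable chain of arrows in the quiver, not merely a monomial. Directability is a one-way hypothesis: it says any path equals, in $A$, its \emph{sorted} rearrangement (and the quadratic form says only that if $x_ix_j$ is a path with $i>j$ then the in-order $x_jx_i$ is one too). It says nothing about \emph{un}sorting, i.e.\ about the vertex $D-\mu(b)$ lying in $\operatorname{Im}\Phi$ when $b$ is not a prefix-exponent of the sorted word. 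So ``a reordering argument shows'' is exactly the step that is missing; a priori the interval is only an induced subposet of the box, for which your distributive-lattice shellability citation does not apply. The paper's EL-labeling argument needs only existence and lex-minimality of the sorted chain, both of which directability \emph{does} supply, so it avoids this issue entirely. (The same issue undermines your closing remark: $\{x_ix_j-x_jx_i\}$ can serve as a quadratic Gr\"obner basis only if both orderings always exist as paths, which is again the unproved surjectivity.)
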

\begin{proof}

Without loss of generality, one can assume the total ordering on the variables to be $x_1<\dots <x_n$.  This induces an edge labeling on the intervals \cite{Bj83}, Definition 2.1.


Then, since EL shellable $\Rightarrow$ CL shellable $\Rightarrow$ shellable \cite{Bj83}, Proposition 2.3, each interval is shellable (unless it has length 2, in which case it is a finite set of points). By Proposition \ref{prop: shellable-koszul}, $A_{\Phi}$ is Koszul.
\end{proof}

We provide a few more toric examples that are not necessarily of Bondal-Ruan type.  Nevertheless, the acyclic matching above agrees with the minimal resolution.

\begin{example}
Let $A$ be the toric FSEC HPA of the quiver of line bundles for the Hirzebruch surface $\mathbb{F}_3$.
\[\begin{tikzcd}
	\bullet & \bullet & \bullet & \bullet
	\arrow["{x_1}", curve={height=-6pt}, from=1-1, to=1-2]
	\arrow["{x_3}", curve={height=6pt}, from=1-1, to=1-2]
	\arrow["{x_1^2}", curve={height=-12pt}, from=1-2, to=1-3]
	\arrow["{x_3^2}", curve={height=12pt}, from=1-2, to=1-3]
	\arrow["x_1x_3", from=1-2, to=1-3]
	\arrow["{x_1}", curve={height=-6pt}, from=1-3, to=1-4]
	\arrow["{x_3}", curve={height=6pt}, from=1-3, to=1-4]
	\arrow["{x_4}", curve={height=30pt}, from=1-1, to=1-3]
	\arrow["{x_4}", shift left=3, curve={height=-24pt}, from=1-2, to=1-4]
\end{tikzcd}\]
Below is an example of maximal internal acyclic matching on $X_A$.

\begin{enumerate}
    \item $M_1^{bottom}$: 1) $[1<x_1^3]$ 2) $[1<x_1^2x_3]$ 3) $[1<x_1x_3^2]$ 4) $[1<x_3^3]$ 5) $[1<x_1x_4]$ 6) $[1<x_3x_4]$ 7) $[1<x_1^4]$ 8) $[1<x_1^3x_3]$ 9) $[1<x_1^2x_3^2]$ 10) $[1<x_1x_3^3]$ 11) $[1<x_4]$ 12) $[x_1<x_1^4]$ 13) $[x_1<x_1^3x_3]$ 14) $[x_1<x_1^2x_3^2]$ 15) $[x_1<x_1x_3^3]$ 16) $[x_3<x_1^3x_3]$ 17) $[x_3<x_1^2x_3^2]$ 18) $[x_3<x_1x_3^3]$ 19) $[x_3<x_3^4]$.\\
    $M_2^{top}$: 1) $[1<x_1<x_1^3]$ 2) $[1<x_1<x_1^2x_3]$ 3) $[1<x_1<x_1x_3^2]$ 4) $[1<x_3<x_3^3]$ 5) $[1<x_1<x_1x_4]$ 6) $1<x_3<x_3x_4$ 7) $[1<x_1<x_1^4]$ 8) $[1<x_1<x_1^3x_3]$ 9) $[1<x_1<x_1^2x_3^2]$ 10) $[1<x_1<x_1x_3^2]$ 11) $[1<x_3<x_3^4]$ 12) $[x_1<x_1^3<x_1^4]$ 13) $[x_1<x_1^3<x_1^3x_3]$ 14) $[x_1<x_1^2x_3<x_1^2x_3^2]$ 15) $[x_1<x_1x_3^2<x_1x_3^3]$ 16) $[x_3<x_1^2x_3<x_1^3x_3]$ 17) $[x_3< x_1^2x_3 <x_1^2x_3^2]$ 18) $[x_3<x_1x_3^2<x_1x_3^3]$ 19) $[x_3<x_3^3<x_3^4]$.
    \item $M_2^{bottom}$: a) $[1<x_3<x_1^3x_3]$ b) $[1<x_3<x_1^2x_3^2]$ c) $[1<x_3<x_1x_3^3]$ d) $[1<x_1^3<x_1^4]$ e) $[1<x_1^3< x_1^3x_3]$ f) $[1<x_1^2x_3<x_1^3x_3]$ g) $[1<x_1^2x_3<x_1^2x_3^2]$ h) $[1<x_1x_3^2<x_1^2x_3^2]$ i) $[1<x_1x_3^2<x_1x_3^3]$ j) $[1<x_3^3<x_1x_3^3]$ k) $[1<x_3^3<x_3^4]$\\
    $M_3^{top}$: a) $[1<x_3<x_1^2x_3<x_1^3x_3]$ b) $[1<x_3<x_1^2x_3<x_1^2x_3^2]$ c) $[1<x_3<x_1x_3^2<x_1x_3^3]$ d) $[1<x_1<x_1^3<x_1^4]$ e) $[1<x_1<x_1^3<x_1^3x_3]$ f) $[1<x_1< x_1^2x_3< x_1^3x_3]$ g) $[1<x_1<x_1^2x_3<x_1^2x_3^2]$ h) $[1<x_1<x_1x_3^2<x_1^2x_3^2]$ i) $[1<x_1<x_1x_3^2<x_1x_3^3]$ j)$[1<x_3<x_3^3<x_1x_3^3]$ k) $[1<x_3<x_3^3<x_3^4]$
\end{enumerate}
The following critical cells of $X_A$ remains after the matching:
\begin{enumerate}
    \item All 4 0-cells and all 9 consecutive 1-cells.
    \item Six 2-cells: $[1<x_3<x_1^2x_3]$, $[1<x_3<x_1x_3^2]$, $[1<x_4<x_1x_4]$, $[1<x_4<x_3x_4]$, $[x_1<x_1^2x_3<x_1^3x_3]\sim [x_3<x_1x_3^2<x_1^2x_3^2]$, $[x_1<x_1x_3^2<x_1^2x_3^2]\sim [x_3<x_3^3<x_1x_3^3]$.
    \item One 3-cell: $[1<x_3<x_1x_3^2<x_1^2x_3^2]$.
\end{enumerate}
We get the 3 dimensional minimal cellular resolution $X_A^{min}$: 
\[
    \includegraphics[scale=1.0]{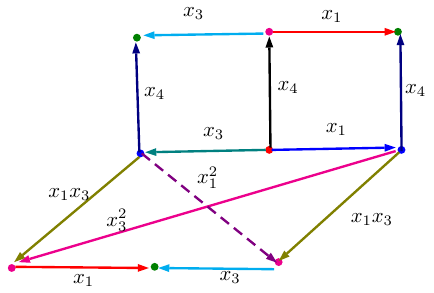}
\]
This is a homotopy torus, where the lower half of the fundamental domain was thickened to a tetrahedron.
\end{example}

This example shows that the minimal bimodule resolution of a toric FSEC HPA, if cellular, can only possibly be a homotopy torus. This is expected from $\mathbb{F}_3$, since the Hochschild dimension(=3) of this FSEC HPA is greater than its dimension(=2). Topologically, it is possible to obtain a CW complex homeomorphic $\mathbb{T}^2$ by matching more. However, such a matching will not be an internal matching, and will no longer provide a bimodule resolution.

\begin{example}
We compare $\mathbb{F}_3$ to the weighted projective stack $\mathbb{P}(1:1:3)$, with the usual FSEC of line bundles:
\[\begin{tikzcd}
	\bullet & \bullet & \bullet & \bullet & \bullet
	\arrow["y"', shift right=1, from=1-1, to=1-2]
	\arrow["x", shift left=1, from=1-1, to=1-2]
	\arrow["y"', shift right=1, from=1-2, to=1-3]
	\arrow["x", shift left=1, from=1-2, to=1-3]
	\arrow["y"', shift right=1, from=1-3, to=1-4]
	\arrow["y"', shift right=1, from=1-4, to=1-5]
	\arrow["x", shift left=1, from=1-3, to=1-4]
	\arrow["x", shift left=1, from=1-4, to=1-5]
	\arrow["z", curve={height=30pt}, from=1-2, to=1-5]
	\arrow["z"', shift left=2, curve={height=-24pt}, from=1-1, to=1-4]
\end{tikzcd}\]

The minimal resolution turns out to be homeomorphic to $\mathbb{T}^2$:
\[
    \includegraphics[scale=1.0]{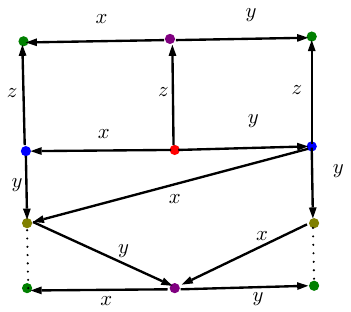}
\]

\end{example}
Note that if one thinks of $\mathbb{P}(1:1:3)$ as obtained from quotient by $\mathbb{C}^*$, the HPA is of Bondal-Ruan type. But if instead it is considered as a VGIT of $\mathbb{F}_3$, thus a quotient by $(\mathbb{C}^*)^2$, the HPA is not of Bondal-Ruan type. There are 6 lattice points in $\im(\Phi)$, but the $\mathbb{P}(1:1:3)$ quiver only has 5 vertices and $\mathbb{F}_3$ quiver has 4.

\begin{remark}
    In fact, one can check by hand that the minimal resolutions for the $\mathbb{P}(1:1:n)$ HPAs are all homeomorphic to $\mathbb{T}^2$.
\end{remark}

\begin{proposition}
Suppose $A$ is a HPA corresponding to an FSEC of line bundles on a smooth variety $X$.  Then $X_A$ has Euler characteristic zero.
\end{proposition}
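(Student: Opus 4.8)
The plan is to read off $\chi(X_A)$ from the projective cellular bimodule resolution of $A$ supported on $X_A$, translate the resulting integer identity into $K_0(D^b(X))$, and then kill it with the rank homomorphism, using that an FSEC consists of \emph{line bundles} (so rank one) on a positive-dimensional variety.

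Since $Q$ is finite, $X_A$ is a finite regular $\Delta$-complex, so $\chi(X_A)=\sum_k(-1)^k\#\Cell_k(X_A)$. Every cell $\eta_k$ of $X_A$ has a well-defined tail $t(\eta_k)$ and head $h(\eta_k)$ in $Q_0$, so, writing $\tau_{vw}:=\sum_k(-1)^k\#\{\eta_k\in\Cell_k(X_A):t(\eta_k)=v,\ h(\eta_k)=w\}$, we get $\chi(X_A)=\sum_{v,w}\tau_{vw}$. Now take the bimodule resolution $\mathsf C_\bullet\to A$ of Corollary~\ref{cor: cell res}. Each $\mathsf C_k=\bigoplus_{\eta_k}P_{t(\eta_k)}\boxtimes P_{h(\eta_k)}^{op}$ is projective as a left $A$-module, so $S_v\otimes_A\mathsf C_\bullet$ is a complex of projective right $A$-modules quasi-isomorphic to $S_v\otimes_A A=S_v$, i.e. a projective resolution of the simple right module $S_v$, with $S_v\otimes_A\mathsf C_k=\bigoplus_{\eta_k:\,t(\eta_k)=v}e_{h(\eta_k)}A$ (this is essentially the computation at the start of the proof of Lemma~\ref{lem: Tor is reduced homology}, where $\tau_{vw}$ is identified with the Euler characteristic of $\op{Tor}^A_\bullet(S_v,S_w)$). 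Passing to $K_0(D^b(A))$ and collecting cells by their head yields
\[
[S_v]=\sum_k(-1)^k[\,S_v\otimes_A\mathsf C_k\,]=\sum_w\tau_{vw}\,[e_wA],
\]
and only this $K_0$ identity is needed going forward.

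Next I would transport this along the equivalence $D^b(A)\simeq D^b(X)$ afforded by the full strong exceptional collection: it carries each indecomposable projective $e_wA$ to the line bundle $L_w$ (or its dual, according to conventions — in any case a rank-one sheaf), and each simple $S_v$ to the object $E_v$ of the right dual exceptional collection, characterized by $R\Hom_X(L_u,E_v)=\delta_{uv}\,\mathsf k$ (which is just $R\Hom$ of an indecomposable projective against a simple on the $A$-side). Applying the rank homomorphism $\op{rk}\colon K_0(D^b(X))\to\mathbb Z$ and using $\op{rk}(L_w)=1$ gives $\op{rk}([E_v])=\sum_w\tau_{vw}$, hence
\[
\chi(X_A)=\sum_{v,w}\tau_{vw}=\op{rk}\Big(\sum_v[E_v]\Big).
\]

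Finally I would identify $\sum_v[E_v]$ in $K_0(D^b(X))$. Fix a closed point $x\in X$. For every $u$, $\langle[L_u],\sum_v[E_v]\rangle=\sum_v\delta_{uv}=1=\op{rk}(L_u)=\chi(L_u,\mathcal O_x)=\langle[L_u],[\mathcal O_x]\rangle$, where $\langle-,-\rangle$ is the Euler pairing and the middle equality is the local computation $R\Hom_X(L_u,\mathcal O_x)=(L_u|_x)^\vee=\mathsf k$. Since $\{[L_u]\}$ is a $\mathbb Z$-basis of $K_0(D^b(X))$ and the Gram matrix of $\langle-,-\rangle$ in this basis is unitriangular (the collection being exceptional), hence unimodular, we conclude $\sum_v[E_v]=[\mathcal O_x]$. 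Therefore $\chi(X_A)=\op{rk}([\mathcal O_x])=0$, a skyscraper sheaf having generic rank zero on a variety of positive dimension (the case $\dim X=0$ being the trivial $X_A=\pt$). The one genuinely delicate point is the $K_0$ dictionary in the third paragraph — one must check the tilting equivalence matches projectives with line bundles and simples with the dual collection consistently with variance, so the rank and Euler-pairing bookkeeping is unambiguous; everything else is formal once the cellular resolution is in hand.
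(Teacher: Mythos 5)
Your proof is correct but takes a genuinely different route from the paper's. The paper works on the bimodule side throughout: it observes that under $D(A\otimes_{\mathsf k}A^{\op{op}})\cong D(X\times X)$ the cellular resolution $\mathsf C_\bullet$ becomes a locally-free resolution $\mathcal C_\bullet$ of $\mathcal O_{\Delta_X}$, that $\op{rk}(\mathcal C_i)$ equals the number of $i$-cells of $X_A$ (because each term is a tensor of line bundles, rank one), and then simply reads off $\chi(X_A)=\sum_i(-1)^i\op{rk}(\mathcal C_i)=\op{rk}(\mathcal O_{\Delta_X})=0$, since the diagonal has positive codimension in $X\times X$. You instead slice this one-sided: you tensor $\mathsf C_\bullet$ with a simple $S_v$, land in $K_0(D^b(X))$ via the tilting equivalence, identify $\sum_v[E_v]$ with a skyscraper class $[\mathcal O_x]$ by a Euler-pairing/unimodularity argument, and then apply the rank homomorphism. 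Both arguments hinge on the same two ingredients — the cellular bimodule resolution and the rank-one property of line bundles — but your version buys an extra, independently pleasant $K_0$ fact ($\sum_v[E_v]=[\mathcal O_x]$ for any closed point), at the cost of the Gram-matrix step, whereas the paper's is essentially a one-line rank count once the resolution of the diagonal is in place. One small caveat applies to both: when $\dim X=0$ the conclusion actually fails ($X_A=\pt$, $\chi=1$), so ``positive dimension'' is a genuine hypothesis rather than a degenerate case handled ``trivially'' as your last parenthetical suggests; the paper's appeal to the vanishing rank of $\mathcal O_{\Delta_X}$ has exactly the same implicit restriction.
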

\begin{proof}
Consider the cellular resolution $\mathsf C_\bullet$ of $A$.  The equivalence $D(A \otimes_{\mathsf k} A^{\op{op}}) \cong D(X \times X)$ sends $\mathsf C_\bullet$ to a locally-free resolution of the diagonal $\mathcal C_\bullet \cong \Delta_X$.  Notice that 
\begin{align*}
\op{rk}(\mathcal C_i) & = \op{rk}(\mathsf C_i) & \text{ since the exceptional collection consists of line bundles}\\
& = \text{ \# of i-cells in }X_A & \text{ by definition of }\mathsf C_\bullet.
\end{align*}
Hence $\chi(\Delta_X) = \chi(X_A)$.  On the other hand, by the HKR theorem $\chi(\Delta_X) =0$.
\end{proof}

We end this paper with the following conjectures.
\begin{conj}
For a toric FSEC HPA $A_{\mathcal{X}}$  of a proper DM toric stack $\mathcal{X}$, $X_{A_\mathcal{X}}$ is homotopic to a real torus $\mathbb{T}$.
\end{conj}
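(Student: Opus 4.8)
The plan is to prove the conjecture by realizing $X_{A_{\mathcal X}}$ as (homotopy equivalent to) the torus through the machinery already built; I would pursue two complementary routes, which share a single hard core.

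\textbf{Route 1 (geometric).} Mimic \S\ref{sec: toric}. Writing $\mathcal X = [(\mathbb A^{n+k})^{ss}_\theta/G]$, for each $\mathcal O_{\mathcal X}(-D)$ in the given full strong exceptional collection form the polyhedron $\op{Poly}_D \subseteq M_{\mathbb R}$ as in \S\ref{sec: toric}; these need not come from the toric Frobenius, but one may still set $\widetilde{S_D} := \bigcap_\rho(\op{Poly}_D \setminus \op{Poly}_{D-D_\rho})$ and let $S$ be the induced stratification of $\mathbb T^n = M_{\mathbb R}/M$. I would then (i) show $\mathcal C_{A_{\mathcal X}} \cong \Ent_S(\mathbb T^n)$, matching a monomial section $m \in \Hom(\mathcal O(-D),\mathcal O(-E))$ with the straight-line path $l_m$, by re-running Proposition~\ref{prop: picard order} --- which used only that $\op{Poly}_D$ is a polyhedron and that monomial Homs translate one polyhedron inside another; (ii) verify $S$ is a block stratification, i.e.\ that each $\widetilde{S_D}$ and each difference $\op{Poly}_D\setminus\op{Poly}_E$ is contractible, which should again follow from star-shapedness of the complement of one polyhedron inside another; and (iii) conclude $X_{A_{\mathcal X}} = B(\mathcal C_{A_{\mathcal X}}) \simeq B(\Ent_S(\mathbb T^n)) \simeq \mathbb T^n$ by Theorem~\ref{thm: block homotopy}, exactly as in Theorem~\ref{thm: torus}.

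\textbf{Route 2 (homotopical).} First pin down the homotopy invariants of $X_{A_{\mathcal X}}$. Using the cellular resolution $\mathsf C_\bullet$ (Corollary~\ref{cor: cell res}) and the equivalence $D(A_{\mathcal X}\otimes_{\mathsf k}A_{\mathcal X}^{\op{op}}) \cong D(\mathcal X\times\mathcal X)$, under which $\mathsf C_\bullet$ resolves $\mathcal O_{\Delta_{\mathcal X}}$, the CW chain complex of $X_{A_{\mathcal X}}$ tensored on both sides with the trivial representation $F$ is $\mathcal O_{(p,p)} \otimes^{L}_{\mathcal O_{\mathcal X\times\mathcal X}} \mathcal O_{\Delta_{\mathcal X}}$ for $p$ a point of the dense torus --- here $F$ corresponds to $\mathcal O_p$ because each Cox variable is invertible at $p$. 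Since $\mathcal X\times\mathcal X$ is smooth at $(p,p)\in\Delta_{\mathcal X}$ and $\Delta_{\mathcal X}$ has codimension $n$, this Tor is the exterior algebra $\bigwedge^\bullet\mathsf k^n$, so the Betti numbers of $X_{A_{\mathcal X}}$ match those of $\mathbb T^n$. Next $\pi_1$: the arrows of $A_{\mathcal X}$ are monomials in the Cox variables, so $\mathcal C_{A_{\mathcal X}}$ is a full subcategory, on the chosen Picard classes, of the category of effective monomials graded by $\widehat G$; group-completing the nerve (using that fullness makes the quiver connected and realizes every $m\in M$ as a closed walk) gives $\pi_1(X_{A_{\mathcal X}}) \cong \ker(\mathbb Z^{n+k}\xrightarrow{\mu}\widehat G) = M \cong \mathbb Z^n$. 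It then suffices to show $X_{A_{\mathcal X}}$ is aspherical: a finite $K(\mathbb Z^n,1)$ is necessarily homotopy equivalent to $\mathbb T^n$.

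The main obstacle in either route is one and the same contractibility statement: in Route~1, the \emph{simplicity} of $S$ (contractibility of $\op{Poly}_D\setminus\op{Poly}_E$ for an arbitrary full strong exceptional collection, which is exactly what Theorem~\ref{thm: block homotopy} requires); in Route~2, the \emph{asphericity} of $X_{A_{\mathcal X}}$, i.e.\ contractibility of the universal cover $\widetilde{X_{A_{\mathcal X}}}$. The universal cover is assembled from the contractible order complexes $K(\Path_{A,\widetilde v})$ (Proposition~\ref{prop: tree is block}) with contractible overlaps, so I would attack it either by a nerve argument for this cover or --- in the spirit of the worked examples --- by producing a global internal acyclic (Babson--Hersh-type) matching whose critical complex is manifestly a CW model of $\mathbb T^n$; combined with the Betti-number computation above, such a matching would force exactly $\binom n i$ critical $i$-cells and identify $\overline{X_{A_{\mathcal X}}}$ with a torus. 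Finally, one should note that the conjecture is conditional in a way orthogonal to topology: the existence of a full strong exceptional collection of line bundles on a proper DM toric stack (a generalized King-type conjecture) is itself open, so the statement concerns only those $\mathcal X$ for which $A_{\mathcal X}$ is defined.
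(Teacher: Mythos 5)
The statement you were asked to prove is stated in the paper as an open conjecture; the authors offer no proof, only the evidence of the worked examples ($\mathbb F_3$, $\mathbb P(1:1:3)$, $\mathbb P(1:1:n)$) and the Euler-characteristic computation $\chi(X_A)=0$. So there is no paper proof to compare against, and your text should be read as a research plan rather than a proof. You are honest about where the plan stops: both routes bottom out at the same contractibility/asphericity statement (simplicity of the stratification, or equivalently contractibility of $\widetilde{X_{A_{\mathcal X}}}$), which you name but do not establish. That is exactly the crux, and leaving it unresolved means the proposal does not close the conjecture.

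Beyond that acknowledged gap, there are two places where the plan as written would not go through without further ideas. In Route 1, the construction of a stratification $S$ of $\mathbb T^n$ indexed by the chosen FSEC is not automatic. In the Bondal--Ruan case the strata are literally the level sets $\Phi^{-1}(D)$ of the map $\Phi$, and the fact that the half-open boxes $-D+[0,1)^{n+k}$ tile $\mathbb R^{n+k}$ is what guarantees the $\widetilde{S_D}$ for $D\in\im\Phi$ partition a fundamental domain. For an arbitrary FSEC $\{L_i\}$, the representatives $D_i$ are only determined modulo $M$, the sets $\bigcap_\rho(\op{Poly}_{D_i}\setminus\op{Poly}_{D_i-D_\rho})$ need not be pairwise disjoint, need not cover $\mathbb T^n$, and the Picard order need not encode the closure relations --- so even the \emph{exceptional} axiom, let alone \emph{simple} or \emph{block}, is not granted. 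Re-running Proposition~\ref{prop: picard order} therefore presupposes exactly the stratification you are trying to build. In Route 2, the claim that the trivial representation $F$ corresponds to $\mathcal O_p$ for $p$ a point of the dense torus, and the claim that the group completion of $\mathcal C_{A_{\mathcal X}}$ gives $\pi_1\cong M$, are both plausible but neither is argued; the $\pi_1$ computation in particular requires showing that closed walks in the 1-skeleton, modulo the homotopy-path relations, generate all of $M\subseteq\mathbb Z^{n+k}$, and for an arbitrary FSEC (whose line bundles may be far apart in the Picard lattice) this is not obvious. One minor caveat: the conjecture as stated only asserts homotopy equivalence to \emph{some} real torus $\mathbb T$; your plan implicitly takes $\mathbb T=\mathbb T^n$ throughout, which is what the later, stronger conjecture posits but is not what this one says.

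Your Route 2 Betti-number computation is a correct and worthwhile strengthening of the paper's $\chi(X_A)=0$ result (modulo the $F\leftrightarrow\mathcal O_p$ identification), and the reduction to a finite $K(\mathbb Z^n,1)$ argument is a clean way to frame what remains. But as it stands, the proposal establishes neither a stratification of $\mathbb T^n$ for general FSECs nor the asphericity of $X_{A_{\mathcal X}}$, so the conjecture remains open.
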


\begin{conj}
Suppose there exists a full strong exceptional collection on a proper DM stack $\mathcal{X}$ of dimension $n$ whose endomorphism algebra $A$ is an HPA. Then there exists a subcomplex $Y \subseteq X_A$ which is homotopic to $\mathbb T^n$.  Furthermore, $\mathcal X$ is rational with local systems on $Y$ corresponding to points on $\mathcal X$.   Finally, $X_{A}$ is homotopic to a torus if and only if $\mathcal X$ is toric. 
\end{conj}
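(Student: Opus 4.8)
This statement is conjectural; here is the approach we would pursue.  Note first that a proper DM stack $\mathcal X$ carrying a full strong exceptional collection of line bundles is necessarily smooth, so the Hochschild--Kostant--Rosenberg isomorphism and the identification $\op{Ext}^\bullet_{\mathcal X}(\mathcal O_p,\mathcal O_p)=\wedge^\bullet T_p\mathcal X$ are at our disposal.  The backbone is the cellular resolution of the diagonal: by Corollary~\ref{cor: cell res} the complex $X_A$ supports the projective cellular resolution $\mathsf C_\bullet$ of the diagonal bimodule $A$, and under the equivalence $D(A\otimes_{\mathsf k}A^{\op{op}})\cong D(\mathcal X\times\mathcal X)$ induced by $\bigoplus L_i$ this maps to a locally free resolution of $\Delta_{\mathcal X}$ by sums of line bundles.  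Passing to an internal acyclic matching as in \S\ref{sec: Morse} produces the minimal such resolution, supported on a collapse $\overline{X_A}\simeq X_A$, where $\overline{X_A}$ is a subcomplex of $X_A$.

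For the first assertion, the plan is to locate the torus $Y$ inside $\overline{X_A}$ using the local structure of $\Delta_{\mathcal X}$: \'etale locally the diagonal ideal is a regular sequence of length $n$, so its Koszul resolution is an $n$-cube, and we would show that a suitable internal matching glues these $n$-cubes into an $n$-dimensional subcomplex $Y$ carrying the (twisted) Koszul differential.  To identify $Y\simeq\mathbb T^n$ one then computes the cohomology ring of $Y$ and shows it is the exterior algebra on $n$ generators -- equivalently, that $Y$ is a closed $n$-manifold with trivial tangent bundle -- or, alternatively, realises $Y$ as the classifying space of entrance paths for the $n$-skeleton of the tree stratification and reruns Theorem~\ref{thm: block homotopy} exactly as in the Bondal--Ruan case (Theorem~\ref{thm: torus}).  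The essential difficulty, absent in Section~\ref{sec: toric}, is that there is no ambient torus $M_{\mathbb R}/M$ to map into: the torus must be \emph{constructed} from the combinatorics of the minimal resolution, and we expect most of the effort to lie in showing that this resolution -- whose graded pieces have ranks governing $HH^\bullet(\mathcal X)$, e.g.\ the top piece computes $HH^{2n}(\mathcal X)=H^n(\mathcal X,\omega_{\mathcal X}^{-1})$ -- reassembles into the cochain complex of $\mathbb T^n$.

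For the rationality statement, one transports a skyscraper $\mathcal O_p$, $p\in\mathcal X$, across the equivalence $D(\mathcal X)\cong DSh_{S^{tr}}(X_A)$ of Corollary~\ref{cor: tree strata}; concretely its image is computed by applying $\mathsf C_\bullet$ to the $A$-module attached to $p$.  Since $\op{Ext}^\bullet(\mathcal O_p,\mathcal O_p)=\wedge^\bullet T_p\mathcal X=H^\bullet(\mathbb T^n;\mathsf k)=\op{Ext}^\bullet(\mathcal L,\mathcal L)$ for a rank-one local system $\mathcal L$ on $Y\simeq\mathbb T^n$, the point is to check directly that this image is a constructible sheaf with one-dimensional stalks supported on $Y$, i.e.\ a rank-one local system on $Y$.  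Varying $p$ then yields a rational map $\mathcal X\dashrightarrow\Hom(\pi_1Y,\mathbb G_m)\cong(\mathbb G_m)^n$, which one argues is birational onto its image; since an algebraic torus is rational this gives rationality of $\mathcal X$.  Finally, for the equivalence ``$X_A\simeq\mathbb T^n$ if and only if $\mathcal X$ is toric'', the implication from toric is the previous conjecture (a theorem when $\mathcal X$ is of Bondal--Ruan type, by Theorem~\ref{thm: torus}), so the new content is the converse.  Here the plan is to present $D(\mathcal X)$ as $DSh_{S^{tr}}(X_A)$ with $X_A$ a homotopy torus equipped with a block stratification, and to reconstruct a fan from the strata, their closure relations, and a Picard-type order as in \eqref{Pic}, literally inverting the construction of Section~\ref{sec: toric}.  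This last step is the crux, and the reason the statement remains a conjecture: a homotopy torus carries many inequivalent stratifications, and one must establish the rigidity that the one arising from an HPA is forced into Bondal--Ruan shape -- a reconstruction theorem recovering a toric stack from its mirror stratified torus, which at present we do not know how to prove.
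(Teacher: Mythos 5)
The statement you were asked to prove is presented in the paper as an open \emph{conjecture} in the final section, and no proof is given; there is therefore no paper argument to compare your attempt against. You correctly recognize this and offer a proposed line of attack rather than claiming a proof.

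Your sketch is consistent with the paper's machinery: it leans on the cellular resolution $\mathsf C_\bullet$ of the diagonal bimodule from Corollary~\ref{cor: cell res}, the Morse collapses of \S\ref{sec: Morse}, and the HKR-type argument the paper itself uses to show $\chi(X_A)=0$ for toric FSEC HPAs. You are also candid that the converse direction of the last assertion (reconstructing a toric stack from the stratified homotopy torus $X_A$) is the genuine gap, and the preceding Example~\ref{ex: BHK} in the paper is exactly the sort of non-toric test case one would want to probe. Two further places where the outline would need substantial strengthening if one were to try to prove the conjecture: (i) to identify the image of a skyscraper $\mathcal O_p$ under $D(\mathcal X)\cong DSh_{S^{tr}}(X_A)$ as a rank-one local system on the putative subcomplex $Y$, matching the self-Ext algebra $\wedge^\bullet T_p\mathcal X$ with $H^\bullet(\mathbb T^n;\mathsf k)$ is necessary but far from sufficient; you would need to control the support of the transported object and the rank of its stalks directly, and in particular rule out that the image is a more complicated complex of constructible sheaves with the same total Ext. (ii) Rationality of $\mathcal X$ does not follow merely from exhibiting a rational map to $(\mathbb G_m)^n$; one must show this map is dominant and birational, which requires knowing that distinct points $p\neq p'$ on a dense open subset yield non-isomorphic local systems. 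Both points are reasonable targets but remain unproven, which is precisely why the authors leave the statement as a conjecture.
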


We end with a non-toric example which motivated the conjecture above.

\begin{example}[Homological Berglund-H\"ubsch-Krawitz Mirror Symmetry] \label{ex: BHK}
Let $Z$ by a Calabi-Yau hypersurface $Z$ in a weighted projective space $\mathbb P(a_0:...:a_n)$ defined by an invertible polynomial $w$ and $G_w$ (resp. $\Gamma_w$) be its (resp. extended) maximal symmetry group. Assume that the mirror is Gorenstein (i.e. the dual polynomial $w^T$ has weights $r_i$ such that $r_i$ divides the degree $d^T$).  

Set $k_i = \frac{d^T}{r_i} - 1$ in  Examples~\ref{A_k} and~\ref{A_k CW}. Recall that $A_{k_i}$ denotes the path algebra of corresponding Dynkin quiver. 
 The tensor product was denoted by  
\[
A := A_{k_0} \otimes ... \otimes A_{k_n}
\]
and is the HPA associated to the CW stratification of the cube
\[
X_A^{min} := [0,k_0] \times ... \times [0, k_n].
\]
Fix a homeomorphism transforming the cube $X_A^{min}$ into a smooth $n+1$-disk  with a boundary $n$-sphere 
\[
\Psi: X_A^{min} \to (\mathbb D,\partial \mathbb D).
\]
This induces a stratification $S^{sm}$ of $\mathbb D$ by $\Psi(e_i)$ where $e_i$ is a stratum of $X_A^{min}$.  In addition, observe that $T^*(\mathbb D,\partial \mathbb D)$ is a Liouville sector by \cite{GPS20}, \S 2.5.  We can equip this Loiuville sector with a Lagrangian skeleton by taking the union over the singular support of the constant sheaves extended by zero for all strata:
\[
\Lambda_{S^{sm}} := \bigcup_{\Psi(e_i) \in S^{sm}} ss(j_{i!}\mathsf k_{\Psi(e_i)})
\]
where $j_i: \Psi(e_i)\hookrightarrow \mathbb D$ is the inclusion.
Now we have the following equivalences:
\begin{align*}
D([Z/G_{max}]) & \cong D[\mathbb A^{n+1}, \Gamma_{max} ,W] & \text{ by \cite{BFK}, Theorem 4.2.1, since $Z$ is Calabi-Yau}\\
& \cong D(A) & \text{ by \cite{FKK20}, Theorem 1.1 }\\
& \cong D(Sh_{S^{tr}}(X_A)) & \text{ by Theorem~\ref{thm: main equivalence}} \\
& \cong D(Sh_{S}(X_A^{min})) & \text{ by Corollary~\ref{cor: tree strata} and Example~\ref{A_k} } \\
& \cong D(Sh_{S^{sm}}(\mathbb D, \partial \mathbb D) & \text{ since the smoothing is a homeomorphism}\\
& \cong D\mathcal{W} (T^*(\mathbb D, \partial \mathbb D), \Lambda_{S^{sm}}) & \text{ by  \cite{GPS18}, Theorem 1.1}.
\end{align*}
where $D\mathcal{W}$ refers to the homotopy category of twisted modules over the (partially) wrapped Fukaya category. 
\end{example}

\end{document}